\definecolor{Gray}{gray}{0.80}
\definecolor{LightGray}{gray}{0.90}
\definecolor{darkpastelgreen}{rgb}{0.01, 0.75, 0.24}
\newcommand{\cC}{\mathcal{C}}
\newcommand{\cD}{\mathcal{D}}
\newcommand{\cE}{\mathcal{E}}
\newcommand{\cF}{\mathcal{F}}
\newcommand{\cI}{\mathcal{I}}
\newcommand{\cP}{\mathcal{P}}
\newcommand{\cT}{\mathcal{T}}
\newcommand{\bE}{\mathbb{E}}
\newcommand{\bN}{\mathbb{N}}
\newcommand{\bR}{\mathbb{R}}
\newcommand{\PR}{\mathbb{P}}
\newcommand{\bONE}{\mathbbm{1}}
\newcommand{\dd}{ \mathrm{d}}
\DeclareMathOperator*{\argmin}{argmin}
\DeclareMathOperator{\grad}{grad}
\renewcommand{\epsilon}{\varepsilon}
\newcommand{\vn}[1]{\left| \! \left| #1\right| \!\right|}
\newcommand{\ip}[2]{\langle #1,#2\rangle}
\numberwithin{equation}{section}
\newtheorem{theorem}{Theorem}[section]
\newtheorem{lemma}[theorem]{Lemma}
\newtheorem{proposition}[theorem]{Proposition}
\newtheorem{corollary}[theorem]{Corollary}
\theoremstyle{definition}
\newtheorem{definition}[theorem]{Definition}
\newtheorem{remark}[theorem]{Remark}
\newtheorem{assumption}[theorem]{Assumption}
\newcommand{\R}{\mathbb{R}}
\newcommand{\De}{\mathrm{d}}
\newcommand{\geo}[2]{\bm{\zeta}^{{#1}\to{#2}}}
\title{Hamilton--Jacobi equations for controlled gradient flows: cylindrical test functions}
\author{Conforti G. \thanks{CMAP, Ecole Polytechnique, Route de Saclay, 91128, Palaiseau Cedex, France. \emph{E-mail address}: giovanni.conforti@polytechnique.edu. Research supported by the ANR project  ANR-20-CE40-0014.} , Kraaij  R. C. \thanks{Delft Institute of Applied Mathematics, Delft University of Technology, Mekelweg 4, 2628 CD Delft, The Netherlands. \emph{E-mail address}: r.c.kraaij@tudelft.nl} , Tonon D.\thanks{Dipartimento di Matematica "Tullio Levi-Civita", Universit\`a degli Studi di Padova, via Trieste 63, 35121 Padova, Italy. \emph{E-mail address}: daniela.tonon@unipd.it. Research supported by the project King Abdullah University of Science and Technology (KAUST)  ORA-CRG2021-4674
“Mean-Field Games: models, theory and computational aspects”; by the porject SID BIRD 2022 "Stochastic mean field control and the Schrödinger problem"; and by the project PRIN 2022 (prot. 2022W58BJ5)  "PDEs and optimal control methods in mean field games, population dynamics and multi-agent models"}} 
\date{\today}
\begin{document}

\maketitle
\abstract{This work is the second part of a program initiated in \cite{CoKrTo21} aiming at the development of an intrinsic geometric well-posedness theory for Hamilton-Jacobi equations related to controlled gradient flow problems in metric spaces. Our main contribution is that of showing that the comparison principle proven therein implies a comparison principle for viscosity solutions relative to smoother Hamiltonians, acting on test functions that are mere cylindrical functions of the underling squared metric distance and whose rigorous definition is achieved from the Evolutional Variational Inequality formulation of gradient flows (EVI). In particular, the new Hamiltonians no longer require to work with test functions containing Tataru's distance. This substantial simplification paves the way for the development of a comprehensive existence theory.} 
\tableofcontents

%

%
%
%
%


\section{Introduction}
The main goal of this paper is to take a second step in the study initiated in \cite{CoKrTo21} of infinite-dimensional Hamilton-Jacobi (HJ) equations characterizing the value function of controlled gradient flows problems. To fix ideas, consider a metric space $(E,d)$ where the $d$ is generated by a (formal) Riemannian metric $\ip{\cdot}{\cdot}$. Then the equations considered here may be seen as versions of the following prototype
\begin{equation} \label{eqn:formal_HJ_eq}
    f- \lambda H f = h, \quad Hf:=-\ip{\grad f}{ \grad \cE} + \frac{1}{2} \vn{\grad f}^2,
\end{equation}
where $\grad$ is the gradient associated with $\ip{\cdot}{\cdot}$. Equation \eqref{eqn:formal_HJ_eq} is expected to characterize the value function of the control problem 
\begin{equation}
    \sup\left\{ \int_0^{+\infty} e^{-\lambda^{-1} t}[\lambda^{-1} h(\rho^u(t))-\frac{1}{2}\vn{u(t)}^2\big]\De t: \dot{\rho}^{u} = -\grad \cE(\rho^{u}) + u, \,\rho^u(0)=\rho_0\right\},
\end{equation}
which can be interpreted as the problem of steering the gradient flow 
\begin{equation*}
    \dot{\rho} = -\grad \cE(\rho)
\end{equation*}
in such a way that an optimal balance is struck between the cost of controlling, modeled through the term $-\frac{1}{2}\vn{u(t)}^2$, and the reward obtained, modeled by the term $\lambda^{-1}h(\rho^u(t))$. A relevant setting where instances of\eqref{eqn:formal_HJ_eq} arise naturally is that of the Wasserstein space $(E,d)=(\cP_2(\R^d),W_2(\cdot,\cdot))$ equipped with an energy functional $\cE$ satisfying McCann's condition \cite{mccann1997convexity}: in this case, the underlying formal Riemannian metric is the so called Otto metric \cite{Ot01}.  We refer to \cite{BeDSGaJLLa02,FK06,FeMiZi21} for applications to statistical mechanics and large deviations, \cite{backhoff2020mean,monsaingeon2020dynamical} for applications to stochastic mass transport problems and general versions of the Schröodinger problem \cite{Le14}, as well as \cite{chen2021density} for automatic control.

One of the difficulties in analysing \eqref{eqn:formal_HJ_eq} is that the energy functional $\cE$ is typically not differentiable but only geodesically semiconvex. Even more fundamentally, the Riemannian metric is only formal, and cannot be rigorously defined. These obstructions are indeed all present in the Wasserstein space example.

\paragraph{Hamilton--Jacobi equations in infinite dimensional spaces}

The theory of viscosity solutions for Hamilton--Jacobi equations in the setting of Hilbert spaces or Banach spaces possessing the Radon-Nikodym property spaces was initiated by Crandall and Lions in \cite{CrLi84} and later developed in a series of influential papers. Beyond the above mentioned applications to large deviations and statistical mechanics, the rise of interest for McKean--Vlasov control problems \cite{carmona2018probabilistic} and Mean Field Games \cite{CaDeLaLi19} have driven the efforts to construct a theory of viscosity solutions for Hamilton--Jacobi equations on metric spaces that are not necessarily Hilbert, and in particular over the space of probability measures endowed with a transport--like distance. A first approach exploits the possibility of lifting the space of probability distributions to the space of square integrable random variables in order to take advantage of the Hilbertian structure of the latter: we refer to \cite{BaCoFuPh19,PhWe18,BeGrYa20} for some results recently obtained following this method. A second approach is more intrinsic and consists of working directly at the level of the space of probability measures and develop a notion of viscosity solution relying on a suitable metric or subdifferential structure, that often turns out to be that induced by optimal transport \cite{AmGiSa08}. We mention \cite{burzoni2020viscosity,AmFe14,gangbo2015existence,gangbo2015metric,GaTu19,WuZh20,CoGoKhPhRo21,cecchin2022weak,soner2022viscosity} as a sample of the recent contributions following this approach. We refer to the introduction of \cite{CoKrTo21} for a more thorough analysis of similarities and differences between the results obtained there and our approach, which draws inspiration from the work of Feng and coauthors \cite{FK06,FeKa09,FeMiZi21}. Mean Field Games theory led to the study of a class of measure--valued partial differential equations called master equations \cite{CaDeLaLi19}. Though related to infinite--dimensional Hamilton-Jacobi equations, master equations have a different nature than the one studied here. We refer to the introduction of \cite{CoKrTo21} for a brief explanation of the main differences, as well as for a summary and comments on recent contributions to this rapidly expanding research field.

\paragraph{Contribution of this work and perspectives}

This manuscript is the second chapter of a more general program initiated in \cite{CoKrTo21} whose aim is to develop a well posedness theory for \eqref{eqn:formal_HJ_eq}. In \cite{CoKrTo21} we established a comparison principle for viscosity solutions under mild assumptions, the most notable one being the existence of a gradient flow for the energy functional $\cE$ in Evolutional Variational Inequality (EVI) formulation. The result is stated in terms of rigorously defined upper and lower bounds $\widetilde{H}_{\dagger},\widetilde{H}_{\ddagger}$ for the formal Hamiltonian $H$, that are recalled at Definition \ref{definition:HdaggerHddagger} below and are constructed exploiting the evolutional variational inequality (EVI) characterization of gradient flows \cite{MuSa20}. The strength of this result lies in its generality; however, it is not the most practical in view of applications, as $\widetilde{H}_{\dagger},\widetilde{H}_{\ddagger}$ act on test functions that include Tataru's distance (see \eqref{eqn:tataru_def}), that is not a standard metric to manipulate. One would rather prefer to work with operators that act on cylindrical test functions of the form 
\begin{equation*}
\varphi\Big(\frac12d^2(\cdot,\rho_1),\ldots,\frac12d^2(\cdot,\rho_k)\Big),
\end{equation*}
where $\varphi$ is smooth and $\rho_1,\ldots,\rho_k$ are elements of the metric space $(E,d)$. The main result of this work is Theorem \ref{theorem:carry_over_solutions} where we show that viscosity solutions defined in terms of newly proposed upper and lower bounds $H_{\dagger},H_{\ddagger}$ (see Definition \ref{definition:H1}) acting on cylindrical test functions are indeed viscosity solutions for the operators $\widetilde{H}_{\dagger},\widetilde{H}_{\ddagger}$ introduced in \cite{CoKrTo21}. As a consequence, we bootstrap a comparison principle for $H_{\dagger},H_{\ddagger}$ from the one already available for $\widetilde{H}_{\dagger},\widetilde{H}_{\ddagger}$, see Corollary \ref{corollary:comparison_principle}. In addition to being more natural objects to consider, cylindrical test functions enjoy better regularity properties than Tataru's distance and are thus a preferable alternative for building an existence theory generalizing classical arguments from the finite dimensional setup. To build intuition,  we shall provide in the upcoming Section \ref{section:technical_intro} a heuristic derivation of the operators $H_{\dagger},H_{\ddagger}$, that relies once again on formal convexity properties of the energy functional and is eventually made rigorous through a systematic use of EVI.  The proof of the main results consists in a series of approximation steps in which we transfer viscosity sub(super)solutions from one operator onto another that acts on test functions that are increasingly closer to one that includes the Tataru distance. We refer to the discussion in Section \ref{section:strategy:approximatingTataru} and Figure \ref{figure:HJ_implications} at page \pageref{figure:HJ_implications} below for a more detailed overview of the proof architecture and a brief explanation of each approximation step.\\
As already stated above, this work can be inscribed in a larger effort to tackle \eqref{eqn:formal_HJ_eq} in its more general version. The level of generality of the results of \cite{CoKrTo21} and consequently of those of the present article is already quite large (see the examples of Sec. $5$ therein). For example, it covers the case of the Wasserstein space equipped with Boltzmann's entropy or a Rény entropy as energy functional. The addition of an interaction energy modeled through a pair potential is also covered as soon as the potential satisfies some mild conditions. What remains to be done to gain a comprehensive understanding of \eqref{eqn:formal_HJ_eq} is to build a solid existence theory for viscosity solutions and show that wellposedness of several other equations of interest can be established by verifying that the underlying space and energy satisfy the hypothesis required for our comparison principle to apply. In what concerns existence, the first step in this directions are taken in \cite{CoKrTaTo24}. About extending the range of applicability of our main results, there are several possibilities: in first place, one can mention the class of dynamic transport distances introduced in \cite{dolbeault2009new} for which gradient flows in EVI formulation for energy functionals relating to Macroscopic Fluctuation Theory \cite{BeDSGaJLLa02} have been constructed in \cite{carrillo2010nonlinear}. Another viable direction is that of considering the HJ equations arising in the study of GENERIC systems \cite{GrOt97,DPZ13} that are the first step beyond systems that are of gradient type.

\paragraph{Organization}

The paper is organized as follows. 

In Section \ref{section:technical_intro} we justify heuristically the definition of viscosity solutions we are going to work with and provide several insights on the key  concepts and mathematical objects we shall use in the rest of the paper. 

In Section \ref{section:general_framework}, we introduce the setting of gradient flows in metric spaces and introduce the context in which we will be working. Additionally, we introduce rigorously the two sets of Hamiltonians that we are working with, and state our main result, Theorem \ref{theorem:carry_over_solutions}.

In Section \ref{section:preparations_proof}, we give an outline of the key steps of the proof of Theorem \ref{theorem:carry_over_solutions}. To establish these steps, we make use of two technical lemmas that are proven in this section also.
                                                            
In Section \ref{proof:carry_over_solutions}, we use the aforementioned technical lemmas to establish the key steps of the proof.

Technical results and background material are gathered in the Appendix sections.

\paragraph{Acknowledgements}
The authors thank Luca Tamanini for helpful discussions. RK thanks Jin Feng for an introduction into using large deviation statements when relating Hamiltonians.


\section{Introduction to the technical aspects of the paper} \label{section:technical_intro}\label{sec: two_comp}

In this section, we introduce the main concepts of the paper on an intuitive level. As above, consider the Hamilton-Jacobi equation
\begin{equation} \label{eqn:formal_HJ_eq1}
    f - \lambda H f  = h 
\end{equation}
for a Hamiltonian $H$ that formally acts as
\begin{equation} \label{eqn:formal_H}
Hf(\pi) = \ip{\grad_\pi f(\pi)}{- \grad_\pi \cE(\pi)} + \frac{1}{2} \vn{\grad_\pi f(\pi)}^2 
\end{equation}
where $\cE: E \to (-\infty, +\infty]$ is some energy functional and gradients are taken w.r.t. a formal Riemannian structure on $E$.  The rigorous definition of such an Hamiltonian depends on the precise notion of gradient that we are going to use. This is the first difficulty to face, since in many of the cases we are going to consider, the energy functional $\cE$ lacks in differentiability (e.g. the Wasserstein space $(\cP_2(\R^d),W_2(\cdot,\cdot))$). Therefore, aiming at well-posedness of such equations, one has to find bounds for the Hamiltonian using notions of gradient flows that do not appeal to the $\mathrm{grad}_{\pi}\cE$ directly, see \cite{AmGiSa08}.

\subsection{Bounds via the evolutional variational inequality} \label{section:intro:bounds_via_EVI}

We start by introducing the strongest possible formulation of a gradient flow on a metric space, i.e. that of a solution to the evolutional variational inequality (EVI), see \cite{AmGiSa08,MuSa20}. We say that $\gamma(t)$ solves \eqref{item:intro_EVI} for $\kappa\in \R$, if
\begin{equation}\label{item:intro_EVI}\tag{$EVI_{\kappa}$}
		\frac{1}{2} {\frac{\dd^+}{\dd t}} \left(d^2(\gamma(t),\rho)\right) \leq \cE(\rho) - \cE(\gamma(t)) - \frac{\kappa}{2} d^2(\gamma(t),\rho),\quad \forall \rho \in \cD(\cE),t\in [0,+\infty).
\end{equation}
We will formally work with metric spaces satisfying the (formal) Riemannian property of the distance
\begin{equation}  \label{eqn:ass_formal_noise_input}
		\forall \pi,\rho\in E \quad \left|\partial\left( \frac{1}{2} d^2(\cdot,\rho)\right)\right|^2(\pi) = d^2(\pi,\rho).
\end{equation}
Note that the above equation holds in the case of a smooth Riemaniann manifold as well as on the Wasserstein space $(\cP_2(\R^d),W_2(\cdot,\cdot))$.

Let us now consider a test function $f^\dagger : E \to \R$  that is given in terms of the squared distance as $f^\dagger(\pi) = \frac{1}{2} a d^2(\pi,\rho)$ for some $\rho\in \cD(\cE)$ and $a>0$. Applying formally the expression for $H$ from \eqref{eqn:formal_H} and  \eqref{eqn:ass_formal_noise_input}
(as if $\pi\in \cD(\cE)$), we obtain that
\begin{equation*}
Hf^\dagger(\pi) =  \frac{1}{2} a\frac{\dd}{\dd t} \left( d^2 (\pi(t), \rho) \right)\Big|_{t=0} + \frac{1}{2} a^2 d^2(\pi,\rho).
\end{equation*}
Then, applying (formally) \eqref{item:intro_EVI}  and being $a > 0$, we get 
\begin{equation} \label{eqn:HJ_formal_upperbound}
Hf^\dagger(\pi) \leq  a\left[ \cE(\rho) - \cE(\pi) \right] -  {a\frac{\kappa}{2} d^2(\pi,\rho) } + \frac{1}{2} a^2 d^2(\pi,\rho).
\end{equation}
Similarly, we get a formal lower bound for a test function $f^\ddagger : E \to \R$ defined as $f^\ddagger(\mu) = - \frac{1}{2}a d^2(\gamma,\mu)$, $\gamma \in \cD(\cE)$
\begin{equation}\label{eqn:HJ_formal_lowerbound}
H f^\ddagger(\mu) \geq a\left[ \cE(\mu) - \cE(\gamma)\right] + a\frac{\kappa}{2} d^2(\gamma,\mu) + \frac{1}{2} a^2 d^2(\gamma,\mu).
\end{equation}

Making rigorous the steps above, we can start developing a correct formulation of the Hamilton--Jacobi equation. The key point here is that, on one hand, the upper and lower bounds introduced with (EVI) are sufficiently tight to allow for uniqueness proofs, whereas on the other, they are sufficiently relaxed to allow for existence theory.

In two companion papers, we treat both issues separately. In \cite{CoKrTo21}, we show that if we include the Tataru distance, to be introduced in Section \ref{section:intro_technical_Tataru} below, in our test functions, we indeed have sufficiently tight bounds for a comparison principle. In \cite{CoKrTaTo24}, we specify to the important relevant context $(E,d) = (\cP_2(\bR^d),W_2)$. Therein we establish existence for a class of Hamilton-Jacobi equations formulated in terms of smooth cylindrical test functions, whose definition we will use as a blue-print for a more general definition of the upper and lower bound in this paper. We introduce the heuristics regarding these test functions in Section \ref{section:intro_technical_smoothcylinders} below. 

The main result of this paper is to connect both collections of test functions. This way, we can connect the general uniqueness theory established for non-smooth test functions of \cite{CoKrTo21} to existence theory for smooth test functions.  We consider the heuristics of the connection in Section \ref{section:intro_technical_smoothcylinders_firststeptowardsTataru}.

\subsection{Test functions: {Comparison via Ekeland and Tataru's distance}} \label{section:intro_technical_Tataru}

To establish the comparison principle {and consequently uniqueness of solutions}, one needs to have test functions that capture more information than merely a quadratic distance. In \cite{CoKrTo21}, we build upon ideas from \cite{Ta92,Ta94,CrLi94,Fe06} and established the comparison principle for an upper and lower bound using the Ekeland variational principle where the {Ekeland} perturbation is performed using the Tataru test function. Thus, instead of giving upper and lower bounds in terms of test functions that only include the squared metric, as in the discussion above, we included the Tataru test function $d_T: E\times E \to [0,+\infty)$ defined as 
\begin{equation}\label{eqn:tataru_def}
    d_T(\pi,\rho) = \inf_{t \geq 0} \left\{ t + e^{\hat{\kappa} t} d(\pi,\rho(t))\right\}, \quad \forall \pi, \rho \in E,
\end{equation}
where $\hat{\kappa} = (0 \wedge \kappa)\leq 0$ and where $\rho(t)$ is the gradient flow for $\cE$ that starts in $\rho$. In \cite[Section 4]{CoKrTo21} we established that $d_T$ is $1$-Lipschitz along the gradient flow, and $1$-Lipschitz in terms of $d$. Formally,
\begin{equation}\label{eqn:tataru_formalbound}
      \forall \pi, \rho \in E \qquad \qquad  \Big| \frac{\dd}{\dd t} \left( d_T(\pi(t),\rho)\right)\big|_{t=0}\Big|\leq 1, \qquad \qquad \big|\partial d_T(\cdot,\rho) \big|(\pi) \leq 1.
\end{equation}

Extending upon the analysis of Section \ref{section:intro:bounds_via_EVI}, one can consider test functions of the type  $f^\dagger(\pi) = \frac{1}{2} a d^2(\pi,\rho) + b d_T(\pi,\mu) + c$ for some $\rho \in \cD(\cE)$ and $\mu\in E$, $a,b >0$ and $c \in \bR$. Via a formal computation one obtains the upper bound
\begin{equation} \label{eqn:H_tataru_sub}
Hf^\dagger(\pi) \leq  a\left[ \cE(\rho) - \cE(\pi) \right] -  {a\frac{\kappa}{2} d^2(\pi,\rho) } + b + \frac{1}{2} a^2 d^2(\pi,\rho) + ab d(\pi,\rho) + \frac{1}{2}b^2.
\end{equation}
To obtain a formal lower bound we consider $f^\ddagger : E \to \R$ defined as $f^\ddagger(\mu) = - \frac{1}{2}a d^2(\gamma,\mu) - b d_T(\gamma,\pi) + c$, $a,b > 0$, $c \in \bR$, $\gamma \in \cD(\cE)$ and $\pi \in E$:
\begin{equation}\label{eqn:H_tataru_super}
H f^\ddagger(\mu) \geq a\left[ \cE(\mu) - \cE(\gamma)\right] + a\frac{\kappa}{2} d^2(\gamma,\mu) - b + \frac{1}{2} a^2 d^2(\gamma,\mu) - ab d(\gamma,\mu) - \frac{1}{2}b^2.
\end{equation}

The  upper and lower bound given by \eqref{eqn:H_tataru_sub} and \eqref{eqn:H_tataru_super} respectively are precisely the one for which we establish the comparison principle (implying uniqueness of viscosity solutions) for the Hamilton-Jacobi equation \eqref{eqn:formal_HJ_eq1} in in \cite{CoKrTo21}.  Nevertheless, as the Tataru distance is non-smooth, establishing existence for the Hamilton-Jacobi equation in terms of Hamiltonians \eqref{eqn:H_tataru_sub} and \eqref{eqn:H_tataru_super} is a non-trivial matter.

\subsection{{Test functions: Existence via smooth cylindrical test functions}} \label{section:intro_technical_smoothcylinders}

To establish existence of viscosity sub- and supersolutions it pays off to work with smooth test functions. Clearly, the quadratic test functions and the bounds given in \eqref{eqn:HJ_formal_upperbound} and \eqref{eqn:HJ_formal_lowerbound} serve this purpose. However, this class of test functions is not always sufficiently large to easily allow for a uniqueness theory: we are not able to connect these simple test functions to the ones that include the Tataru distance. A typical method to enlarge the class of test functions, but to stay within the class of smooth test functions is to relax to the class of cylindrical test functions. The main goal of this paper is to show that this larger class of test functions is sufficient to connect to the Tataru distance.

For the upper bound, we will work with test functions of the type

\begin{equation} \label{eqn:def_intro_cylindertest}
    f^\dagger(\pi)  = \varphi\left( \frac{1}{2}d^2(\pi,\rho_0), \dots, \frac{1}{2}d^2(\pi,\rho_k)\right) 
\end{equation}
where $k \in \bN$, $\rho_0,\dots,\rho_k \in E$, $\varphi : [0,+\infty)^{k+1}\to\R $ is bounded and continuous and where  for any $i \in \{0,\dots,k\}$ we have $\partial_i \varphi > 0$. Arguing as in Section \ref{section:intro:bounds_via_EVI}, writing $\bm{\rho} = (\rho_0,\dots,\rho_k)$ and $d (\cdot,\bm\rho)= (d(\cdot,\rho_0),\dots, d(\cdot, \rho_k))$, we obtain the formal upper bound
\begin{equation} \label{eqn:H_cylindrical_sub}
    \begin{aligned}
     Hf^\dagger(\pi) & ={\frac{\dd}{\dd t} \left( f^\dagger (\pi(t)) \right)|_{t=0}+ \frac{1}{2} |\partial  f^\dagger|^2(\pi) }\\
    &\leq { \sum_{i=0}^k \partial_i \varphi\left( \frac{1}{2} d^2(\pi,\bm\rho)\right) \frac{\dd}{\dd t} \left( \frac 1 2 d^2(\pi(t), \rho_i) \right)\Big|_{t=0}} \\
    & \qquad { \,+ \frac{1}{2}\left(\sum_{i=0}^k \partial_i \varphi\left( \frac{1}{2}d^2(\pi,\bm\rho)\right) {\left| \partial\left(\frac{1}{2} d^2(\cdot,\rho_i)\right)\right|(\pi)}\right)^2} \\
    & \leq \sum_{i=0}^k \partial_i \varphi\left( \frac{1}{2}d^2(\pi,\bm\rho)\right) \left[\cE(\rho_i) - \cE(\pi) - \frac{\kappa}{2} d^2(\pi,\rho_i) \right] \\
    & \qquad \, + \frac{1}{2}\sum_{i=0}^k \left( \partial_i \varphi\left( \frac{1}{2}d^2(\pi,\bm\rho)\right)  d(\pi,\rho_i)\right)^2
\end{aligned}
\end{equation}
where  we used \eqref{item:ass_EVI} and \eqref{eqn:ass_formal_noise_input}. In a similar fashion, we can obtain a lower bound using functions of the type $f^\ddagger(\mu) = - \varphi\left(\frac{1}{2}d^2(\mu,\bm\gamma)\right)$,  in this case we need to work  slightly harder to find an appropriate lower bound for the square $\frac{1}{2} |\partial  f^\ddagger|^2(\mu)$. To give a particular, but relevant context, consider the Wasserstein space on $\bR^d$ and $d(\mu,\gamma) = W_2(\mu,\gamma)$. In this context, we find for $f^\ddagger$, but similar for $f^\dagger$, that 
\begin{equation*}
|\partial f^\ddagger|^2(\mu) = \sum_{i,j=0}^k \partial_i \varphi\left( \frac{1}{2}d^2(\mu,\bm\gamma)\right)\partial_j \varphi\left( \frac{1}{2}d^2(\mu,\bm\gamma) \right) \int  \ip{\bm{t_\mu^{\gamma_i}-id}}{\bm{t_\mu^{\gamma_j}-id}} \,\dd \mu 
\end{equation*}
where $t_\mu^{\gamma_i}-id$ is the transport map from $\mu$ to $\gamma_i$, see e.g. \cite{AmGiSa08}. This formula reflects the typical Hilbert space inner product structure underlying the square of the slope. Using Cauchy-Schwarz inequality, one would easily find the upper bound as in \eqref{eqn:H_cylindrical_sub}. To obtain  a useful lower bound, however, we find ourselves with a problem since we end up with negative terms for the off-diagonal terms: 
\begin{equation} \label{eqn:H_cylindrical_super}
\begin{aligned}
|\partial f^\ddagger|^2(\mu) & = \sum_{i,j=0}^k \partial_i \varphi\left( \frac{1}{2}d^2(\mu,\bm\gamma)\right)\partial_j \varphi\left( \frac{1}{2}d^2(\mu,\bm\gamma)\right)\int \ip{\bm{t_\mu^{\gamma_i}-id}}{\bm{t_\mu^{\gamma_j}-id}} \, \dd \mu \\
& \geq \sum_{i,j=0}^k \partial_i \varphi\left( \frac{1}{2}d^2(\mu,\bm\gamma)\right)^2  d^2(\mu,\gamma_i) \\
& \qquad - \sum_{i \neq j} \partial_i \varphi\left( \frac{1}{2}d^2(\mu,\bm\gamma)\right)\partial_j \varphi\left( \frac{1}{2}d^2(\mu,\bm\gamma)\right) d(\mu,\gamma_i)d(\mu,\gamma_j).
\end{aligned}
\end{equation}

The bounds \eqref{eqn:H_cylindrical_sub} and \eqref{eqn:H_cylindrical_super} for bounded $\varphi$ are a possible starting point for existence theory. We will, however, choose a slightly different starting point in next section by specifying our cylindrical test functions to one where a single quadratic component is singled out. This has multiple advantages:
\begin{itemize}
    \item The formulas bring our definitions closer to \eqref{eqn:H_tataru_sub} and \eqref{eqn:H_tataru_super}, 
    \item We can further lower bound \eqref{eqn:H_cylindrical_super} to more closely resemble the upper bound.
    \item We obtain an unbounded term that can be used to establish coercivity.
\end{itemize}
For those those that are interested in the bounds \eqref{eqn:H_cylindrical_sub} and \eqref{eqn:H_cylindrical_super} for bounded $\varphi$, can refer to our Appendix \ref{appendix:boundedcylinders}.

\subsection{{Relating the sets of test functions:  a stepping stone towards well-posedness theory}} \label{section:intro_technical_smoothcylinders_firststeptowardsTataru}

In \cite{CoKrTaTo24} we establish existence of solutions to the Hamilton-Jacobi equation where we specify our cylinders to
\begin{align}
    f^\dagger(\pi) & = \varphi\left( \frac{1}{2}d^2(\pi,\rho),\frac{1}{2}d^2(\pi,\mu_1)\dots, \frac{1}{2}d^2(\pi,\mu_k)\right) \notag \\
    & = \frac{1}{2}a d^2(\pi,\rho) +  \varphi_0\left( \frac{1}{2}d^2(\pi,\mu_1),\dots, \frac{1}{2}d^2(\pi,\mu_k)\right), \label{eqn:split_smooth_test_dagger}
\end{align}
and
\begin{equation}\label{eqn:split_smooth_test_ddagger}
    f^\ddagger(\mu) = - \frac{a}{2} d^2(\mu,\gamma) -  \varphi_0\left( \frac{1}{2}d^2(\mu,\pi_1),\dots, \frac{1}{2}d^2(\mu,\pi_k)\right).
\end{equation}
where $a > 0$. Splitting of the quadratic term of \eqref{eqn:H_cylindrical_sub} according to the decomposition in \eqref{eqn:split_smooth_test_dagger}, we find
\begin{equation}\label{eqn:H_cylindrical_specific_sub}
\begin{aligned}
    Hf^\dagger(\pi) & \leq a\left[\cE(\rho) - \cE(\pi) - \frac{\kappa}{2} d^2(\pi,\rho) \right] +
    \frac{1}{2} a^2 d(\pi,\rho)^2 \\
    & \qquad + \sum_{i=1}^k \partial_i \varphi_0\left( \frac{1}{2}d^2(\pi,\bm\mu)\right) \left[\cE(\mu_i) - \cE(\pi) - \frac{\kappa}{2} d^2(\pi,\mu_i) \right]  \\
    & \qquad + ad(\pi,\rho) \left(\sum_{i=1}^k  \partial_i \varphi_0\left( \frac{1}{2}d^2(\pi,\bm\mu)\right)  d(\pi,\mu_i)\right) \\
    & \qquad + \frac{1}{2} \sum_{i,j=1}^k \left( \partial_i \varphi_0\left( \frac{1}{2}d^2(\pi,\bm\mu)\right)  d(\pi,\mu_i)\right)^2.
\end{aligned}
\end{equation}
We can similarly split of the action of the Hamiltonian on the first term when working with the lower bound. In particular, when working with the terms in the Hamiltonian that arise from the squared gradient in \eqref{eqn:H_cylindrical_super}, we can use the elementary estimate
\begin{equation} \label{eqn:elementary_lowerbound}
    \sum_{i=0}^k a_i^2 -  \sum_{i\neq j} a_i a_j \geq a_0^2 - \sum_{i=1}^k a_i^2  -  \sum_{i\neq j} a_i a_j  = a_0^2 - \left(\sum_{i=1}^k a_i\right)^2 {- 2 a_0 \sum_{i=1}^k a_i},
\end{equation}
 which reads 
\begin{equation} \label{eqn:lowerbound_offdiagonal}
\begin{aligned}
|\partial f^\ddagger|^2(\mu) & \geq \frac{1}{2}a^2 d^2(\mu,\gamma)  - \left(\sum_{i=1}^k  \partial_i \varphi_0\left( \frac{1}{2}d^2(\mu,\bm\pi)\right) d(\mu,\pi_i)\right)^2 \\
& \qquad - 2 a d(\mu,\gamma)\left(\sum_{i=1}^k  \partial_i \varphi_0\left( \frac{1}{2}d^2(\mu,\bm\pi)\right) d(\mu,\pi_i)\right).
\end{aligned}
\end{equation}
The estimates then lead to a candidate lower bound
\begin{equation}\label{eqn:H_cylindrical_specific_super}
\begin{aligned}
    Hf^\ddagger(\mu) & \geq a \left[\cE(\mu) -\cE(\gamma) + \frac{\kappa}{2}d^2(\mu,\gamma) \right] + \frac{1}{2} a^2 d^2(\gamma,\mu) \\
    &  + \sum_{i=1}^k \partial_i \varphi\left(\frac{1}{2}d^2(\mu,\bm\pi)\right)\left[\cE(\mu) -\cE(\gamma_i) + \frac{\kappa}{2}d^2(\mu,\gamma_i) \right] \\
    &  \qquad - \frac{1}{2}  \left(\sum_{i=1}^k  \partial_i \varphi_0\left( \frac{1}{2}d^2(\mu,\bm\pi)\right) d(\mu,\pi_i)\right)^2 \\
    &  \qquad - a d(\mu,\gamma)\left(\sum_{i=1}^k  \partial_i \varphi_0\left( \frac{1}{2}d^2(\mu,\bm\pi)\right) d(\mu,\pi_i)\right).
\end{aligned}
\end{equation}

The bounds \eqref{eqn:H_cylindrical_specific_sub} and \eqref{eqn:H_cylindrical_specific_super} bounds are the basis for our paper, and the starting point for the existence theory in  \cite{CoKrTaTo24}. As uniqueness was established in \cite{CoKrTo21} in terms of the bounds \eqref{eqn:H_tataru_sub} and \eqref{eqn:H_tataru_super}, we need to relate both sets of test functions to obtain a satisfactory well-posedness theory. Our main theorem, Theorem \ref{theorem:carry_over_solutions}, shows that indeed solutions are related, i.e. any subsolution for the Hamilton-Jacobi equation formulated in terms of the upper bound of \eqref{eqn:H_cylindrical_specific_sub} is also a subsolution for the Hamilton-Jacobi equation formulated in terms of the upper bound in \eqref{eqn:H_tataru_sub}. A similar statement holds for supersolutions.

\smallskip

To connect both sets of test functions, again consider \eqref{eqn:split_smooth_test_dagger}:
\begin{equation*}
    f^\dagger(\pi)  = \frac{1}{2}a d^2(\pi,\rho) +  \varphi_0\left( \frac{1}{2}d^2(\pi,\mu_1),\dots, \frac{1}{2}d^2(\pi,\mu_k)\right), 
\end{equation*}
and compare this to 
\begin{equation*}
    \tilde{f}^\dagger(\pi) = \frac{1}{2} a d^2(\pi,\rho) + b d_T(\pi,\mu) + c.
\end{equation*}
We will connect the two by choosing $\mu_1,\dots,\mu_k$ equal to $(\mu(t_1),\dots,\mu(t_k))$ for some well chosen times $t_1,\dots,t_k$ where $t \mapsto \mu(t)$ is the gradient flow for $\cE$ started from $\mu(0) = \mu$ and where $\varphi_0$ is chosen such that 
\begin{equation} \label{eqn:approx_Tataru}
    \varphi_0\left( \frac{1}{2}d^2(\pi,\mu_1),\dots, \frac{1}{2}d^2(\pi,\mu_k)\right) \approx b d_T(\pi,\mu) + c.
\end{equation}
A large part of the analysis of this paper and the proof of the main Theorem \ref{theorem:carry_over_solutions} is spent on making this approximation precise, as well as verifying that the action of the gradient flow on this approximation behaves as it should. Here we follow ideas introduced by \cite{Fe06} in the context of large deviations on Hilbert spaces, implementing them  in this more involved context. 

We sketch here the key issues in facing the relation between the two sets of test functions. Taking the approximation \eqref{eqn:approx_Tataru} at face value, we find a striking resemblance between the pair \eqref{eqn:H_cylindrical_specific_sub} and \eqref{eqn:H_cylindrical_specific_super} on one hand and \eqref{eqn:H_tataru_sub} and \eqref{eqn:H_tataru_super} on the other.

Using the approximate identity \eqref{eqn:approx_Tataru}, we see that the term in the second line of \eqref{eqn:H_cylindrical_specific_sub} approximately equals
\begin{equation*}
    \Big| \frac{\dd}{\dd t} \left( b d_T(\pi(t),\mu)\right)\big|_{t=0}\Big|\leq b
\end{equation*}
where the bound follows by the first estimate in \eqref{eqn:tataru_formalbound}. The terms on the final two lines of \eqref{eqn:H_cylindrical_specific_sub} can be treated with the second estimate in \eqref{eqn:tataru_formalbound} and therefore give
\begin{multline*}
    ad(\pi,\rho) \left(\sum_{i=1}^k  \partial_i \varphi_0\left( \frac{1}{2}d^2(\pi,\bm\mu)\right)  d(\pi,\mu_i)\right) + \frac{1}{2} \sum_{i,j=1}^k \left( \partial_i \varphi_0\left( \frac{1}{2}d^2(\pi,\bm\mu)\right)  d(\pi,\mu_i)\right)^2 \\
    \leq ab d(\pi,\rho) + \frac{1}{2} b^2, 
\end{multline*}
thus showing that \eqref{eqn:H_tataru_sub} is an appropriate upper bound for \eqref{eqn:H_cylindrical_specific_sub}. Similar bounds on the basis of \eqref{eqn:tataru_formalbound} connect \eqref{eqn:H_tataru_super} to \eqref{eqn:H_cylindrical_specific_super}.

Thus, the proof of our main Theorem \ref{theorem:carry_over_solutions} reduces to making rigorous three main steps that are key in relating the two upper bound or lower bounds respectively:
\begin{enumerate}[(1)]
    \item Finding approximate functions $\varphi_0$ and $\mu_1,\dots,\mu_k$ such that \eqref{eqn:approx_Tataru} holds:
    \begin{equation*} 
    \varphi_0\left( \frac{1}{2}d^2(\pi,\mu_1),\dots, \frac{1}{2}d^2(\pi,\mu_k)\right) \approx b d_T(\pi,\mu) + c,
    \end{equation*}
    \item checking that, working with this $\varphi_0$, it approximately holds that the action of the gradient flow on this test function is bounded by $b$, as in the first bound of \eqref{eqn:tataru_formalbound},
    \item checking that, in the same situation, it approximately holds that the squared slope of the part involving $\varphi_0$ is bounded by $b^2$ as predicted by the second bound of \eqref{eqn:tataru_formalbound}.
\end{enumerate}

\textbf{Frequently used notation}

We write $C(E), LSC(E)$, and $USC(E)$ for the spaces of continuous, lower semi-continuous and upper semi-continuous functions from $E$ into $\bR$. We denote by $C_u(E), C_l(E), LSC_l(E)$ and $USC_u(E)$ the subsets of functions that admit a lower or upper bound. Finally $C_b(E) = C_u(E) \cap C_l(E)$. 
We write $\bN_\infty$ equals $\bN \cup \{\infty\}$ equipped with the topology where any unbounded sequence converges to $\infty$.

Finally, for a constant $\kappa \in \bR$, we write $\hat{\kappa} = (0 \wedge \kappa)\leq 0$.

\section{EVI-gradient flows and main results} \label{section:general_framework}

\subsection{Set-up}

The setting of this paper will be a complete metric space $(E,d)$ where we define $\cE : E\to (-\infty,+\infty]$ an extended energy (entropy) functional. Being the notion of gradient too strong for the considered energy functional we will use the definition of local slope, as defined in the first chapter of \cite{AmGiSa08}.
\begin{definition} {Let $\phi: E\to (-\infty,+\infty]$ be an extended functional with proper effective
domain, i.e. 
$\cD(\phi):=\{ \pi\in E: \phi(\pi)<+\infty \}\neq \emptyset$.}
Then the local slope of $\phi$ at {$\rho\in \cD(\phi)$} is defined as 
\begin{equation*}
    |\partial \phi|(\rho):= \begin{cases} \limsup_{\pi \rightarrow \rho} \frac{(\phi(\rho)-\phi(\pi))^+}{d(\rho,\pi)}, &\quad\mbox{if $\phi(\rho)<+\infty.$ }  \\
    +\infty, &\quad \mbox{otherwise.}
    \end{cases}
\end{equation*}

\end{definition}

Moreover, our metric space will be required to be  a geodesic space in the sense of the following definition.

\begin{definition} $(E,d)$ is a geodesic space, if for any $\rho,\pi\in E$ there exists a curve  $(\geo{\rho}{\pi}(t))_{t\in[0,1]}$ such that $\geo{\rho}{\pi}(0)=\rho,\geo{\rho}{\pi}(1)=\pi$ and for all $s,t\in[0,1]$
		\begin{equation}\label{eq: geodesicproperty}
		    d(\geo{\rho}{\pi}(s),\geo{\rho}{\pi}(t))=|t-s|d(\rho,\pi).
		\end{equation}
		Such a curve will be called \emph{geodesic}.
\end{definition}

\begin{assumption}[Metric and energy] \label{assumption:distance_and_energy}
The complete metric space $(E,d)$ and the energy functional $\cE$ satisfy the following assumptions: 
\begin{enumerate}[(a)]
		\item   $(E,d)$ is a geodesic space. 
		\item We assume that the energy functional $\cE : E \to { (-\infty, +\infty]}$ is an extended functional such that: 
        \begin{itemize}
            \item  It has a proper effective domain, i.e. $\cD(\mathcal{E}):=\{ \pi\in E:\mathcal{E}(\pi)<+\infty \}\neq \emptyset
             $.   
            \item It is lower semi-continuous. 
        \end{itemize}
\end{enumerate}
\end{assumption}

\begin{assumption}[Weak topology]\label{assumption:weak_topology}
 We assume the existence of a topology on $E$ that is weaker than the topology generated by $d$. We will call this topology the weak topology. We assume that
\begin{itemize}
    \item 
    The metric $d(\cdot,\cdot)$ is weakly lower semi-continuous. The energy functional $\cE$ is weakly lower semi-continuous on metric balls. 
    \item For all $\rho \in E$ and $c,d \in \bR$, the set 
    \begin{equation} \label{def: K^rho}
    K^{\rho}_{c,d} := \{ \pi\in E\ : d(\rho,\pi) \leq c, \cE(\pi)\leq d \}   
    \end{equation}
    is weakly compact.
\end{itemize}

\end{assumption}

In most of the examples of interest, metric balls are not compact with respect to the topology generated by $d$ and this in Assumption \ref{assumption:weak_topology} we do not use the standard topology but a weaker one. Let us note that for  the fundamental example
$(E,d)=(\cP_2(\R^d),W_2(\cdot,\cdot))$, Assumption \ref{assumption:weak_topology} is verified by the topology generated by convergence in the $W_p(\cdot,\cdot)$ metric for $p <2$.

\begin{remark}
    Assumption \ref{assumption:weak_topology} combined with the lower bound  \eqref{eq: lower bound on energy} on the energy $\cE$ defined in Assumption \ref{assumption:distance_and_energy} are equivalent to Assumption 2.1a, 2.1b and 2.1c of \cite{AmGiSa08}.
\end{remark}

We now make precise the definition of  EVI (Evolutional Variational Inequality) gradient flow of $\cE$. The important properties of this inequality are fully detailed in the monograph \cite{AmGiSa08} and in the more recent article \cite{MuSa20}.

\begin{definition}
    Given  $\kappa\in \R$, we define   \emph{solution of the $EV\!I_k$ inequality}   a continuous curve  $\gamma: [0,+\infty)\to E$  such that $ \gamma((0,+\infty))\subseteq \cD(\cE) $ and for all $\rho\in E$
		 \begin{equation}\label{item:ass_EVI}\tag{$EV\!I_{\kappa}$}
		\frac{1}{2} {\frac{\dd^+}{\dd t}} \left(d^2(\gamma(t),\rho)\right) \leq \cE(\rho) - \cE(\gamma(t)) - \frac{\kappa}{2} d^2(\gamma(t),\rho),\quad \forall \rho \in \cD(\cE),t\in [0,+\infty).
		\end{equation}
		Here $\frac{\dd^+}{\dd t}$ denotes the upper right time derivative.

		 An \emph{ $EV\!I_k$ gradient flow} of $\mathcal{E}$  defined in $D\subset \overline{\mathcal {D} (\cE)}$ is a family of continuous maps $S(t): D\to D, t\geq 0$  such that for every $\pi\in D$:
		 \begin{itemize}
	    \item The semigroup property holds 
	    \begin{equation}\label{ass:semigroup ppty and continuity} 
	    S[\pi](0)=\pi,\quad S[\pi](t+s)=S[S[\pi](t)](s) \quad \forall t,s\geq0.
	    \end{equation}
	    \item The curve $ (S[\pi](t))_{t\geq0}$ is a solution to \ref{item:ass_EVI}.
	    \end{itemize}
		   We shall refer to $(S[\pi](t))_{t\geq 0 }$ as  the \emph{gradient flow} of $\mathcal{E}$ started at $\pi$. 
		To lighten the notation, from now on, we will denote with $(\pi(t))_{t\geq 0}$ the gradient flow $(S[\pi](t))_{t\geq 0}$. 
\end{definition}

\begin{assumption} \label{assumption:gradientflow}
	 \textbf{[Gradient flow and  EVI]}  
	    We assume the existence of an \ref{item:ass_EVI} gradient flow of $\cE$ defined on  $D=E$. 
\end{assumption}

According to the above assumption we have that  $\overline{\mathcal {D} (\cE)}=E$.

We refer to Lemma \ref{lem: EVI implies boundedness} for the most important consequences of  \ref{item:ass_EVI} that will be used in our proofs, see also  (see \cite{MuSa20}).

For later use, we define the information functional as the squared slope of the energy. 

\begin{definition} \label{definition:information}
    We define  the  \textit{information functional} $I: E \to [0, +\infty]$ as 
		\begin{equation*}
		   I(\pi) : = \left\{\begin{array}{cc}
		      |\partial \cE|^2(\pi)  & \pi\in \cD(\cE) \\
		       +\infty  & \text{otherwise}
		    \end{array} \right..
		\end{equation*}
\end{definition}
The information functional is closely related to the gradient flow via the energy identity
\begin{equation*}
    \cE(\pi(t))-\cE(\pi(0))=-\int_0^t I(\pi(s))\De s,
\end{equation*}
see Lemma \ref{lem: EVI implies boundedness} for a rigorous version of the above relation.

In \cite{CoKrTo21}, we finished with an angle condition, a non-standard assumption that essentially captures the fact that $\cE$ is differentiable in its effective domain.

\begin{assumption} \label{assumption:regularized_geodesics}
     For any $\rho,\pi\in E$ satisfying  $I(\rho)+\cE(\pi)<+\infty$, there exist a geodesic $\geo{\rho}{\pi}$ such that, for any $\theta>0$, there exists $\tau>0$ and a curve, not necessarily a geodesic, $(\geo{\rho}{\pi}_{\theta}(t))_{t\in[0,\tau]}$ , satisfying  
            \begin{equation}\label{eq: angle condition} 
            \limsup_{t \downarrow 0}  \frac{d(\geo{\rho}{\pi}_{\theta}(t),\geo{\rho}{\pi}(t))}{t} \leq \theta ,\quad 
            \end{equation} 
            and 
            \begin{equation}\label{eq: energy directional derivative}
            \liminf_{t\downarrow 0} \frac{\cE(\geo{\rho}{\pi}_{\theta}(t))-\cE(\rho)}{t}\leq |\partial \cE |(\rho)(d(\rho,\pi)+\theta).
            \end{equation}
            Note that \eqref{eq: angle condition} implies that $\geo{\rho}{\pi}_{\theta}(0)=\rho.$ 
\end{assumption}

     \eqref{eq: energy directional derivative} can be interpreted as the controllability of directional derivatives of regularized geodesics by the local slope of the energy.

\subsection{Two collections of Hamiltonians}

We next formalize the two collections of upper and lower bounds introduced in Section \ref{section:intro:bounds_via_EVI}.

We start with the set of Hamiltonians in terms of smooth cylindrical test functions. Let $\cT$ be the collection of functions $\varphi$ defined as 
     \begin{equation} \label{eqn:defT}
    \cT := \left\{ \varphi \in \mathcal{C}_{\infty}([0,\infty)^{k};\mathbb{R}) \middle| \, k\in\mathbb{N},\,\forall \, i=1,\dots, k,  \, \partial_i \varphi > 0   \, \right\}, 
    \end{equation}
    and where $\cC_\infty([0,\infty)^{k};\mathbb{R})$ is the set of smooth functions mapping $[0,\infty)^{k}$ into $\bR$. Recall that for $\mu_1,\dots,\mu_k \in E$, we write  $\bm{\mu} = (\mu_1,\dots,\mu_k)$ and $\bm{\mu} \in {\cD(I) }$  if all elements in the vector are in $\cD(I)$. Moreover $d (\cdot,\bm\mu)= (d(\cdot,\mu_1),\dots, d(\cdot, \mu_k))$.

\begin{definition} \label{definition:H1}
	For $a > 0$, $\varphi \in \cT$ and $\rho \in \cD(I)$, and $\bm\mu=(\mu_1,\ldots,\mu_{k}) \in E^{k}$ such that $\bm{\mu} \in \cD(I)$,  we define $f^{\dagger} = f^\dagger_{a,\varphi,\rho,\bm\mu} \in C_l(E)$ and $g^{\dagger}=g^\dagger_{a,\varphi,\rho,\bm\mu} \in USC(E)$ for all $\pi \in E$ as
    \begin{align}
	    f^\dagger(\pi) & := \frac{a}{2} d^2(\pi,\rho) + \varphi\left( \frac{1}{2}d^2(\pi,\bm\mu)\right), \label{eq:reg_f_1dag} \\
	    g^\dagger(\pi) & := a \left[\cE(\rho) - \cE(\pi) - \frac{\kappa}{2} d^2(\pi,\rho) \right] + \frac{a^2}{2} d^2(\pi,\rho) \label{eq:reg_g_1dag}\\
     & \qquad +\sum_{i=1}^k \partial_i \varphi\left( \frac{1}{2}d^2(\pi,\bm\mu)\right) \left[\cE(\mu_i) - \cE(\pi) - \frac{\kappa}{2} d^2(\pi,\mu_i) \right]  \notag \\
    & \qquad + \frac{1}{2} \left( \sum_{i = 1}^k \partial_i \varphi\left( \frac{1}{2}d^2(\pi,\bm\mu)\right) d(\pi,\mu_i) \right)^2 \notag \\
    & \qquad + a d(\pi,\rho) \left(\sum_{i = 1}^k \partial_i \varphi\left( \frac{1}{2}d^2(\pi,\bm\mu)\right) d(\pi,\mu_i)\right) \notag
	\end{align}
	and set $H \subseteq C_l(E) \times USC(E)$ by
	\begin{equation*}
	    H_{\dagger} := \left\{ (f^{\dagger},g^{\dagger}) \, \middle| \, \forall \varphi \in \cT, a > 0, \rho \in \cD(I), \bm\mu\in {\cD(I) } \right\}.
	\end{equation*}
	In the same way, for $a > 0$, $\varphi \in \cT$, $\mu \in \cD(I)$ and $\bm\pi=(\pi_1,\ldots,\pi_{k})\in E^{k}$ such that $\bm\pi\in {\cD(I) }$ we define 
 	$f^{\ddagger} = f^\ddagger_{a,\varphi,\mu,\bm\pi} \in C_u(E) $ and $g^{\ddagger} = g^\ddagger_{a,\varphi,\mu,\bm\pi} \in LSC(E)$ for all $\mu \in E$ as
	\begin{align*}
    f^{\ddagger}(\mu) & := - \frac{a}{2} d^2(\mu,\gamma) -\varphi\left( \frac{1}{2}d^2(\mu,\bm\pi)\right), \\
	    g^{\ddagger}(\mu) & := a \left[\cE(\mu) -\cE(\gamma) + \frac{\kappa}{2}d^2(\mu,\gamma) \right] + \frac{a^2}{2} d^2(\gamma,\mu) \\
        & \qquad + \sum_{i=1}^k \partial_i \varphi\left(\frac{1}{2}d^2(\mu,\bm\pi)\right)\left[\cE(\mu) -\cE(\pi_i) + \frac{\kappa}{2}d^2(\mu,\pi_i) \right] \label{eq:reg_g_ddag}\\
        & \nonumber \qquad - \frac{1}{2}  \left(\sum_{i=1}^k  \partial_i \varphi\left( \frac{1}{2}d^2(\mu,\bm\pi)\right) d(\mu,\pi_i)\right)^2 \\
    	& \nonumber \qquad - a d(\mu,\gamma)\left(\sum_{i=1}^k  \partial_i \varphi\left( \frac{1}{2}d^2(\mu,\bm\pi)\right) d(\mu,\pi_i)\right)
	\end{align*}	
	and set $H_\ddagger \subseteq C_u(E) \times LSC(E)$ by
	\begin{equation*}
	    H_{\ddagger} := \left\{ (f^{\ddagger},g^{\ddagger}) \, \middle| \, \forall \varphi \in \cT, a >0, \gamma \in \cD(I), \bm\pi \in {\cD(I) } \right\}.
	\end{equation*}

\end{definition}

\begin{remark}
    A starting point for our analysis could have been a set of Hamiltonians in which the test functions are build up from a smooth and bounded $\varphi$ and $a = 0$. We consider this setting in Appendix \ref{appendix:boundedcylinders}. A distinct advantage arises, however, for the choice made in Definition \ref{definition:H1}, one that is exploited in \cite{CoKrTaTo24}.

    Suppose that $E$ can be equipped with a weaker topology, and suppose that a (candidate) subsolution $u$ of $f - \lambda H_{\dagger} f = h$ is upper semi-continuous for the weaker topology. Then for any $f \in \cD(H_{\dagger})$ there exists a $\pi$ such that
    \begin{equation*}
        u(\pi) - f(\pi) = \sup u-f,
    \end{equation*}
    which is of significant help for further arguments.
\end{remark}

We proceed with the Hamiltonians that include the Tataru distance function in the domain. The definition follows that of \cite{CoKrTo21}.

\begin{definition} \label{definition:Tataru}
We define the Tataru distance {$d_T: E\times E \to [0,+\infty)$} with respect to the metric $d$ and energy $\cE$ as
    \begin{equation*}
    d_T(\pi,\rho) = \inf_{t \geq 0} \left\{ t + e^{\hat{\kappa} t} d(\pi,\rho(t))\right\}, \quad \forall \pi, \rho \in E,
\end{equation*}
where $\hat{\kappa} = (0 \wedge \kappa)\leq 0$.
\end{definition}

Computations done in \cite{CoKrTo21}, based on the Lipschitz property of $d_T$, lead to the following definition for a second pair of upper and lower Hamiltonians. Note that in this case we prefer to underline in the definition the fact that the Hamiltonians are operators. 

\begin{definition} \label{definition:HdaggerHddagger} [Non smooth Hamiltonians]

\begin{enumerate}[(1)]
    \item For each $a > 0, b > 0, c \in \bR$, and $\mu,\rho\in E : \, \cE(\rho) < \infty$ let $f^\dagger = f^\dagger_{a,b,c,\mu,\rho} \in C_l(E)$ and  $g^\dagger = g^\dagger_{a,b,c,\mu,\rho} \in USC(E)$  
    be given for any $\pi \in E$ by
    \begin{align*}
        f^\dagger(\pi) & := \frac{1}{2}a d^2(\pi,\rho) + b d_T(\pi,\mu) + c \\
        g^\dagger(\pi) & := a\left[ \cE(\rho) - \cE(\pi)\right] - a \frac{\kappa}{2} d^2(\pi,\rho) + b  + \frac{1}{2} a^2 d^2(\pi,\rho) + ab d(\pi,\rho) 
+ \frac{1}{2} b^2.
    \end{align*}
    Then the  operator $\widetilde H_\dagger \subseteq C_l(E) \times USC (E)$ is defined by
    \begin{equation*}
        \widetilde{H}_\dagger := \left\{\left(f^\dagger_{a,b,c,\mu,\rho},g^\dagger_{a,b,c,\mu,\rho}\right) \, \middle| \, a , b>0,c\in \bR, \mu,\rho\in E : \, \cE(\rho) < \infty \right\}.
    \end{equation*}
    \item For each $a > 0, b > 0, c \in \bR$, and $\pi, \gamma\in E : \, \cE(\gamma) < \infty$ let $f^\ddagger = f^\ddagger_{a,b,c,\pi,\gamma} \in  C_u(E)$ and $g^\ddagger = g^\ddagger_{a,b,c,\pi,\gamma} \in LSC(E)$  be given for any $\mu\in E$ by
    \begin{align*}
        f^\ddagger(\mu) & := -\frac{1}{2}a d^2(\gamma,\mu) - b d_T(\mu,\pi) + c \\
        { g^\ddagger(\mu) } & := { a\left[ \cE(\mu) - \cE(\gamma)\right] + a \frac{\kappa}{2} d^2(\gamma,\mu) - b  + \frac{1}{2} a^2 d^2(\gamma,\mu) - ab d(\gamma,\mu) - \frac{1}{2}b^2. }
    \end{align*}
    Then the  operator $\widetilde H_\ddagger \subseteq  C_u(E)  \times LSC(E)$ is defined by
    \begin{equation*}
        \widetilde{H}_\ddagger := \left\{\left(f^\ddagger_{a,b,c,\pi,\gamma},g^\ddagger_{a,b,c,\pi,\gamma}\right) \, \middle| \, a,b>0,c\in \bR, \pi,\gamma\in E : \, \cE(\gamma) < \infty \right\}.
    \end{equation*}
\end{enumerate}
\end{definition}

\subsection{Main results}

To state the main results, we first precise the notion of solution we are looking for. We will state it for general Hamiltonians $A_\dagger\subseteq C_l(E) \times USC (E)$ and $A_\ddagger\subseteq C_u(E) \times LSC (E)$.

	\begin{definition} \label{definition:viscosity_solutions_HJ_sequences}
		 Fix $\lambda > 0$ and $h^\dagger,h^\ddagger \in C_b(E)$. Consider the equations
		\begin{align} 
		f - \lambda  A_\dagger f & = h^\dagger, \label{eqn:differential_equation_tildeH1} \\
		f - \lambda A_\ddagger f & = h^\ddagger. \label{eqn:differential_equation_tildeH2}
		\end{align}

		\begin{gather}
		\lim_{n \uparrow \infty} u(\pi_n) - f(\pi_n)  = \sup_\pi u(\pi) - f(\pi), \label{eqn:viscsub1} \\
		\limsup_{n \uparrow \infty}  u(\pi_n) - \lambda g(\pi_n) - h^\dagger(\pi_n) \leq 0. \label{eqn:viscsub2}
		\end{gather}

		We say that $v$ is a \textit{(viscosity) supersolution} of equation \eqref{eqn:differential_equation_tildeH2} if $v$ is bounded, lower semi-continuous and if for all $(f,g) \in A_\ddagger$ there exists a sequence $(\pi_n)_{n\in \mathbb N}\in E$ such that
		\begin{gather*}
		\lim_{n \uparrow \infty} v(\pi_n) - f(\pi_n)  = \inf_\pi v(\pi) - f(\pi),  \\
		\liminf_{n \uparrow \infty} v(\pi_n) - \lambda g(\pi_n) - h^\ddagger(\pi_n) \geq 0.  
		\end{gather*}
		If $h^\dagger = h^\ddagger$, we say that $u$ is a \textit{(viscosity) solution} of equations \eqref{eqn:differential_equation_tildeH1} and \eqref{eqn:differential_equation_tildeH2} if it is both a subsolution of \eqref{eqn:differential_equation_tildeH1} and a supersolution of \eqref{eqn:differential_equation_tildeH2}.
		
		We say that \eqref{eqn:differential_equation_tildeH1} and \eqref{eqn:differential_equation_tildeH2} satisfy the \textit{comparison principle} if for every subsolution $u$ to \eqref{eqn:differential_equation_tildeH1} and supersolution $v$ to \eqref{eqn:differential_equation_tildeH2}, we have $\sup_E u-v \leq \sup_E h^\dagger - h^\ddagger$.
	\end{definition}

	In classical works on viscosity solutions, instead of working with the statement "there exists a sequence such that...", one has 'for all optimizers one has'. Even though the classical stronger definition has advantages when proving the comparison principle, the weaker definition allows for easier approximation arguments that are needed later on in our setting, see Section \ref{sec: push over}, Lemma \ref{lemma:viscosity_push} and Proposition \ref{proposition:Hamiltonian_convergence_pseudo_coercive}.

    \smallskip

    We next state the main result of the paper. Recall that the weak upper semi-continuous regularization of an upper semi-continuous function $u^*$ is the smallest weak upper semi-continuous function $\hat u$ such that $\hat u \geq u$. Analogously for a weak lower semi-continuous function.

    \begin{theorem} \label{theorem:carry_over_solutions}
    Let Assumptions \ref{assumption:distance_and_energy}, \ref{assumption:weak_topology} and \ref{assumption:gradientflow} be satisfied. Let $\lambda > 0$ and $h \in C_b(E)$ be weakly continuous. We then have 
    \begin{enumerate}[(a)]
        \item Let $u$ be a viscosity subsolution to $f - \lambda H_{\dagger}f = h$, then the weak upper semi-continuous regularization $u^*$ of $u$ is a viscosity subsolution to $f - \lambda \widetilde{H}_\dagger f = h$.
        \item Let $v$ be a viscosity supersolution to $f - \lambda H_{\ddagger} f = h$. Then the weak lower semi-continuous regularization $v_*$ of $v$ is a viscosity supersolution to $f - \lambda \widetilde{H}_\ddagger f = h$.
    \end{enumerate}
    \end{theorem}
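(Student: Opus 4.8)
The plan is to interpolate between the two operators by a chain of intermediate Hamiltonians, as sketched in Figure~\ref{figure:HJ_implications}, each step transferring the subsolution property to test functions whose cylindrical part is closer to $b\,d_T(\cdot,\mu)+c$. I only discuss part (a); part (b) is obtained by the mirror-image argument (interchanging $\sup/\inf$, upper/lower bounds and sub-/supersolutions), so no separate work is needed. By Definition~\ref{definition:viscosity_solutions_HJ_sequences} it suffices to exhibit, for each test pair $(\tilde f^\dagger,\tilde g^\dagger)=(f^\dagger_{a,b,c,\mu,\rho},g^\dagger_{a,b,c,\mu,\rho})\in\widetilde H_\dagger$, a sequence along which $u^*-\tilde f^\dagger$ tends to $\sup(u^*-\tilde f^\dagger)$ while $\limsup(u^*-\lambda\tilde g^\dagger-h)\le 0$. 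First I would upgrade $u$ to its weak upper semi-continuous envelope $u^*$ while preserving the subsolution property of $f-\lambda H_\dagger f=h$: the cylindrical test functions $f^\dagger_{a,\varphi,\rho,\bm\mu}$ are $d$-continuous, weakly lower semi-continuous (their $\varphi\in\cT$ is monotone, having positive partials) and coercive, and the subsolution inequality keeps $\cE$ bounded along near-optimizers, so these live in a weakly compact set $K^\rho_{c',d'}$ (Assumption~\ref{assumption:weak_topology}); since $h$ is weakly continuous, Lemma~\ref{lemma:viscosity_push} (see also Section~\ref{sec: push over}) then yields genuine maximizers of $u^*-f^\dagger$ satisfying the subsolution inequality. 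From now on $u=u^*$ is weakly upper semi-continuous and we work with honest maximizers.

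\emph{Step 1 (approximating the Tataru function by cylinders along the flow).} Fix $(\tilde f^\dagger,\tilde g^\dagger)$. Since $u^*$ is bounded above and $\tfrac a2 d^2(\cdot,\rho)$ is coercive, only the behaviour on a fixed ball $B_R$ matters, and for $\pi\in B_R$ the infimum in $d_T(\pi,\mu)=\inf_{t\ge 0}\{t+e^{\hat\kappa t}d(\pi,\mu(t))\}$ is attained for $t\in[0,R]$ because $t+e^{\hat\kappa t}d(\pi,\mu(t))\ge t$. Following the construction of \cite{Fe06}, the first technical lemma of Section~\ref{section:preparations_proof} produces, for each $\delta>0$, times $0<t_1<\dots<t_k$ with $\mu(t_i)\in\cD(I)$ (the regularising property of EVI flows, Lemma~\ref{lem: EVI implies boundedness}), a point $\rho_\delta=\rho(\epsilon_\delta)\in\cD(I)$ with $d(\rho_\delta,\rho)+|\cE(\rho_\delta)-\cE(\rho)|\le\delta$, and $\varphi_0=\varphi_0^\delta\in\cT$ such that
\begin{equation*}
\sup_{\pi\in B_R}\left| \varphi_0\!\left(\tfrac12 d^2(\pi,\mu(t_1)),\dots,\tfrac12 d^2(\pi,\mu(t_k))\right)-\left(b\,d_T(\pi,\mu)+c\right)\right|\le\delta .
\end{equation*}
The idea is that, after the substitution $s_i=\tfrac12 d^2(\pi,\mu(t_i))$, the function $b\,d_T(\cdot,\mu)+c$ is, up to $\delta$, a smoothing of a finite minimum of maps affine in the $d(\pi,\mu(t_i))$, and the positivity $\partial_i\varphi_0^\delta>0$ can be arranged because $d(\pi,\mu(\cdot))$ enters $d_T$ with a positive sign. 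Setting $f^\dagger_\delta:=\tfrac a2 d^2(\cdot,\rho_\delta)+\varphi_0^\delta\!\left(\tfrac12 d^2(\cdot,\bm\mu)\right)$ and letting $g^\dagger_\delta$ be the corresponding function $g^\dagger_{a,\varphi_0^\delta,\rho_\delta,\bm\mu}$ of Definition~\ref{definition:H1}, we have $(f^\dagger_\delta,g^\dagger_\delta)\in H_\dagger$ and $f^\dagger_\delta\to\tilde f^\dagger$ uniformly on $B_R$.

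\emph{Step 2 (propagating the two Lipschitz bounds on $d_T$).} Matching $g^\dagger_\delta$ (of the form \eqref{eqn:H_cylindrical_specific_sub}) with $\tilde g^\dagger$ (of the form \eqref{eqn:H_tataru_sub}), and noting that the $\tfrac a2 d^2(\cdot,\rho_\delta)$-blocks agree in the limit, the task reduces to showing, along any sequence on which $f^\dagger_\delta(\pi)\to b\,d_T(\pi,\mu)+c$, that
\begin{equation*}
\sum_{i=1}^k\partial_i\varphi_0^\delta\!\left(\tfrac12 d^2(\pi,\bm\mu)\right)\left[\cE(\mu(t_i))-\cE(\pi)-\tfrac\kappa2 d^2(\pi,\mu(t_i))\right]\le b+o(1)
\end{equation*}
and
\begin{equation*}
\left(\sum_{i=1}^k\partial_i\varphi_0^\delta\!\left(\tfrac12 d^2(\pi,\bm\mu)\right)d(\pi,\mu(t_i))\right)^2\le b^2+o(1).
\end{equation*}
These are the cylinder counterparts of the two estimates in \eqref{eqn:tataru_formalbound}: the first is a discrete, EVI-based form of the statement that the time-derivative of $t\mapsto b\,d_T(\pi(t),\mu)$ is at most $b$ in modulus, the second a form of $|\partial d_T(\cdot,\mu)|\le 1$. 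They are the content of the second technical lemma and are proved by feeding \eqref{item:ass_EVI} for the curve $t\mapsto\mu(t)$ and the defining minimisation of $d_T$ into the derivatives of $\varphi_0^\delta$, rather than by naively differentiating the approximation. The slope estimate is the delicate point: a crude bound on the slope of a cylinder produces the negative off-diagonal terms visible in \eqref{eqn:H_cylindrical_super}, and only because all base points $\mu(t_1),\dots,\mu(t_k)$ lie on a single gradient-flow trajectory --- so that the ``directions'' to the $\mu(t_i)$ are nested rather than independent --- does the elementary inequality \eqref{eqn:elementary_lowerbound} recover the clean bound $b^2$.

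\emph{Step 3 (passing to the limit) and the main obstacle.} For each $\delta$, apply the (maximizer form of the) subsolution property of $u^*$ for $H_\dagger$ to $(f^\dagger_\delta,g^\dagger_\delta)$: there is a maximizer $\pi_\delta$ of $u^*-f^\dagger_\delta$, lying in a weakly compact set that is uniform in $\delta$, with $u^*(\pi_\delta)-\lambda g^\dagger_\delta(\pi_\delta)-h(\pi_\delta)\le 0$. By the uniform convergence $f^\dagger_\delta\to\tilde f^\dagger$ on $B_R$ (Step~1) one gets $u^*(\pi_\delta)-\tilde f^\dagger(\pi_\delta)\to\sup(u^*-\tilde f^\dagger)$; extracting a weakly convergent subsequence of $(\pi_\delta)$, using weak lower semi-continuity of $d$ and $\cE$ and weak continuity of $h$, and invoking Step~2, we obtain $\limsup_\delta\left(u^*(\pi_\delta)-\lambda\tilde g^\dagger(\pi_\delta)-h(\pi_\delta)\right)\le\limsup_\delta\lambda\left(g^\dagger_\delta(\pi_\delta)-\tilde g^\dagger(\pi_\delta)\right)\le 0$, so that $(\pi_{1/m})_{m}$ witnesses the subsolution property of $u^*$ for $\widetilde H_\dagger$ against $(\tilde f^\dagger,\tilde g^\dagger)$; the abstract bookkeeping for this ``convergence of Hamiltonians'' is packaged in Proposition~\ref{proposition:Hamiltonian_convergence_pseudo_coercive}. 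The hard part is realising Steps~1 and~2 \emph{simultaneously}: one must produce a single $\varphi_0^\delta\in\cT$ --- smooth, with strictly positive partials, and with enough growth to keep $f^\dagger_\delta$ coercive --- that both approximates the non-smooth, non-convex function $b\,d_T(\cdot,\mu)+c$ \emph{and} whose induced $g^\dagger_\delta$ does not overshoot $\tilde g^\dagger$; controlling the squared-slope term uniformly, while $\cE$ is only lower semi-continuous and only an EVI (not a genuine gradient) is available, is where most of the work goes and is the reason the approximation has to be built along the gradient flow.
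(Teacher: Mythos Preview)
Your high-level architecture matches the paper's: regularize to $u^*$, approximate the Tataru part by cylinders built along the flow $t\mapsto\mu(t)$, and push sub-solutions along via Proposition~\ref{proposition:Hamiltonian_convergence_pseudo_coercive}. But Step~2 contains a genuine gap: you assert the existence of a single $\varphi_0^\delta\in\cT$ that simultaneously approximates $b\,d_T(\cdot,\mu)+c$ uniformly \emph{and} satisfies the two Lipschitz-type bounds, yet you give no construction, and your explanation of why the bounds hold does not work for a generic smooth approximation of a minimum. The paper does not proceed this way. It uses a very specific $\varphi$: the log-Laplace functional $-\tfrac1m\log\int e^{-m(t+e^{\hat\kappa t}\psi_\varepsilon(\frac12 d^2(\pi,\mu(t))))}\lambda_{m+1,n}(\dd t)$, first discretized (Riemann sum), then with $n\to\infty$, then $m\to\infty$, then $\varepsilon\downarrow 0$. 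The flow-derivative bound (your first inequality in Step~2) is not obtained by ``feeding EVI into $\partial_i\varphi_0^\delta$''; it comes from a large-deviations argument (Proposition~\ref{proposition:Varadhan_usc}) showing that as $m\to\infty$ the weights concentrate on the minimizers $\Xi(\pi)$ of $t\mapsto t+e^{\hat\kappa t}d_\varepsilon(\pi,\mu(t))$, and \emph{only then} does first-order optimality at $t^*\in\Xi(\pi)$ combined with \eqref{item:ass_EVI} give the bound $\le b$ (Lemma~\ref{lemma:bound_gradflow_for_4to5}). Without this concentration mechanism you have no handle on the sum $\sum_i\partial_i\varphi_0^\delta[\cE(\mu(t_i))-\cE(\pi)-\tfrac\kappa2 d^2(\pi,\mu(t_i))]$.

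Your account of the slope bound is also off: it is \emph{not} obtained via the ``nested directions'' heuristic and \eqref{eqn:elementary_lowerbound}; that inequality is used only on the \emph{supersolution} side to repair the off-diagonal loss in \eqref{eqn:H_cylindrical_super}. On the subsolution side the slope bound is purely algebraic: the specific choice of $\varphi$ gives $\partial_i\varphi\cdot d(\pi,\mu(t_i))$ proportional to $e^{\hat\kappa t_i}d(\pi,\mu(t_i))\psi_\varepsilon'(\tfrac12 d^2(\pi,\mu(t_i)))\le 1$ (Lemma~\ref{lemma:approximate_square_root}\ref{item:lemma_approx_sqrt_bound_on_product}), which reduces the sum of weights back to the normalizing constant and yields $\le b$ pointwise (see the proof of Lemma~\ref{lemma:push_1_2}). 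A generic $\varphi_0^\delta$ will not have $\sum_i\partial_i\varphi_0^\delta\cdot d(\pi,\mu(t_i))\le b$. Finally, you omit the $\kappa=0$ case, which the paper treats separately (Section~\ref{section:approximation_kappa_is_0}) because the uniform-in-$t$ control of $e^{\hat\kappa t}d(\pi,\mu(t))$ from Lemma~\ref{lem: EVI implies boundedness}\ref{item: EVI distance bound} breaks down there.
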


    Our main result is of relevance in combination with the associated comparison principle for $\widetilde{H}_\dagger,\widetilde{H}_\ddagger$ of \cite{CoKrTo21} and the existence result established in the context of the Wasserstein space in \cite{CoKrTaTo24}, developments for a larger class of spaces is postponed to future work. 
    We refer to \cite{CoKrTaTo24} for the complete picture in the Wasserstein context, and now proceed to give the comparison principle for $H_\dagger,H_\ddagger$ that follows as a consequence of \cite[Theorem 2.13 and Remark 2.14]{CoKrTo21}, repeated here for completeness.

    \begin{theorem} \label{theorem:comparison_Tataru}
	Let Assumptions \ref{assumption:distance_and_energy}, \ref{assumption:gradientflow} and \ref{assumption:regularized_geodesics} be satisfied. Let $\lambda>0$ and  $h^\dagger,h^\ddagger \in C_b(E)$ be uniformly continuous.
	
	Let $u$ be a  viscosity subsolution to $f - \lambda \widetilde{H}_\dagger f = h^\dagger$ and let $v$ be a viscosity supersolution to $f - \lambda \widetilde{H}_\ddagger f = h^\ddagger$. Then we have 
	\begin{equation*}
	    \sup_{\pi \in E} u(\pi) - v(\pi) \leq \sup_{\pi \in E} h^\dagger(\pi) - h^\ddagger(\pi).
	\end{equation*}
	
	The same result holds for $h^{\dagger}, h^{\ddagger} \in C_b(E)$ that are uniformly continuous on sets of the type
    \begin{equation*}
    K^{\rho}_{c,d} := \{ \pi\in E\ : d(\rho,\pi) \leq c, \cE(\pi)\leq d \}.
    \end{equation*}
    \end{theorem}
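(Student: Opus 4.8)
As flagged above, the statement reproduces \cite[Theorem~2.13 and Remark~2.14]{CoKrTo21}, so strictly one may simply invoke that reference; here I sketch the argument one would run. It is the classical doubling-of-variables comparison scheme, adapted to the metric setting by two devices. Since metric balls of $(E,d)$ need not be $d$-compact, the doubling is paired with Ekeland's variational principle rather than with an honest maximiser; and --- this being exactly why $\widetilde H_\dagger,\widetilde H_\ddagger$ carry a Tataru term --- the Ekeland perturbation is performed with respect to $d_T$ rather than $d$. This is admissible because $d_T$ is $1$-Lipschitz for $d$, and, decisively, $d_T(\cdot,\mu)$ with an arbitrary base point $\mu$ is a legitimate ingredient of the test functions appearing in \emph{both} $\widetilde H_\dagger$ and $\widetilde H_\ddagger$, with the companion bounds $g^\dagger,g^\ddagger$ tailored precisely to absorb the perturbation via the unit-speed contraction of $d_T$ along the gradient flow recorded in \eqref{eqn:tataru_formalbound}.

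\textbf{Key steps.} Assume, toward a contradiction, that $\delta:=\sup_E(u-v)-\sup_E(h^\dagger-h^\ddagger)>0$ and fix $\bar\pi$ with $(u-v)(\bar\pi)$ within $\delta/4$ of $\sup_E(u-v)$. For $\alpha,\epsilon>0$ one maximises over $E\times E$
\[
\Phi(\pi,\rho)=u(\pi)-v(\rho)-\tfrac{\alpha}{2}d^2(\pi,\rho)-\epsilon\, d_T(\pi,\bar\pi)-\epsilon\, d_T(\rho,\bar\pi);
\]
applying Ekeland's principle with respect to $d_T$ (on $E\times E$, with $d_T\oplus d_T$) yields, for each small $\sigma>0$, a genuine maximiser $(\pi^*,\rho^*)$ of $\Phi-\sigma d_T(\cdot,\pi^*)-\sigma d_T(\cdot,\rho^*)$, together with the usual doubling estimates $\alpha d^2(\pi^*,\rho^*)\to0$, $d(\pi^*,\rho^*)\to0$, and $u(\pi^*)-v(\rho^*)\to\sup_E(u-v)$ as $\alpha\to\infty$ and then $\epsilon,\sigma\to0$. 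Freezing $\rho^*$ and collapsing $\epsilon\, d_T(\cdot,\bar\pi)+\sigma\, d_T(\cdot,\pi^*)$ into a single $b\, d_T(\cdot,\pi^*)$ with $b=\epsilon+\sigma$ (triangle inequality for $d_T$) exhibits $u$ as maximised against a test function $\tfrac12 a\, d^2(\cdot,\rho^*)+b\, d_T(\cdot,\pi^*)+c$ of $\widetilde H_\dagger$ with $a=\alpha$ --- after a harmless replacement of $\rho^*$ by a nearby point of $\cD(\cE)$, possible since $\overline{\cD(\cE)}=E$; symmetrically $v$ is minimised against a test function of $\widetilde H_\ddagger$ with quadratic base $\pi^*$ and Tataru base $\rho^*$. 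The sub- and supersolution properties then supply one-sided inequalities along the respective defining sequences; because $g^\dagger$ carries the term $-\alpha\cE(\pi)$ and $g^\ddagger$ the term $+\alpha\cE(\mu)$, these force the sequences to have bounded energy, which (together with the sequential formulation of Definition \ref{definition:viscosity_solutions_HJ_sequences}) lets one combine the two inequalities into
\[
u(\pi^*)-v(\rho^*)\;\le\;\lambda\big(g^\dagger(\pi^*)-g^\ddagger(\rho^*)\big)+h^\dagger(\pi^*)-h^\ddagger(\rho^*)+o(1).
\]
In $g^\dagger(\pi^*)-g^\ddagger(\rho^*)$ the quadratic terms $\tfrac12\alpha^2 d^2(\pi^*,\rho^*)$ cancel, the energy terms cancel exactly (the outer energies being $\cE(\rho^*),\cE(\pi^*)$ and the inner ones $\cE(\pi^*),\cE(\rho^*)$), the $\kappa$-terms amount to a vanishing multiple of $d^2(\pi^*,\rho^*)$, and the residual $\pm b$, $\pm\alpha b\, d(\pi^*,\rho^*)$, $\pm\tfrac12 b^2$ terms are controlled using \eqref{eqn:tataru_formalbound}. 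Passing to the limit $\alpha\to\infty$, then $\epsilon,\sigma\to0$, and using $d(\pi^*,\rho^*)\to0$ together with the uniform continuity of $h^\dagger,h^\ddagger$ (on $E$, or on $K^{\bar\pi}_{c,d}$ for the final variant) contradicts $\delta>0$.

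\textbf{Main obstacle.} The doubling and Ekeland bookkeeping and the cancellations sketched above are, in the end, routine once the penalisations are set up correctly. The genuinely hard part --- and the reason Assumption \ref{assumption:regularized_geodesics} is invoked --- is showing that the pairs constituting $\widetilde H_\dagger$ and $\widetilde H_\ddagger$ are admissible in the first place: that $g^\dagger$ dominates the formal action of $H$ on $\tfrac12 a\, d^2(\cdot,\rho)+b\, d_T(\cdot,\mu)$ and $g^\ddagger$ is dominated by it. This reduces to making rigorous the two estimates of \eqref{eqn:tataru_formalbound} --- a bound on the ``derivative of $d_T(\cdot,\mu)$ along the gradient flow'' and on the slope $|\partial d_T(\cdot,\mu)|$ --- which \cite[Section~4]{CoKrTo21} derives from the \eqref{item:ass_EVI} inequality together with the angle condition of Assumption \ref{assumption:regularized_geodesics}. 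This is the step where the structural hypotheses on $(E,d,\cE)$ are indispensable, and it is the heart of the matter.
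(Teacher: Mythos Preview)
The paper does not prove this theorem; it is quoted verbatim from \cite[Theorem~2.13 and Remark~2.14]{CoKrTo21} and used as a black box to derive Corollary~\ref{corollary:comparison_principle}. You correctly identify this and your opening sentence is already the whole ``proof'' as far as this paper is concerned. Your sketch of the doubling-plus-Ekeland-with-$d_T$ argument is also broadly in line with what the introduction of the present paper (Section~\ref{section:intro_technical_Tataru}) attributes to \cite{CoKrTo21}.

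That said, your ``main obstacle'' paragraph misidentifies where Assumption~\ref{assumption:regularized_geodesics} enters. First, in the pair formulation of Definition~\ref{definition:HdaggerHddagger} there is no ``admissibility'' to establish: $(f^\dagger,g^\dagger)$ and $(f^\ddagger,g^\ddagger)$ are \emph{defined} as pairs, and the comparison principle is proved directly for those operators --- there is no rigorous $H$ against which to check that $g^\dagger$ dominates $Hf^\dagger$. Second, the two Lipschitz estimates \eqref{eqn:tataru_formalbound} for $d_T$ do \emph{not} require the angle condition: Lemma~\ref{lemma:estimates_Tataru} in this paper states them under Assumptions~\ref{assumption:distance_and_energy} and~\ref{assumption:gradientflow} only. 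The angle condition is used inside the comparison argument itself. The point you skate over --- the ``harmless replacement of $\rho^*$ by a nearby point of $\cD(\cE)$'' --- is exactly the delicate step: once the quadratic bases are regularised, the energy contributions $a[\cE(\rho)-\cE(\pi)]$ and $a[\cE(\mu)-\cE(\gamma)]$ in $g^\dagger,g^\ddagger$ no longer cancel exactly, and one must control the residual energy mismatch in terms of $|\partial\cE|$ and $d$. Assumption~\ref{assumption:regularized_geodesics} (bounding the directional derivative of $\cE$ along approximate geodesics by the slope) is precisely the tool for that estimate, not for the Lipschitz properties of $d_T$.
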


    Theorem \ref{theorem:comparison_Tataru} in combination with our main result leads to the comparison principle also for the pair $(H_\dagger,H_\dagger)$. For this note that: 
    \begin{itemize}
        \item The construction of upper and lower semicontinuous regularizations yields $u-v \leq u^* - v_*$, so that it suffices to establish an upper bound for $u^* - v_*$.
        \item In the context of Assumption \ref{assumption:weak_topology}, any $h \in C_b(E)$ that is weakly continuous is uniformly continuous on the sets $K^{\rho}_{c,d}$ as in Theorem \ref{theorem:comparison_Tataru}. It follows, that under this assumption Theorem \ref{theorem:comparison_Tataru} can be formulated in terms of $h^\dagger,h^\ddagger \in C_b(E)$ that are weakly continuous.
    \end{itemize}

We thus obtain the following corollary from Theorems \ref{theorem:carry_over_solutions} and \ref{theorem:comparison_Tataru}.

	\begin{corollary} \label{corollary:comparison_principle}
	Let Assumptions \ref{assumption:distance_and_energy}, \ref{assumption:weak_topology}, \ref{assumption:gradientflow} and \ref{assumption:regularized_geodesics} be satisfied. Let $\lambda>0$ and  $h^\dagger,h^\ddagger \in C_b(E)$ be weakly continuous.
	
	Let $u$ be a  viscosity subsolution to $f - \lambda H_\dagger f = h^\dagger$ and let $v$ be a viscosity supersolution to $f - \lambda H_\ddagger f = h^\ddagger$. Then we have 
	\begin{equation*}
	    \sup_{\pi \in E} u(\pi) - v(\pi) \leq \sup_{\pi \in E} h^\dagger(\pi) - h^\ddagger(\pi).
	\end{equation*}
	\end{corollary}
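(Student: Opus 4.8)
The plan is to derive the corollary by composing the solution‐transfer result of Theorem~\ref{theorem:carry_over_solutions} with the comparison principle of Theorem~\ref{theorem:comparison_Tataru}. Let $u$ be a viscosity subsolution of $f - \lambda H_\dagger f = h^\dagger$ and $v$ a viscosity supersolution of $f - \lambda H_\ddagger f = h^\ddagger$. Since Assumptions~\ref{assumption:distance_and_energy}, \ref{assumption:weak_topology} and \ref{assumption:gradientflow} hold and $h^\dagger,h^\ddagger$ are weakly continuous, Theorem~\ref{theorem:carry_over_solutions}(a) (applied with $h=h^\dagger$) shows that the weak upper semi-continuous regularization $u^*$ is a viscosity subsolution of $f - \lambda \widetilde{H}_\dagger f = h^\dagger$, and Theorem~\ref{theorem:carry_over_solutions}(b) (applied with $h=h^\ddagger$) shows that the weak lower semi-continuous regularization $v_*$ is a viscosity supersolution of $f - \lambda \widetilde{H}_\ddagger f = h^\ddagger$; in particular $u^*$ and $v_*$ are bounded because $u$ and $v$ are.

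Next I would verify that the pair $(u^*,v_*)$ falls under Theorem~\ref{theorem:comparison_Tataru}. Its standing Assumptions~\ref{assumption:distance_and_energy}, \ref{assumption:gradientflow} and \ref{assumption:regularized_geodesics} are all among the hypotheses of the corollary, so the only thing that needs checking is that $h^\dagger$ and $h^\ddagger$ — which are merely bounded and \emph{weakly} continuous — are uniformly continuous for $d$ on each set $K^{\rho}_{c,d}$. I would obtain this by contradiction: were it to fail for, say, $h^\dagger$ on some $K^{\rho}_{c,d}$, there would be $\epsilon>0$ and sequences $\pi_n,\tilde\pi_n\in K^{\rho}_{c,d}$ with $d(\pi_n,\tilde\pi_n)\to 0$ but $|h^\dagger(\pi_n)-h^\dagger(\tilde\pi_n)|\ge\epsilon$; by weak compactness of $K^{\rho}_{c,d}$ (Assumption~\ref{assumption:weak_topology}) one may pass to a subsequence along which $\pi_n\to\pi$ and $\tilde\pi_n\to\tilde\pi$ weakly, and weak lower semi-continuity of $d$ forces $d(\pi,\tilde\pi)\le\liminf_n d(\pi_n,\tilde\pi_n)=0$, i.e.\ $\pi=\tilde\pi$; then weak continuity of $h^\dagger$ gives $h^\dagger(\pi_n)\to h^\dagger(\pi)$ and $h^\dagger(\tilde\pi_n)\to h^\dagger(\pi)$, contradicting $|h^\dagger(\pi_n)-h^\dagger(\tilde\pi_n)|\ge\epsilon$. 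The same argument applies to $h^\ddagger$.

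With these verifications in place, Theorem~\ref{theorem:comparison_Tataru} applied to $u^*$ and $v_*$ yields $\sup_E(u^*-v_*)\le\sup_E(h^\dagger-h^\ddagger)$. Finally, since $u^*$ is the smallest weak upper semi-continuous majorant of $u$ and $v_*$ the largest weak lower semi-continuous minorant of $v$, we have $u\le u^*$ and $v_*\le v$ pointwise, hence
\[
\sup_E(u-v)\;\le\;\sup_E(u^*-v_*)\;\le\;\sup_E(h^\dagger-h^\ddagger),
\]
which is the asserted inequality. The proof is thus essentially a matter of checking that the hypotheses of the two cited theorems are satisfied; the only step that is not purely formal is the uniform‐continuity‐on‐sublevel‐sets claim, and even that is a short compactness argument, so no real obstacle is anticipated.
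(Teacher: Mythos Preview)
Your proposal is correct and follows essentially the same approach as the paper: combine Theorem~\ref{theorem:carry_over_solutions} with Theorem~\ref{theorem:comparison_Tataru}, using $u\le u^*$ and $v_*\le v$ to reduce to the regularized pair, and verify that weak continuity of $h^\dagger,h^\ddagger$ gives uniform continuity on the sets $K^{\rho}_{c,d}$. Your compactness argument for this last point is in fact more explicit than what the paper writes out.
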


\section{Preparations for the proof of Theorem \ref{theorem:carry_over_solutions}} \label{sec: push over} \label{section:preparations_proof}

In this section, we set up the proof of Theorem \ref{theorem:carry_over_solutions}. The key issue, c.f. \eqref{eqn:approx_Tataru}  that needs to be overcome, is to approximate the Tataru distance by a smooth approximation
\begin{equation*} 
    \varphi\left( \frac{1}{2}d^2(\pi,\rho_1),\dots, \frac{1}{2}d^2(\pi,\rho_k)\right) \approx b d_T(\pi,\rho),
\end{equation*}
and to show that the behaviour of this approximation along the gradient flow is similar to that of the Tataru distance.

\subsection{Approximating the Tataru distance} \label{section:strategy:approximatingTataru}

To approximate the Tataru distance, we make use of the classical Laplace type approximation result
\begin{equation} \label{eqn:Laplace_asymptotics_intro}
    \lim_{m \rightarrow \infty} \frac{1}{m} \log \int_0^\infty e^{m f(t)} \dd t = \sup_{t} f(t).
\end{equation}
For the Tataru distance, this translates to
\begin{equation} \label{eqn:def_Tataru_approx}
    d_T(\pi,\mu) = \inf_{t \geq 0} \left\{t + e^{\hat{\kappa}t}d(\pi,\mu(t))\right\} = \lim_{m \rightarrow \infty} - \frac{1}{m} \log \int_0^\infty e^{-m(t + e^{\hat{\kappa}t} d(\pi,\mu(t)))} \dd t.
\end{equation}
{It was first observed by \cite{Fe06} that  the flow can be  controlled along this approximation, motivating a further exploration of \eqref{eqn:def_Tataru_approx}.}

Indeed, we see that the Tataru distance can be approximated by an integral over objects involving the ordinary distance. In turn, the integral can be approximated by a Riemann sum. This will lead us  close to  an object of this type
\begin{equation}\label{eqn:riemann_sum_in_terms_of_d2}
    \pi \mapsto \varphi\left(\frac{1}{2} d^2(\pi,\mu(t_1)),\dots, \frac{1}{2} d^2(\pi,\mu(t_k)) \right)
\end{equation}
for a finite collection of times $\{t_i\}_{i=1,\dots,k}$ and $\varphi \in \cT$. To end up with something of the type \eqref{eqn:riemann_sum_in_terms_of_d2}, however, we first need to approximate the distance by a smooth function of its square. We thus approximate the function $r \mapsto r$ by $\psi_\varepsilon( \frac{1}{2}r^2)$ where $\psi_\varepsilon$ is a smooth approximation of $r \mapsto \sqrt{2r}$. Therefore, the procedure above will be applied to  a smoothed version of the Tataru distance:
\begin{multline} \label{eqn:smoothed_version_Tataru}
    d_{T,\varepsilon}(\pi,\mu) := \inf_{t\geq 0}\left\{t + e^{\hat{\kappa}t} \psi_\varepsilon\left(\frac{1}{2}d^2(\pi,\mu(t))\right) \right\} \\
    = \lim_{m \rightarrow \infty} - \frac{1}{m} \log \int_0^\infty e^{-m\left(t + e^{\hat{\kappa}t} \psi_\varepsilon\left(\frac{1}{2}d^2(\pi,\mu(t))\right)\right)} \dd t.
\end{multline}
On one hand, we will show that $d_{T,\varepsilon} \rightarrow d_T$ uniformly. On the other hand, we will approximate the integral depending on  $\psi_\varepsilon\left(\frac{1}{2}d^2(\pi,\mu(t))\right)$ in terms of Riemann sums. This can indeed be done by means of function of the form \eqref{eqn:riemann_sum_in_terms_of_d2} for a finite collection of times $t_1,\ldots,t_k$ and $\varphi\in\cT_b$. 

\smallskip

The approximation steps above, c.f. $m \rightarrow \infty$, $\varepsilon \downarrow 0$ and approximating the integral by a Riemann sum, lead to a chain of intermediate results that together will imply Theorem \ref{theorem:carry_over_solutions}. In addition to the three important approximation steps mentioned above, we will carry out some additional smaller steps that are e.g. to remove the assumption that a test function is bounded, or to relax the assumption that we compare with a configuration in the domain of the Fisher energy to that of the energy itself.

\smallskip

Effectively, the proof of Theorem \ref{theorem:carry_over_solutions} is composed of six steps. At each but the last step we define new upper and lower bounds $H_{i,\dagger},H_{i,\ddagger}, i=2, \ldots, 6$ and prove a statement that, for the equations involving subsolutions, loosely speaking looks like
\begin{equation*}
u \,\, \text{subsolution of} \,\,u-\lambda H_{i,\dagger}u=h \quad \Rightarrow u \,\, \text{subsolution of}\,\, u-\lambda H_{i+1,\dagger}u=h,
\end{equation*}
with the understanding that $H_{1,\dagger}= H_{\dagger}$ and $H_{7,\dagger}= \widetilde{H}_{\dagger}$. As $i$ increases, $H_{i,\dagger}$ acts on test functions that get closer to a test function that includes the Tataru distance. 

\smallskip

In order to push over viscosity solutions, in addition to showing that we have the above described  chain of convergence of test functions, we need to show that the action of the Hamiltonian, or more specifically the action of the gradient flow, on these test functions behaves in the right way as well.

We will do so relying on the technical Lemma \ref{lemma:viscosity_push} and Proposition \ref{proposition:Hamiltonian_convergence_pseudo_coercive} below and exploiting in a systematic way the regularizing effects and dissipation estimates implied by \ref{item:ass_EVI}-gradient flows. The application of Proposition \ref{proposition:Hamiltonian_convergence_pseudo_coercive}, we need one additional property, namely that we can work with weak upper or lower semi-continuous viscosity sub or supersolutions respectively. Using Lemma \ref{lemma: regularization} that for the operators $H_{1,\dagger},H_{1,\ddagger}$ we show that we can indeed work with the weak regularizations.

We refer to Figure \ref{figure:HJ_implications} for a formal overview of the six approximation steps that we will be carrying out, the most delicate and technical ones being those that allow to go from $H_{2,\dagger}$ to $H_{3,\dagger}$ (resp. from $H_{2,\ddagger}$ to $H_{3,\ddagger}$) and from $H_{3,\dagger}$ to $H_{4,\dagger}$ (resp. from $H_{3,\ddagger}$ to $H_{4,\ddagger}$). 

In the rest of the section, we state and prove our main tools. We will then carry out the proofs of the steps in Figure \ref{figure:HJ_implications} in Section \ref{proof:carry_over_solutions}.

\begin{figure}
	\centering

    \begin{tikzpicture}
	\matrix (m) [matrix of math nodes,row sep=1em,column sep=4em,minimum width=2em]
	{
		H_\dagger = H_{1,\dagger} &[-5mm] { } &[-5mm] H_\ddagger = H_{1,\ddagger} \\
		{ } & \substack{\text{Replace general} \\ \text{test functions by} \\ \text{Riemann sums}} & { } \\
		H_{2,\dagger} & { } & H_{2,\ddagger} \\
		{ } & \substack{\text{Approximate } \\ \text{Laplace integral by} \\ \text{Riemann sums}} & { } \\
		H_{3,\dagger} & { } & H_{3,\ddagger} \\
		{ } & \substack{\text{Carry out} \\ \text{Laplace asymptotics} \\ \text{to approximate smoothened versions} \\ \text{of the Tataru distances}} & { } \\
		H_{4,\dagger} & { } & H_{4,\ddagger} \\
		{ } & \substack{\text{Bound the outcome} \\ \text{by using the properties} \\ \text{of the gradient flow}} & { } \\
		H_{5,\dagger} & { } &H_{5,\ddagger} \\
		{ } & \substack{\text{Remove the} \\ \text{smoothing in the} \\ \text{Tataru distances}} & { } \\
		H_{6,\dagger} & { } & H_{6,\ddagger} \\
		{ } & \substack{\text{Perform a uniform closure} \\ \text{of the graphs to remove} \\ \text{the finite Fisher information}} & { } \\
	    \widetilde{H}_{\dagger} = H_{7,\dagger} & { } & \widetilde{H}_{\ddagger} = H_{7,\ddagger} \\
	};
	\path[-stealth]
	(m-1-1) edge node [left] {sub} (m-3-1)
	(m-7-1) edge node [left] {sub} (m-9-1)
	(m-9-1) edge node [left] {sub} (m-11-1)
	(m-11-1) edge node [left] {sub} (m-13-1)
	(m-1-3) edge node [right] {super} (m-3-3)
	(m-7-3) edge node [right] {super} (m-9-3)
	(m-9-3) edge node [right] {super} (m-11-3)
	(m-11-3) edge node [right] {super} (m-13-3);

	\path[dashed, -stealth]
	(m-3-1) edge node [left] {sub} (m-5-1)
	(m-5-1) edge node [left] {sub} (m-7-1)
    (m-3-3) edge node [right] {super} (m-5-3)
	(m-5-3) edge node [right] {super} (m-7-3);
	
	\path[dotted, -stealth]
    (m-1-1) edge [loop above] node {weak reg.  sub} (m-1-1)
    (m-1-3) edge [loop above] node {weak reg.  super} (m-1-3);

	\end{tikzpicture}
	\caption{Carrying over viscosity sub- and supersolutions. \\In this diagram, an arrow connecting an operator $A$ with operator $B$ with subscript 'sub' means that viscosity subsolutions of $f - \lambda A f = h$ are also viscosity subsolutions of $f - \lambda B f = h$. If a line is dashed, the result holds under the additional assumption that the subsolution is weakly upper semi-continuous. 	Similarly for arrows with a subscript 'super'. \\
	The loop equipped with 'weak reg. sub' means that the weak upper semi-continuous regularization of any viscosity subsolution is also a viscosity subsolution. Similarly for the loop decorated with 'weak reg. super'.
 }
    \label{figure:HJ_implications} 
	\end{figure}
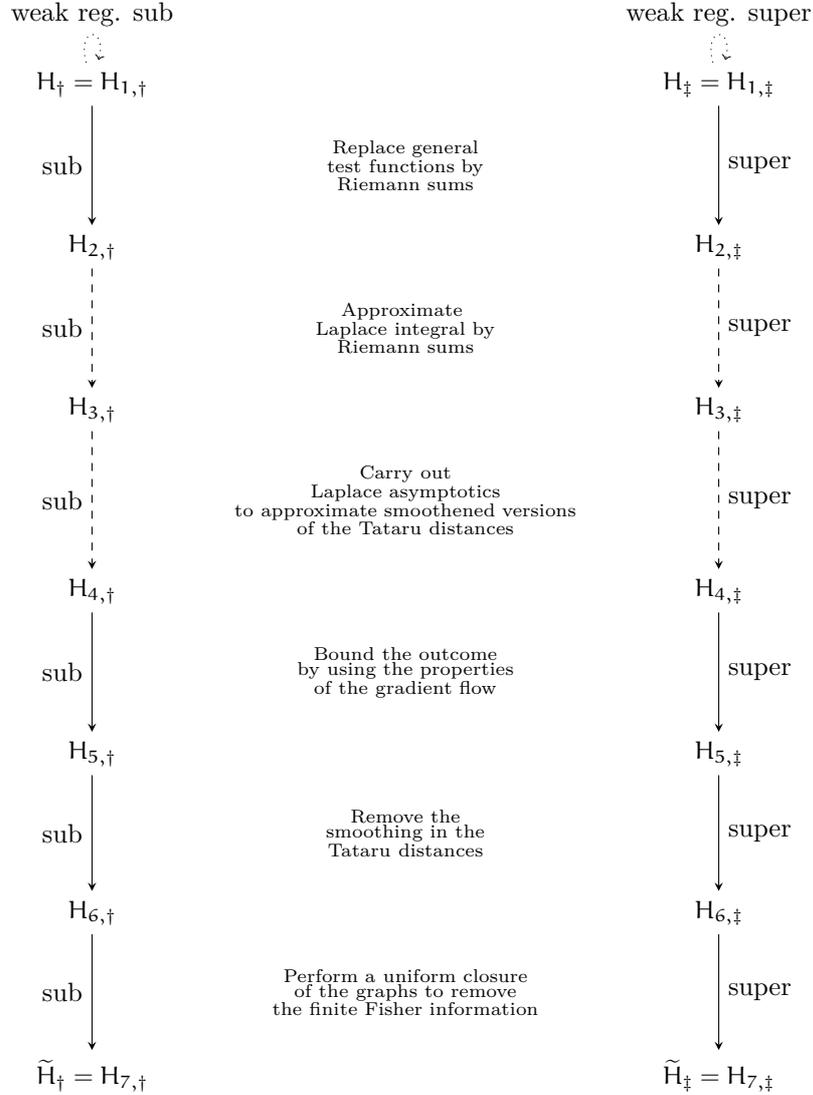

\subsection{A toolbox for pushing over sub- and supersolutions}
The following tool  will allow us to push over subsolutions, from an Hamiltonian to another, in most of the intermediate steps.

\begin{lemma}[Lemma 7.6 of \cite{FK06}] \label{lemma:viscosity_push}
	Suppose that $A_\dagger, \widehat{A}_\dagger \subseteq LSC_l(E) \times USC(E)$ and $A_\ddagger, \widehat{A}_\ddagger \subseteq USC_u(E) \times LSC(E)$.   Let $h \in C_b(E)$.

	\begin{enumerate}[(a)]
		\item Suppose for each $(f^\dagger,g^\dagger) \in A_\dagger$ there are $(f_k,g_k)_{k\geq 1} \in \widehat{A}_\dagger$ such that for all $c,d \in \bR$ we have 
		\begin{align*}
		& \lim_{k \rightarrow \infty} \vn{f_k \wedge c - f^\dagger\wedge c} = 0, \\
		& \limsup_{k \rightarrow \infty} \sup_{\mu: f_k(\mu) \vee f^\dagger(\mu) \leq c} g_k(\mu) \vee d - g^\dagger(\mu)\vee d \leq 0.
		\end{align*}
		If $u$ is a viscosity subsolution to $f - \lambda \widehat A_\dagger f = h$, then it is also a viscosity subsolution to $f - \lambda {A}_\dagger f = h$.
		\item Suppose for each $(f^\ddagger,g^\ddagger) \in A_\ddagger$ there are $(f_k,g_k)_{k\geq 1} \in \widehat{A}_\ddagger$ such that for all $c,d \in \bR$ we have 
		\begin{align*}
		& \lim_{k \rightarrow \infty} \vn{f_k \vee c - f^\ddagger\vee c} = 0, \\
		& \liminf_{k \rightarrow \infty} \inf_{\mu: f_k(\mu) \wedge f(\mu) \geq c} g_k(\mu) \wedge d - g^\ddagger(\mu)\wedge d \geq 0.
		\end{align*}
		If $v$ is a viscosity supersolution to $f - \lambda \widehat A_\ddagger f = h$, then it is also a viscosity supersolution to $f - \lambda {A}_\dagger f = h$.
	\end{enumerate}
	
\end{lemma}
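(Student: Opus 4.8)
The statement to prove is Lemma~\ref{lemma:viscosity_push}, which is attributed to \cite[Lemma 7.6]{FK06}, so the proof is essentially a matter of unwinding the (weak) definition of viscosity sub/supersolution in Definition~\ref{definition:viscosity_solutions_HJ_sequences} and performing a diagonal extraction. I will only treat part (a); part (b) is symmetric under the substitutions $u \leftrightarrow v$, $\sup \leftrightarrow \inf$, $\wedge \leftrightarrow \vee$, and reversing the inequalities.

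The plan is as follows. Fix $(f^\dagger, g^\dagger) \in A_\dagger$ and let $(f_k, g_k)_{k \geq 1} \in \widehat{A}_\dagger$ be the approximating pairs provided by the hypothesis. Since $u$ is bounded (say $\vn{u} \leq M$), adding a constant to each $f_k$ and $f^\dagger$ we may work modulo constants, and the quantity $\sup_\pi u(\pi) - f(\pi)$ is unchanged under truncation of $f$ at a level $c > M + \sup f^\dagger$ in the sense that $\sup_\pi u - f = \sup_\pi u - (f \wedge c)$ when $f \geq $ something; more carefully, I would first record the elementary fact that for any bounded $u$ and any $f$, and any $c$ with $c \geq \sup u + 1$, one has $\sup_\pi (u(\pi) - f(\pi)) = \sup_\pi (u(\pi) - (f(\pi) \wedge c))$ and that optimizing sequences for the two coincide eventually; hence the first convergence hypothesis $\vn{f_k \wedge c - f^\dagger \wedge c} \to 0$ is exactly what is needed to transfer optimizing sequences. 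Concretely: for each $k$, since $u$ is a subsolution to $f - \lambda \widehat{A}_\dagger f = h$, applying Definition~\ref{definition:viscosity_solutions_HJ_sequences} to $(f_k, g_k)$ yields a sequence $(\pi_n^k)_n$ with $u(\pi_n^k) - f_k(\pi_n^k) \to \sup_\pi u - f_k$ and $\limsup_n u(\pi_n^k) - \lambda g_k(\pi_n^k) - h(\pi_n^k) \leq 0$.

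Next I would extract, for each $k$, a single index $n(k)$ such that, writing $\pi_k := \pi_{n(k)}^k$: (i) $\left| u(\pi_k) - f_k(\pi_k) - \sup_\pi(u - f_k)\right| \leq 1/k$; and (ii) $u(\pi_k) - \lambda g_k(\pi_k) - h(\pi_k) \leq 1/k$. Claim: $(\pi_k)_k$ is an optimizing sequence witnessing that $u$ is a subsolution for $(f^\dagger, g^\dagger)$. For \eqref{eqn:viscsub1}: using (i), the uniform estimate $\vn{f_k \wedge c - f^\dagger \wedge c} \to 0$, and the truncation remark above (valid because along an optimizing sequence $f_k(\pi_k)$ stays below a fixed level $c$ since $u$ is bounded and $\sup u - f_k$ is bounded below via $\vn{f_k \wedge c - f^\dagger \wedge c}\to 0$), one gets $u(\pi_k) - f^\dagger(\pi_k) \to \sup_\pi u - f^\dagger$. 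For \eqref{eqn:viscsub2}: on the event that $f_k(\pi_k) \vee f^\dagger(\pi_k) \leq c$ (which holds for all large $k$), the second hypothesis gives $g_k(\pi_k) \geq g^\dagger(\pi_k) - \delta_k$ for a suitable sequence $\delta_k \to 0$ (after also truncating $g$'s at a level $d$, which is harmless since the relevant terms are bounded along the optimizing sequence); plugging into (ii) gives $\limsup_k u(\pi_k) - \lambda g^\dagger(\pi_k) - h(\pi_k) \leq 0$. This verifies Definition~\ref{definition:viscosity_solutions_HJ_sequences} for the pair $(f^\dagger, g^\dagger)$, and since the pair was arbitrary, $u$ is a subsolution to $f - \lambda A_\dagger f = h$.

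The main obstacle — really the only subtle point — is bookkeeping the truncation levels $c, d$ so that the uniform $\sup$-hypotheses genuinely apply along the constructed sequence: one must check \emph{a priori} that the optimizing sequence $(\pi_k)$ cannot escape the sublevel sets $\{f_k \vee f^\dagger \leq c\}$, which follows from boundedness of $u$ together with the fact that $\sup_\pi(u - f_k) \to \sup_\pi(u - f^\dagger) > -\infty$ (a consequence of the first hypothesis), and similarly that $g_k(\pi_k), g^\dagger(\pi_k)$ stay below a fixed level $d$ along the sequence, which follows from (ii) and boundedness of $u, h$. Once these \emph{a priori} bounds are in place the diagonal extraction is routine. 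I would present the argument as a clean diagonal-sequence extraction with the truncation constants chosen once and for all at the start.
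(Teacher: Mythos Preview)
The paper does not give its own proof of this lemma; it simply cites it as Lemma 7.6 of \cite{FK06}. Your diagonal-extraction strategy is the standard one and is essentially what underlies that reference, so the approach is correct.

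One slip worth flagging: in the step for \eqref{eqn:viscsub2} you write that the second hypothesis gives $g_k(\pi_k) \geq g^\dagger(\pi_k) - \delta_k$. This is the wrong direction. The hypothesis $\limsup_k \sup_{\mu} \big(g_k(\mu)\vee d - g^\dagger(\mu)\vee d\big) \leq 0$ yields $g_k(\pi_k)\vee d \leq g^\dagger(\pi_k)\vee d + \delta_k$, i.e.\ an \emph{upper} bound on $g_k$ in terms of $g^\dagger$; and this is precisely what you need, since
\[
u(\pi_k) - \lambda g^\dagger(\pi_k) - h(\pi_k) = \big[u(\pi_k) - \lambda g_k(\pi_k) - h(\pi_k)\big] + \lambda\big[g_k(\pi_k) - g^\dagger(\pi_k)\big].
\]
To strip the $\vee d$, use that from (ii) and the boundedness of $u,h$ you get $g_k(\pi_k) \geq -C$ for some fixed $C$; choosing $d < -C$ then forces $g_k(\pi_k)\vee d = g_k(\pi_k)$, and a short case analysis rules out $g^\dagger(\pi_k) < d$ for large $k$ (otherwise $g_k(\pi_k) \leq d + \delta_k < -C$, a contradiction). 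With this correction your argument goes through.
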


However, in order to pass from $H_{2,\dagger}$ to $H_{3,\dagger}$  and from $H_{3,\dagger}$ to $H_{4,\dagger}$, we need a more elaborate machinery than Lemma \ref{lemma:viscosity_push}. This is due to the fact that uniform estimates are too much to ask for in this context. We will therefore argue on this novel proposition that allows us to obtain a similar result. Recall that the set $K^\rho_{c,d}$ has been defined in \eqref{def: K^rho} for $\rho\in E, c,d \in \R$.

\begin{proposition} \label{proposition:Hamiltonian_convergence_pseudo_coercive}
{Suppose that $A_\dagger, \widehat{A}_\dagger \subseteq LSC_l(E) \times USC(E)$ and $A_\ddagger, \widehat{A}_\ddagger \subseteq USC_u(E) \times LSC(E)$  and } that for all $(f^\dagger,g^\dagger) \in  {A_\dagger}$, there are $(f_k,g_k)_{\kappa\geq1} \in  \widehat A_\dagger$ such that the following conditions are satisfied. 
\begin{enumerate}[(a)]
    \item \label{item:Ham_conv_weaktop_compactness} 

There exists a continuous function $\omega_1 : \bR \rightarrow [0,+\infty)$ such that $\lim_{r \rightarrow +\infty} \omega_1(r) = +\infty$ and $\tilde \rho\in E$ satisfying
\begin{equation}\label{eq:weak_compactness_via_levelsets 1}
  f_k(\pi) \geq \omega_1(d(\pi,\tilde\rho)) ,\quad \forall k\geq 1, \,\pi\in E. 
\end{equation}
Moreover, for any $R>0$, there exist a continuous function $\omega_{2,R} : \bR \rightarrow [0,+\infty)$ such that $\lim_{r \rightarrow +\infty} \omega_{2,R}(r) = +\infty$ satisfying
\begin{equation}\label{eq:weak_compactness_via_levelsets 2}
   g_k(\pi) \leq - \omega_{2,R}(\cE(\pi)), \quad  \forall k\geq1, \,\pi \in B_{R}(\tilde \rho).
\end{equation}
    \item \label{item:Ham_conv_weaktop_equi_pointwise} For any $\pi \in E$ we have
    \begin{equation*}
        \lim_{k \to +\infty} f_k(\pi) = f^\dagger(\pi).
    \end{equation*}
    \end{enumerate}
    {Consider $\tilde{\rho} \in E$ as in \ref{item:Ham_conv_weaktop_compactness} and let $c,d \in \bR$.}
    \begin{enumerate}[(a),resume]
    \item \label{item:Ham_conv_weaktop_equi_uniform_on_compacts_f} For any $(\pi_k)_{k\in\mathbb N}\subseteq K^{\tilde \rho}_{c,d}$ that  converges weakly to $\pi \in K^{\tilde \rho}_{c,d}$ we have
    \begin{equation*}
        \liminf_{k \to +\infty} f_k(\pi_k) \geq f^\dagger(\pi).
    \end{equation*}
    \item \label{item:Ham_conv_weaktop_equi_uniform_on_compacts_g}  For any $(\pi_k)_{k\in\mathbb N}\subseteq K^{\tilde \rho}_{c,d}$ that  converges weakly to $\pi \in K^{\tilde\rho}_{c,d}$ and that is such that
    \begin{equation*}
        \lim_{k \to +\infty} f_k(\pi_k) = f^\dagger(\pi)
    \end{equation*}
    we have
    \begin{equation}\label{eq: HJB transfer 1}
        \limsup_{k \to +\infty} g_k(\pi_k) \leq g^\dagger(\pi).
    \end{equation}
    \end{enumerate}

Fix $\lambda > 0$ and let $h \in C_b(E)$ be weakly continuous. 

Let $u$ be a bounded and weakly upper semi-continuous viscosity subsolution to $f - \lambda \widehat A_\dagger f = h$. Then it is also a viscosity subsolution to $f - \lambda {A_\dagger} f = h$.

	\vspace{0,1cm}
The result holds also for viscosity supersolutions with appropriate modifications. 
\end{proposition}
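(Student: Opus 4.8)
The plan is to mimic the proof of Lemma \ref{lemma:viscosity_push} (Lemma 7.6 of \cite{FK06}), but to replace the uniform convergence of test functions by the combination of coercivity, pointwise convergence, and the weak-compactness machinery supplied by hypotheses \ref{item:Ham_conv_weaktop_compactness}--\ref{item:Ham_conv_weaktop_equi_uniform_on_compacts_g}. Fix $(f^\dagger,g^\dagger) \in A_\dagger$ and the approximating sequence $(f_k,g_k)_k \in \widehat A_\dagger$. Let $u$ be a bounded, weakly upper semi-continuous subsolution of $f - \lambda \widehat A_\dagger f = h$; we must produce a sequence $(\pi_n)_n$ achieving $\sup_\pi (u-f^\dagger)(\pi)$ and satisfying $\limsup_n u(\pi_n) - \lambda g^\dagger(\pi_n) - h(\pi_n) \leq 0$. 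First I would reduce to the case where the supremum $S := \sup_\pi (u-f^\dagger)(\pi)$ is attained: since $u$ is weakly u.s.c. and bounded above, and $f^\dagger$ is weakly l.s.c. (being in $LSC_l(E)$) with the coercivity bound $f^\dagger(\pi) \geq \omega_1(d(\pi,\tilde\rho))$ from passing \eqref{eq:weak_compactness_via_levelsets 1} to the limit via \ref{item:Ham_conv_weaktop_equi_pointwise}, the function $u - f^\dagger$ is weakly u.s.c. and has weakly precompact superlevel sets — but to get genuine weak compactness I need to also control the energy, which is where \eqref{eq:weak_compactness_via_levelsets 2} enters indirectly through the subsolution property. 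Let $\pi_\infty$ be a maximizer.

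The heart of the argument is to show $u(\pi_\infty) - \lambda g^\dagger(\pi_\infty) - h(\pi_\infty) \leq 0$; then $(\pi_n)_n \equiv \pi_\infty$ works. For each $k$, apply the subsolution property of $u$ with the test function $f_k$: there is a sequence $(\pi_{k,n})_n$ with $u(\pi_{k,n}) - f_k(\pi_{k,n}) \to \sup_\pi(u - f_k) =: S_k$ and $\limsup_n u(\pi_{k,n}) - \lambda g_k(\pi_{k,n}) - h(\pi_{k,n}) \leq 0$. Using pointwise convergence $f_k \to f^\dagger$ one gets $\limsup_k S_k \geq S$, and one extracts, for each $k$, a single point $\hat\pi_k := \pi_{k,n(k)}$ such that $u(\hat\pi_k) - f_k(\hat\pi_k) \to S$ along a subsequence and $u(\hat\pi_k) - \lambda g_k(\hat\pi_k) - h(\hat\pi_k) \leq o(1)$. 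Now comes the crucial compactness step: the coercivity bound \eqref{eq:weak_compactness_via_levelsets 1} together with boundedness of $u$ forces $d(\hat\pi_k,\tilde\rho)$ to stay bounded, say $\hat\pi_k \in B_R(\tilde\rho)$; then the bound \eqref{eq:weak_compactness_via_levelsets 2} combined with $u(\hat\pi_k) - \lambda g_k(\hat\pi_k) - h(\hat\pi_k) \leq o(1)$ and boundedness of $u,h$ gives $\omega_{2,R}(\cE(\hat\pi_k)) \leq C$, hence $\cE(\hat\pi_k)$ is bounded; thus $(\hat\pi_k)_k \subseteq K^{\tilde\rho}_{c,d}$ for suitable $c,d$, and by Assumption \ref{assumption:weak_topology} we may pass to a weakly convergent subsequence $\hat\pi_k \to \pi_* \in K^{\tilde\rho}_{c,d}$.

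It remains to identify $\pi_*$ as a maximizer and to pass the inequality to the limit. From $u(\hat\pi_k) - f_k(\hat\pi_k) \to S$, weak upper semicontinuity of $u$, and hypothesis \ref{item:Ham_conv_weaktop_equi_uniform_on_compacts_f} (which gives $\liminf_k f_k(\hat\pi_k) \geq f^\dagger(\pi_*)$), we obtain $S \leq u(\pi_*) - f^\dagger(\pi_*) \leq S$, so $\pi_*$ maximizes $u - f^\dagger$ and moreover $\lim_k f_k(\hat\pi_k) = f^\dagger(\pi_*)$ and $\lim_k u(\hat\pi_k) = u(\pi_*)$. This last equality, the convergence of $f_k(\hat\pi_k)$, and hypothesis \ref{item:Ham_conv_weaktop_equi_uniform_on_compacts_g} then yield $\limsup_k g_k(\hat\pi_k) \leq g^\dagger(\pi_*)$, whence, using weak continuity of $h$,
\begin{equation*}
u(\pi_*) - \lambda g^\dagger(\pi_*) - h(\pi_*) \leq \liminf_k \bigl( u(\hat\pi_k) - \lambda g_k(\hat\pi_k) - h(\hat\pi_k) \bigr) \leq 0.
\end{equation*}
Taking $(\pi_n)_n \equiv \pi_*$ completes the subsolution case; the supersolution case is symmetric, reversing all inequalities, replacing u.s.c.\ by l.s.c., $\wedge$ by $\vee$, and using the analogues of \ref{item:Ham_conv_weaktop_compactness}--\ref{item:Ham_conv_weaktop_equi_uniform_on_compacts_g} with signs flipped. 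The main obstacle I anticipate is the bookkeeping in the double-sequence extraction: the subsolution property only supplies an asymptotically-optimizing sequence for each fixed $k$, so one must carefully diagonalize to get a single sequence in $k$ that is simultaneously asymptotically optimal and along which the Hamiltonian inequality survives, while verifying at each stage that the relevant points remain in a common compact set $K^{\tilde\rho}_{c,d}$ so that the weak-topology arguments apply; a secondary subtlety is checking that the limiting $f^\dagger$ inherits enough coercivity from \eqref{eq:weak_compactness_via_levelsets 1} to make $u - f^\dagger$ attain its supremum in the first place.
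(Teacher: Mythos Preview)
Your proposal is correct and follows essentially the same route as the paper: for each $k$ extract a near-optimizer $\hat\pi_k$ of $u-f_k$ along which the Hamiltonian inequality approximately holds, use the coercivity bounds \eqref{eq:weak_compactness_via_levelsets 1}--\eqref{eq:weak_compactness_via_levelsets 2} to confine $(\hat\pi_k)_k$ to a set $K^{\tilde\rho}_{c,d}$, pass to a weak limit $\pi_*$, and then use \ref{item:Ham_conv_weaktop_equi_uniform_on_compacts_f}--\ref{item:Ham_conv_weaktop_equi_uniform_on_compacts_g} together with weak upper semi-continuity of $u$ and weak continuity of $h$ to conclude. Your opening paragraph about directly proving that $u-f^\dagger$ attains its supremum is an unnecessary detour (and the claim that $f^\dagger \in LSC_l(E)$ implies \emph{weak} lower semi-continuity is not given); as you yourself discover, the maximizer $\pi_*$ emerges from the construction, which is exactly how the paper proceeds.
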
 

In the proposition, we assume that $u$,$v$ have appropriate weak semi-continuity properties, these are properties that are a-priori not known. We therefore include immediately a lemma that shows that, given that the test functions in the considered Hamiltonians  have themselves weak semi-continuity properties, we can replace a viscosity sub or supersolution by its weak upper or lower semi-continuous regularization.

\begin{lemma}\label{lemma: regularization} 
    {Let $h \in C_b(E)$ and $\lambda >0$, $A_\dagger \subseteq LSC_l(E) \times USC(E)$ and $A_\ddagger \subseteq USC_u(E) \times LSC(E)$.}

	Suppose that for all $(f^\dagger,g^\dagger) \in A_\dagger$ the function $f\in LSC_l(E)$ is weakly lower semi-continuous.
	
	Let $u$ be a viscosity subsolution to $f - \lambda A_\dagger f = h$. Then the weak upper semi-continuous regularization $u^*$ is also a viscosity subsolution to $f - \lambda A_\dagger f = h$.
	
	\vspace{0,1cm}
	Suppose that for all $(f^\ddagger,g^\ddagger) \in A_\ddagger$ the function {$f\in USC_u(E)$} is weakly upper semi-continuous.
	
	Let $v$ be a viscosity subsolution to $f - \lambda A_\ddagger f = h$. Then the weak lower semi-continuous regularization $v_*$ is also a viscosity supersolution to $f - \lambda A_\ddagger f = h$.
\end{lemma}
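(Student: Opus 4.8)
\textbf{Proof plan for Lemma \ref{lemma: regularization}.}

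The plan is to verify directly that the weak upper semi-continuous regularization $u^*$ satisfies the two defining conditions of a viscosity subsolution from Definition \ref{definition:viscosity_solutions_HJ_sequences}, namely \eqref{eqn:viscsub1} and \eqref{eqn:viscsub2}, relative to an arbitrary pair $(f^\dagger, g^\dagger) \in A_\dagger$. First I would record the elementary properties of $u^*$: it is bounded (since $u$ is), it is weakly upper semi-continuous by construction, it dominates $u$ pointwise, and — because the weak topology is coarser than the $d$-topology — it is also $d$-upper semi-continuous, so $u^*$ is an admissible candidate subsolution. Crucially, $\sup_E (u^* - f^\dagger) = \sup_E (u - f^\dagger)$: the inequality ``$\geq$'' is immediate from $u^* \geq u$, while ``$\leq$'' uses that $f^\dagger$ is weakly lower semi-continuous, hence $u - f^\dagger$ is dominated by some weakly upper semi-continuous function with the same supremum, and $u^*$ is the smallest such majorant of $u$, so $u^* - f^\dagger$ cannot exceed that supremum. (More carefully: $u^* - f^\dagger \le (u-f^\dagger)^{*,w} \le \sup_E(u-f^\dagger)$, where $(\cdot)^{*,w}$ denotes weak u.s.c. regularization and we use that $f^\dagger$ weakly l.s.c. makes $-f^\dagger$ weakly u.s.c., so the regularization distributes as a sub-additive bound.)

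Next, the heart of the argument is to produce, for the given $(f^\dagger,g^\dagger)$, a sequence $(\pi_n)$ along which $u^*(\pi_n) - f^\dagger(\pi_n) \to \sup_E(u^* - f^\dagger)$ and $\limsup_n \big(u^*(\pi_n) - \lambda g^\dagger(\pi_n) - h(\pi_n)\big) \leq 0$. Since $u$ is a viscosity subsolution, there is a sequence $(\sigma_m)$ with $u(\sigma_m) - f^\dagger(\sigma_m) \to \sup_E(u - f^\dagger) = \sup_E(u^* - f^\dagger)$ and $\limsup_m \big(u(\sigma_m) - \lambda g^\dagger(\sigma_m) - h(\sigma_m)\big) \leq 0$. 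Because $u^* \geq u$ everywhere, this same sequence automatically satisfies the second (limsup) inequality with $u$ replaced by $u^*$, using $g^\dagger \in USC(E)$ and $h$ continuous. For the first (sup-attainment) condition we need $u^*(\sigma_m) - f^\dagger(\sigma_m) \to \sup_E(u^*-f^\dagger)$; this follows by sandwiching $u(\sigma_m) - f^\dagger(\sigma_m) \le u^*(\sigma_m) - f^\dagger(\sigma_m) \le \sup_E(u^* - f^\dagger)$ and letting $m \to \infty$. Thus the very same sequence $(\sigma_m)$ witnesses that $u^*$ is a viscosity subsolution for $(f^\dagger,g^\dagger)$, and since $(f^\dagger,g^\dagger)\in A_\dagger$ was arbitrary, $u^*$ is a viscosity subsolution to $f - \lambda A_\dagger f = h$.

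The supersolution statement is handled by the symmetric argument: replace $u^*$ by $v_*$, ``weakly u.s.c.'' by ``weakly l.s.c.'', suprema by infima, $USC$ by $LSC$, and use that $f^\ddagger$ weakly upper semi-continuous gives $\inf_E(v_* - f^\ddagger) = \inf_E(v - f^\ddagger)$; the minimizing sequence for $v$ then works verbatim for $v_*$ since $v_* \leq v$.

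I do not expect a serious obstacle here — the lemma is essentially a bookkeeping argument. The one point requiring care is the identity $\sup_E(u^* - f^\dagger) = \sup_E(u - f^\dagger)$, which is exactly where the weak lower semi-continuity hypothesis on the test functions $f^\dagger$ is used and cannot be dropped; without it, regularizing $u$ upward could strictly increase the gap $u - f^\dagger$ at points where $f^\dagger$ jumps down, breaking the optimality condition \eqref{eqn:viscsub1}. A secondary subtlety is making sure $u^*$ remains a legitimate test object, i.e. bounded and ($d$-)upper semi-continuous, which is immediate once one notes the weak topology is coarser than the metric topology.
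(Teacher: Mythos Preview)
Your overall strategy matches the paper's, but there is one genuine error in the reasoning. You write that ``because $u^* \geq u$ everywhere, this same sequence automatically satisfies the second (limsup) inequality with $u$ replaced by $u^*$''. This is the wrong direction: from $u^* \geq u$ you get
\[
u^*(\sigma_m) - \lambda g^\dagger(\sigma_m) - h(\sigma_m) \;\geq\; u(\sigma_m) - \lambda g^\dagger(\sigma_m) - h(\sigma_m),
\]
so knowing $\limsup_m$ of the right-hand side is $\leq 0$ tells you nothing about the left-hand side. The invocation of $g^\dagger \in USC(E)$ and continuity of $h$ does not help here. The paper fixes this by first observing that your own sandwich
\[
u(\sigma_m) - f^\dagger(\sigma_m) \leq u^*(\sigma_m) - f^\dagger(\sigma_m) \leq \sup_E(u^* - f^\dagger) = \sup_E(u - f^\dagger)
\]
forces $\lim_m \bigl(u^*(\sigma_m) - u(\sigma_m)\bigr) = 0$, and \emph{this} is what allows you to replace $u$ by $u^*$ in the limsup inequality. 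You have all the ingredients; you just need to reorder the argument and draw this consequence explicitly.

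A smaller point: in your justification of $\sup_E(u^*-f^\dagger) \leq \sup_E(u-f^\dagger)$ you write $u^* - f^\dagger \leq (u-f^\dagger)^{*,w}$. This inequality is backwards: since $u^* - f^\dagger$ is weakly upper semi-continuous (sum of two weakly u.s.c.\ functions) and dominates $u - f^\dagger$, you actually get $u^* - f^\dagger \geq (u-f^\dagger)^{*,w}$ from the minimality of the regularization. The paper instead argues that $u^* - f^\dagger$ \emph{equals} $(u-f^\dagger)^{*,w}$, and then bounds the latter by the constant $\sup_E(u-f^\dagger)$ (which is weakly u.s.c.\ and dominates $u-f^\dagger$).
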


\begin{proof}[Proof of Proposition \ref{proposition:Hamiltonian_convergence_pseudo_coercive}]

First of all, let us recall that, 
 by Assumption \ref{assumption:weak_topology}, the set $K^{\tilde \rho}_{c,d}$ is  weakly compact for all $c,d\in\R$. 

Let $(f^\dagger, g^\dagger)\in {A_\dagger}$ and $u$ be a bounded and weakly upper semi-continuous viscosity subsolution to $f - \lambda \widehat A_\dagger f = h$ and $(f_k,g_k)_{k\geq 1}$ be the corresponding approximating sequence. Consider $(\varepsilon_k)_{k\geq 1} > 0$ be such that $\varepsilon_k \downarrow 0$. By the subsolution property applied to $(f_k, g_k)$ we can find   $(\pi_k)_{k\in \mathbb{N}}$ such that
\begin{equation} \label{eqn:quasi_coercive_Hamiltonian_seq1}
    u(\pi_k) - f_k(\pi_k) \geq \sup_{\rho\in E} \left\{u(\rho) - f_k(\rho)\right\} - \varepsilon_k
\end{equation}
and
\begin{equation}\label{eqn:quasi_coercive_Hamiltonian_seq2}
    u(\pi_k) - \lambda g_k(\pi_k) \leq h(\pi_k) + \varepsilon_k.
\end{equation}

Let us choose $\rho_0\in E$. 
After some straightforward calculations, from \eqref{eqn:quasi_coercive_Hamiltonian_seq1}, we deduce that
\begin{equation*}
 \sup_{k\geq 1} f_k(\pi_k) \leq 2\sup_{\gamma\in E} |u(\gamma)| + \sup_{k\geq 1} f_k(\rho_0) + \varepsilon_1 
 \end{equation*}
and the latter is finite because of \ref{item:Ham_conv_weaktop_equi_pointwise}. 
On the other hand, from \eqref{eqn:quasi_coercive_Hamiltonian_seq2} and the boundedness of $u$ and $h$ we have
\begin{equation*}
 \inf_{k\geq 1} g_k(\pi_k) \geq -\frac{\sup_{\gamma\in E}(|u|+|h|)(\gamma)+ \varepsilon_1}{\lambda} > -\infty
\end{equation*}

Then using \ref{item:Ham_conv_weaktop_compactness} we deduce that there exist $\tilde \rho\in E$ and $c,d\in\R$ such that $(\pi_k)_{k\geq 1}\subseteq K^{\tilde \rho}_{c,d}$ and thanks to Assumption \eqref{assumption:weak_topology} we can assume without loss of generality that $\pi_k$ converges {weakly} to some $\pi_\infty \in E$.

\smallskip

We establish that $u$ is a viscosity subsolution for $f - \lambda  A_\dagger f = h$ 
in three steps.
\begin{itemize}
    \item \textit{Step 1}. We first establish that
    \begin{equation} \label{eqn:push_visc_sol_optimum}
    \begin{split}
         u(\pi_\infty)-f^\dagger(\pi_\infty)&=\sup_{\rho\in E} \left\{ u(\rho) - f^\dagger(\rho) \right\}  \\
         &= \lim_{k\to +\infty} \sup_{\rho\in E} \left\{ u(\rho) - f_k(\rho)\right\}  = \lim _{k \to+\infty} \{u(\pi_k) - f_k(\pi_k)\}.
    \end{split}
    \end{equation}
    \item \textit{Step 2}. 
    We next establish that the outcome of Step 1 implies 
    \begin{align} 
    \lim_{k \to+\infty} u(\pi_k) & = u(\pi_\infty), \label{eqn:convergence_optima_in_subsol} \\
    \lim_{k \to +\infty} f_k(\pi_k) & = f^\dagger(\pi_\infty),\label{eqn:convergence_optima_in_testfunctions} 
\end{align}

    \item \textit{Step 3}. We establish that
    \begin{equation}
        u(\pi_\infty) - \lambda g^\dagger(\pi_\infty) - h(\pi_\infty) \leq 0.
    \end{equation}
\end{itemize}
This proves that $u$ satisfies the definition of viscosity subsolution  for $f - \lambda  {A_\dagger} f = h$ via the sequence $\{\pi_\infty\}.$

\textit{Proof of step 1.}

As $u$ is upper semi-continuous for the weak topology and item \ref{item:Ham_conv_weaktop_equi_uniform_on_compacts_f} holds, we have 
\begin{equation} \label{eqn:suprema_limsup}
    \begin{aligned}
    \sup_{\rho\in E} \left\{ u(\rho) - f^\dagger(\rho) \right\} & \geq u(\pi_\infty) - f^\dagger(\pi_\infty) \\
    & \geq \limsup_{k\to+\infty} \{u(\pi_k) - f_k(\pi_k)\} \\
    & \stackrel{\eqref{eqn:quasi_coercive_Hamiltonian_seq1}}\geq \limsup_{k\to +\infty} \sup_{\rho\in E} \left\{ u(\rho) - f_k(\rho) \right\} - \varepsilon_k \\
    & = \limsup_{k \to+\infty} \sup_{\rho\in E} \left\{ u(\rho) - f_k(\rho) \right\}.
\end{aligned}
\end{equation}
On the other hand, let $(\rho_l)_{l\in \mathbb N}\in E$ be such that
\begin{equation*}
    u(\rho_l) - f^\dagger(\rho_l) \geq \sup_{\rho\in E} \left\{ u(\rho) - f^\dagger(\rho)\right\} - \varepsilon_l.
\end{equation*}
Due to item \ref{item:Ham_conv_weaktop_equi_pointwise},  for all $k\geq 1$ we find
\begin{align*}
    \sup_{\rho\in E} \left\{ u(\rho) - f^\dagger(\rho)\right\} & \leq   u(\rho_l) - f^\dagger(\rho_l)  + \varepsilon_l, \\
    & \leq \liminf_{k\to+\infty} \{u(\rho_l) - f_k(\rho_l)\} + \varepsilon_l \\
    & \leq \liminf_{k\to+\infty} \sup_{\rho\in E} \left\{ u(\rho) - f_k(\rho)\right\} +\varepsilon_l,
\end{align*}
so that
\begin{equation}\label{eqn:suprema_liminf}
    \sup_{\rho\in E} \left\{ u(\rho) - f^\dagger(\rho)\right\} \leq \liminf_{k\to+\infty} \sup_{\rho\in E} \left\{ u(\rho) - f_k(\rho)\right\}.
\end{equation}
Combining \eqref{eqn:suprema_limsup} with  \eqref{eqn:suprema_liminf}   we have
\begin{align*}
    \limsup_{k\to+\infty} \sup_{\rho\in E} \left\{ u(\rho) - f_k(\rho) \right\} & \leq \limsup_{k\to+\infty} \{u(\pi_k) - f_k(\pi_k)\} \\ 
    & \leq  u(\pi_\infty) - f^\dagger(\pi_\infty)\leq \sup_{\rho\in E} \left\{ u(\rho) - f^\dagger(\rho)\right\} \\
    & \leq \liminf_{k \to+\infty} \sup_{\rho\in E} \left\{ u(\rho) - f_k(\rho)\right\}.
\end{align*}

Therefore the above inequalities are equalities. Note also that,  due to \eqref{eqn:quasi_coercive_Hamiltonian_seq1}, we have 
\begin{equation*}
\liminf_{k\to+\infty} \{u(\pi_k) - f_k(\pi_k)\} \geq \lim_{k\to+\infty} \sup_{\rho \in E} \left\{u(\rho) - f_k(\rho)\right\} \geq \limsup_{k\to+\infty} \{u(\pi_k) - f_k(\pi_k)\}
\end{equation*}
and \eqref{eqn:push_visc_sol_optimum} is established.

\textit{Proof of step 2.}

To establish \eqref{eqn:convergence_optima_in_subsol}, it suffices to show $\liminf_{k \to +\infty} u(\pi_k) \geq u(\pi_\infty)$. We have
\begin{align*}
    \liminf_{k\to+\infty} u(\pi_k) & = \liminf_{k\to+\infty} \left\{u(\pi_k) - f_k(\pi_k) + f_k(\pi_k) \right\} \\
    & \geq \liminf_{k\to+\infty} \left\{u(\pi_k) - f_k(\pi_k) \right\} + \liminf_{k\to+\infty} f_k(\pi_k) \\
    & \stackrel{\text{Step 1}}{\geq} u(\pi_\infty) - f^\dagger(\pi_\infty) + f^\dagger(\pi_\infty) = u(\pi_\infty).
\end{align*}
where we used  \eqref{eqn:push_visc_sol_optimum} and \ref{item:Ham_conv_weaktop_equi_uniform_on_compacts_f} to go from line 2 to 3. \eqref{eqn:convergence_optima_in_testfunctions} follows  establishing similarly that $\limsup_{k\to+\infty} f_k(\pi_k) \leq f^\dagger(\pi_\infty)$.

\smallskip

\textit{Proof of step 3}.
Note that \eqref{eqn:convergence_optima_in_testfunctions} enables us to use \ref{item:Ham_conv_weaktop_equi_uniform_on_compacts_g} that, together with the weak continuity of $h$ and  \eqref{eqn:convergence_optima_in_subsol},  gives
\begin{align*}
    u(\pi_\infty) - \lambda g^\dagger(\pi_\infty) - h(\pi_\infty) & \leq \lim_{k\to+\infty} u(\pi_k)  + \liminf_{k\to+\infty} \left( - \lambda g_k(\pi_k) \right) - \lim_{k\to+\infty} h(\pi_k) \\
    & \leq \liminf_{k\to+\infty} \left( u(\pi_k) - \lambda g_k(\pi_k) - h(\pi_k) \right) \\
    & \stackrel{\eqref{eqn:quasi_coercive_Hamiltonian_seq2}}{\leq} 0.
\end{align*}

Since $(f^\dagger,g^\dagger) \in A_\dagger$ has been chosen arbitrarily, we conclude that $u$ is a viscosity subsolution to $f - \lambda A_\dagger f = h$.
\end{proof}

\begin{proof}[Proof of Lemma \ref{lemma: regularization}]
	We only prove the first statement. Let $u$ be a subsolution to $f - \lambda A_\dagger f = h$. Let $(f^\dagger,g^\dagger) \in A_\dagger$. As $u$ is a subsolution, there exists a sequence $(\pi_n)_{n\in\mathbb N}\in E$ such that
	\begin{equation} \label{eqn:viscosity_push_topology_1}
		\lim_{n\to+\infty} u(\pi_n) - f^\dagger(\pi_n) = \sup_{\rho\in E} \left\{u(\rho) - f^\dagger(\rho) \right\}
	\end{equation}
	and
	\begin{equation} \label{eqn:viscosity_push_topology_2}
		\limsup_{n\to+\infty} u(\pi_n) - \lambda g^\dagger(\pi_n) - h(\pi_n) \leq 0.
	\end{equation}
	Let $u^*$ be the weak upper semi-continuous regularization of $u$. Recall that  $u^*$ is the smallest weak upper semi-continuous function $\hat{u}$ such that $\hat{u} \geq u$. We aim to prove the same statements for $u^*$, which establishes the claim.
	
	\smallskip
	
	As $u^* \geq u$, we have
	\begin{equation} \label{eqn:viscosity_subsolution_topology_push_inequality1}
		\sup_{\rho \in E} \left\{u^*(\rho) - f^\dagger(\rho)  \right\} \geq \sup_{\rho\in E} \left\{u(\rho) - f^\dagger(\rho)  \right\}.
	\end{equation}
	As $u^*$ is the weak upper semi-continuous regularization of $u$, and $f^\dagger$ is weakly lower semi-continuous, it follows that $u^* - f^\dagger$ is the weak upper semi-continuous regularization of $u - f^\dagger$. 

 The constant function $\pi\mapsto \sup_{\rho\in E} \left\{u(\rho) - f^\dagger(\rho)  \right\}$ is weakly upper semi-continuous. In addition, it is dominating the function $u-f^\dagger$, it thus follows that for all $\pi \in E$ that

     \begin{equation*}
	 u^*(\pi) - f^\dagger(\pi) \leq \sup_{\rho\in E} \left\{u(\rho) - f^\dagger(\rho)  \right\}.
 \end{equation*}
 
 Therefore we can conclude that
	\begin{equation*}
		\sup_{\rho\in E} \left\{u^*(\rho) - f^\dagger(\rho)  \right\} =\sup_{\rho\in E} \left\{u(\rho) - f^\dagger(\rho)  \right\}.
	\end{equation*}
	
	We derive then that
	\begin{multline*}
		{\liminf}_{n\to+\infty} u^*(\pi_n) - f^\dagger(\pi_n) \geq \lim_{n\to+\infty} u(\pi_n) - f^\dagger(\pi_n) = \sup_{\rho\in E} \left\{u(\rho) - f^\dagger(\rho) \right\} \\
		=\sup_{\rho\in E} \left\{u^*(\rho) - f^\dagger(\rho) \right\} \geq \limsup_{n\to +\infty} u^*(\pi_n) - f^\dagger(\pi_n)
	\end{multline*}
	which implies in particular that
	\begin{equation*}
		\lim_{n\to+\infty} u(\pi_n) -u^*(\pi_n) = 0.
	\end{equation*}
	Applying this to \eqref{eqn:viscosity_push_topology_1} and \eqref{eqn:viscosity_push_topology_2}, we obtain
	\begin{gather*}
		\lim_{n\to+\infty} u^*(\pi_n) - f^\dagger(\pi_n) = \sup_{\rho\in E} \left\{u^*(\rho) - f^\dagger(\rho)  \right\} \\
		{\limsup_{n \rightarrow \infty} } u^*(\pi_n) - \lambda g^\dagger(\pi_n) - h(\pi_n) \leq 0
	\end{gather*}
	which establishes the claim.

\end{proof}

\section{Proof of Theorem \ref{theorem:carry_over_solutions}}\label{proof:carry_over_solutions}

In this section, we carry out the steps presented at the beginning of Section \ref{section:preparations_proof}. 

Before doing so, however, we will split the proof of Theorem \ref{theorem:carry_over_solutions} into two parts, namely one proof for the case that $\kappa \neq 0$, and one proof for $\kappa = 0$. This split is inspired by the consequences of the properties of the gradient flow in the context of the evolutionary variational inequality. Inded it is shown in items \ref{item: EVI Bakry} and \ref{item: EVI distance bound} of Lemma \ref{lem: EVI implies boundedness} that uniform bounds on $\cE(\mu(t))$ and $e^{\hat{\kappa}t}d_\varepsilon(\pi,\mu(t))$, along a gradient flow $\mu(t)$, can be given only if $\kappa \neq 0$.

We will, therefore, start in Section \ref{section:approximation_kappa_neq_0} proving the result in the case that $\kappa \neq 0$, using the strategy that was outlined at the start of Section \ref{section:preparations_proof}.  The case $\kappa = 0$ will be treated in Section \ref{section:approximation_kappa_is_0} by using the fact that if the gradient flow satisfies \eqref{item:ass_EVI} with $\kappa = 0$, then it also satisfies \eqref{item:ass_EVI}  for any $\kappa < 0$ and a final approximation $\kappa \uparrow 0$ can extend the result.

We believe that all the steps of the proof of Theorem \ref{theorem:carry_over_solutions}, as carried out in Section \ref{section:approximation_kappa_neq_0} in the context $\kappa \neq 0$, can be adapted to $\kappa = 0$ using the non uniform bounds of Lemma \ref{lem: EVI implies boundedness} \ref{item: EVI distance bound}, at the cost of greater technical difficulty. However, we think that the non-direct method employed below leads to proofs that are easier than the ones for the direct method.

\subsection{The proof in the case that \texorpdfstring{$\kappa \neq 0$}{kappa not 0}} \label{section:approximation_kappa_neq_0}

\subsubsection{The weak regularization of solutions for $H_\dagger$ and $H_\ddagger$} \label{section:H1_weakregularize}

Our first result is on the weak regularization of sub and supersolutions.

\begin{lemma} \label{lemma:regularization_1}
    {Let $h \in C_b(E)$ and $\lambda > 0$.}

	Let $u$ be a viscosity subsolution to $f - \lambda H_\dagger f = h$. Then the weak upper semi-continuous regularization $u^*$ of $u$ is also a viscosity subsolution to $f - \lambda H_{\dagger} f = h$.
	
	Let $v$ be a viscosity supersolution to $f - \lambda H_{\ddagger} f = h$. Then the weak lower semi-continuous regularization $v_*$ is also a viscosity supersolution to $f - \lambda H_{\ddagger} f = h$ .
\end{lemma}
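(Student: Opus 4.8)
The plan is to deduce Lemma~\ref{lemma:regularization_1} directly from the general regularization principle of Lemma~\ref{lemma: regularization}. That lemma transfers the viscosity subsolution property to the weak upper semi-continuous regularization as soon as every test function $f^\dagger$ appearing in $H_\dagger$ is weakly lower semi-continuous, and dually transfers the supersolution property to the weak lower semi-continuous regularization provided every test function $f^\ddagger$ in $H_\ddagger$ is weakly upper semi-continuous. Since $H_\dagger \subseteq C_l(E)\times USC(E) \subseteq LSC_l(E)\times USC(E)$ and $H_\ddagger \subseteq C_u(E)\times LSC(E)\subseteq USC_u(E)\times LSC(E)$ by Definition~\ref{definition:H1}, the structural hypotheses of Lemma~\ref{lemma: regularization} are automatic, and the entire task reduces to verifying these two semi-continuity properties for the explicit test functions.

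For the subsolution statement, fix $(f^\dagger,g^\dagger)\in H_\dagger$, so that $f^\dagger(\pi)=\frac{a}{2}d^2(\pi,\rho)+\varphi\!\left(\tfrac12 d^2(\pi,\mu_1),\dots,\tfrac12 d^2(\pi,\mu_k)\right)$ with $a>0$, $\varphi\in\cT$ and $\rho,\mu_1,\dots,\mu_k\in E$. By Assumption~\ref{assumption:weak_topology} the metric $d(\cdot,\cdot)$ is weakly lower semi-continuous, and since $t\mapsto \tfrac12 t^2$ is continuous and nondecreasing on $[0,+\infty)$, each map $\pi\mapsto \tfrac12 d^2(\pi,\mu_i)$, as well as $\pi\mapsto\tfrac{a}{2}d^2(\pi,\rho)$, is weakly lower semi-continuous. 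The only point carrying genuine content is the following elementary stability fact, which I would isolate and prove: if $g:[0,+\infty)^k\to\bR$ is continuous and nondecreasing in each coordinate --- which holds for $g=\varphi$ because $\partial_i\varphi>0$ --- and $h_1,\dots,h_k:E\to[0,+\infty)$ are weakly lower semi-continuous, then $\pi\mapsto g(h_1(\pi),\dots,h_k(\pi))$ is weakly lower semi-continuous. The argument is a short $\epsilon$-estimate: along a weakly convergent sequence $\pi_n\to\pi$, lower semi-continuity and nonnegativity give $h_i(\pi_n)\geq \max\{h_i(\pi)-\epsilon,\,0\}$ for $n$ large and all $i$, monotonicity of $g$ then yields $g(h_1(\pi_n),\dots,h_k(\pi_n))\geq g\!\left(\max\{h_1(\pi)-\epsilon,0\},\dots,\max\{h_k(\pi)-\epsilon,0\}\right)$, and letting $\epsilon\downarrow 0$ and using continuity of $g$ together with $h_i(\pi)\geq 0$ gives the claim. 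Applying this with $g=\varphi$ and $h_i=\tfrac12 d^2(\cdot,\mu_i)$, and adding the weakly lower semi-continuous term $\tfrac{a}{2}d^2(\cdot,\rho)$, shows that $f^\dagger$ is weakly lower semi-continuous.

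For the supersolution statement, fix $(f^\ddagger,g^\ddagger)\in H_\ddagger$ and observe that $-f^\ddagger(\mu)=\tfrac{a}{2}d^2(\mu,\gamma)+\varphi\!\left(\tfrac12 d^2(\mu,\pi_1),\dots,\tfrac12 d^2(\mu,\pi_k)\right)$ has exactly the same structure as $f^\dagger$, hence is weakly lower semi-continuous by the argument above; therefore $f^\ddagger$ is weakly upper semi-continuous. Having verified both semi-continuity properties, an application of Lemma~\ref{lemma: regularization} yields the two assertions of Lemma~\ref{lemma:regularization_1}. I do not expect a real obstacle: the statement is essentially a bookkeeping consequence of Lemma~\ref{lemma: regularization}, the only mild subtlety being to phrase the monotone-composition stability fact so that it respects the domain $[0,+\infty)^k$ of $\varphi$ (handled above by the truncation $\max\{\cdot,0\}$), and, if one wishes to avoid sequential arguments when the weak topology is not metrizable, to run the same $\epsilon$-estimate along nets.
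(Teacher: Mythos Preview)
Your proof is correct and follows exactly the same approach as the paper, which simply states that ``the result is immediate by Lemma~\ref{lemma: regularization} and Assumption~\ref{assumption:weak_topology}.'' You have just spelled out the implicit verification that every $f^\dagger$ is weakly lower semi-continuous (via the monotone-composition argument using $\partial_i\varphi>0$ and the weak lower semi-continuity of $d$), which is precisely what the paper leaves as immediate.
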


\begin{proof}
    The result is immediate by Lemma \ref{lemma: regularization} and Assumption \ref{assumption:weak_topology}.
\end{proof}

\subsubsection{From $H_{1,\dagger}$ to $H_{2,\dagger}$} \label{section:H1toH2}

For our first real step relating two sets of Hamiltonians, we consider the corresponding first step in the approximation of the Tataru distance. In particular, our starting point is the log of the Riemann sum approximation of the integral on the right-hand side of \eqref{eqn:smoothed_version_Tataru} for fixed $m$. To define this object in our definition of $H_{2,\dagger}$, we start out by introducing a smooth approximation of the square root function $r \mapsto \sqrt{2r}$ and the corresponding version of the Tataru distance.

\begin{definition}
    	For sufficiently small $\varepsilon > 0$ define 
	\begin{equation} \label{eqn:def_approx_of_squareroot}
	\psi_\varepsilon(r) := \left(\sqrt{2\varepsilon} + \frac{r-\varepsilon}{\sqrt{2\varepsilon}} - \frac{(r-\varepsilon)^2}{2 (2\varepsilon)^{3/2}}\right) \bONE_{\{0 \leq r \leq \varepsilon\}} + \sqrt{2r} \bONE_{\{r \geq \varepsilon\}}.
	\end{equation}
    
	The modified distance $d_{\varepsilon}$ and the modified Tataru distance $d_{T,\varepsilon}$ are defined for all $\pi,\mu\in E$ as
	\begin{align}
		d_{\varepsilon}(\pi,\mu)&:=\psi_\varepsilon\left(\frac{1}{2}d^2(\pi,\mu)\right),\label{eq: d_epsilon}\\
	    h^{\varepsilon}_{\pi,\mu}(t) &: = e^{\hat\kappa t}d_{\varepsilon}(\pi,\mu(t)),
	\label{eqn:definition_h}\\
    d_{T,\varepsilon}(\pi,\mu) &:= \inf_{t\geq 0}\left\{t + h^{\varepsilon}_{\pi}(t) \right\}. \label{eqn:def_smoothed_Tataru_distance}
\end{align}
\end{definition}

{Of fundamental importance in this definition is the fact that $r\mapsto \psi_\varepsilon\left(\frac{1}{2} r^2\right)$ is an approximation of the identity $r \mapsto r$, in such a way that it is smooth in its input $\frac{1}{2}r^2$.}

The study of the properties of the functions $\psi_\varepsilon(r)$, $d_\varepsilon$ and $d_{T,\varepsilon}$ are postponed to Section \ref{appendixTataru_modified} below. Let us underline, however, that  the two key properties are that $r \mapsto \psi_{\varepsilon}\left(\frac{1}{2}r^2\right)$ is strictly increasing and approximates the identity, and that $\psi_\varepsilon$ is twice continuously differentiable.

In addition to the above approximation, we consider, for any integer $m,n\geq 1 $, the approximation $\lambda_{m,n}$ of the exponential distribution of parameter $m$.
	\begin{equation*}
	    \lambda_{m,n}(\dd t) := c_{m,n}  \sum_{i=1}^{n^2} e^{-m \frac{i}{n}} \delta_{\frac{i}{n}}(\dd t).
	\end{equation*}
	The pre-factor $c_{m,n}$ is a normalizing constant, whose explicit form is uninteresting. 

To simplify the notation, in the following definition and in the rest of the paper, we only explicitly write the dependence of the functions in our definitions on the parameters $\varepsilon,m,n$ since they are the ones that vary in the approximation procedures. All the others parameters ($a,b,c,\rho$ and $\mu$) are omitted. For example, in the next definition we write $f^{2,\dagger}_{\varepsilon,m,n}(\cdot)$ and $h_{\pi}^\varepsilon(\cdot)$ instead of $f^{2,\dagger}_{\varepsilon,m,n,\rho,\mu, a, b, c }(\cdot)$ and $h^{\varepsilon}_{\pi,\mu}(\cdot)$ respectively.

\begin{definition}
  Let $\psi_\varepsilon$ be defined as in \eqref{eqn:def_approx_of_squareroot}, $\rho,\mu\in E$ such that $I(\rho) + I(\mu) < \infty$, $a,b,\varepsilon >0$, $c \in \bR$ and  $m,n \geq 1$. 	For all $\pi\in E,t>0$ we define
	\begin{equation} \label{eqn:definition_lambda}
	    \Lambda_{\varepsilon,m,n}(\pi) := \int_0^\infty  e^{-m h^{\varepsilon}_{\pi}(t)}\lambda_{m+1,n}(\dd t).
	  \end{equation}
   with $h^\varepsilon_\pi$ as in \eqref{eqn:definition_h}. Consider the test function $f^{2,\dagger}_{\varepsilon,m,n}$ given by
	\begin{equation}\label{eq:f2dag}
	f^{2,\dagger}_{\varepsilon,m,n}(\pi) = \frac{1}{2} a d^2(\pi,\rho) + b\left( - \frac{1}{m} \log \Lambda_{\varepsilon,m,n}(\pi)\right) +c.
	\end{equation}
	Moreover, we define
	\begin{equation}
	\begin{split}
	 g^{2,\dagger}_{\varepsilon,m,n}(\pi) &:=  a \left[\cE(\rho) - \cE(\pi)\right] - a \frac{\kappa}{2}d^2(\pi,\rho) + \frac{a^2}{2} d^2 (\pi,\rho) + ab d(\pi,\rho)   + \frac{b^2 }{2}\\
	& \qquad + b \Lambda_{\varepsilon,m,n}^{-1}(\pi) \int_0^\infty e^{-m h^{\varepsilon}_{\pi}(t)} \psi_\varepsilon'\left(\frac{1}{2}d^2\left(\pi,\mu\left(t \right)\right)\right) e^{\hat{\kappa}t} \left[\cE\left(\mu\left(t\right)\right) - \cE(\pi)\right] \lambda_{m+1,n}(\dd t)    \\
	& \qquad -  b \frac{\hat{\kappa}}{2} \Lambda_{\varepsilon,m,n}^{-1}(\pi) \int_0^\infty  e^{-m h^{\varepsilon}_{\pi}(t)} \left(\frac{1}{m} \vee h^{\varepsilon}_{\pi}(t) \right)  \lambda_{m+1 ,n}(\dd t). \label{eq:g2dag}
    \end{split}
	\end{equation}
	Then $H_{2,\dagger}$ is defined as the operator given by all pairs
	\begin{equation*}
	    H_{2,\dagger} := \left\{(f^{2,\dagger}_{\varepsilon,m,n},g^{2,\dagger}_{\varepsilon,m,n}) \, \middle| \, \rho,\mu: \, I(\rho) + I(\mu) < \infty, a,b,\varepsilon>0, c \in \bR, m,n \geq 1\right\}.
	\end{equation*}

    For $\gamma,\pi\in E$ such that $ I(\gamma)+ I(\pi) < \infty$, $a,b,\varepsilon >0$, $c \in \bR$ and  $m,n \geq 1$, consider the test function $f^{2,\ddagger}_{\varepsilon,m,n}$ given by 
	\begin{equation*}\label{eq:f2ddag}
	f^{2,\ddagger}_{\varepsilon,m,n}(\mu) = - \frac{1}{2} a d^2(\gamma,\mu) - b\left( - \frac{1}{m} \log \Lambda_{\varepsilon,m,n}(\mu)\right) +c,
	\end{equation*}
 where $\Lambda_{\varepsilon,m,n}(\mu)$ is defined as in \eqref{eqn:definition_lambda}, using $h^{\varepsilon}_{\mu}(t)$,  inverting the role of $\pi$ and $\mu$.
	Moreover, we define 
	\begin{equation*}
	\begin{split}
	 g^{2,\ddagger}_{\varepsilon,m,n}(\mu) &:=  a \left[\cE(\mu) - \cE(\gamma)\right] + a \frac{\kappa}{2}d^2(\gamma,\mu) + \frac{a^2}{2}d^2 (\gamma,\mu) - ab d(\gamma,\mu) {-\frac{1}{2}b^2} \\
	& \qquad - b \Lambda_{\varepsilon,m,n}^{-1}(\mu) \int_0^\infty e^{-m h^{\varepsilon}_{\mu}(t)} \psi_\varepsilon'\left(\frac{1}{2}d^2\left(\mu,\pi\left(t \right)\right)\right) e^{\hat{\kappa}t} \left[\cE\left(\pi\left(t\right)\right) - \cE(\mu)\right] \lambda_{m+1,n}(\dd t)    \\
	& \qquad +  b \frac{\hat{\kappa}}{2} \Lambda_{\varepsilon,m,n}^{-1}(\mu) \int_0^\infty  e^{-m h^{\varepsilon}_{\mu}(t)} \left(\frac{1}{m} \vee h^{\varepsilon}_{\mu}(t) \right)  \lambda_{m+1 ,n}(\dd t). \label{eq:g2ddag}
    \end{split}
	\end{equation*}
	Then $H_{2,\ddagger}$ is defined as the operator given by all pairs
	\begin{equation*}
	    H_{2,\dagger} := \left\{(f^{2,\ddagger}_{\varepsilon,m,n},g^{2,\ddagger}_{\varepsilon,m,n}) \, \middle| \, \gamma,\pi: \,  I(\gamma)+I(\pi) < \infty, a,b,\varepsilon>0, c \in \bR, m,n \geq 1\right\}.
	\end{equation*}

\end{definition}

Note that the terms in $g^{2,\dagger}_{\varepsilon,m,n}$ and $g^{2,\ddagger}_{\varepsilon,m,n}$, are, up to the terms in the final 2 lines equal to those of \eqref{eqn:H_tataru_sub} and \eqref{eqn:H_tataru_super}. Thus, once we have established the next lemma, our main focus will be on showing that we are approximating $b > 0$ times the Tataru function in the right way, and that the time derivative of the gradient flow along this approximation is bounded above by $b$.

\begin{remark}
    This remark is not important on first reading. Lemma \ref{lemma:push_1_2} would hold without the factor $\vee \frac{1}{m}$ in the final line of the definitions of $g^{2,\dagger}_{\varepsilon,m,n}$ and $g^{2,\ddagger}_{\varepsilon,m,n}$. This factor is added for later purposes: the map $s \mapsto \exp(-ms)(m^{-1} \vee s)$ is decreasing, whereas $s \mapsto \exp(-ms)s$ is not. This will aid us in the step from 2 to 3. Note, however, that the relaxation is only a minor one: in the step from 3 to 4, we send $m \rightarrow \infty$, effectively removing this this factor.
    
\end{remark}

\begin{lemma} \label{lemma:push_1_2}
    {Let $h \in C_b(E)$ and $\lambda > 0$.}

	Every viscosity subsolution to $f - \lambda H_{1,\dagger} f = h$ is a viscosity subsolution to $f - \lambda H_{2,\dagger} f = h$.  
	
	Every viscosity supersolution to $f - \lambda H_{1,\ddagger} f = h$ is a viscosity supersolution to $f - \lambda H_{2,\ddagger} f = h$. 
\end{lemma}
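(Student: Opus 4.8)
The plan is to recognise $f^{2,\dagger}_{\varepsilon,m,n}$ as a cylindrical test function of the type occurring in $\cD(H_{1,\dagger})$ (up to a harmless smoothing), to compute the associated $H_{1,\dagger}$-datum, to check that it is dominated by $g^{2,\dagger}_{\varepsilon,m,n}$, and then to conclude with Lemma \ref{lemma:viscosity_push}. Set $t_i := i/n$ for $i=1,\dots,n^2$ and $\bm\mu := (\mu(t_1),\dots,\mu(t_{n^2}))$. Since $\lambda_{m+1,n}$ is supported on $\{t_1,\dots,t_{n^2}\}$, the function $\Lambda_{\varepsilon,m,n}$, and hence $f^{2,\dagger}_{\varepsilon,m,n}$, depends on $\pi$ only through $\big(\tfrac12 d^2(\pi,\mu(t_i))\big)_i$:
\begin{equation*}
    f^{2,\dagger}_{\varepsilon,m,n}(\pi) = \frac{a}{2}d^2(\pi,\rho) + \varphi\Big(\tfrac12 d^2(\pi,\mu(t_1)),\dots,\tfrac12 d^2(\pi,\mu(t_{n^2}))\Big),
\end{equation*}
where $\varphi(s_1,\dots,s_{n^2}) := c - \tfrac{b}{m}\log\big(\sum_i \lambda_{m+1,n}(\{t_i\})\, e^{-m e^{\hat\kappa t_i}\psi_\varepsilon(s_i)}\big)$. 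By the regularising estimates for $EVI_\kappa$-gradient flows (Lemma \ref{lem: EVI implies boundedness}) one has $\mu(t_i)\in\cD(I)$ for each $i$ (since $t_i>0$), while $\rho\in\cD(I)$ by hypothesis, and a direct differentiation gives $\partial_i\varphi>0$ because $\psi_\varepsilon'>0$. The only obstruction to $\varphi\in\cT$ is that $\psi_\varepsilon$ is merely $C^2$; I would remove it by fixing smooth $\psi_{\varepsilon,k}$ with $\psi_{\varepsilon,k}'>0$ such that $\psi_{\varepsilon,k}\to\psi_\varepsilon$ and $\psi_{\varepsilon,k}'\to\psi_\varepsilon'$ uniformly on $[0,\infty)$ (it suffices to mollify $\psi_\varepsilon$ near $r=\varepsilon$, the only point where smoothness fails). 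Replacing $\psi_\varepsilon$ by $\psi_{\varepsilon,k}$ produces $\varphi_k\in\cT$, hence pairs $(f_k,g_k)\in H_{1,\dagger}$ with $f_k=\tfrac a2 d^2(\cdot,\rho)+\varphi_k(\tfrac12 d^2(\cdot,\bm\mu))$ and $g_k=g^\dagger_{a,\varphi_k,\rho,\bm\mu}$ as in Definition \ref{definition:H1}.

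The core of the argument is the pointwise inequality $g^\dagger_{a,\varphi,\rho,\bm\mu}\le g^{2,\dagger}_{\varepsilon,m,n}$ for the \emph{exact} (non-smooth) profile $\varphi$. To prove it, insert into the formula for $g^\dagger_{a,\varphi,\rho,\bm\mu}$ the explicit derivative $\partial_i\varphi\big(\tfrac12 d^2(\pi,\bm\mu)\big) = b\,\Lambda_{\varepsilon,m,n}^{-1}(\pi)\,\lambda_{m+1,n}(\{t_i\})\,e^{-m h^\varepsilon_\pi(t_i)}\,e^{\hat\kappa t_i}\,\psi_\varepsilon'\big(\tfrac12 d^2(\pi,\mu(t_i))\big)$. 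Then the terms $a[\cE(\rho)-\cE(\pi)-\tfrac\kappa2 d^2(\pi,\rho)]+\tfrac{a^2}2 d^2(\pi,\rho)$ and $\sum_i\partial_i\varphi\,[\cE(\mu(t_i))-\cE(\pi)]$ coincide \emph{exactly} with the corresponding pieces of $g^{2,\dagger}_{\varepsilon,m,n}$ (the latter sum being precisely the $\cE(\mu(t))-\cE(\pi)$ integral against $\lambda_{m+1,n}$). The remaining three terms are controlled by two elementary facts about $\psi_\varepsilon$ from \eqref{eqn:def_approx_of_squareroot}: $\psi_\varepsilon'(\tfrac12 r^2)\,r\le 1$ and $\psi_\varepsilon'(\tfrac12 r^2)\,r^2\le\psi_\varepsilon(\tfrac12 r^2)$ for all $r\ge 0$; the second holds with equality for $\tfrac12 r^2\ge\varepsilon$ and, writing $u:=\tfrac12 r^2\le\varepsilon$, reduces to $\psi_\varepsilon(u)-2u\,\psi_\varepsilon'(u)=\tfrac32(2\varepsilon)^{-3/2}(\varepsilon-u)^2\ge 0$. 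From $\psi_\varepsilon'(\tfrac12 r^2)r\le1$ and $e^{\hat\kappa t}\le1$ one gets $\sum_i\partial_i\varphi\,d(\pi,\mu(t_i))\le b\,\Lambda_{\varepsilon,m,n}^{-1}(\pi)\sum_i\lambda_{m+1,n}(\{t_i\})e^{-m h^\varepsilon_\pi(t_i)}=b$, whence $\tfrac12\big(\sum_i\partial_i\varphi\,d(\pi,\mu(t_i))\big)^2\le\tfrac{b^2}2$ and $a\,d(\pi,\rho)\sum_i\partial_i\varphi\,d(\pi,\mu(t_i))\le ab\,d(\pi,\rho)$, matching the two corresponding terms of $g^{2,\dagger}_{\varepsilon,m,n}$. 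For the curvature term: if $\kappa\ge0$ then $\hat\kappa=0$ and $-\tfrac\kappa2\sum_i\partial_i\varphi\,d^2(\pi,\mu(t_i))\le0$ while the matching term of $g^{2,\dagger}_{\varepsilon,m,n}$ vanishes; if $\kappa<0$ then $\hat\kappa=\kappa$ and $\psi_\varepsilon'(\tfrac12 r^2)r^2\le\psi_\varepsilon(\tfrac12 r^2)$ gives $e^{\hat\kappa t_i}\psi_\varepsilon'\big(\tfrac12 d^2(\pi,\mu(t_i))\big)d^2(\pi,\mu(t_i))\le h^\varepsilon_\pi(t_i)\le\tfrac1m\vee h^\varepsilon_\pi(t_i)$, so that $-\tfrac\kappa2\sum_i\partial_i\varphi\,d^2(\pi,\mu(t_i))\le -b\tfrac{\hat\kappa}2\Lambda_{\varepsilon,m,n}^{-1}(\pi)\int e^{-m h^\varepsilon_\pi(t)}\big(\tfrac1m\vee h^\varepsilon_\pi(t)\big)\,\lambda_{m+1,n}(\dd t)$. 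Adding these up yields $g^\dagger_{a,\varphi,\rho,\bm\mu}\le g^{2,\dagger}_{\varepsilon,m,n}$, and since $\partial_i\varphi_k\to\partial_i\varphi$ uniformly on bounded sets, also $g_k\le g^{2,\dagger}_{\varepsilon,m,n}+o(1)$ there.

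It then remains to check the hypotheses of Lemma \ref{lemma:viscosity_push}(a) with $\widehat A_\dagger=H_{1,\dagger}$ and $A_\dagger=H_{2,\dagger}$. For the first, $\vn{f_k\wedge c-f^{2,\dagger}_{\varepsilon,m,n}\wedge c}\to0$: the soft-minimum $(a_i)\mapsto-\tfrac1m\log\sum_i\lambda_{m+1,n}(\{t_i\})e^{-ma_i}$ is $1$-Lipschitz for the supremum norm, so $\vn{f_k-f^{2,\dagger}_{\varepsilon,m,n}}\le b\,\vn{\psi_{\varepsilon,k}-\psi_\varepsilon}_\infty\to0$. For the second, $\varphi$ (and $\varphi_k$, uniformly in $k$) is bounded below, so on $\{\pi:f_k(\pi)\vee f^{2,\dagger}_{\varepsilon,m,n}(\pi)\le c\}$ the quadratic term forces $d(\pi,\rho)$, hence each $d(\pi,\mu(t_i))$, to be bounded; since $\cE$ is bounded below on metric balls, $-\cE(\pi)$ is then bounded above there, and the $\cE(\mu(t_i))$ are finite constants. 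Consequently $g_k\to g^\dagger_{a,\varphi,\rho,\bm\mu}$ uniformly on that sublevel set, and together with $g^\dagger_{a,\varphi,\rho,\bm\mu}\le g^{2,\dagger}_{\varepsilon,m,n}$ this gives $\limsup_k\sup_{f_k\vee f^{2,\dagger}_{\varepsilon,m,n}\le c}\big(g_k\vee d-g^{2,\dagger}_{\varepsilon,m,n}\vee d\big)\le0$. Lemma \ref{lemma:viscosity_push}(a) then turns any viscosity subsolution of $f-\lambda H_{1,\dagger}f=h$ into a viscosity subsolution of $f-\lambda H_{2,\dagger}f=h$. The supersolution statement is obtained in the same way via Lemma \ref{lemma:viscosity_push}(b): here $f^{2,\ddagger}_{\varepsilon,m,n}=-\tfrac a2 d^2(\cdot,\gamma)-\varphi(\tfrac12 d^2(\cdot,\bm\pi))$ for the analogous $\varphi$ and $\bm\pi=(\pi(t_1),\dots,\pi(t_{n^2}))$, the curvature and quadratic estimates are identical, and all inequalities reverse (the overall sign having flipped) so that $g^\ddagger_{a,\varphi,\gamma,\bm\pi}\ge g^{2,\ddagger}_{\varepsilon,m,n}$, with the corresponding uniform bounds on the superlevel sets $\{f_k\wedge f^{2,\ddagger}_{\varepsilon,m,n}\ge c\}$.

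The step I expect to require most care is the pointwise inequality $g^\dagger_{a,\varphi,\rho,\bm\mu}\le g^{2,\dagger}_{\varepsilon,m,n}$: making the energy and cross terms cancel \emph{on the nose} pins down the precise weights of $\lambda_{m+1,n}$ and the factor $e^{\hat\kappa t}$ inside $\partial_i\varphi$, and reconciling the curvature term $-\tfrac\kappa2\sum_i\partial_i\varphi\,d^2(\pi,\mu(t_i))$ with the $\tfrac1m\vee h^\varepsilon_\pi$-integral in the definition of $g^{2,\dagger}_{\varepsilon,m,n}$ is exactly where the two inequalities for $\psi_\varepsilon$ and the bound $e^{\hat\kappa t}\le1$ are used; one sees there that the harmless-looking relaxation $\vee\tfrac1m$ (flagged in the remark after the definition of $H_{2,\dagger}$) in fact leaves room to spare. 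A secondary point is that the sublevel sets $\{f_k\vee f^{2,\dagger}_{\varepsilon,m,n}\le c\}$ are genuinely metrically bounded, which is what makes the convergence $g_k\to g^\dagger_{a,\varphi,\rho,\bm\mu}$ uniform enough to feed into Lemma \ref{lemma:viscosity_push}.
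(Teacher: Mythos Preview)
Your argument is correct and follows essentially the same route as the paper: you recognise $f^{2,\dagger}_{\varepsilon,m,n}$ as a cylindrical test function, compute the associated $g^{1,\dagger}$, and establish the pointwise domination $g^{1,\dagger}\le g^{2,\dagger}_{\varepsilon,m,n}$ via the two $\psi_\varepsilon$-inequalities. The paper simply asserts that $\psi_\varepsilon$ is ``smooth'' and takes $(f^{2,\dagger}_{\varepsilon,m,n},g^{1,\dagger})\in H_{1,\dagger}$ directly, whereas you correctly note that $\psi_\varepsilon$ is only $C^2$ and insert a mollification step together with Lemma~\ref{lemma:viscosity_push}; this makes your version slightly more rigorous on the $C^\infty$ requirement in $\cT$, at the cost of one extra (routine) approximation layer. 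The curvature bound is also handled the same way in substance: the paper chains $r^2\psi_\varepsilon'(\tfrac12 r^2)\le r\le \psi_\varepsilon(\tfrac12 r^2)$ via Lemma~\ref{lemma:approximate_square_root}(b),(c), while you verify $r^2\psi_\varepsilon'(\tfrac12 r^2)\le \psi_\varepsilon(\tfrac12 r^2)$ by a direct quadratic computation --- these are equivalent.
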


\begin{proof}

We only prove the statement for $H_{1,\dagger}$ and $H_{2,\dagger}$ as the analogous statement for supersolutions follows similarly.

     For a fixed choice of admissible parameters $\rho,\mu,\varepsilon,a,b,m,n,c$  let $f^{2,\dagger}_{\varepsilon,m,n}$ be given by \eqref{eq:f2dag}. Note that the term
    \begin{equation*}
        \theta(\pi) :=b\left(- \frac{1}{m} \log  \Lambda_{\varepsilon,m,n}(\pi) \right) +c
    \end{equation*}
    rewrites, using the definition of $\Lambda_{\varepsilon,m,n}$, to
    \begin{equation}\label{eqn:varphi}
     \theta(\pi) := \varphi\left(\frac{1}{2}d^2\left(\pi,\mu\left(\frac{1}{n}\right) \right), \dots, \frac{1}{2}d^2\left(\pi,\mu(n)\right)\right),
     \end{equation}
     where
     \begin{equation*}
         \varphi(r_1,\dots,r_{n^2}) := b\left( - \frac{1}{m} \log \left( c_{m,n}   \sum_{i=1}^{n^2} e^{-m \left(e^{\hat{\kappa}t} \psi_\varepsilon\left(r_i \right) \right) -(m+1) \frac{i}{n}}\right)\right) +c.
     \end{equation*}
    Using that $\psi_\varepsilon$ is smooth with positive derivative, we find that our test function is a smooth function that is applied to a finite number of metric squared type objects and satisfies $\partial_i \phi > 0$. In other words, there exists $g^{1,\dagger}$ such that $(f^{2,\dagger}_{\varepsilon,m,n},g^{1,\dagger}) \in H_{1,\dagger}$. 

	In particular, using \eqref{eq:reg_g_1dag} we obtain that we can choose  $g^{1,\dagger}$ as follows
	\begin{align}
	g^{1,\dagger}(\pi) & = a \left[\cE(\rho) - \cE(\pi)\right] - a \frac{\kappa}{2}d^2(\pi,\rho) + \frac{1}{2} a^2 d^2(\pi,\rho)\label{eqn:worked_out_Hdagger} \\
    & \qquad + b \Lambda_{\varepsilon,m,n}^{-1}(\pi) \int_0^\infty  e^{-m h^\varepsilon_\pi(t)} \psi_\varepsilon'\left(\frac{1}{2}d^2\left(\pi,\mu(t)\right)\right)e^{\hat{\kappa}t} \times \notag \\
	& \hspace{3.5cm} \left[\cE\left(\mu(t)\right) - \cE(\pi) - \frac{\kappa}{2} d^2\left(\pi,\mu(t) \right)\right] \lambda_{m+1,n}(\dd t) \notag \\
	& \qquad + a b d(\pi,\rho) \Lambda_{\varepsilon,m,n}^{-1}(\pi) \int_0^\infty e^{-m h^\varepsilon_\pi(t)}    \psi_\varepsilon'\left(\frac{1}{2}d^2\left(\pi,\mu(t)\right)\right)e^{\hat{\kappa}t} d\left(\pi,\mu(t)\right) \lambda_{m+1,n}(\dd t) \notag\\
	& \qquad + \frac{1}{2}  b^2 \left[\Lambda_{\varepsilon,m,n}^{-1}(\pi) \int_0^\infty e^{-m h^{\varepsilon}_\pi(t)}   \psi_\varepsilon'\left(\frac{1}{2}d^2\left(\pi,\mu(t)\right)\right) e^{\hat{\kappa}t}  d\left(\pi,\mu(t)\right) \lambda_{m+1,n}(\dd t) \right]^2. \notag
    \end{align}
    To obtain the above expression we used the fact that, for $\theta$  as in \eqref{eqn:varphi}  and $i\in\{1, \dots, n^2\}$, we have
	\begin{align*}
	& \partial_i \varphi\left(\frac{1}{2} d^2\left(\pi,\mu\left(\frac 1 n \right)\right),\dots, \frac{1}{2} d^2\left(\pi,\mu\left(\frac {n^2} n \right)\right) \right) \\ 
    & = b \left(-\frac 1 m \right) \Lambda_{\varepsilon,m,n}^{-1}(\pi)  c_{m+1,n}  (-m)e^{\hat{\kappa}\frac i n}e^{-m h^\varepsilon_\pi(\frac i n) - (m+1) \frac{i}{n}} \psi_\varepsilon'\left(\frac{1}{2}d^2\left(\pi,\mu\left(\frac i n\right)\right)\right).
	\end{align*}
	To finish the proof, it suffices to show that $g^{1,\dagger}(\pi) \leq g^{2,\dagger}_{\varepsilon,m,n}(\pi)$ for all $\pi\in E$. For the final two lines of \eqref{eqn:worked_out_Hdagger}, note that by Lemma \ref{lemma:approximate_square_root} \ref{item:lemma_approx_sqrt_bound_on_product} and recalling that $\hat{\kappa}{\leq}0$, we have
	\begin{equation*}
	    d(\pi,\mu(t)) \psi_\varepsilon'\left(\frac{1}{2}d^2\left(\pi,\mu(t)\right)\right) e^{\hat{\kappa}t} \leq 1
	\end{equation*}
    thus reducing the integrals to $\Lambda_{\varepsilon,m,n}(\pi)$, so that 
	\begin{align*}
	g^{1,\dagger}(\pi) & \leq  a \left[\cE(\rho) - \cE(\pi)\right] - a \frac{\kappa}{2}d^2(\pi,\rho) + \frac{1}{2} a^2 d^2(\pi,\rho) + a b d(\pi,\rho)   + \frac{1}{2} b^2 \\
    & \qquad + b \Lambda_{\varepsilon,m,n}^{-1}(\pi) \int_0^\infty  e^{-m e^{\hat{\kappa}t} d_\varepsilon(\pi,\mu(t))} \psi_\varepsilon'\left(\frac{1}{2}d^2\left(\pi,\mu(t)\right)\right)e^{\hat{\kappa}t} \times \notag \\
	& \hspace{3.5cm} \left[\cE\left(\mu(t)\right) - \cE(\pi) - \frac{\kappa}{2} d^2\left(\pi,\mu(t) \right)\right] \lambda_{m+1,n}(\dd t). \notag 
	\end{align*}
	We focus now on the final integral term and perform a final estimate on the part that involves $\frac{\kappa}{2} d^2(\pi,\mu(t))$. First of all, $\hat{\kappa} \leq \kappa$, so that we can replace $\kappa$ in front of $d^2$ by $\hat{\kappa}$ thus fixing the sign of the integral term. Note furthermore that  by Lemma~\ref{lemma:approximate_square_root} (c), we have 
	\begin{equation*}
	    e^{\hat{\kappa}t} d^2(\pi,\mu(t)) \psi_\varepsilon'\left(\frac{1}{2}d^2\left(\pi,\mu(t)\right)\right) \leq e^{\hat{\kappa}t}d(\pi,\mu(t)) \leq \frac{1}{m} \vee \left( e^{\hat{\kappa}t}d_\varepsilon(\pi,\mu(t))\right),
	\end{equation*}
	which yields
	\begin{align*}
	& g^{1,\dagger}(\pi) \\
	& \leq  a \left[\cE(\rho) - \cE(\pi)\right] - a \frac{\kappa}{2}d^2(\pi,\rho) + \frac{1}{2} a^2 d^2(\pi,\rho) + a b d(\pi,\rho)   + \frac{1}{2} b^2 \\
    & \qquad + b \Lambda_{\varepsilon,m,n}^{-1}(\pi) \int_0^\infty e^{-m e^{\hat{\kappa}t} d_\varepsilon(\pi,\mu(t))} \psi_\varepsilon'\left(\frac{1}{2}d^2\left(\pi,\mu(t)\right)\right)e^{\hat{\kappa}t}  \left[\cE\left(\mu(t)\right) - \cE(\pi) \right] \lambda_{m+1,n}(\dd t) \notag \\
	& \qquad - b \frac{\hat{\kappa}}{2}  \Lambda_{\varepsilon,m,n}^{-1}(\pi) \int_0^\infty  e^{-m e^{\hat{\kappa}t} d_\varepsilon(\pi,\mu(t))} \left( \frac{1}{m} \vee e^{\hat{\kappa}t} d_\varepsilon(\pi,\mu(t)) \right) \lambda_{m+1,n}(\dd t).
	\end{align*}
	We conclude that $g^{1,\dagger}(\pi) \leq g^{2,\dagger}_{\varepsilon,m,n}(\pi)$.
\end{proof}

\subsubsection{Approximating the integral: from $H_{2,\dagger}$ to $H_{3,\dagger}$}

In this section, we will make explicit that the Riemann sum featuring in the approximation of the Tataru distance nicely converges to the corresponding integral. Recall that the measure $\lambda_{m,n}$ appearing in the Riemann sum was a discrete approximation of the exponential measure with mean $m^{-1}$. We denote this exponential measure by $\lambda_m$:
	\begin{equation*}
	    \lambda_{m}(\dd t) := \bONE_{\{t \geq 0\}} m e^{-m t} \dd t.
	\end{equation*}

We next give the definitions of $H_{3,\dagger}$ and $H_{3,\ddagger}$. Note that the only change  is the replacement of the Riemann sum by an integral.

\begin{definition} 
For given $\rho,\mu\in E$ such that $I(\rho) + I(\mu) < \infty$, $a,b,\varepsilon >0$, $c \in \bR$ and  $m\geq 1$ we define
	\begin{equation*}
	    \Lambda_{\varepsilon,m}(\pi) := \int_0^\infty  e^{-m h^{\varepsilon}_{\pi}(t)}\lambda_{m+1}(\dd t)
	\end{equation*}
	{ with $h_\pi^\varepsilon$ as in \eqref{eqn:definition_h},}
	\begin{equation}\label{def: f3}
	f^{3,\dagger}_{\varepsilon,m}(\pi) : = \frac{1}{2} a d^2(\pi,\rho) - \frac{b}{m} \log \Lambda_{\varepsilon,m}(\pi) +c
	\end{equation}
	and
	\begin{equation}\label{def: g3}
	\begin{split}
	 g^{3,\dagger}_{\varepsilon,m}(\pi)&:=  a \left[\cE(\rho) - \cE(\pi)\right] - a \frac{\kappa}{2}d^2(\pi,\rho) + \frac{ a^2}{2} d^2(\pi,\rho) + a b d(\pi,\rho)   + \frac{b^2}{2} \\
	&  \qquad + b \Lambda_{\varepsilon,m}^{-1}(\pi) \int_0^\infty e^{-m h^{\varepsilon}_{\pi}(t)} \psi_\varepsilon'\left(\frac{1}{2}d^2\left(\pi,\mu\left(t \right)\right)\right) e^{\hat{\kappa}t} \left[\cE\left(\mu\left(t\right)\right) - \cE(\pi)\right] \lambda_{m+1}(\dd t)    \\
	 &\qquad -  b \frac{\hat{\kappa}}{2} \Lambda_{\varepsilon,m}^{-1}(\pi) \int_0^\infty  e^{-mh^{\varepsilon}_{\pi}(t)} \left( \frac{1}{m} \vee h^{\varepsilon}_{\pi}(t)  \right)   \lambda_{m+1}(\dd t).
	\end{split}
	\end{equation}
    Finally, $H_{3,\dagger}$ is the operator given by all pairs
	\begin{equation*}
	    H_{3,\dagger} := \left\{(f^{3,\dagger}_{\varepsilon,m},g^{3,\dagger}_{\varepsilon,m}) \, \middle| \, \rho,\mu: \, I(\rho) + I(\mu) < \infty, a,b,\varepsilon >0, c \in \bR, m \geq 1 \right\}.
	\end{equation*}
	For given $\gamma,\pi\in E$ such that $ I(\gamma)+ I(\pi) < \infty$, $a,b,\varepsilon >0$, $c \in \bR$ and  $m\geq 1$ consider
	\begin{equation*}
	f^{3,\ddagger}_{\varepsilon,m}(\mu) : = -\frac{a}{2} d^2(\gamma,\mu) + \frac{b}{m} \log \Lambda_{\varepsilon,m}(\mu) +c,
	\end{equation*}
	where $\Lambda_{\varepsilon,m}(\mu) $ is defined as $\Lambda_{\varepsilon,m}(\pi) $ inverting the role of $\mu$ and $\pi$, and 
	\begin{equation*}
	\begin{split}
	 g^{3,\ddagger}_{\varepsilon,m}(\mu)&:=  a \left[\cE(\mu) - \cE(\gamma)\right] + a \frac{\kappa}{2}d^2(\mu,\gamma) + \frac{ a^2}{2} d^2(\mu,\gamma) - a b d(\mu,\gamma)   {- \frac{1}{2}b^2} \\
	&  \qquad - b \Lambda_{\varepsilon,m}^{-1}(\mu) \int_0^\infty e^{-m h^{\varepsilon}_{\mu}(t)} \psi_\varepsilon'\left(\frac{1}{2}d^2\left(\mu,\pi\left(t \right)\right)\right) e^{\hat{\kappa}t} \left[\cE\left(\pi\left(t\right)\right) - \cE(\mu)\right] \lambda_{m+1}(\dd t)    \\
	 & \qquad +  b \frac{\hat{\kappa}}{2} \Lambda_{\varepsilon,m}^{-1}(\mu) \int_0^\infty  e^{-mh^{\varepsilon}_{\mu}(t)} \left( \frac{1}{m} \vee h^{\varepsilon}_{\mu}(t)  \right)   \lambda_{m+1}(\dd t).
	\end{split}
	\end{equation*}
    Finally, $H_{3,\ddagger}$ is the operator given by all pairs
	\begin{equation*}
	    H_{3,\ddagger} := \left\{(f^{3,\ddagger}_{\varepsilon,m},g^{3,\ddagger}_{\varepsilon,m}) \, \middle| \, \gamma,\pi: \,  I(\gamma)+I(\pi) < \infty, a,b,\varepsilon >0, c \in \bR, m \geq 1 \right\}.
	\end{equation*}
\end{definition}

\begin{theorem} \label{thm: 2to3}
Let $\lambda > 0$ and let $h \in C_b(E)$ be continuous for the weak topology.

	Every weakly upper semi-continuous viscosity subsolution to $f - \lambda H_{2,\dagger} f = h$ is also a viscosity subsolution to $f - \lambda H_{3,\dagger} f = h$.
	
	Every weakly lower semi-continuous viscosity supersolution to $f - \lambda H_{2,\ddagger} f = h$ is also a viscosity supersolution to $f - \lambda H_{3,\ddagger} f = h$.
\end{theorem}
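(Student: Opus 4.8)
The strategy is to verify the hypotheses of Proposition~\ref{proposition:Hamiltonian_convergence_pseudo_coercive} with $\widehat{A}_\dagger = H_{2,\dagger}$ and $A_\dagger = H_{3,\dagger}$ (and the dual version for $H_{2,\ddagger},H_{3,\ddagger}$), the approximating sequence being the obvious one: given $(f^{3,\dagger}_{\varepsilon,m},g^{3,\dagger}_{\varepsilon,m})\in H_{3,\dagger}$ with data $\rho,\mu,a,b,\varepsilon,c,m$, take $(f^{2,\dagger}_{\varepsilon,m,n},g^{2,\dagger}_{\varepsilon,m,n})\in H_{2,\dagger}$, $n\to\infty$, with the same data. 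Throughout we are in the regime $\kappa\neq 0$, so Lemma~\ref{lem: EVI implies boundedness} provides uniform bounds on $\cE(\mu(t))$ and on $e^{\hat\kappa t}d_\varepsilon(\pi,\mu(t))$ along the gradient flow. The backbone of every step is that $\lambda_{m+1,n}$ converges weakly to $\lambda_{m+1}$ on $[0,\infty)$ — a Riemann-sum statement, with the truncation at $n^2$ harmless because of the exponential tail — so that, using continuity and nonnegativity of $t\mapsto h^\varepsilon_\pi(t)=e^{\hat\kappa t}\psi_\varepsilon(\tfrac12 d^2(\pi,\mu(t)))$, one gets $\Lambda_{\varepsilon,m,n}(\pi)\to\Lambda_{\varepsilon,m}(\pi)$ for each $\pi$; this is hypothesis~\ref{item:Ham_conv_weaktop_equi_pointwise}. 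Moreover, on each level set $K^{\rho}_{c,d}$ the modulus of continuity of $t\mapsto h^\varepsilon_\pi(t)$ on compact time intervals can be chosen independent of $\pi$ (it is controlled by the modulus of $\mu(\cdot)$, the Lipschitz constant of $\psi_\varepsilon$, and the diameter of the level set), whence $\Lambda_{\varepsilon,m,n}\to\Lambda_{\varepsilon,m}$ uniformly on $K^{\rho}_{c,d}$.

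For the coercivity hypothesis~\ref{item:Ham_conv_weaktop_compactness}: since $\psi_\varepsilon\geq 0$ we have $h^\varepsilon_\pi\geq 0$, hence $\Lambda_{\varepsilon,m,n}(\pi)\leq 1$ and $f^{2,\dagger}_{\varepsilon,m,n}(\pi)\geq \tfrac12 a d^2(\pi,\rho)+c$, which supplies $\omega_1$ and $\tilde\rho=\rho$. For the upper bound on $g^{2,\dagger}_{\varepsilon,m,n}$ over a ball $B_R(\rho)$, the only terms unbounded in $\cE(\pi)$ are $-a\cE(\pi)$ and $-b\,\Lambda_{\varepsilon,m,n}^{-1}(\pi)\,\cE(\pi)\int e^{-mh^\varepsilon_\pi(t)}\psi_\varepsilon'(\tfrac12 d^2(\pi,\mu(t)))e^{\hat\kappa t}\lambda_{m+1,n}(\dd t)$, whose combined coefficient in front of $-\cE(\pi)$ is $\geq a>0$; every remaining term is bounded uniformly in $n$ and in $\pi\in B_R(\rho)$, using boundedness of $\cE(\mu(t))$ along the flow, boundedness of $\psi_\varepsilon'$ for fixed $\varepsilon$, the inequality $d(\pi,\mu(t))\psi_\varepsilon'(\tfrac12 d^2(\pi,\mu(t)))e^{\hat\kappa t}\leq 1$ from Lemma~\ref{lemma:approximate_square_root}, and a uniform lower bound on $\Lambda_{\varepsilon,m,n}(\pi)$ (because $\lambda_{m+1,n}([0,1])\to\lambda_{m+1}([0,1])>0$ and $h^\varepsilon_\pi$ is bounded on $[0,1]$ for $\pi$ in a fixed ball). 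Thus $g^{2,\dagger}_{\varepsilon,m,n}(\pi)\leq -a\cE(\pi)+C_R$, giving $\omega_{2,R}$.

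The substance is hypotheses~\ref{item:Ham_conv_weaktop_equi_uniform_on_compacts_f}--\ref{item:Ham_conv_weaktop_equi_uniform_on_compacts_g}. Let $(\pi_n)\subseteq K^{\rho}_{c,d}$ converge weakly to $\pi\in K^{\rho}_{c,d}$. Weak lower semicontinuity of $d$ and monotonicity of $\psi_\varepsilon$ give $\liminf_n h^\varepsilon_{\pi_n}(t)\geq h^\varepsilon_\pi(t)$ for each $t$; combined with the uniform convergence $\Lambda_{\varepsilon,m,n}\to\Lambda_{\varepsilon,m}$ on $K^{\rho}_{c,d}$ and weak upper semicontinuity of $\pi\mapsto\Lambda_{\varepsilon,m}(\pi)$ (reverse Fatou, integrands $\leq 1$), this yields $\limsup_n\Lambda_{\varepsilon,m,n}(\pi_n)\leq\Lambda_{\varepsilon,m}(\pi)$, and together with $\liminf_n d^2(\pi_n,\rho)\geq d^2(\pi,\rho)$ we obtain~\ref{item:Ham_conv_weaktop_equi_uniform_on_compacts_f}. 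For~\ref{item:Ham_conv_weaktop_equi_uniform_on_compacts_g} assume in addition $f^{2,\dagger}_{\varepsilon,m,n}(\pi_n)\to f^{3,\dagger}_{\varepsilon,m}(\pi)$; since both summands of $f^{2,\dagger}_{\varepsilon,m,n}(\pi_n)$ have $\liminf$ bounded below by the respective limits and their sum converges, each converges, so $\Lambda_{\varepsilon,m,n}(\pi_n)\to\Lambda_{\varepsilon,m}(\pi)>0$ and $d^2(\pi_n,\rho)\to d^2(\pi,\rho)$. The crucial point is that convergence of the $\Lambda$'s upgrades the weak convergence of $\pi_n$ to locally uniform convergence of $d(\pi_n,\mu(\cdot))$: along any subsequence, Arzel\`a--Ascoli applies (the maps $t\mapsto d(\pi_n,\mu(t))$ are equibounded on $K^{\rho}_{c,d}$ and equicontinuous since $|d(\pi_n,\mu(t))-d(\pi_n,\mu(s))|\leq d(\mu(t),\mu(s))$), producing a further subsequence with $d(\pi_n,\mu(\cdot))\to\ell(\cdot)$ locally uniformly and $\ell(t)\geq d(\pi,\mu(t))$; but then
\[
\int_0^\infty e^{-m e^{\hat\kappa t}\psi_\varepsilon(\tfrac12 \ell(t)^2)}\lambda_{m+1}(\dd t) = \Lambda_{\varepsilon,m}(\pi) = \int_0^\infty e^{-m h^\varepsilon_\pi(t)}\lambda_{m+1}(\dd t),
\]
and strict monotonicity of $\psi_\varepsilon$ forces $\ell\equiv d(\pi,\mu(\cdot))$. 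With this, every $t$-dependent factor in $g^{2,\dagger}_{\varepsilon,m,n}(\pi_n)$ — namely $e^{-mh^\varepsilon_{\pi_n}(t)}$, $\psi_\varepsilon'(\tfrac12 d^2(\pi_n,\mu(t)))$, and $\tfrac1m\vee h^\varepsilon_{\pi_n}(t)$ — converges locally uniformly and boundedly, while the sole $t$-independent piece with indefinite sign, $-\cE(\pi_n)$, satisfies $\limsup_n(-\cE(\pi_n))\leq -\cE(\pi)$ by weak lower semicontinuity of $\cE$ on balls and is bounded on $K^{\rho}_{c,d}$; feeding these into the definition of $g^{2,\dagger}_{\varepsilon,m,n}$ together with $\lambda_{m+1,n}\to\lambda_{m+1}$, $\Lambda_{\varepsilon,m,n}(\pi_n)\to\Lambda_{\varepsilon,m}(\pi)>0$, and $d^2(\pi_n,\rho)\to d^2(\pi,\rho)$ gives $\limsup_n g^{2,\dagger}_{\varepsilon,m,n}(\pi_n)\leq g^{3,\dagger}_{\varepsilon,m}(\pi)$. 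As every subsequence admits such a further subsequence, the $\limsup$ inequality holds along the full sequence, establishing~\ref{item:Ham_conv_weaktop_equi_uniform_on_compacts_g}. Then Proposition~\ref{proposition:Hamiltonian_convergence_pseudo_coercive} yields the subsolution statement, and its dual version (reverse all inequalities, interchange upper/lower semicontinuity, use weak lower semicontinuity of $v$) the supersolution statement.

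I expect the one genuinely delicate point to be precisely the upgrade from weak convergence of $\pi_n$ to locally uniform convergence of $d(\pi_n,\mu(\cdot))$ inside~\ref{item:Ham_conv_weaktop_equi_uniform_on_compacts_g}: this cannot come from weak convergence alone and must exploit that $f^{2,\dagger}_{\varepsilon,m,n}(\pi_n)$ converges to the correct limit — exactly the extra assumption built into condition~\ref{item:Ham_conv_weaktop_equi_uniform_on_compacts_g} of the Proposition — with strict monotonicity of $\psi_\varepsilon$ supplying the needed rigidity. The remaining bookkeeping (uniform-in-$n$ coercivity of $g^{2,\dagger}_{\varepsilon,m,n}$, the uniform lower bound on $\Lambda_{\varepsilon,m,n}$, and the handling of the exponential time-tails) is routine given the EVI estimates of Lemma~\ref{lem: EVI implies boundedness}.
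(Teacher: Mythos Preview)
Your proposal is correct and follows the same overall architecture as the paper (verify (a)--(d) of Proposition~\ref{proposition:Hamiltonian_convergence_pseudo_coercive} for the obvious approximating sequence $n\to\infty$), but your treatment of the key step~(d) is genuinely different and worth noting. The paper decomposes the integrand into three pieces $F^1_n,F^2_n,F^3_n$, handles the nonnegative pieces $F^2,F^3$ via Lemma~\ref{lem:weak conv usc}, and for the sign-indefinite piece $F^1$ passes through Skorokhod's representation, Lemma~\ref{lemma:a.s.limsup_and_meanconvergene_inplies_prob_liminf} (to upgrade the mean convergence of $\Lambda_{\varepsilon,m,n}(\pi_n)$ and the a.s.\ one-sided bound into convergence in probability of $e^{-mh^\varepsilon_{\pi_n}}$), Borel--Cantelli, and finally Fatou. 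You instead use equicontinuity of $t\mapsto d(\pi_n,\mu(t))$ (modulus $d(\mu(t),\mu(s))$, independent of $n$) to invoke Arzel\`a--Ascoli and extract a locally uniform limit $\ell(\cdot)\geq d(\pi,\mu(\cdot))$; then the equality $\int e^{-me^{\hat\kappa t}\psi_\varepsilon(\frac12\ell^2)}\dd\lambda_{m+1}=\Lambda_{\varepsilon,m}(\pi)$ combined with strict monotonicity of $\psi_\varepsilon$ forces $\ell\equiv d(\pi,\mu(\cdot))$, upgrading the weak convergence to locally uniform convergence of all $\pi_n$-dependent factors. This rigidity argument is cleaner and avoids the probabilistic machinery entirely. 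The paper's route, however, has the advantage that the Skorokhod/convergence-in-probability framework is set up once and reused verbatim in the more delicate $H_{3,\dagger}\to H_{4,\dagger}$ step (Lemma~\ref{lem: 3to4 skorokhod}), where the discrete-to-continuous passage is replaced by a large-deviation tilt and the analogue of your Arzel\`a--Ascoli step is less immediate.
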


\begin{proof}
    We only prove the first claim.
    We will argue on the basis of Proposition \ref{proposition:Hamiltonian_convergence_pseudo_coercive}. Let $(f^{3,\dagger}_{\varepsilon,m},g^{3,\dagger}_{\varepsilon,m}) \in H_{3,\dagger}$. Thus, there are $\rho,\mu$ such that $I(\rho) + I(\mu)  < \infty$, $a,b, \varepsilon > 0$ and $c \in \bR$, $m \in \{1,2,\dots\}$ such that 
	\begin{equation*}
	f^{3,\dagger}_{\varepsilon,m}(\pi) = \frac{1}{2}a d^2(\pi,\rho) - \frac{b}{m} \log \Lambda_{\varepsilon,m}(\pi,\mu) + c.
	\end{equation*}
 	For the same $\rho,\mu,a,b,c,\varepsilon, m$ and for all $n\geq 1$, we choose now $(f^{2,\dagger}_{\varepsilon,m,n},g^{2,\dagger}_{\varepsilon,m,n}) \in H_{2,\dagger}$ as in \eqref{eq:f2dag} and proceed to verify the hypothesis of Proposition \ref{proposition:Hamiltonian_convergence_pseudo_coercive}  with $n \rightarrow \infty$ as a running variable.
 	We do so in three steps; in the first step we verify (a), in the second we verify \ref{item:Ham_conv_weaktop_equi_pointwise} and \ref{item:Ham_conv_weaktop_equi_uniform_on_compacts_f} and we conclude by verifying \ref{item:Ham_conv_weaktop_equi_uniform_on_compacts_g}.
 	\begin{itemize}
 	    \item \underline{Step 1: Verification of (a).} It is easily seen that
 	    \begin{equation}\label{eq:H2_to_H3_1}
 	   \forall \pi,n, \quad f^{2,\dagger}_{\varepsilon,m,n}(\pi) \geq \frac{1}{2}a d^2(\pi,\rho) + c.
 	    \end{equation}
 	     Thus, it remains to verify condition \eqref{eq:weak_compactness_via_levelsets 2}.  
       
 To this aim, it is enough  to check that the contribution of the two integral terms appearing in the definition of $g^{2,\dagger}_{\varepsilon,m,n}$ is bounded above on $B_R(\rho)$ by a function of $\cE(\pi)$ that grows less fast than $a\cE(\pi)$, for any $R>0$. To handle the first term, it suffices to observe that the function $t\mapsto \exp(\hat\kappa t)\psi'_{\varepsilon}(\frac{1}{2}d^2(\pi,\mu(t)))$ is non-negative and upper bounded due to Lemma \ref{lemma:approximate_square_root} and that $t\mapsto \cE(\mu(t))-\cE(\pi)$ is upper bounded on $B_{R}(\rho)$ thanks to \eqref{eq: lower bound on energy} and \eqref{eq_energy_identity}. The second integral term can be bounded in terms of $\sqrt{\cE(\pi)}$ using \eqref{eq: EVI implies boundedness 6} from Lemma \ref{lem: EVI implies boundedness}, and the desired conclusion follows.

     \item \underline{Step 2: Verification of (b) and (c).} By the definition of the test functions, (b) comes down to the convergence of $\Lambda_{\varepsilon, m,n}(\pi)$ towards $\Lambda_{\varepsilon,m}(\pi)$ as $n\rightarrow +\infty$, which is a consequence of the  weak convergence of $\lambda_{m+1,n}$ towards $\lambda_{m+1}$ and the continuity of $t\mapsto \mu(t)$, see Lemma \ref{lem: EVI implies boundedness} \ref{item: EVI stability}. In order to establish item (c), let us note that it is enough to  prove  
     \begin{equation}
     \label{eq: 2to3 2}\limsup_{n\rightarrow+\infty} \Lambda_{\varepsilon,m,n}(\pi_n) \leq \Lambda_{\varepsilon,m}(\pi_{\infty}) 
    \end{equation}
      along any weakly converging subsequence $\pi_n\rightarrow\pi_{\infty}$ in  $K^\rho_{\bar c,\bar d}$for any $\bar c,\bar d\in \R$, see \eqref{def: K^rho}. To do so, consider a subsequence weakly converging  to $\pi_\infty$  and observe that, since $d(\cdot,\cdot)$ is weakly lower semi-continuous, both $d_\varepsilon(\cdot,\cdot)$ and $h^{\varepsilon}_{(\cdot)}(\cdot)$ are weakly lower semi-continuous.  Let $t_\infty\in[0,+\infty)$, then for any $t_n \rightarrow t_{\infty}$, we have 
      	        \begin{equation*}
               \limsup_{n\rightarrow+\infty} \exp(-m h^{\varepsilon}_{\pi_n}(t_n)) \leq \exp(-mh^{\varepsilon}_{\pi_{\infty}}(t_{\infty})).   \end{equation*}
      	         But then, recalling the definition of $\Lambda_{\varepsilon,m,n}$, we observe that they are uniformly upper bounded
                and we can deduce \eqref{eq: 2to3 2} from Lemma \ref{lem:weak conv usc}. 
     	      \item \underline{Step 3: Verification of (d).} For any $\bar c,\bar d\in \R$, consider a weakly converging sequence $\pi_n\rightarrow \pi_{\infty}$ such that $(\pi_n)_{n\geq1}\subseteq K^\rho_{\bar c,\bar d}$   and
     	       \begin{equation}\label{eq: 2to3 11} 
     	       \lim_{n \to +\infty} f^{2,\dagger}_{\varepsilon,m,n}(\pi_n) = f^{3,\dagger}_{\varepsilon,m}(\pi_{\infty}).
     	       \end{equation}
     	       We need to show that 
     	       \begin{equation}\label{eq: 2to3 12}
     	       \limsup_{n \to +\infty} g^{2,\dagger}_{\varepsilon,m,n}(\pi_n) \leq g^{3,\dagger}_{\varepsilon,m}(\pi_{\infty}).
     	       \end{equation}
     	               Using the fact that $d(\cdot,\cdot)$ is lower semi-continuous for the weak topology and \eqref{eq: 2to3 2} we deduce from  \eqref{eq: 2to3 11} that
     	                \begin{equation}\label{eq: 2to3 5}
     	                   \lim_{n \to +\infty} d^2(\pi_{n},\rho)=d^2(\pi_{\infty},\rho),\quad \lim_{n \to +\infty} \Lambda_{\varepsilon,m,n}(\pi_n) = \Lambda_{\varepsilon,m}(\pi_\infty). 
     	                \end{equation}

     	                  Next, we observe that to prove \eqref{eq: 2to3 12} it suffices to show that	                 
     	                \begin{equation}\label{eq: 2to3 3}
     	               \limsup_{n\to+\infty} \int F^{i}_n(t) \lambda_{m+1,n}(\dd t)
	                    \leq \int F^{i}_{\infty}(t) \lambda_{m+1}(\dd t), \quad i=1,2,3, \end{equation}
     	                hold, where for $(t,n)\in \R \times {\mathbb{N}_{\infty}}$,  $i=1,2,3,$ we set $F^i_n:[0,+\infty)\to\R$ as 
     	                \begin{equation}\label{eq: 2to3 14} F^1_n(t):=e^{-m h^{\varepsilon}_{\pi_n}(t)} \psi_\varepsilon'\left(\frac{1}{2}d^2\left(\pi_n,\mu(t)\right)\right) e^{\hat{\kappa}t} \left[\cE\left(\mu(t)\right) - \sup_{k\geq 1} \cE(\pi_k)\right],  \end{equation}
     	                \begin{equation*} 
     	                F^2_n(t):=e^{-m h^{\varepsilon}_{\pi_n}(t)} \psi_\varepsilon'\left(\frac{1}{2}d^2\left(\pi_n,\mu(t)\right)\right) e^{\hat{\kappa}t} \left[ \sup_{k\geq 1} \cE(\pi_k) - \cE(\pi_n)\right],
     	                \end{equation*}
     	                and 
     	                \begin{equation*}
     	                    F^3_n(t):=e^{-m h^{\varepsilon}_{\pi_n}(t)} \left(\frac{1}{m} \vee h^{\varepsilon}_{\pi_n}(t) \right). 
     	                \end{equation*}
     	              We first prove \eqref{eq: 2to3 3} for $i=2,3$, as both $F_n^2 \geq 0$ and $F_n^3 \geq 0$. Indeed, using that the maps $s \mapsto e^{-ms}$, $s \mapsto e^{-ms}(\frac{1}{m} \vee s)$ and $s \mapsto \psi_\varepsilon'(s)$ are decreasing (See Lemma \ref{lemma:approximate_square_root}), we can leverage the lower semi-continuity of $d$ and $\cE$ to obtain that
                     for any $t_\infty\in[0,+\infty)$, and any $t_n \rightarrow t_{\infty}$,  we have 
     	                \begin{equation*}
     	                    \limsup_{n\to+\infty} F^{i}_n(t_n) \leq F^{i}_{\infty}(t_{\infty}), \quad i=2,3.
     	                \end{equation*} 
 	                    Moreover, since $(\pi_n)_{n\geq 1}\subseteq K^\rho_{\bar c,\bar d}$ and   \eqref{eq: lower bound on energy} holds, we deduce that  $\sup_{k\geq1}|\cE(\pi_k)|<+\infty$ and eventually that
 	                    \begin{equation*}
                      \sup_{n,t}F^{i}_n(t)<+\infty, \quad i=2,3. 
                      \end{equation*}
 	                    Thus, since $\lambda_{m+1,n}\longrightarrow \lambda_{m+1}$ weakly, we can apply Lemma \ref{lem:weak conv usc}, which gives the desired conclusion. The case $i=1$ is more delicate. 
 	                    Indeed, note that we cannot invoke Lemma \ref{lem:weak conv usc} here since the function $t\mapsto \cE\left(\mu(t)\right)-\sup_k\cE(\pi_k)$ may take negative values and obliges us to proceed otherwise. The proof of this step is carried out separately in Lemma \ref{lem: 2to3 skorokhod} below.
                            \end{itemize} 
\end{proof}

\begin{lemma}\label{lem: 2to3 skorokhod}
For fixed $a,b,c,\rho,\mu$, let $f^{3,\dagger}_{\varepsilon,m}$ be as in \eqref{def: f3}. Moreover, for any $n\geq 1$, let $f^{2,\dagger}_{\varepsilon,m,n}$ be as in \eqref{eq:f2dag}. For any $\bar c,\bar d\in \R$, consider a weakly converging sequence $\pi_n\rightarrow \pi_{\infty}$ such that $(\pi_n)_{n\geq1}\subseteq K^\rho_{\bar c,\bar d}$  and that 
\eqref{eq: 2to3 11} holds. Then
\begin{equation*}
\limsup_{n\to+\infty} \int F^{1}_n(t) \lambda_{m+1,n}(\dd t)
	                    \leq \int F^{1}_{\infty}(t) \lambda_{m+1}(\dd t), 
                     \end{equation*}
	                    where $F^{1}_n(t)$ has been defined in \eqref{eq: 2to3 14}.
\end{lemma}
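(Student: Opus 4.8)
The plan is to represent the measures probabilistically and to upgrade the one-sided (upper semicontinuous) control coming from weak convergence into genuine almost sure convergence, using the hypothesis \eqref{eq: 2to3 11} as the crucial extra input. Write $C := \sup_{k\geq 1}\cE(\pi_k)$, which is finite because $(\pi_k)_{k\geq 1}\subseteq K^{\rho}_{\bar c,\bar d}$ and $\cE$ is bounded below (cf. \eqref{eq: lower bound on energy}), so that, by \eqref{eq: 2to3 14}, $F^1_n(t) = e^{-m h^{\varepsilon}_{\pi_n}(t)}\,\psi_\varepsilon'\!\big(\tfrac12 d^2(\pi_n,\mu(t))\big)\,e^{\hat\kappa t}\,\big(\cE(\mu(t)) - C\big)$. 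Since we are in the case $\kappa\neq 0$, Lemma \ref{lem: EVI implies boundedness} gives $\sup_{t\geq 0}|\cE(\mu(t))|<\infty$, and together with the boundedness of $\psi_\varepsilon'$ (Lemma \ref{lemma:approximate_square_root}) and $0\le e^{-m h^{\varepsilon}_{\pi_n}(t)}\le 1$, $0< e^{\hat\kappa t}\le 1$, this produces a constant $M=M(\varepsilon,m,\rho,\mu,\bar c,\bar d)$ with $|F^1_n(t)|\le M$ for all $n$ and $t$. This uniform bound, not any sign property, is what will rescue the argument.

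It suffices to show that every subsequence of $\big(\int F^1_n\,\lambda_{m+1,n}(\dd t)\big)_n$ has a further subsequence converging to $\int F^1_\infty\,\lambda_{m+1}(\dd t)$; fix such a subsequence (not relabelled). As $\lambda_{m+1,n}$ is a probability measure converging weakly to the probability measure $\lambda_{m+1}$, by the Skorokhod representation theorem there is a probability space carrying $T_n\sim\lambda_{m+1,n}$ and $T_\infty\sim\lambda_{m+1}$ with $T_n\to T_\infty$ almost surely, and then $\int F^1_n\,\lambda_{m+1,n}(\dd t)=\mathbb{E}[F^1_n(T_n)]$ and $\int F^1_\infty\,\lambda_{m+1}(\dd t)=\mathbb{E}[F^1_\infty(T_\infty)]$. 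Put $X_n:=e^{-m h^{\varepsilon}_{\pi_n}(T_n)}$ and $X_\infty:=e^{-m h^{\varepsilon}_{\pi_\infty}(T_\infty)}$, which take values in $[0,1]$. The map $(\pi,t)\mapsto h^{\varepsilon}_\pi(t)=e^{\hat\kappa t}\psi_\varepsilon\!\big(\tfrac12 d^2(\pi,\mu(t))\big)$ is lower semicontinuous jointly for weak convergence in $\pi$ and convergence in $t$ (weak lower semicontinuity of $d$, continuity of $t\mapsto\mu(t)$, monotonicity of $\psi_\varepsilon$), so $\limsup_n X_n\le X_\infty$ almost surely; on the other hand $\mathbb{E}[X_n]=\Lambda_{\varepsilon,m,n}(\pi_n)\to\Lambda_{\varepsilon,m}(\pi_\infty)=\mathbb{E}[X_\infty]$ by \eqref{eq: 2to3 5}, which is exactly where \eqref{eq: 2to3 11} is used. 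From $0\le X_n\le 1$, $\limsup_n X_n\le X_\infty$ a.s. and $\mathbb{E}[X_n]\to\mathbb{E}[X_\infty]$ a short Fatou-type computation (bound $\mathbb{E}[(X_n-X_\infty)^+]\to 0$ by bounded convergence, then $\mathbb{E}[(X_n-X_\infty)^-]=\mathbb{E}[(X_n-X_\infty)^+]-(\mathbb{E}[X_n]-\mathbb{E}[X_\infty])\to 0$) gives $X_n\to X_\infty$ in $L^1$, hence, along a further subsequence, $X_n\to X_\infty$ almost surely.

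Along this subsequence we bootstrap from convergence of the exponential factor to convergence of the distances: $h^{\varepsilon}_{\pi_n}(T_n)\to h^{\varepsilon}_{\pi_\infty}(T_\infty)$ a.s., so $d_\varepsilon(\pi_n,\mu(T_n))=e^{-\hat\kappa T_n}h^{\varepsilon}_{\pi_n}(T_n)\to e^{-\hat\kappa T_\infty}h^{\varepsilon}_{\pi_\infty}(T_\infty)=d_\varepsilon(\pi_\infty,\mu(T_\infty))$ a.s.; since $\psi_\varepsilon$ is a continuous, strictly increasing bijection of $[0,\infty)$ onto its range, its inverse is continuous, whence $\tfrac12 d^2(\pi_n,\mu(T_n))\to\tfrac12 d^2(\pi_\infty,\mu(T_\infty))$ and therefore $d(\pi_n,\mu(T_n))\to d(\pi_\infty,\mu(T_\infty))$ almost surely. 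Consequently, using continuity of $\psi_\varepsilon'$, of $t\mapsto e^{\hat\kappa t}$, and of $t\mapsto\cE(\mu(t))$ (absolute continuity of the energy along the flow, Lemma \ref{lem: EVI implies boundedness}), all factors of $F^1_n(T_n)$ converge and $F^1_n(T_n)\to F^1_\infty(T_\infty)$ a.s.; since $|F^1_n(T_n)|\le M$, dominated convergence gives $\mathbb{E}[F^1_n(T_n)]\to\mathbb{E}[F^1_\infty(T_\infty)]$. As the initial subsequence was arbitrary, the full sequence of integrals converges to $\int F^1_\infty\,\lambda_{m+1}(\dd t)$, which is stronger than the asserted $\limsup$ inequality. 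The main obstacle — and the reason this case is isolated as a separate lemma rather than handled like $i=2,3$ — is precisely that $\cE(\mu(t))-C$ changes sign, making Lemma \ref{lem:weak conv usc} inapplicable; the Skorokhod representation together with the \emph{equality} forced by \eqref{eq: 2to3 11} is what converts the merely one-sided control of $e^{-m h^{\varepsilon}_{\pi_n}}$ into full a.s.\ convergence of the distances, after which ordinary dominated convergence finishes the job.
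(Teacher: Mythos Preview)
Your argument follows the paper's proof essentially step for step: Skorokhod representation, upgrading the one-sided control on $X_n=e^{-mh^{\varepsilon}_{\pi_n}(T_n)}$ to genuine convergence via the combination ``$\limsup X_n\le X_\infty$ a.s.'' plus ``$\bE[X_n]\to\bE[X_\infty]$'' (the paper packages this as Lemma~\ref{lemma:a.s.limsup_and_meanconvergene_inplies_prob_liminf}, you write the Fatou computation out), extraction of an a.s.\ converging subsequence, and bootstrapping through the strictly increasing $\psi_\varepsilon$ to recover $d(\pi_n,\mu(T_n))\to d(\pi_\infty,\mu(T_\infty))$. The organisation via ``every subsequence has a further subsequence'' versus the paper's argument by contradiction is cosmetic.

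There is one genuine slip. Your claimed uniform two-sided bound $|F^1_n(t)|\le M$ rests on $\sup_{t\ge 0}|\cE(\mu(t))|<\infty$, but Lemma~\ref{lem: EVI implies boundedness} does not give this when $\kappa<0$: item~\ref{item: EVI Bakry} only says that $t\mapsto e^{2\hat\kappa t}\cE(\mu(t))$ is bounded below, and with $\hat\kappa=\kappa<0$ this allows $e^{\hat\kappa t}|\cE(\mu(t))|$ to grow like $e^{|\kappa|t}$. So dominated convergence is not justified and your stronger conclusion of full convergence is unproven. This does not damage the lemma, however: the upper bound $F^1_n(t)\le \|\psi_\varepsilon'\|_\infty\,[\cE(\mu)-C]^+$ (from $\cE(\mu(t))\le\cE(\mu)$) is uniform, and once you have $F^1_n(T_n)\to F^1_\infty(T_\infty)$ a.s.\ along your subsequence, Fatou's lemma for the $\limsup$ gives $\limsup_n \bE[F^1_n(T_n)]\le \bE[F^1_\infty(T_\infty)]$, which is exactly the paper's final step and all that the statement asks for.
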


\begin{proof}

 	                       Thanks to  Skorokhod's Theorem we can find random variables $(\zeta_n)_{n\in\bar{\mathbb{N}}}$ defined on the same probability space of $\lambda_{m+1,n}$  and such that
 	                      \begin{equation}\label{eq: 2to3 16} 
                        \zeta_{n}\sim \lambda_{m+1,n}\, \forall n\in\mathbb{N},\quad \zeta_{\infty}\sim\lambda_{m+1},\quad \zeta_n\rightarrow \zeta_{\infty} \, a.s. 
                        \end{equation}
                            We then proceed by contradiction and assume that there exist a subsequence $(n_k)_{k\geq 1}$ and $r>0$ such that 
                            \begin{equation}\label{eq: 2to3 6} \forall k\geq 1 \  \mathbb{E}[F^{1}_{n_k}(\zeta_{n_k})] \geq  \mathbb{E}[F^{1}_{\infty}(\zeta_{\infty})]+r.
                            \end{equation}
                            We derive a contradiction in two steps.
                            \begin{itemize}
                                \item\underline{Step 1: Almost sure convergence of $F^1_{n_k}(\zeta_{n_k})$ to $F^{1}_{\infty}(\zeta_{\infty})$ along a subsequence.} Due to the continuity of $t\mapsto \cE(\mu(t))$, see Lemma \ref{lem: EVI implies boundedness}, and of $r\mapsto \psi'_{\varepsilon}(\frac{r^2}{2})$, it is sufficient to show that 
                                \begin{equation}\label{eq: 2to3 7} 
                                \exp(-m h^{\varepsilon}_{m,\pi_{n_k}}(\zeta_{n_k})) \longrightarrow \exp(-m h^{\varepsilon}_{m,\pi_{\infty}}(\zeta_{\infty})) \quad \text{a.s.}  
                                \end{equation}
                                along a subsequence. To do so, it suffices to show that convergence in measure takes place.

We will do so on the basis of Lemma \ref{lemma:a.s.limsup_and_meanconvergene_inplies_prob_liminf} using the random variables $\zeta_{n_k}$, $\zeta_\infty$ and
\begin{equation*}
    f_k := \exp\left(-m h^{\varepsilon}_{m,\pi_{n_k}}(\zeta_{n_k})\right), \qquad f_\infty := \exp\left(-m h^{\varepsilon}_{m,\pi_{\infty}}(\zeta_\infty)\right).
\end{equation*}
We check the conditions of the Lemma. Item \ref{item:a.s.limsup_limsup} follows by \eqref{eq: 2to3 16} using the weak convergence of $\pi_n$ towards $\pi_{\infty}$ and the lower semi-continuity of  $d_{\varepsilon}$. Item \ref{item:a.s.limsup_mean} is implied by the second statement of \eqref{eq: 2to3 5}. Thus Step 1 is concluded by noting that by the Borel-Cantelli's Lemma any sequence converging in probability allows for a subsequence converging almost surely.

                                \item\underline{Step 2: Conclusion.}
                                Consider the  subsequence $(n_{k_l})$ along which we have 
                                \begin{equation}\label{eq: 2to3 19} 
                                F^1_{n_{k_l}}(\zeta_{n_{k_l}}) \longrightarrow F^1_{\infty}(\zeta_{\infty}) \quad \text{a.s.}
                                \end{equation}
                                 given  in  \underline{Step 1}.
                                Since $\cE(\mu(t))\leq \cE(\mu_0)$ and 
                                \begin{equation*}
                                \sup_{t\geq0,l\in \mathbb{N}} \Big|\exp\big(-m h^{\varepsilon}_{m,\pi_{n_{k_l}}}(\zeta_{n_{k_l}})\big)\, \psi_{\varepsilon}'\Big(\frac{1}{2}d^2_{\varepsilon}\big(\pi_{n_{k_l}},\mu(\zeta_{n_{k_l}}\big)\Big) \Big|<+\infty
                                \end{equation*}
                                we find
                                \begin{equation*}
                                \sup_{t\geq0,l \in\mathbb{N}} F^{1}_{n_{k_l}}(\zeta_{n_{k_l}})<+\infty.
                                \end{equation*}
                                But then, using Fatou's Lemma we find
                                \begin{equation*}
                                    \begin{split}
                                        \limsup_{l\to +\infty} \mathbb{E}[F^{1}_{n_{k_l}}(\zeta_{n_{k_l}})] &\leq  \mathbb{E}[\limsup_{l\to + \infty} F^{1}_{n_{k_l}}(\zeta_{n_{k_l}})] \\
                                       & \stackrel{\eqref{eq: 2to3 19}}{=}\mathbb{E}[ F^{1}_{\infty}(\zeta_{\infty})],
                                    \end{split}
                                \end{equation*}
                                which contradicts \eqref{eq: 2to3 6}.
                            \end{itemize}
\end{proof}

	\subsubsection{From $H_{3,\dagger}$ to $H_{4,\dagger}$}
In the definition of $H_{4,\dagger}$, we appeal again to the  approximation of the Tataru distance $d_T(\pi,\mu)$, defined as in \eqref{eqn:def_smoothed_Tataru_distance}. We recall that  

\begin{equation*} 
    d_{T,\varepsilon}(\pi,\mu) = \inf_{t \geq 0} \left\{ t + e^{\hat{\kappa}t} \psi_\varepsilon\left(\frac{1}{2}d^2(\pi,\mu(t))\right) \right\} =\inf_{t \geq 0} \left\{ t + h_{\pi}^\varepsilon(t) \right\} ,
\end{equation*}
where $\psi_\varepsilon$ and $h_\pi^\varepsilon$ have been defined in \eqref{eqn:def_approx_of_squareroot} and \eqref{eqn:definition_h} respectively. When sending $m \rightarrow \infty$ for the functions in the domain of $H_{3,\dagger}$ and $H_{3,\ddagger}$ we obtain by large deviation arguments the following operator.

\begin{definition} \label{definition:H4}
	Fix $\rho,\mu \in E$ such that $I(\rho) + I(\mu)  < \infty$, $a,b,\varepsilon > 0$ and $c \in \R$. Set
	\begin{equation}\label{def: fH4}
	f^{4,\dagger}_{\varepsilon}(\pi) := \frac{1}{2}a d^2(\pi,\rho) + b d_{T,\varepsilon}(\pi,\mu)  + c.
	\end{equation}
	For $\pi\in {E}$, consider the set $\Xi(\pi) \subseteq \bR^+$ given by
    \begin{equation*}
        \Xi(\pi):= \argmin_{t \geq 0} \left\{ t +e^{\hat{\kappa}t} d_{\varepsilon}(\pi,\mu(t)) \right\} = \argmin_{t \geq 0} \left\{t + h_{\pi}^\varepsilon(t)\right\}
    \end{equation*}
        and define	
		\begin{align}\label{def: gH4}
	g^{4,\dagger}_{\varepsilon}(\pi) & := a \left[\cE(\rho) - \cE(\pi)\right] - a\frac{\kappa}{2}d^2(\pi,\rho) + \frac{1}{2} a^2 d^2(\pi,\rho) + a b d(\pi,\rho) + \frac{1}{2} b^2 \\
	& \nonumber \qquad + b \sup_{t \in \Xi(\mu)} \left\{ e^{\hat{\kappa}t} \left[ \cE(\mu(t)) - \cE(\pi)\right]\psi_\varepsilon'\left(\frac{1}{2}d^2(\pi,\mu(t))\right)  - \frac{\hat{\kappa}}{2}e^{\hat{\kappa}t}  d_\varepsilon\left(\pi,\mu(t)\right)  \right\}. 
	\end{align}

     $H_{4,\dagger}$ is the operator given by all pairs: 
	\begin{align}\label{def: H4}
	    H_{4,\dagger} & := \left\{(f^{4,\dagger}_{\varepsilon},g^{4,\dagger}_{\varepsilon}) \, \middle| \, \rho,\mu: \, I(\rho) + I(\mu) < \infty,a,b>0,c\in \bR \right\}.
	\end{align}
	
		Fix $\gamma,\pi\in E$ such that $ I(\gamma)+ I(\pi) < \infty$, $a,b,\varepsilon > 0$ and $c \in \R$. Set
	\begin{equation*}
	f^{4,\ddagger}_{\varepsilon}(\mu) := -\frac{1}{2}a d^2(\gamma,\mu) - b d_{T,\varepsilon}(\mu,\pi)  + c.
	\end{equation*}
	For $\mu\in {E}$, consider the set $\Xi(\mu) \subseteq \bR^+$ given by
    \begin{equation*}
        \Xi(\mu):= \argmin_{t \geq 0} \left\{ t +e^{\hat{\kappa}t} d_{\varepsilon}(\mu,\pi(t)) \right\} = \argmin_{t \geq 0} \left\{t + h_{\mu}^\varepsilon(t)\right\}
    \end{equation*}
        and define	
    \begin{align*}
	g^{4,\ddagger}_{\varepsilon}(\mu) & := a \left[\cE(\mu) - \cE(\gamma)\right] + a\frac{\kappa}{2}d^2(\gamma,\mu) + \frac{1}{2} a^2 d^2(\gamma,\mu) - a b d(\gamma,\mu) { - \frac{1}{2}b^2 } \\
	& \nonumber \qquad - b \sup_{t \in \Xi(\mu)} \left\{ e^{\hat{\kappa}t} \left[ \cE(\pi(t)) - \cE(\mu)\right]\psi_\varepsilon'\left(\frac{1}{2}d^2(\mu,\pi(t))\right)  - \frac{\hat{\kappa}}{2}e^{\hat{\kappa}t}  d_\varepsilon\left(\mu,\pi(t)\right)  \right\}.
	\end{align*}

     $H_{4,\ddagger}$ is the operator given by all pairs: 
	\begin{align*}\label{def: H4d}
	    H_{4,\ddagger} & := \left\{(f^{4,\ddagger}_{\varepsilon},g^{4,\ddagger}_{\varepsilon}) \, \middle| \, \gamma,\pi: \,  I(\gamma) +I(\pi)< \infty,a,b>0,c\in \bR \right\}.
	   \end{align*}
\end{definition}

In the next theorem we prove the main result of this section.
\begin{theorem} \label{thm: 3to4}
{Let $\lambda > 0$ and let $h \in C_b(E)$ be continuous for the weak topology.}

	Every weakly upper semi-continuous viscosity subsolution to $f - \lambda H_{3,\dagger} f = h$ is also a viscosity subsolution to $f - \lambda H_{4,\dagger} f = h$.
	
	Every weakly lower semi-continuous viscosity supersolution to $f - \lambda H_{3,\ddagger} f = h$ is also a viscosity supersolution to $f - \lambda H_{4,\ddagger} f = h$. 
\end{theorem}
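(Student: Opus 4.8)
The plan is to deduce Theorem~\ref{thm: 3to4} from Proposition~\ref{proposition:Hamiltonian_convergence_pseudo_coercive}, now with $m\to\infty$ playing the role of the running parameter; as throughout, I treat only the subsolution statement, the supersolution one being symmetric. Fix $(f^{4,\dagger}_{\varepsilon},g^{4,\dagger}_{\varepsilon})\in H_{4,\dagger}$ with parameters $\rho,\mu$ (so $I(\rho)+I(\mu)<\infty$), $a,b,\varepsilon>0$, $c\in\bR$. For each $m\geq1$ pick $(f^{3,\dagger}_{\varepsilon,m},g^{3,\dagger}_{\varepsilon,m})\in H_{3,\dagger}$ built from the \emph{same} parameters, and verify hypotheses (a)--(d) of Proposition~\ref{proposition:Hamiltonian_convergence_pseudo_coercive} along $m\to\infty$ with $\tilde\rho:=\rho$. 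Hypothesis (a) is cheap: since $e^{-mh^{\varepsilon}_{\pi}(t)}\leq1$ and $\lambda_{m+1}$ is a probability measure, $\Lambda_{\varepsilon,m}(\pi)\leq1$, hence $f^{3,\dagger}_{\varepsilon,m}(\pi)\geq\tfrac12 a\,d^2(\pi,\rho)+c$ uniformly in $m$, which gives $\omega_1$; the uniform-in-$m$ energy coercivity of $g^{3,\dagger}_{\varepsilon,m}$ on the balls $B_R(\rho)$ is obtained exactly as in Step~1 of the proof of Theorem~\ref{thm: 2to3} (the $-a\,\cE(\pi)$ term dominates, the first integral term is controlled through boundedness of $t\mapsto e^{\hat\kappa t}\psi'_{\varepsilon}(\tfrac12 d^2(\pi,\mu(t)))$ from Lemma~\ref{lemma:approximate_square_root} and of $t\mapsto\cE(\mu(t))$ on $B_R(\rho)$, the second through $\sqrt{\cE(\pi)}$ via Lemma~\ref{lem: EVI implies boundedness}), with $\lambda_{m+1}$ replacing $\lambda_{m+1,n}$.

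Hypotheses (b) and (c) both rest on the Laplace asymptotics $-\tfrac1m\log\Lambda_{\varepsilon,m}(\pi)\to d_{T,\varepsilon}(\pi,\mu)$, which I would record as a short lemma: the pointwise limit (b) is \eqref{eqn:Laplace_asymptotics_intro} applied to the continuous map $t\mapsto t+h^{\varepsilon}_{\pi}(t)$, the prefactor $e^{-t}$ and the normalization being $o(1)$ on the exponential scale. For (c), since $t+h^{\varepsilon}_{\pi}(t)\geq d_{T,\varepsilon}(\pi,\mu)$ for every $t$, one has $\Lambda_{\varepsilon,m}(\pi)\leq(m+1)e^{-m\,d_{T,\varepsilon}(\pi,\mu)}$, so $\liminf_m(-\tfrac bm\log\Lambda_{\varepsilon,m}(\pi_m))\geq b\liminf_m d_{T,\varepsilon}(\pi_m,\mu)$; combining this with the weak lower semicontinuity of $d^2(\cdot,\rho)$ and of $d_{T,\varepsilon}(\cdot,\mu)$ on $\{d(\cdot,\rho)\leq c\}$ gives $\liminf_m f^{3,\dagger}_{\varepsilon,m}(\pi_m)\geq f^{4,\dagger}_{\varepsilon}(\pi_\infty)$. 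The weak lower semicontinuity of $d_{T,\varepsilon}(\cdot,\mu)$ on balls holds because on $\{d(\cdot,\rho)\leq c\}$ the infimum defining it may be restricted to a compact interval $[0,T_0]$, $T_0=T_0(c)$, on which $(\pi,t)\mapsto t+e^{\hat\kappa t}\psi_{\varepsilon}(\tfrac12 d^2(\pi,\mu(t)))$ is jointly lower semicontinuous (using $d$ weakly lower semicontinuous, $t\mapsto\mu(t)$ $d$-continuous, $\psi_{\varepsilon}$ continuous and increasing).

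The core is hypothesis (d): for $\pi_m\to\pi_\infty$ weakly in $K^{\rho}_{c,d}$ with $\lim_m f^{3,\dagger}_{\varepsilon,m}(\pi_m)=f^{4,\dagger}_{\varepsilon}(\pi_\infty)$ one must show $\limsup_m g^{3,\dagger}_{\varepsilon,m}(\pi_m)\leq g^{4,\dagger}_{\varepsilon}(\pi_\infty)$. The convergence of $f^{3,\dagger}_{\varepsilon,m}(\pi_m)$ together with the two lower bounds of the previous paragraph forces $d(\pi_m,\rho)\to d(\pi_\infty,\rho)$ and $d_{T,\varepsilon}(\pi_m,\mu)\to d_{T,\varepsilon}(\pi_\infty,\mu)$, so the first line of $g^{3,\dagger}_{\varepsilon,m}(\pi_m)$ passes to the limit (the only term needing an inequality, $-a\,\cE(\pi_m)$, has the right sign by weak lower semicontinuity of $\cE$ on balls). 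Writing $\nu^{\pi}_m(\mathrm dt):=\Lambda_{\varepsilon,m}(\pi)^{-1}e^{-mh^{\varepsilon}_{\pi}(t)}\lambda_{m+1}(\mathrm dt)$ and $\Phi^{\pi}(t):=\psi'_{\varepsilon}(\tfrac12 d^2(\pi,\mu(t)))e^{\hat\kappa t}[\cE(\mu(t))-\cE(\pi)]-\tfrac{\hat\kappa}{2}h^{\varepsilon}_{\pi}(t)$, and using that $\tfrac1m\vee h^{\varepsilon}_{\pi_m}=h^{\varepsilon}_{\pi_m}+O(1/m)$, the two integral terms reduce to the claim $\limsup_m\int_0^\infty\Phi^{\pi_m}\,\mathrm d\nu^{\pi_m}_m\leq\sup_{t\in\Xi(\pi_\infty)}\Phi^{\pi_\infty}(t)$, which I would isolate as a lemma (the analogue of Lemma~\ref{lem: 2to3 skorokhod}) and prove by Skorokhod representation. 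The measures $\nu^{\pi_m}_m$ are tight (they charge $[0,T_0]$), so along a weakly convergent subsequence realize $\zeta_m\sim\nu^{\pi_m}_m$, $\zeta_\infty\sim$ limit, with $\zeta_m\to\zeta_\infty$ a.s. The Gibbs-concentration estimate $\nu^{\pi_m}_m(\{t:t+h^{\varepsilon}_{\pi_m}(t)>d_{T,\varepsilon}(\pi_m,\mu)+\delta\})\to0$ — numerator $\leq(m+1)e^{-m(d_{T,\varepsilon}(\pi_m,\mu)+\delta)}$, denominator $\geq c_m e^{-m(d_{T,\varepsilon}(\pi_m,\mu)+o(1))}$ by a Laplace lower bound, itself valid because $\{h^{\varepsilon}_{\pi_m}\}_m$ is equicontinuous on $[0,T_0]$ (equicontinuity of $t\mapsto\mu(t)$ and Lipschitzness of $\psi_{\varepsilon}$ on bounded sets) — yields that $h^{\varepsilon}_{\pi_m}(\zeta_m)\to d_{T,\varepsilon}(\pi_\infty,\mu)-\zeta_\infty$ a.s. along a further subsequence; dividing by $e^{\hat\kappa\zeta_m}\to e^{\hat\kappa\zeta_\infty}>0$ and inverting the strictly increasing $\psi_{\varepsilon}$ gives a.s. convergence of $\tfrac12 d^2(\pi_m,\mu(\zeta_m))$, which weak lower semicontinuity of $d$ (together with $\mu(\zeta_m)\to\mu(\zeta_\infty)$) pins to $\tfrac12 d^2(\pi_\infty,\mu(\zeta_\infty))$, so also $\zeta_\infty\in\Xi(\pi_\infty)$ a.s. Every factor of $\Phi^{\pi_m}(\zeta_m)$ then converges a.s. — $\psi'_{\varepsilon}(\cdot)$, $e^{\hat\kappa\zeta_m}$, $\cE(\mu(\zeta_m))$ (continuity of $\cE$ along the flow, Lemma~\ref{lem: EVI implies boundedness}), $h^{\varepsilon}_{\pi_m}(\zeta_m)$ — except for $-\cE(\pi_m)$, which has $\limsup_m(-\cE(\pi_m))\leq-\cE(\pi_\infty)$ and multiplies the \emph{nonnegative} convergent factor $\psi'_{\varepsilon}(\cdot)e^{\hat\kappa\zeta_m}$; hence $\limsup_m\Phi^{\pi_m}(\zeta_m)\leq\Phi^{\pi_\infty}(\zeta_\infty)\leq\sup_{t\in\Xi(\pi_\infty)}\Phi^{\pi_\infty}(t)$ a.s. Since $\cE(\mu(t))\leq\cE(\mu)$, $\psi'_{\varepsilon}(\cdot)e^{\hat\kappa t}$ is bounded, and $-\tfrac{\hat\kappa}{2}h^{\varepsilon}_{\pi_m}(t)$ is bounded above (the distance bound of Lemma~\ref{lem: EVI implies boundedness} for $\kappa\neq0$), $\Phi^{\pi_m}(\zeta_m)$ is uniformly bounded above, so reverse Fatou concludes.

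I expect this last lemma to be the main obstacle, for the following reason. Unlike in the step $H_{2}\to H_{3}$, there is no weakly convergent family of base measures to lean on: the integrating measures $\nu^{\pi_m}_m$ depend on $m$ both through the inverse temperature and through the moving point $\pi_m$, and the integrand $\Phi^{\pi_m}$ is \emph{not} weakly upper semicontinuous in $\pi_m$ because of the factor $[\cE(\mu(t))-\cE(\pi)]$, whose sign varies. The remedy sketched above is to extract from $f^{3,\dagger}_{\varepsilon,m}(\pi_m)\to f^{4,\dagger}_{\varepsilon}(\pi_\infty)$ the exact exponential rate of $\Lambda_{\varepsilon,m}(\pi_m)$, feed it into the Gibbs-concentration estimate to obtain a genuine (not merely one-sided) a.s. limit of $h^{\varepsilon}_{\pi_m}(\zeta_m)$, promote this via strict monotonicity of $\psi_{\varepsilon}$ to a.s. convergence of $d(\pi_m,\mu(\zeta_m))$, after which the only residual asymmetry, the term $-\cE(\pi_m)$, carries the favourable sign; all remaining steps (the joint lower semicontinuity and restriction-to-$[0,T_0]$ arguments, the uniform Laplace bounds, the equicontinuity estimate) are routine but tedious.
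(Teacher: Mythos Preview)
Your proposal is correct and follows the paper's overall architecture: apply Proposition~\ref{proposition:Hamiltonian_convergence_pseudo_coercive} with $m\to\infty$, verify (a) as in Theorem~\ref{thm: 2to3}, get (b) and (c) from Laplace asymptotics, and for (d) pass to the tilted measures, use Skorokhod and reverse Fatou. The differences are in the technical implementation of (c) and (d). For (c) the paper invokes a Varadhan-type upper bound for lower semicontinuous weights (Proposition~\ref{proposition:Varadhan_usc}\ref{item:prop:usc varadhan_upper_bound}); you replace this by the elementary bound $\Lambda_{\varepsilon,m}(\pi)\leq(m+1)e^{-m\,d_{T,\varepsilon}(\pi,\mu)}$ together with weak lower semicontinuity of $d_{T,\varepsilon}(\cdot,\mu)$. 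For (d) the paper uses Proposition~\ref{proposition:Varadhan_usc}\ref{item:prop:usc varadhan_limit_points}--\ref{item:prop_usc varadhan_convergence_mean_weight} (tightness and support of the tilted limit, plus convergence of the weighted mean, the latter proved via a relative-entropy identity) combined with Lemma~\ref{lemma:a.s.limsup_and_meanconvergene_inplies_prob_liminf} to obtain a.s.\ convergence of $h^\varepsilon_{\pi_{m}}(\xi_{m})$; you instead argue directly by a Gibbs-concentration estimate whose Laplace lower bound you extract from equicontinuity of $\{h^\varepsilon_{\pi_m}\}_m$ on a compact time interval. The paper also splits the integrand into three pieces $G^1,G^2,G^3$ to isolate the non-negative part; you keep $\Phi^{\pi_m}$ whole and handle the sign issue by observing that $-\cE(\pi_m)$ multiplies a nonnegative convergent factor. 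Both routes are valid; the paper's is more modular (the Varadhan lemmas in Appendix~\ref{Appendix:Varadhan_tilt} are reusable), yours is more self-contained but leans on the equicontinuity of $t\mapsto h^\varepsilon_{\pi_m}(t)$, which is available here but is a context-specific input.

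Two small imprecisions to fix when you write this up: first, what the convergence of $f^{3,\dagger}_{\varepsilon,m}(\pi_m)$ together with the two lower bounds directly yields is $-\tfrac{1}{m}\log\Lambda_{\varepsilon,m}(\pi_m)\to d_{T,\varepsilon}(\pi_\infty,\mu)$, not $d_{T,\varepsilon}(\pi_m,\mu)\to d_{T,\varepsilon}(\pi_\infty,\mu)$; the latter does follow, but only after combining the elementary upper bound on $\Lambda_{\varepsilon,m}$ with the weak lower semicontinuity of $d_{T,\varepsilon}(\cdot,\mu)$. Second, the tilted measures $\nu^{\pi_m}_m$ are not supported on $[0,T_0]$; their tightness is a consequence of the very Laplace lower bound you establish via equicontinuity, so the order of the argument should be: lower bound first, then tightness and concentration.
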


\begin{proof}
  As in the proof of Theorem \ref{thm: 2to3}, we only prove the first claim and argue on the basis of Proposition \ref{proposition:Hamiltonian_convergence_pseudo_coercive}. 
  
  Let $(f^{4,\dagger}_{\varepsilon},g^{4,\dagger}_{\varepsilon}) \in H_{4,\dagger}$. Thus, there exist $\rho,\mu,a,b,c,\varepsilon$ fulfilling the requirements in \eqref{def: H4} such that $(f^{4,\dagger}_{\varepsilon},g^{4,\dagger}_{\varepsilon})$ take the form \eqref{def: fH4} and \eqref{def: gH4}. We proceed by considering for the same $\rho,\mu,a,b,c,\varepsilon$ and for all $m\geq 1$, $f^{3,\dagger}_{\varepsilon,m},g^{3,\dagger}_{\varepsilon,m}$ as in \eqref{def: f3}, \eqref{def: g3} and show that the hypothesis of Proposition \ref{proposition:Hamiltonian_convergence_pseudo_coercive} are satisfied, which yields the desired result. We break down the proof in three steps.
  	\begin{itemize}
 	    \item \underline{Step 1: Verification of (a).} This follows as in the proof of Theorem \ref{thm:  2to3}.
 	    \item \underline{Step 2: Verification of (b) and (c).}
 	    To establish (b) we need to show that for all $\pi\in E$,
 	    \begin{equation}\label{eq: 3to4 5} 
 	    \lim_{m\to +\infty}\frac1m \log \Lambda_{\varepsilon,m}(\pi) = d_{T,\varepsilon}(\pi,\mu). 
 	    \end{equation}
 	     To this aim, we observe that $h_{\pi}^\varepsilon(t) = e^{\hat\kappa t}d_{\varepsilon}(\pi,\mu(t))$ is a bounded continuous function and that the sequence of measures $(\lambda_{m+1})_{m\geq 1}$ satisfies the large deviation principle on $\bR_+$ with rate function $\mathcal{I}(t)=t$. Thus the hypotheses of Varadhan's Lemma, see Proposition \ref{proposition:Varadhan}, are satisfied, and \eqref{eq: 3to4 5} holds. 
 	     
 	     To verify (c), for any $\bar c,\bar d\in \R$, consider a weakly converging sequence $\pi_m\rightarrow \pi_{\infty}$ such that $(\pi_m)_{m\geq1}\subseteq K^\rho_{\bar c,\bar d}$. We argue using Proposition \ref{proposition:Varadhan_usc} for the continuous and bounded functions $h_m := h_{\pi_m}^\varepsilon$ and the limiting function $h_\infty := h_{\pi_\infty}^\varepsilon$. Note that hypothesis \eqref{proposition:hypothesis_Varadhan_usc} follows from the weak lower semi-continuity of $d_{\varepsilon}(\cdot,\cdot)$. Since $(\lambda_{m+1})_{m\geq 1}$ satisfies the LDP with rate function $\mathcal{I}(t)=t$, Proposition \ref{proposition:Varadhan_usc} \ref{item:prop:usc varadhan_upper_bound} yields
 	     \begin{equation}\label{eq: 3to4 7} \limsup_{m\to+\infty} \frac1m \log \Lambda_{\varepsilon,m}(\pi_m) \leq- d_{T,\varepsilon}(\pi_{\infty},\mu). 
 	     \end{equation}
 	     Using once again the lower semi-continuity of the distance, we immediately deduce that 
 	     \begin{equation*}
 	         \liminf_{m\to +\infty}f^{3,\dagger}_{\varepsilon,m}(\pi_m) \geq f^{4,\dagger}_{\varepsilon}(\pi_{\infty}).
 	     \end{equation*}
 	     The proof of (c) is now complete.
   \item \underline{Step 3: Verification of (d)}
 	      For any $\bar c,\bar d\in \R$, consider a weakly converging sequence $\pi_m\rightarrow \pi_{\infty}$ such that $(\pi_m)_{m\geq1}\subseteq K^\rho_{\bar c,\bar d}$ such 
 	     \begin{equation}\label{eq: 3to4 6}
 	      \lim_{m\rightarrow+\infty}f^{3,\dagger}_{\varepsilon,m}(\pi_m)=f^{4,\dagger}_\varepsilon(\pi_{\infty}). 
 	     \end{equation}
        We will show that
        \begin{equation}\label{eq: 3to4 new1}
 	      \limsup_{m\rightarrow+\infty}g^{3,\dagger}_{\varepsilon,m}(\pi_m) \leq g^{4,\dagger}_\varepsilon(\pi_{\infty}). 
 	     \end{equation}
        As in Step 3 of the proof of Theorem \ref{thm: 2to3}, we deduce From \eqref{eq: 3to4 6}, \eqref{eq: 3to4 7}, and the lower semi-continuity of $d$, that
 	\begin{equation} \label{eqn:3to4_improved_limits}
 	    \lim_{m\rightarrow+\infty}d(\pi_m,\rho) = d(\pi_{\infty},\rho), \quad \lim_{m\to +\infty} -\frac{1}{m} \log \Lambda_{\varepsilon,m}(\pi_m) = d_{T,\varepsilon}(\pi_{\infty},\mu).
 	\end{equation}
       
 	     Given these assumptions, \eqref{eq: 3to4 new1} follows if we establish
 	       \begin{align}\label{eq: 3to4 2}
 	         \limsup_{m\rightarrow +\infty}\int_0^\infty\left( \psi'_{\varepsilon}\left(\frac{1}{2}d^2(\pi_m,\mu(t))\right)\exp(\hat\kappa t)[\cE(\mu(t))-\cE(\pi_m)]-\frac{\hat\kappa}{2}\Big(\frac1m \vee h_{\pi_m}^\varepsilon(t) \Big)\right)\theta_m(\dd t) \nonumber\\
 	        \leq  \sup_{t\in \Xi(\pi_{\infty})} \left\{ \psi'_{\varepsilon}\left(\frac{1}{2}d^2(\pi_{\infty},\mu(t))\right)\exp(\hat\kappa t)[\cE(\mu(t))-\cE(\pi_\infty)]-\frac{\hat\kappa}{2} h_{\pi_\infty}^\varepsilon(t) \right\}
 	      \end{align}
 	      where the sequence of probability measures $(\theta_m)_{m\geq 1}$ is defined by
 	    \begin{equation}\label{theta}
 	        \theta_m(\dd t) = \Lambda^{-1}_{\varepsilon,m}(\pi_m)\exp(-m h_{\pi_m}^\varepsilon(t)) \lambda_{m+1}(\dd t).
 	    \end{equation}
    Due to the second limit of \eqref{eqn:3to4_improved_limits}, we can apply Proposition \ref{proposition:Varadhan_usc} \ref{item:prop:usc varadhan_limit_points}, to obtain that sequence $(\theta_m)_{m \geq 1}$ is tight and any accumulation point is supported on $\Xi(\pi_{\infty})$. 
    
    \smallskip

    Thus, \eqref{eq: 3to4 2} holds if for any converging subsequence $(\theta_{m_k})_{k \geq 1}$ of $(\theta_m)_{m \geq 1}$ with limit $\theta_\infty$ there is a further subsequence $(\theta_{m_{k_l}})_{l \geq 1}$ such that
        \begin{equation} \label{eqn:Gi_limsup}
        \limsup_{l\rightarrow +\infty} \int_0^\infty G_{m_{k_l}}^i \dd \theta_{m_{k_l}} \leq \int_0^\infty G_{\infty}^i \dd \theta_{\infty}, \qquad i \in \{1,2,3\},
        \end{equation}
where the functions $G_m^i:[0,+\infty)\to\R$, $m \in \{1,2,\dots\} \cup \{\infty\} $, $i \in \{1,2,3\}$ are defined by
    \begin{align*}
        G_m^1(t) & := \psi_\varepsilon'\left(\frac{1}{2}d^2\left(\pi_m,\mu(t)\right)\right) e^{\hat{\kappa}t} \left[\cE\left(\mu(t)\right) - \sup_{n\geq 1} \cE(\pi_n)\right], \\
        G_m^2(t) & := \psi_\varepsilon'\left(\frac{1}{2}d^2\left(\pi_m,\mu(t)\right)\right) e^{\hat{\kappa}t} \left[ \sup_{n\geq 1} \cE(\pi_n) - \cE(\pi_m)\right], \\
        G_m^3(t) & :=  \left( \frac{1}{m} \vee h_{\pi_m}^\varepsilon(t) \right),
    \end{align*}
    and where the functions $G_\infty^i: [0,+\infty)\to\R$ are the corresponding terms obtainable from the second line of \eqref{eq: 3to4 2}.

\smallskip
    
   Following the proof of Theorem \ref{thm: 2to3}, we obtain \eqref{eqn:Gi_limsup} for $i = 2$ as the terms are non-negative and because we can exploit weak lower semi-continuity of $d$ and $\cE$.

    In this context, both the proof for $i = 1$ and for $i=3$ are more delicate. We prove these in Lemma \ref{lem: 3to4 skorokhod} below. 
    \end{itemize}

\end{proof}

\begin{lemma} \label{lem: 3to4 skorokhod} 
For fixed $a,b,c,\rho,\mu$, let $f^{4,\dagger}_{\varepsilon}$ be as in \eqref{def: fH4}. Moreover, for any $m\geq 1$, let $f^{3,\dagger}_{\varepsilon,m}$ be as in \eqref{def: f3}. For any $\bar c,\bar d\in \R$, consider a weakly converging sequence $\pi_m\rightarrow \pi_{\infty}$ such that $(\pi_m)_{m\geq1}\subseteq K^\rho_{\bar c,\bar d}$  and that 
\eqref{eq: 3to4 6} holds. Let $\{\theta_{m
}\}_{m \geq 1}$ be the sequence of probability measures defined as in \eqref{theta} and  $(\theta_{m_k})_{k \geq 1}$  any converging subsequence of  $(\theta_m)_{m \geq 1}$ with limit $\theta_\infty$.
    Then there exists a subsequence $\{\theta_{m_{k_l}}\}_{l \geq 1}$ satisfying
    \begin{equation*} 
        \limsup_{l\rightarrow +\infty} \int_0^\infty G_{m_{k_l}}^i \dd \theta_{m_{k_l}} \leq \int_0^\infty G_{\infty}^i \dd \theta_{\infty}
    \end{equation*}
    for $i \in \{1,3\}$.
\end{lemma}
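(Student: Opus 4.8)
The argument will follow the pattern of Lemma~\ref{lem: 2to3 skorokhod}, streamlined to what the statement actually requires (just one sub-subsequence). By Skorokhod's representation theorem applied to $\theta_{m_k}\Rightarrow\theta_\infty$ on $[0,+\infty)$, fix random variables $\zeta_{m_k}\sim\theta_{m_k}$ and $\zeta_\infty\sim\theta_\infty$ on a common probability space with $\zeta_{m_k}\to\zeta_\infty$ almost surely, so $\int_0^\infty G^i_{m_k}\,\dd\theta_{m_k}=\bE[G^i_{m_k}(\zeta_{m_k})]$ and likewise for the limit. Inspecting the definitions, every factor entering $G^1_m$ and $G^3_m$ other than $d^2(\pi_m,\mu(t))$ is a continuous function of $t$ --- namely $e^{\hat\kappa t}$, $t\mapsto\cE(\mu(t))$ (continuous by Lemma~\ref{lem: EVI implies boundedness}), and the finite constant $\sup_{n\ge1}\cE(\pi_n)$ (finite since $(\pi_n)\subseteq K^{\rho}_{\bar c,\bar d}$ and \eqref{eq: lower bound on energy}) --- while the dependence on $d^2(\pi_m,\mu(t))$ enters through the continuous strictly monotone maps $r\mapsto\psi_\varepsilon(\tfrac12 r)$ and $r\mapsto\psi'_\varepsilon(\tfrac12 r)$. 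It therefore suffices to exhibit a subsequence $(m_{k_l})$ along which $h^\varepsilon_{\pi_{m_{k_l}}}(\zeta_{m_{k_l}})\to h^\varepsilon_{\pi_\infty}(\zeta_\infty)$ almost surely.

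\textbf{The key convergence --- the main obstacle.} One inequality is free: $\liminf_l h^\varepsilon_{\pi_{m_{k_l}}}(\zeta_{m_{k_l}})\ge h^\varepsilon_{\pi_\infty}(\zeta_\infty)$ a.s., from $\zeta_{m_k}\to\zeta_\infty$ a.s., continuity of $t\mapsto\mu(t)$, the weak convergence $\pi_{m_k}\rightharpoonup\pi_\infty$, and joint weak lower semicontinuity of $(\pi,\nu)\mapsto d_\varepsilon(\pi,\nu)=\psi_\varepsilon(\tfrac12 d^2(\pi,\nu))$ (Assumption~\ref{assumption:weak_topology} and monotonicity of $\psi_\varepsilon$). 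The reverse inequality --- and hence convergence in probability --- will follow from Lemma~\ref{lemma:a.s.limsup_and_meanconvergene_inplies_prob_liminf} applied to $-h^\varepsilon_{\pi_{m_k}}(\zeta_{m_k})$ (these are uniformly bounded because $h^\varepsilon$ is bounded along the flow, Lemma~\ref{lem: EVI implies boundedness}~\ref{item: EVI distance bound}, available precisely because $\kappa\ne0$), provided one proves the \emph{mean convergence} $\int_0^\infty h^\varepsilon_{\pi_{m_k}}(t)\,\dd\theta_{m_k}(t)\to\int_0^\infty h^\varepsilon_{\pi_\infty}(t)\,\dd\theta_\infty(t)$. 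I expect this to be the decisive point, and would obtain it by a Laplace concentration estimate. Set $\phi_m(t):=t+h^\varepsilon_{\pi_m}(t)$, so $\theta_m(\dd t)=Z_m^{-1}e^{-m\phi_m(t)}e^{-t}\,\dd t$ with $Z_m=\Lambda_{\varepsilon,m}(\pi_m)/(m+1)$ and $\min_t\phi_m=d_{T,\varepsilon}(\pi_m,\mu)$, while $\theta_\infty$ is carried by $\Xi(\pi_\infty)=\argmin_{t\ge0}\phi_\infty(t)$ (Proposition~\ref{proposition:Varadhan_usc}~\ref{item:prop:usc varadhan_limit_points}, as invoked in the proof of Theorem~\ref{thm: 3to4}), on which $\phi_\infty\equiv D:=d_{T,\varepsilon}(\pi_\infty,\mu)$. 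Since $h^\varepsilon_{\pi_m}=\phi_m-\mathrm{id}$, $\int t\,\dd\theta_{m_k}\to\int t\,\dd\theta_\infty$ (weak convergence plus uniform integrability of $t$, the latter from $\theta_m(\dd t)\le\Lambda_{\varepsilon,m}^{-1}(\pi_m)\lambda_{m+1}(\dd t)$ and $\Lambda_{\varepsilon,m}^{-1}(\pi_m)=e^{m\ell_m}$ with $\ell_m:=-\tfrac1m\log\Lambda_{\varepsilon,m}(\pi_m)\to D$ by \eqref{eqn:3to4_improved_limits}), and $\int h^\varepsilon_{\pi_\infty}\,\dd\theta_\infty=D-\int t\,\dd\theta_\infty$, it is enough to prove $\int\phi_{m_k}\,\dd\theta_{m_k}\to D$. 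For the lower bound, combine $\int\phi_m\,\dd\theta_m\ge\min_t\phi_m=d_{T,\varepsilon}(\pi_m,\mu)$ with $d_{T,\varepsilon}(\pi_m,\mu)\le\ell_m+\tfrac1m\log(m+1)$ (from $m\phi_m(t)+t\ge m\,d_{T,\varepsilon}(\pi_m,\mu)+t$) and a routine weak-lower-semicontinuity argument giving $\liminf_m d_{T,\varepsilon}(\pi_m,\mu)\ge D$ (splitting according to whether the minimizing times stay bounded); this yields $d_{T,\varepsilon}(\pi_m,\mu)\to D$ and $\liminf_m\int\phi_m\,\dd\theta_m\ge D$. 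For the upper bound use $h^\varepsilon_{\pi_m}\le M$ uniformly, hence $\phi_m(t)\le t+M$: for each $\delta>0$,
\[
\int_{\{\phi_m>D+\delta\}}\phi_m\,\dd\theta_m\ \le\ \frac{e^{-m(D+\delta)}}{Z_m}\int_0^\infty(t+M)e^{-t}\,\dd t\ =\ (m+1)(1+M)\,e^{m(\ell_m-D-\delta)}\ \longrightarrow\ 0\quad(m\to\infty),
\]
since $\ell_m-D\to0<\delta$, whence $\limsup_m\int\phi_m\,\dd\theta_m\le D+\delta$ for every $\delta$, and the mean convergence follows. Consequently $h^\varepsilon_{\pi_{m_k}}(\zeta_{m_k})\to h^\varepsilon_{\pi_\infty}(\zeta_\infty)$ in probability, and passing to a sub-subsequence $(m_{k_l})$ gives the a.s.\ convergence; dividing by $e^{\hat\kappa\zeta_{m_{k_l}}}>0$ and inverting the continuous strictly monotone $\psi_\varepsilon$ also gives $d^2(\pi_{m_{k_l}},\mu(\zeta_{m_{k_l}}))\to d^2(\pi_\infty,\mu(\zeta_\infty))$ a.s.

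\textbf{Conclusion.} For $i=3$: since $0\le G^3_m-h^\varepsilon_{\pi_m}=(\tfrac1m-h^\varepsilon_{\pi_m})^+\le\tfrac1m$ and $G^3_\infty=h^\varepsilon_{\pi_\infty}$ (the term multiplying $-\tfrac{\hat\kappa}{2}$ in the second line of \eqref{eq: 3to4 2}), the mean convergence above already gives $\int G^3_{m_k}\,\dd\theta_{m_k}\to\int G^3_\infty\,\dd\theta_\infty$, a fortiori along $(m_{k_l})$. For $i=1$: along $(m_{k_l})$ the a.s.\ convergence just established, together with continuity of $\psi'_\varepsilon$, of $e^{\hat\kappa\,\cdot}$ and of $t\mapsto\cE(\mu(t))$, gives $G^1_{m_{k_l}}(\zeta_{m_{k_l}})\to G^1_\infty(\zeta_\infty)$ a.s., and $\sup_{l,t}|G^1_{m_{k_l}}(t)|<\infty$ since $\psi'_\varepsilon$ is bounded, $\cE(\mu(t))\le\cE(\mu(0))$ and $\sup_{n\ge1}|\cE(\pi_n)|<\infty$; reverse Fatou then gives
\[
\limsup_{l\to+\infty}\int_0^\infty G^1_{m_{k_l}}\,\dd\theta_{m_{k_l}}=\limsup_{l\to+\infty}\bE\big[G^1_{m_{k_l}}(\zeta_{m_{k_l}})\big]\ \le\ \bE\big[G^1_\infty(\zeta_\infty)\big]=\int_0^\infty G^1_\infty\,\dd\theta_\infty,
\]
exactly as in Step~2 of the proof of Lemma~\ref{lem: 2to3 skorokhod}. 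This is the desired subsequence for both $i=1$ and $i=3$.
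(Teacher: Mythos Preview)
Your proof is correct and follows the same architecture as the paper's: Skorokhod coupling, upgrading the almost-sure $\liminf$ of $h^\varepsilon_{\pi_{m_k}}(\zeta_{m_k})$ to full almost-sure convergence via Lemma~\ref{lemma:a.s.limsup_and_meanconvergene_inplies_prob_liminf}, then handling $i=1$ by reverse Fatou after inverting $\psi_\varepsilon$. The one substantive difference is how you obtain the mean convergence $\int h^\varepsilon_{\pi_{m_k}}\,\dd\theta_{m_k}\to\int h^\varepsilon_{\pi_\infty}\,\dd\theta_\infty$: the paper invokes Proposition~\ref{proposition:Varadhan_usc}\,\ref{item:prop_usc varadhan_convergence_mean_weight} as a black box (an abstract argument via relative entropy and the variational formula), whereas you derive it from scratch by decomposing $h^\varepsilon_{\pi_m}=\phi_m-\mathrm{id}$ and running an explicit Laplace tail estimate on $\{\phi_m>D+\delta\}$. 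Your route is more elementary and self-contained; the paper's is shorter but relies on a separate general lemma. For $i=3$ your observation that $|G^3_m-h^\varepsilon_{\pi_m}|\le 1/m$ lets you conclude directly from the mean convergence, which is in fact cleaner than the paper's dominated-convergence argument. Two minor remarks: the uniform integrability of $t$ under $\theta_{m_k}$ deserves one more line (your Laplace tail bound with $t\le\phi_m$ does it), and for $i=1$ you only need (and only prove) that $G^1$ is bounded \emph{above}, not $|G^1|$ bounded---this is exactly what reverse Fatou requires, so the argument is fine as stated.
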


The proof will be analogous to that of Lemma \ref{lem: 2to3 skorokhod}. We will therefore start with the Skorokhod's representation Theorem, and prove that we can get a.s. convergence also for the terms involving the metric $d(\pi_m,\mu(\cdot))$ and $h_{\pi_m}^\varepsilon(\cdot)$ appearing in $G^1_m$ and $G^3_m$.

\begin{proof}
    Let $\{\theta_{m_k}\}_{k\geq 0}$ be a sequence of measures with limit point $\theta_\infty$ supported on $\Xi(\pi_\infty)$. By the Skorokhod'  Theorem we can find a sequence of random variables $(\xi_{m_k})_{k \geq 1}$ defined on a common probability space satisfying
    \begin{equation}\label{eq: 3to4 4}
 	\forall k\geq 1 \quad \xi_{m_k} \sim \theta_{m_k}, \quad \xi_{\infty} \sim \theta_{\infty},\quad \xi_{m_k}\rightarrow \xi_{\infty} \, \text{a.s.}.
    \end{equation}
       
    Using Proposition \ref{proposition:Varadhan_usc} \ref{item:prop_usc varadhan_convergence_mean_weight} and Lemma \ref{lemma:a.s.limsup_and_meanconvergene_inplies_prob_liminf} for the sequence $f_k = - h_{m_k}(\xi_{m_k})$, we can use Borel-Cantelli's Lemma to extract a further subsequence $\{m_{k_l}\}_{l\geq 1}$ satisfying
    \begin{equation} \label{eqn:3to4_convergence_in_as_h_thm}
         h_{\pi_{m_{k_l}}}^\varepsilon(\xi_{m_{k_l}})  \rightarrow h_{\pi_\infty}^\varepsilon(\xi_\infty)\quad \text{a.e.}.
    \end{equation}

    At this point, we can first establish the result for $i=3$. To do so, we invoke the Lebesgue dominated convergence Theorem. Note that this theorem is applicable due to   \eqref{eqn:3to4_convergence_in_as_h_thm} and the fact that the functions $G_m^3$ are bounded uniformly by \eqref{eq: EVI implies boundedness 7} of Lemma \ref{lem: EVI implies boundedness}. 

    \smallskip

    We next proceed with the proof for $i=1$. Recall that
    \begin{equation*}
        h_{\pi_{m_k}}^\varepsilon(t) = e^{\hat{\kappa} t} \psi_\varepsilon\left( \frac{1}{2} d^2 (\pi_{m_k},\mu(t))\right).
    \end{equation*}
    First of all, the first statement of \eqref{eq: 3to4 4} implies that
    \begin{equation}\label{eqn:3to4_convergence_prob_expon}
        e^{-\hat{\kappa}\xi_{m_{k_l}}} \rightarrow e^{-\hat{\kappa}\xi_{\infty}} \qquad \text{a.s.}
    \end{equation}
    in probability. In combination with \ref{eqn:3to4_convergence_in_as_h_thm} this yields that
    \begin{equation*}
        \psi_\varepsilon\left( \frac{1}{2} d^2 (\pi_{m_{k_l}},\mu(t))\right) \rightarrow \psi_\varepsilon\left( \frac{1}{2} d^2 (\pi_{\infty},\mu(t))\right) \qquad \text{a.e.}.
    \end{equation*}
    As $r \mapsto \psi_\varepsilon\left(\frac{1}{2}r^2\right)$ is strictly increasing, it is invertible. Thus applying the inverse function on the above result, we obtain 
    \begin{equation}  \label{eqn:3to4_convergence_as_distance_thm} 
      d(\pi_{m_{k_l}},\mu(\xi_{m_{k_l}})) \rightarrow d(\pi_\infty,\mu(\xi_\infty))\quad \text{a.e.}, 
    \end{equation}
    The result for $i=1$ thus follows by noting that as $t \mapsto \cE(\mu(t))$ and $\psi_\varepsilon'$ are continuous, the  term with $i=1$ is continuous in $t$ and bounded from above. We can therefore conclude by \eqref{eqn:3to4_convergence_as_distance_thm} and Fatou's Lemma. 

\end{proof}

\subsubsection{From $H_{4,\dagger}$ to $H_{5,\dagger}$}

In this section, we do not carry out an approximation step, but rather focus ourselves on bounding the Hamiltonian using properties of the gradient flow. In particular, in the definition below, note e.g. from \eqref{def: fH4} and \eqref{eq: f5} that 
\begin{equation*}
    f^{5,\dagger}_{\varepsilon} = f^{4,\dagger}_{\varepsilon}, \qquad  f^{5,\ddagger}_{\varepsilon} = f^{4,\ddagger}_{\varepsilon}.
\end{equation*}
Comparing on the other hand \eqref{def: gH4}, \eqref{eq: g5}

 {we see that the only difference between $g^{4,\dagger}_{\varepsilon}$ and $g^{5,\dagger}_{\varepsilon}$ lies in the action of the gradient flow on our approximation of the Tataru distance. Correspondingly, we will prove below in Lemmas \ref{lemma:push_4_5} and \ref{lemma:bound_gradflow_for_4to5} that for any $\pi$, we have
	\begin{equation*}
	      \sup_{t \in \Xi(\mu)} \left\{ e^{\hat{\kappa}t} \left[ \cE(\mu(t)) - \cE(\pi)\right]\psi_\varepsilon'\left(\frac{1}{2}d^2(\pi,\mu(t))\right)  - \frac{\hat{\kappa}}{2}e^{\hat{\kappa}t}  d_\varepsilon\left(\pi,\mu(t)\right)  \right\} \leq 1
	\end{equation*}
 }{reflecting the idea that the Tataru distance is Lipschitz along the gradient flow.}

We proceed with the formal definitions of $H_{5,\dagger}$ and $H_{5,\ddagger}$.

\begin{definition} \label{definition:H5}
	Fix $\rho,\mu\in E$ such that $I(\rho) + I(\mu)  < \infty$, $a,b,\varepsilon > 0$ and $c\in\bR $. Set
	\begin{align}
	f^{5,\dagger}_{\varepsilon}(\pi) & := \frac{1}{2}a d^2(\pi,\rho) + b d_{T,\varepsilon}(\pi,\mu)  + c, \label{eq: f5} \\
g^{5,\dagger}_{\varepsilon}(\pi) & := a\left[\cE(\rho) - \cE(\pi)\right] - a \frac{\kappa}{2}d^2(\pi,\rho) + \frac{1}{2} a^2 d^2(\pi,\rho) \label{eq: g5} \\
	& \qquad + b  + a b d(\pi,\rho) + \frac{1}{2} b^2. \notag
	\end{align}
	$H_{5,\dagger}$ is the operator given by all pairs
	\begin{equation*}
	    H_{5,\dagger} := \left\{(f^{5,\dagger}_{\varepsilon},g^{5,\dagger}_{\varepsilon})\, \middle| \, \rho,\mu: \, I(\rho) + I(\mu) < \infty, a,b,\varepsilon>0, c \in \bR \right\}.
	\end{equation*}
	
		Fix $\gamma,\pi\in E$ such that $I(\gamma) + I(\pi)  < \infty$, $a,b,\varepsilon > 0$ and $c\in\bR $. Set
	\begin{align*}
	f^{5,\ddagger}_{\varepsilon}(\mu) & := - \frac{1}{2}a d^2(\gamma,\mu) - b d_{T,\varepsilon}(\mu,\pi)  + c,  \\
g^{5,\ddagger}_{\varepsilon}(\mu) & := a\left[\cE(\mu) - \cE(\gamma)\right] + a \frac{\kappa}{2}d^2(\gamma,\mu) + \frac{1}{2} a^2 d^2(\gamma,\mu)  \\
	& \qquad - b  - a b d(\gamma,\mu) {- \frac{1}{2}b^2}. 
	\end{align*}
	$H_{5,\ddagger}$ is the operator given by all pairs 
	\begin{equation*}
	    H_{5,\ddagger} := \left\{(f^{5,\ddagger}_{\varepsilon},g^{5,\ddagger}_{\varepsilon})\, \middle| \, \gamma,\pi: \, I(\gamma) + I(\pi) < \infty, a,b,\varepsilon>0, c \in \bR \right\}.
	\end{equation*}
\end{definition}

\begin{lemma} \label{lemma:push_4_5}
{Let $h \in C_b(E)$ and $\lambda >0$.}

	Every viscosity subsolution to $f - \lambda H_{4,\dagger} f = h$ is a viscosity subsolution to $f - \lambda H_{5,\dagger} f = h$.  
	
	Every viscosity supersolution to $f - \lambda H_{4,\ddagger} f = h$ is a viscosity subsolution to $f - \lambda H_{5,\ddagger} f = h$. 
\end{lemma}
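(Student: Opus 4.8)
The plan is to exploit that $H_{4,\dagger}$ and $H_{5,\dagger}$ act on \emph{the same} family of test functions, which collapses the whole statement to a pointwise comparison of the two upper bounds; the genuine content then lies in a single estimate for the action of the gradient flow on the smoothed Tataru distance, which I would isolate as a separate lemma (the one referred to as Lemma \ref{lemma:bound_gradflow_for_4to5} in the text above).

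\textbf{Step 1: reduction to a pointwise inequality.} By \eqref{def: fH4} and \eqref{eq: f5} the test functions satisfy $f^{5,\dagger}_\varepsilon=f^{4,\dagger}_\varepsilon$ for every admissible choice of parameters, and similarly $f^{5,\ddagger}_\varepsilon=f^{4,\ddagger}_\varepsilon$. Consequently, if $u$ is a viscosity subsolution of $f-\lambda H_{4,\dagger}f=h$ and $(f^{5,\dagger}_\varepsilon,g^{5,\dagger}_\varepsilon)\in H_{5,\dagger}$, then the sequence $(\pi_n)_n$ realising the subsolution property for $(f^{4,\dagger}_\varepsilon,g^{4,\dagger}_\varepsilon)\in H_{4,\dagger}$ automatically satisfies \eqref{eqn:viscsub1} for $(f^{5,\dagger}_\varepsilon,g^{5,\dagger}_\varepsilon)$, and since $\lambda>0$ it also satisfies \eqref{eqn:viscsub2} as soon as $g^{4,\dagger}_\varepsilon(\pi)\le g^{5,\dagger}_\varepsilon(\pi)$ for all $\pi$ (both sides are $-\infty$ when $\cE(\pi)=+\infty$, so only $\pi\in\cD(\cE)$ matters). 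Comparing \eqref{def: gH4} with \eqref{eq: g5}, all entropy-difference, quadratic and $\tfrac12 b^2$ terms coincide, and the inequality reduces to
\begin{equation*}
\sup_{t\in\Xi(\pi)}\left\{e^{\hat\kappa t}\left[\cE(\mu(t))-\cE(\pi)\right]\psi_\varepsilon'\!\left(\tfrac12 d^2(\pi,\mu(t))\right)-\frac{\hat\kappa}{2}e^{\hat\kappa t}d_\varepsilon(\pi,\mu(t))\right\}\le 1 .
\end{equation*}
The supersolution claim is symmetric: the same bookkeeping gives $g^{5,\ddagger}_\varepsilon\le g^{4,\ddagger}_\varepsilon$ once the analogous bound, with $\mu(\cdot)$ replaced by the flow $\pi(\cdot)$, is established by the same argument.

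\textbf{Step 2: the bound along the gradient flow.} Fix $\pi\in\cD(\cE)$. The set $\Xi(\pi)$ is nonempty because $t\mapsto t+h^\varepsilon_\pi(t)$ is continuous (continuity of $t\mapsto\mu(t)$, Lemma \ref{lem: EVI implies boundedness}) and tends to $+\infty$ as $t\to\infty$. Pick $t^\ast\in\Xi(\pi)$; minimality over $[0,\infty)$ forces $\tfrac{\dd^+}{\dd t}\big(t+e^{\hat\kappa t}\psi_\varepsilon(\tfrac12 d^2(\pi,\mu(t)))\big)\big|_{t=t^\ast}\ge 0$, whether $t^\ast>0$ or $t^\ast=0$. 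Expanding by the product rule (licit since $e^{\hat\kappa t}>0$) and the chain rule for the $C^1$, strictly increasing $\psi_\varepsilon$, and recalling $d_\varepsilon(\pi,\mu(t^\ast))=\psi_\varepsilon(\tfrac12 d^2(\pi,\mu(t^\ast)))$, this reads
\begin{equation*}
1+\hat\kappa\,e^{\hat\kappa t^\ast}d_\varepsilon(\pi,\mu(t^\ast))+e^{\hat\kappa t^\ast}\psi_\varepsilon'\!\left(\tfrac12 d^2(\pi,\mu(t^\ast))\right)\tfrac12\tfrac{\dd^+}{\dd t}d^2(\pi,\mu(t))\big|_{t=t^\ast}\ge 0 .
\end{equation*}
Multiplying the \ref{item:ass_EVI} inequality $\tfrac12\tfrac{\dd^+}{\dd t}d^2(\mu(t),\pi)\big|_{t=t^\ast}\le\cE(\pi)-\cE(\mu(t^\ast))-\tfrac\kappa2 d^2(\pi,\mu(t^\ast))$ by the nonnegative factor $e^{\hat\kappa t^\ast}\psi_\varepsilon'(\tfrac12 d^2(\pi,\mu(t^\ast)))$ and substituting into the previous display, then rearranging and using $\hat\kappa\le\kappa$ to replace $\kappa$ by $\hat\kappa$ in the residual term, gives
\begin{multline*}
e^{\hat\kappa t^\ast}\psi_\varepsilon'\!\left(\tfrac12 d^2(\pi,\mu(t^\ast))\right)\!\left[\cE(\mu(t^\ast))-\cE(\pi)\right]-\frac{\hat\kappa}{2}e^{\hat\kappa t^\ast}d_\varepsilon(\pi,\mu(t^\ast)) \\
\le\ 1+\frac{\hat\kappa}{2}e^{\hat\kappa t^\ast}\!\left[d_\varepsilon(\pi,\mu(t^\ast))-\psi_\varepsilon'\!\left(\tfrac12 d^2(\pi,\mu(t^\ast))\right)d^2(\pi,\mu(t^\ast))\right].
\end{multline*}
If $\kappa\ge 0$ then $\hat\kappa=0$ and the right-hand side equals $1$; if $\kappa<0$ then $\hat\kappa<0$ and it remains to check $d_\varepsilon(\pi,\mu(t^\ast))\ge\psi_\varepsilon'(\tfrac12 d^2(\pi,\mu(t^\ast)))\,d^2(\pi,\mu(t^\ast))$, i.e. $\psi_\varepsilon(\tfrac12 r^2)\ge r^2\psi_\varepsilon'(\tfrac12 r^2)$ with $r=d(\pi,\mu(t^\ast))$. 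This is among the elementary properties of $\psi_\varepsilon$ collected in Lemma \ref{lemma:approximate_square_root}: it is an equality when $\tfrac12 r^2\ge\varepsilon$, and when $\tfrac12 r^2<\varepsilon$ it follows from $r^2\psi_\varepsilon'(\tfrac12 r^2)\le r\le\psi_\varepsilon(\tfrac12 r^2)$, the last inequality holding because $s\mapsto\sqrt{2s}$ lies below its second-order Taylor polynomial at $s=\varepsilon$. Taking the supremum over $t^\ast\in\Xi(\pi)$ yields the bound of Step 1, and the lemma follows.

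I expect Step 2 to be the main obstacle: one must be disciplined about working only with the \emph{upper right} derivative along the flow (the sole quantity controlled by \ref{item:ass_EVI}), handle the left-boundary minimiser $t^\ast=0$ on the same footing, and keep careful track of the signs of the leftover $\hat\kappa$-terms — which is exactly the point where the second-order construction of the smoothing $\psi_\varepsilon$ in \eqref{eqn:def_approx_of_squareroot} is needed.
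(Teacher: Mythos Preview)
Your proposal is correct and follows essentially the same approach as the paper: reduce to the pointwise inequality $g^{4,\dagger}_\varepsilon\le g^{5,\dagger}_\varepsilon$ (using that the test functions coincide), then derive the key bound from the nonnegativity of the upper right derivative at a minimiser combined with \ref{item:ass_EVI} and the elementary properties of $\psi_\varepsilon$ from Lemma \ref{lemma:approximate_square_root}. The paper's Lemma \ref{lemma:bound_gradflow_for_4to5} carries out exactly your Step 2, bounding the residual $\kappa$-term via the chain $r^2\psi_\varepsilon'(\tfrac12 r^2)\le r\le d_\varepsilon$, which is the same inequality you isolate; your additional remarks on nonemptiness of $\Xi(\pi)$ and the boundary case $t^\ast=0$ are helpful clarifications not made explicit in the paper.
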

The proof of the lemma above follows immediately from  the following lemma.

\begin{lemma} \label{lemma:bound_gradflow_for_4to5}
Fix $a,b>0,c\in \R$  and $\rho,\mu\in E$ such that $I(\rho) + I(\mu)  < \infty$ and let the corresponding test functions $f^{4,\dagger}_{\varepsilon}, f^{5,\dagger}_{\varepsilon}$ be given by \eqref{def: fH4} and \eqref{eq: f5}. (Note that $f^{4,\dagger}_{\varepsilon}=f^{5,\dagger}_{\varepsilon}$.) Then we have that
\[ g^{4,\dagger}_{\varepsilon}(\pi)\leq g^{5,\dagger}_{\varepsilon}(\pi) \quad \forall \pi\in E.\]
\end{lemma}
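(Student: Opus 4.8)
The plan is to reduce everything to the statement that the smoothed Tataru distance is $1$-Lipschitz along the gradient flow at its minimizing times. Since $f^{4,\dagger}_{\varepsilon}=f^{5,\dagger}_{\varepsilon}$ and a direct comparison of \eqref{def: gH4} with \eqref{eq: g5} gives $g^{5,\dagger}_{\varepsilon}(\pi)-g^{4,\dagger}_{\varepsilon}(\pi)=b\bigl(1-\sup_{t\in\Xi(\pi)}\{\,\cdots\,\}\bigr)$, and $b>0$, the assertion reduces to proving that for every $\pi\in E$
\begin{equation}\label{eqn:bound45_reduction}
\sup_{t\in\Xi(\pi)}\Bigl\{e^{\hat{\kappa}t}\bigl[\cE(\mu(t))-\cE(\pi)\bigr]\psi_\varepsilon'\!\bigl(\tfrac12 d^2(\pi,\mu(t))\bigr)-\tfrac{\hat{\kappa}}{2}e^{\hat{\kappa}t}d_\varepsilon(\pi,\mu(t))\Bigr\}\ \le\ 1 .
\end{equation}
If $\pi\notin\cD(\cE)$ the inequality $g^{4,\dagger}_{\varepsilon}(\pi)\le g^{5,\dagger}_{\varepsilon}(\pi)$ is immediate (both carry the summand $a[\cE(\rho)-\cE(\pi)]=-\infty$), so we assume $\pi\in\cD(\cE)$. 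Setting $F_\pi(s):=s+h^{\varepsilon}_{\pi}(s)=s+e^{\hat{\kappa}s}d_\varepsilon(\pi,\mu(s))$, continuity of $s\mapsto\mu(s)$ together with $F_\pi(s)\ge s$ makes $F_\pi$ continuous and coercive, so $\Xi(\pi)=\argmin_{s\ge0}F_\pi(s)\neq\emptyset$.

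The heart of the argument is the following one-sided bound on $F_\pi$: for every $s\ge 0$,
\begin{equation}\label{eqn:bound45_dd}
\frac{\dd^+}{\dd s}F_\pi(s)\ \le\ 1+e^{\hat{\kappa}s}\psi_\varepsilon'\!\bigl(\tfrac12 d^2(\pi,\mu(s))\bigr)\Bigl[\cE(\pi)-\cE(\mu(s))-\tfrac{\kappa}{2}d^2(\pi,\mu(s))\Bigr]+\hat{\kappa}e^{\hat{\kappa}s}d_\varepsilon(\pi,\mu(s)) .
\end{equation}
Since $\psi_\varepsilon$ is $\mathcal{C}^1$ with $\psi_\varepsilon'>0$ (Lemma \ref{lemma:approximate_square_root}) and $s\mapsto e^{\hat{\kappa}s}$ is $\mathcal{C}^1$ and positive, the chain and product rules for the upper right derivative give $\frac{\dd^+}{\dd s}F_\pi(s)=1+e^{\hat{\kappa}s}\psi_\varepsilon'\bigl(\tfrac12 d^2(\pi,\mu(s))\bigr)\cdot\tfrac12\frac{\dd^+}{\dd s}\bigl(d^2(\mu(s),\pi)\bigr)+\hat{\kappa}e^{\hat{\kappa}s}d_\varepsilon(\pi,\mu(s))$, and \eqref{eqn:bound45_dd} then follows by bounding $\tfrac12\frac{\dd^+}{\dd s}(d^2(\mu(s),\pi))$ from above via \ref{item:ass_EVI} applied to the gradient flow $\mu(\cdot)$ with $\rho=\pi\in\cD(\cE)$.

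Now fix $t\in\Xi(\pi)$. Since $t$ minimizes $F_\pi$ on $[0,\infty)$ we have $\frac{\dd^+}{\dd s}F_\pi(s)\big|_{s=t}\ge 0$ (this uses only $F_\pi(s)\ge F_\pi(t)$ for $s>t$, hence is valid also at $t=0$). Combining this with \eqref{eqn:bound45_dd} and rearranging yields
\begin{equation}\label{eqn:bound45_star}
e^{\hat{\kappa}t}\psi_\varepsilon'\!\bigl(\tfrac12 d^2(\pi,\mu(t))\bigr)\bigl[\cE(\mu(t))-\cE(\pi)\bigr]+\tfrac{\kappa}{2}e^{\hat{\kappa}t}\psi_\varepsilon'\!\bigl(\tfrac12 d^2(\pi,\mu(t))\bigr)d^2(\pi,\mu(t))-\hat{\kappa}e^{\hat{\kappa}t}d_\varepsilon(\pi,\mu(t))\ \le\ 1 .
\end{equation}
Comparing the left-hand side of \eqref{eqn:bound45_star} with the summand in \eqref{eqn:bound45_reduction}, the latter is dominated by the former provided $\tfrac{\hat{\kappa}}{2}d_\varepsilon(\pi,\mu(t))\le\tfrac{\kappa}{2}\psi_\varepsilon'\bigl(\tfrac12 d^2(\pi,\mu(t))\bigr)d^2(\pi,\mu(t))$. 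If $\kappa\ge 0$ this holds since $\hat{\kappa}=0$ and the right-hand side is nonnegative; if $\kappa<0$ then $\hat{\kappa}=\kappa<0$ and, dividing by $\tfrac{\kappa}{2}<0$, it becomes $d^2(\pi,\mu(t))\,\psi_\varepsilon'\bigl(\tfrac12 d^2(\pi,\mu(t))\bigr)\le d_\varepsilon(\pi,\mu(t))$, which is exactly the elementary estimate $d^2\psi_\varepsilon'(\tfrac12 d^2)\le d\le d_\varepsilon$ of Lemma \ref{lemma:approximate_square_root}. Taking the supremum over $t\in\Xi(\pi)$ establishes \eqref{eqn:bound45_reduction}, hence $g^{4,\dagger}_{\varepsilon}\le g^{5,\dagger}_{\varepsilon}$ on all of $E$.

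The step I expect to be the main obstacle is the careful calculus with the upper right derivative in \eqref{eqn:bound45_dd}: one must justify passing $\psi_\varepsilon$ and the weight $e^{\hat{\kappa}s}$ through $\frac{\dd^+}{\dd s}$ — which is licit precisely because $\psi_\varepsilon\in\mathcal{C}^1$ with $\psi_\varepsilon'>0$ and $e^{\hat{\kappa}s}$ is $\mathcal{C}^1$ and positive, so that the relevant products of difference quotients have a factor converging to a strictly positive limit — and then couple the resulting upper bound with the first-order minimality inequality at $t\in\Xi(\pi)$. Everything else is sign bookkeeping between $\kappa$ and $\hat{\kappa}$ together with the convexity-type estimates for $\psi_\varepsilon$ already recorded in Lemma \ref{lemma:approximate_square_root}.
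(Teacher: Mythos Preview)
Your proof is correct and follows essentially the same approach as the paper's own proof: reduce to the inequality \eqref{eqn:bound45_reduction}, use that $\frac{\dd^+}{\dd s}F_\pi(s)\big|_{s=t}\ge 0$ at a minimizer $t\in\Xi(\pi)$, apply the chain rule together with \ref{item:ass_EVI}, and then convert the resulting $\frac{\kappa}{2}\psi_\varepsilon'd^2$ term into $\frac{\hat{\kappa}}{2}d_\varepsilon$ via the estimates $d^2\psi_\varepsilon'(\tfrac12 d^2)\le d\le d_\varepsilon$ of Lemma~\ref{lemma:approximate_square_root}. Your presentation is in fact slightly more careful than the paper's: you explicitly dispose of the case $\pi\notin\cD(\cE)$, note that $\Xi(\pi)\neq\emptyset$, and flag the justification of the chain rule for $\frac{\dd^+}{\dd s}$.
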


\begin{proof}
From the definition of $H_{4,\dagger},H_{5,\dagger}$ it is sufficient to show that for all $\pi\in E$ and for all $t\in\Xi(\mu)$
\begin{equation} \label{eqn:proof_4to5_1}
 e^{\hat{\kappa}t}[\cE(\mu(t))-\cE(\pi)] {\psi'_{\varepsilon}\left(\frac{1}{2}d^2(\pi,\mu(t))\right)}-\frac{\hat\kappa}{2}e^{\hat\kappa t}d_{\varepsilon}(\pi,\mu(t)) \leq 1.
\end{equation}

By construction, if $t\in\Xi(\mu)$, then $t$ minimizes $t+e^{\hat\kappa t}d_{\varepsilon}(\pi,\mu_t)$, whence $\frac{d^+}{d t} \Big({ t+ } e^{\hat \kappa t}d_{\varepsilon}(\pi,\mu(t))\Big) \geq 0$. Since $\psi_{\varepsilon}$ is continuously differentiable and increasing we can apply the chain rule for the upper right derivative. After doing so, we apply \eqref{item:ass_EVI} and $\psi_\varepsilon' \geq 0$, see Lemma \ref{lemma:approximate_square_root}, to obtain
\begin{equation}\label{eqn:proof_4to5_2}
\begin{aligned}
    0 & \leq 1 + \hat{\kappa}e^{\hat{\kappa}t}d_{\varepsilon}(\pi,\mu(t)) + e^{\hat{\kappa}t}\psi'_{\varepsilon}\left(\frac{1}{2}d^2(\pi,\mu(t))\right) \left(\frac{d^+}{d t} \frac12 d^2(\pi,\mu(t)) \right) \\
    & \leq 1 + \hat{\kappa}e^{\hat{\kappa}t}d_{\varepsilon}(\pi,\mu(t)) \\
    & \hspace{3cm} + e^{\hat{\kappa}t}\psi'_{\varepsilon}\left(\frac{1}{2}d^2(\pi,\mu(t))\right)\left(\left[\cE(\pi) - \cE(\mu(t)) \right] - \frac{\kappa}{2} d^2(\pi,\mu(t)) \right).
\end{aligned}
\end{equation}
We work on the final term on the right-hand side:
\begin{align*}
    - \frac{\kappa}{2}e^{\hat{\kappa}t}\psi'_{\varepsilon}\left(\frac{1}{2}d^2(\pi,\mu(t))\right) d^2(\pi,\mu(t)) & \leq - \frac{\hat{\kappa}}{2}e^{\hat{\kappa}t}\psi'_{\varepsilon}\left(\frac{1}{2}d^2(\pi,\mu(t))\right) d^2(\pi,\mu(t)) \\
    & \leq - \frac{\hat{\kappa}}{2}e^{\hat{\kappa}t} d(\pi,\mu(t)) \\
    & \leq - \frac{\hat{\kappa}}{2}e^{\hat{\kappa}t} d_\varepsilon(\pi,\mu(t)).
\end{align*}
In line one we used that $\psi_\varepsilon' \geq 0$ and $\kappa \geq \hat{\kappa}$, in line two we used Lemma \ref{lemma:approximate_square_root} \ref{item:lemma_approx_sqrt_bound_on_product} and in line three we used $d \leq d_\varepsilon$. Using this result in \eqref{eqn:proof_4to5_2}, we obtain
\begin{equation*}
    0 \leq 1 + \frac{\hat{\kappa}}{2}e^{\hat{\kappa}t}d_{\varepsilon}(\pi,\mu(t)) + e^{\hat{\kappa}t}\psi'_{\varepsilon}\left(\frac{1}{2}d^2(\pi,\mu(t))\right)\left[\cE(\pi) - \cE(\mu(t)) \right]
\end{equation*}
which is equivalent to \eqref{eqn:proof_4to5_1}.
\end{proof}

\subsubsection{From $H_{5,\dagger}$ to $H_{6,\dagger}$}

In this small section, we send $\varepsilon \downarrow 0$ in the approximation of $d_{T,\varepsilon}$ to $d_T$. Note that the only difference in definitions lies in the removal of the $\varepsilon$ in the test function, and that the bounds on the action of the Hamiltonian on the test function is unchanged.

\begin{definition} \label{definition:H6}
	Fix $\rho,\mu\in E$ such that $I(\rho) + I(\mu)  < \infty$, $a,b > 0$ and $c\in\bR $. Set
	\begin{align*}
	f^{6,\dagger}(\pi) & := \frac{1}{2}a d^2(\pi,\rho) + b d_{T}(\pi,\mu)  +c, \\
	g^{6,\dagger}(\pi) & := a\left[\cE(\rho) - \cE(\pi)\right] - a \frac{\kappa}{2}d^2(\pi,\rho) + \frac{1}{2} a^2 d^2(\pi,\rho) \\
	& \qquad + b  + a b d(\pi,\rho) + \frac{1}{2} b^2.
	\end{align*}
	 $H_{6,\dagger}$ is the operator given by all pairs
	\begin{align*}
	    H_{6,\dagger} & := \left\{(f^{6,\dagger},g^{6,\dagger}) \, \middle| \, \rho,\mu: \, I(\rho) + I(\mu) < \infty, a,b>0,c \in \bR \right\}. \\
	\end{align*}
	Fix $\gamma,\pi\in E$ such that $I(\gamma) + I(\pi)  < \infty$, $a,b>0$ and $c\in\bR $. Set 
	\begin{align*}
	f^{6,\ddagger}(\mu) & := -\frac{1}{2}a d^2(\gamma,\mu) -b d_{T}(\mu,\pi)  +c, \\
	g^{6,\ddagger}(\mu) & := a\left[\cE(\mu) - \cE(\gamma)\right] + a \frac{\kappa}{2}d^2(\gamma,\mu) + \frac{1}{2} a^2 d^2(\gamma,\mu) \\
	& \qquad - b  - a b d(\gamma,\mu) { - \frac{1}{2} b^2}.
	\end{align*}
	 $H_{6,\ddagger}$ is the operator given by all pairs
	\begin{align*}
	    H_{6,\ddagger} & := \left\{(f^{6,\ddagger},g^{6,\ddagger}) \, \middle| \, \gamma,\pi: \, I(\gamma) + I(\pi) < \infty, a,b>0,c \in \bR \right\}. \\
	\end{align*}
\end{definition}

\begin{lemma} \label{lemma:push_5_6}
Let $h \in C_b(E)$ and $\lambda >0$.

	Every viscosity subsolution to $f - \lambda H_{5,\dagger} f = h$ is a viscosity subsolution to $f - \lambda H_{6,\dagger} f = h$.
	
	Every viscosity supersolution to $f - \lambda H_{5,\ddagger} f = h$ is a viscosity subsolution to $f - \lambda H_{6,\ddagger} f = h$.
\end{lemma}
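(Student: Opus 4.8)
The plan is to reduce the claim to the uniform convergence $d_{T,\varepsilon}\to d_T$ as $\varepsilon\downarrow 0$ and then invoke Lemma \ref{lemma:viscosity_push}; in contrast with the steps $H_{2}\to H_{3}$ and $H_{3}\to H_{4}$, no compactness or large deviation machinery is needed here. First I would record the structural observation visible on comparing Definitions \ref{definition:H5} and \ref{definition:H6}: for every admissible choice of data $\rho,\mu$ with $I(\rho)+I(\mu)<\infty$, $a,b>0$, $c\in\bR$, one has $g^{5,\dagger}_{\varepsilon}=g^{6,\dagger}$ as functions on $E$ -- neither expression depends on $\varepsilon$ and they agree term by term -- while the test functions differ only through the replacement of $d_{T,\varepsilon}(\cdot,\mu)$ by $d_T(\cdot,\mu)$, so that $f^{6,\dagger}(\pi)-f^{5,\dagger}_{\varepsilon}(\pi)=b\bigl(d_T(\pi,\mu)-d_{T,\varepsilon}(\pi,\mu)\bigr)$; the analogous identities hold on the $\ddagger$ side. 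Thus the only genuine content of the lemma is that $d_{T,\varepsilon}$ approximates $d_T$ uniformly well.

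Second, I would prove $\sup_{\pi,\mu\in E}\bigl|d_{T,\varepsilon}(\pi,\mu)-d_T(\pi,\mu)\bigr|\to 0$. Since passing to the infimum over $t\ge 0$ is $1$-Lipschitz for the supremum norm of the integrand family and $e^{\hat{\kappa}t}\le 1$ (because $\hat{\kappa}\le 0$), this supremum is bounded by $\sup_{r\ge 0}\bigl|\psi_\varepsilon(\tfrac12 r^2)-r\bigr|$, which tends to $0$ by the properties of $\psi_\varepsilon$ from Lemma \ref{lemma:approximate_square_root} together with the explicit form \eqref{eqn:def_approx_of_squareroot}: indeed $\psi_\varepsilon(\tfrac12 r^2)=r$ already for $r\ge\sqrt{2\varepsilon}$, so only $r\in[0,\sqrt{2\varepsilon})$ contributes and there both terms are $O(\sqrt{\varepsilon})$.

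Finally, I would apply Lemma \ref{lemma:viscosity_push}(a) with $\widehat A_\dagger=H_{5,\dagger}$ and $A_\dagger=H_{6,\dagger}$: given $(f^{6,\dagger},g^{6,\dagger})\in H_{6,\dagger}$ attached to data $\rho,\mu,a,b,c$, choose $\varepsilon_k\downarrow 0$ and take $(f_k,g_k):=(f^{5,\dagger}_{\varepsilon_k},g^{5,\dagger}_{\varepsilon_k})\in H_{5,\dagger}$ built from the same data. Truncation at any level is $1$-Lipschitz, so $\vn{f_k\wedge c'-f^{6,\dagger}\wedge c'}\le\vn{f_k-f^{6,\dagger}}=b\sup_{\pi\in E}\bigl|d_{T,\varepsilon_k}(\pi,\mu)-d_T(\pi,\mu)\bigr|\to 0$, giving the first hypothesis; and since $g_k=g^{6,\dagger}$ identically, the supremum in the second hypothesis equals $0$ for every $k$, so its $\limsup$ is $\le 0$ trivially. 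Lemma \ref{lemma:viscosity_push}(a) then yields the subsolution statement, and part (b) with the symmetric choices yields the supersolution statement. I expect this step to be the easiest of the six; the only point meriting attention is to confirm that $\sup_{r\ge0}|\psi_\varepsilon(\tfrac12 r^2)-r|\to 0$ is genuinely uniform in $r$ (it is, by the piecewise form of $\psi_\varepsilon$) and that taking the infimum over $t$ preserves uniformity (it does, thanks to $e^{\hat{\kappa}t}\le1$), so the main obstacle amounts to careful bookkeeping rather than any new idea.
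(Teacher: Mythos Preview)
Your proposal is correct and follows essentially the same approach as the paper: observe $g^{5,\dagger}_\varepsilon=g^{6,\dagger}$, establish uniform convergence $d_{T,\varepsilon}\to d_T$, and apply Lemma~\ref{lemma:viscosity_push}. The only cosmetic difference is that you spell out the uniform convergence directly, whereas the paper cites Lemma~\ref{lemma:uniform_convergence_approxTataru_to_Tataru}\ref{item:lemma_approx_sqrt_uniform_Tataru}, whose proof is exactly your argument.
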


\begin{proof}
 Fix $\rho,\mu\in E$ such that $I(\rho) + I(\mu) < \infty$, $a,b > 0$, $c\in\bR $ and let $(f^{6,\dagger},g^{6,\dagger})$ be the corresponding pair in $H_{6,\dagger}$. For any $\varepsilon>0$, consider and $(f^{5,\dagger}_\varepsilon,g^{5,\dagger}_\varepsilon) \in H_{5,\dagger}$ as in \eqref{eq: f5},\eqref{eq: g5}, for the same $a,b,c$. By construction we have that $g^{5,\dagger}_{\varepsilon}=g^{6,\dagger}$ for all $\varepsilon>0$. 
Moreover, Lemma \ref{lemma:uniform_convergence_approxTataru_to_Tataru} \ref{item:lemma_approx_sqrt_uniform_Tataru}, makes sure that
\begin{equation*}
    \lim_{\varepsilon\rightarrow0} \sup_{\pi\in E}|f^{5,\dagger}_{\varepsilon}(\pi)-f^{6,\dagger}(\pi)|=0.
\end{equation*}
We have thus verified the hypothesis of Lemma \ref{lemma:viscosity_push}, whose application gives the conclusion.
\end{proof}

\subsubsection{From $H_{6,\dagger}$ to $\widetilde{H}_{\dagger}$}

In this final approximation step, we remove the restriction on the domain of $H_{6,\dagger}$ and $H_{6,\ddagger}$ that impose to the configurations to which we compare the distance must come from the domain of the Fisher energy $I$.

Indeed, comparing the definitions of  $H_{6,\dagger}$ and $H_{6,\ddagger}$ with those of $\widetilde{H}_\dagger$ and $\widetilde{H}_\ddagger$, we see that the only change lies in the fact that the domain of $I$ is replaced by that of $\cE$.

\begin{lemma} \label{lemma:push_6_7}
{Let $h \in C_b(E)$ and $\lambda >0$.}

	Every viscosity subsolution to $f - \lambda H_{6,\dagger} f = h$ is also a viscosity subsolution to $f - \lambda \widetilde{H}_\dagger f = h$.
	
	Every viscosity supersolution to $f - \lambda H_{6,\ddagger} f = h$ is also a viscosity supersolution to $f - \lambda \widetilde{H}_\ddagger f = h$.
\end{lemma}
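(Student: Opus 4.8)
The plan is to apply Lemma \ref{lemma:viscosity_push} once more, approximating each pair $(f^\dagger_{a,b,c,\mu,\rho}, g^\dagger_{a,b,c,\mu,\rho}) \in \widetilde H_\dagger$ (where now only $\cE(\rho) < \infty$ is required) by a sequence of pairs in $H_{6,\dagger}$, for which the additional constraint $I(\rho) + I(\mu) < \infty$ holds. The key observation is that the only structural difference between $H_{6,\dagger}$ and $\widetilde H_\dagger$ is the domain restriction: the test functions and the bounds $g$ have exactly the same algebraic form. So the task reduces to showing that, given $\rho$ with $\cE(\rho) < \infty$ and arbitrary $\mu \in E$, one can find sequences $\rho_k, \mu_k$ with $I(\rho_k) + I(\mu_k) < \infty$ such that $\rho_k \to \rho$, $\mu_k \to \mu$ in $d$, with $\cE(\rho_k) \to \cE(\rho)$ (or at least $\limsup_k \cE(\rho_k) \le \cE(\rho)$), and such that the resulting test functions and bounds converge in the sense demanded by Lemma \ref{lemma:viscosity_push}.

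The natural candidates are the points along the gradient flow: set $\rho_k := \rho(s_k)$ and $\mu_k := \mu(s_k)$ for a sequence $s_k \downarrow 0$. First I would invoke the regularizing property of \ref{item:ass_EVI}-gradient flows — as recorded in Lemma \ref{lem: EVI implies boundedness} — which guarantees that $\rho(s), \mu(s) \in \cD(I)$ for every $s > 0$, so that $I(\rho_k) + I(\mu_k) < \infty$, and also that $s \mapsto \cE(\rho(s))$ is continuous and nonincreasing on $[0,\infty)$ with $\cE(\rho(s)) \to \cE(\rho)$ as $s \downarrow 0$ when $\cE(\rho) < \infty$; moreover $d(\rho(s),\rho) \to 0$ and $d(\mu(s),\mu) \to 0$ as $s \downarrow 0$ by the continuity of the flow. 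Let $f_k := f^{6,\dagger}_{a,b,c,\mu_k,\rho_k}$ and $g_k := g^{6,\dagger}_{a,b,c,\mu_k,\rho_k}$. The convergence $\vn{f_k \wedge c' - f^\dagger \wedge c'} \to 0$ for every $c' \in \bR$ follows because $\pi \mapsto d^2(\pi,\rho_k)$ converges uniformly to $\pi \mapsto d^2(\pi,\rho)$ on any fixed bounded region (using the triangle inequality and $d(\rho_k,\rho) \to 0$), and similarly $d_T(\pi,\mu_k) \to d_T(\pi,\mu)$ uniformly in $\pi$ — this last point uses that $d_T(\cdot,\cdot)$ is $1$-Lipschitz in its second argument with respect to $d$ (recall \eqref{eqn:tataru_formalbound}), so $|d_T(\pi,\mu_k) - d_T(\pi,\mu)| \le d(\mu_k,\mu) \to 0$; truncation at $c'$ then handles the unboundedness.

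For the second condition of Lemma \ref{lemma:viscosity_push}, I would bound $g_k(\mu) \vee d' - g^\dagger(\mu) \vee d'$ on the set $\{f_k \vee f^\dagger \le c'\}$. Inspecting the formula for $g^{6,\dagger}$, all terms involving $d^2(\pi,\rho_k)$, $d(\pi,\rho_k)$, and the constants $b, \tfrac12 b^2$ converge uniformly (again since $d(\rho_k,\rho)\to 0$), while the term $a[\cE(\rho_k) - \cE(\pi)]$ compares to $a[\cE(\rho) - \cE(\pi)]$ via $\cE(\rho_k) \to \cE(\rho)$; here one must be slightly careful since $\cE(\pi)$ may be $+\infty$, but in that case $g^\dagger(\pi) = -\infty$ and there is nothing to prove, while on $\{f_k \vee f^\dagger \le c'\}$ the quadratic terms control $d(\pi,\rho)$, and $\cE$ bounded below (from \eqref{eq: lower bound on energy}) combined with the constraint $f^\dagger(\pi) \le c'$ keeps $\cE(\pi)$ in a region where the comparison is uniform. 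Taking $\limsup_k$ then yields the required $\le 0$. The main obstacle I anticipate is the bookkeeping around the unboundedness of $\cE$ and of the test functions: one needs the truncations at $c', d'$ in Lemma \ref{lemma:viscosity_push} to absorb the places where $\cE(\pi) = +\infty$ or $d(\pi,\rho)$ is large, and to check that the energy-monotonicity along the flow indeed gives $\cE(\rho(s_k)) \to \cE(\rho)$ rather than merely $\le \cE(\rho)$ — but the latter weaker statement, combined with the direction of the inequality in $g^\dagger$ (where $+\cE(\rho)$ appears with a positive sign), is in fact all that is needed, so even the one-sided limit suffices. Once these approximation estimates are in place, Lemma \ref{lemma:viscosity_push}(a) delivers the subsolution statement, and part (b) with the analogous construction (flow points $\gamma(s_k), \pi(s_k)$, using $1$-Lipschitzness of $d_T$ and lower semicontinuity/monotonicity of $\cE$) delivers the supersolution statement.
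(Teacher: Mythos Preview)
Your proposal is correct and follows essentially the same approach as the paper: both approximate $\rho,\mu$ by the gradient-flow points $\rho(t),\mu(t)$ for $t\downarrow 0$, invoke the regularizing effect of the flow (Lemma~\ref{lem: EVI implies boundedness}~\ref{item:Irightcontinuous}) to land in $\cD(I)$, and then verify the hypotheses of Lemma~\ref{lemma:viscosity_push} using the $d$-Lipschitzianity of $d_T$ (Lemma~\ref{lemma:estimates_Tataru}) together with the convergence $\cE(\rho(t))\to\cE(\rho)$ from the energy identity. Your discussion of the truncations and the one-sided energy bound is slightly more detailed than the paper's, but the argument is the same.
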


\begin{proof}
    We establish the result only for subsolutions. Fix $h \in C_b(E)$ and $\lambda > 0$ and let $u^*$ be a viscosity subsolution to $f - \lambda H_{6,\dagger} f = h$. We prove this also holds for the equation in terms of  $\widetilde{H}_\dagger$.

    We argue on the basis of Lemma \ref{lemma:viscosity_push}. Thus let $(f_0,g_0) \in \widetilde{H}_\dagger$ be given by
    \begin{align*}
         f_0(\pi) & := \frac{1}{2}a d^2(\pi,\rho) + b d_T(\pi,\mu) + c \\
        g_0(\pi) & := a\left[ \cE(\rho) - \cE(\pi)\right] - a { \frac{\kappa}{2} } d^2(\pi,\rho) + b  + \frac{1}{2} a^2 d^2(\pi,\rho) + ab d(\pi,\rho) + \frac{1}{2} b^2.
    \end{align*}
    for $a,b > 0$, $c \in \bR$, $\mu, \rho \in E$ with $\cE(\rho) < \infty$. Consider the gradient flow $t \mapsto \rho(t)$ started from $\rho(0) = \rho$ and $t \mapsto \mu(t)$ started from $\mu(0) = \mu$. Set
    \begin{align*}
         f_t(\pi) & := \frac{1}{2}a d^2(\pi,\rho(t)) + b d_T(\pi,\mu(t)) + c \\
        g_t(\pi) & := a\left[ \cE(\rho(t)) - \cE(\pi)\right] - a {\frac{\kappa}{2} } d^2(\pi,\rho(t)) + b  \\
        & \qquad + \frac{1}{2} a^2 d^2(\pi,\rho(t)) + ab d(\pi,\rho(t)) + \frac{1}{2} b^2.
    \end{align*}
    By Lemma \ref{lem: EVI implies boundedness} \ref{item:Irightcontinuous} $I(\rho(t)) + I(\mu(t)) < \infty$ so that $(f_t,g_t) \in H_{6,\dagger}$. We then verify that assumptions of Lemma \ref{lemma:viscosity_push} hold for any sequence of times $t \downarrow 0$.

    First of all, by Lemma \ref{lem: EVI implies boundedness} \ref{item: EVI stability} $\rho(t) \rightarrow \rho$, $\mu(t) \rightarrow \mu$, so that uniform convergence of $f_t \wedge c$ to $f_0 \wedge c$ for any $c$ follows by the Lipschitzianity of $d$ and $d_T$ obtained in Lemma \ref{lemma:estimates_Tataru}.

    Uniform convergence $g_t \rightarrow g_0$ on appropriate sets follows similarly using that Lemma \ref{lem: EVI implies boundedness} \ref{item: EVI energy identity} yields $\cE(\rho(t)) \rightarrow \cE(\rho)$.
       
    Thus, the conclusion follows by an application of Lemma \ref{lemma:viscosity_push}.
    
\end{proof}

\subsection{Proof in the case that \texorpdfstring{$\kappa = 0$}{kappa = 0}.} \label{section:approximation_kappa_is_0}

In the previous subsection, we have established Theorem \ref{theorem:carry_over_solutions} in the context where $\kappa \neq 0$. The proof in the case $\kappa= 0$ needs changes due to the non-uniform estimates on the behaviour of the metric along  the gradient flow as established in  \cite{MuSa20}, restated in convenient form for our exposition in Lemma \ref{lem: EVI implies boundedness} \ref{item: EVI distance bound}. Instead of using the more elaborate control of the metric along the gradient flow throughout our proofs, we instead opt in this section to argue on the basis of an additional approximation $\kappa \uparrow 0$.

\begin{proof}[Proof of Theorem \ref{theorem:carry_over_solutions} in the setting that $\kappa = 0$]
    We argue for subsolutions only, as the supersolution case can be taken care of similarly. Let $h \in C_b(E)$ and $\lambda >0$.

    Denote by $\widetilde{H}_{\dagger,\kappa}$ the operator introduced in Definition \ref{definition:HdaggerHddagger}, where we now explicitly write the dependence on $\kappa$. Note that the dependence of a pair $(f,g) \in \widetilde{H}_{\dagger,\kappa}$ on $\kappa$ is present in $f$ via the Tataru distance as well as in $g$ via the gradient flow estimates.
    
    Denote by $\overline{H}_\dagger := \bigcup_{\kappa < 0} \widetilde{H}_{\dagger,\kappa}$. Let $u$ be a viscosity subsolution to $f - \lambda H_\dagger f = h$ and let $u^*$ be its weak upper semi-continuous regularization. 
    
    By assumption \eqref{item:ass_EVI} holds for $\kappa = 0$. Consequently, \eqref{item:ass_EVI} holds for any $\kappa < 0$. Thus, for any $\kappa < 0$, Theorem \ref{theorem:carry_over_solutions} implies that $u^*$ is a viscosity subsolution for $f - \lambda \widetilde{H}_{\dagger,\kappa} f = h$. Combining these results, we  obtain that $u^*$ is also a subsolution for $f - \lambda \overline{H}_\dagger f = h$.

    To conclude we proceed in two steps. 
    \begin{itemize}
    \item \textit{Step 1} We show $u^*$ is also a subsolution for $f - \lambda H_{6,\dagger} f = h$, where $H_{6,\dagger}$ is the operator from Definition \ref{definition:H6} for $\kappa = 0$.
    \item \textit{Step 2} We apply Lemma \ref{lemma:push_6_7} in the context $\kappa = 0$ to obtain the final result.
    \end{itemize}

    It thus suffices to carry out Step 1. We will argue on the basis of Proposition \ref{proposition:Hamiltonian_convergence_pseudo_coercive} and Lemma \ref{lemma:convergence_Tataru_in_kappa}. Indeed, it is the condition on the finiteness of $I$ in Lemma \ref{lemma:convergence_Tataru_in_kappa} that forces us to split our proof in two steps and work with $H_{6,\dagger}$ and perform again Lemma \ref{lemma:push_6_7} instead of working with $\widetilde{H}_\dagger$ directly.

    Let $(f_0,g_0) \in H_{6,\dagger}$ as in Definition \ref{definition:H6} for $\kappa = 0$, where $\rho,\mu$  such that $I(\rho) + I(\mu) < \infty$, $a,b > 0$ and $c \in \bR$. Now let $(f_\kappa,g_\kappa) \in \widetilde{H}_{\dagger,\kappa} \subseteq \overline{H}_\dagger$ be defined for the various $\kappa < 0$ as
    \begin{align*}
    f_\kappa(\pi) & := \frac{1}{2}a d^2(\pi,\rho) + b d_{T}(\pi,\mu)  +c, \\
	g_\kappa(\pi) & := a\left[\cE(\rho) - \cE(\pi)\right] - a \frac{\kappa}{2}d^2(\pi,\rho) + \frac{1}{2} a^2 d^2(\pi,\rho) \\
	& \qquad + b  + a b d(\pi,\rho) + \frac{1}{2} b^2.
    \end{align*}

    We now apply Proposition \ref{proposition:Hamiltonian_convergence_pseudo_coercive} for $\kappa \uparrow 0$. Assumption \ref{item:Ham_conv_weaktop_compactness} is immediate by the definition of our test functions. Assumptions \ref{item:Ham_conv_weaktop_equi_pointwise} and \ref{item:Ham_conv_weaktop_equi_uniform_on_compacts_f} follow from Lemma \ref{lemma:convergence_Tataru_in_kappa} \ref{item:convergence_Tataru_in_kappa_uniform} and \ref{item:convergence_Tataru_in_kappa_lsc} respectively.  Assumption \ref{item:Ham_conv_weaktop_equi_uniform_on_compacts_g} now follows immediately.

    Thus step 1 follows by application of Proposition \ref{proposition:Hamiltonian_convergence_pseudo_coercive} completing the proof of the lemma.

\end{proof}

\appendix

\section{Consequences of EVI and properties of the Tataru distances} \label{EVI-Tataru}

The formulation of gradient flows in terms of an evolutional variational inquality has far-reaching consequences. The following sections {include some key results from \cite{MuSa20} and \cite{CoKrTo21} as well as some new minor lemmas and} are included for completeness and readability.

\subsection{Consequences of EVI}  \label{section:consequences_EVI}

In this section we deduce from EVI various estimates on the behavior of $d$ and $\cE$ along the gradient flow. The first part of the following result is a copy of that in \cite{CoKrTo21}, but contains three new statements. All statements can be obtained from those in \cite{MuSa20}. 

\begin{lemma}\label{lem: EVI implies boundedness}
Let Assumption \ref{assumption:distance_and_energy} and \ref{assumption:gradientflow}  hold (in particular EVI inequality \eqref{item:ass_EVI}). 

For $\mu\in E$, let $(\mu(t))_{t\geq 0 }$ be the corresponding gradient flow starting at $\mu.$

Then the following holds:
\begin{enumerate}[(a)]
 \item\label{EVI: quadratic lower bound} There exists $\sigma\in E$ and constants $c_1,c_2\geq0$ such that
             \begin{equation}\label{eq: lower bound on energy} \quad \cE(\pi) \geq -c_1d^2(\pi,\sigma)-c_2\quad\forall\pi\in E.
             \end{equation} 
\item\label{item: EVI energy identity} For any $t>0$ we have
		\begin{equation}\label{eq_energy_identity}
		\cE(\mu(t)) - \cE(\mu) = -\int_0^t I(\mu(s)) \dd s.
		\end{equation}
		\item \label{item:domainIdense} The domain $\cD(I)$ is dense in $\cD(\cE)$ and dense in $E$. In particular, the domain $\cD(\cE)$ of $\cE$ is dense in $E$.
		\item \label{item:Irightcontinuous}
		For any $t>0$, we have $I(\mu(t)) < \infty$. The map $t \mapsto I(\mu(t))$ is right-continuous at any $t_0 \geq 0$ such that $I(\mu(t_0)) < \infty$.
        \item\label{item: EVI contraction}		Let $\nu\in E$  and let $(\nu(t))_{t\geq 0}$ be the corresponding gradient flow starting at $\nu$. Then we have 
	\begin{equation}\label{lemma:distance_contracting_under_gradient_flow}
	    d(\mu(t),\nu(t)) \leq e^{-{\kappa} t }d(\mu,\nu)\quad \forall t\in[0,+\infty).
		\end{equation}
		In particular, for a given $\mu\in E$, there is at most one solution of \eqref{item:ass_EVI} such that $\mu(t)\rightarrow \mu$ as $t\rightarrow0$.
		   \item\label{item: EVI stability} If $(\mu_n)_{n\in\mathbb{N}}\in E$  and $(t_n)_{n\in\mathbb{N}}\in [0,+\infty)$ are such that $\mu_n\rightarrow \mu$ and $t_{n}\rightarrow t$, then $\mu_n(t_n)\rightarrow \mu(t)$.
\item\label{item: EVI Bakry} Let $\mu \in E $ be such that $I(\mu)<+\infty$. Then for all $t>0$ we have
            \begin{equation}\label{eq: EVI implies boundedness 1}
                I(\mu(t)) \leq I(\mu)\exp(-2\kappa t)
            \end{equation}
            and
            \begin{equation}\label{eq: EVI implies boundedness 2}
            \begin{aligned}
                 \mathcal{E}(\mu(t)) & \geq \cE(\mu) + \frac{\exp(-2\kappa t)-1}{2\kappa}I(\mu), && \text{if } \kappa \neq 0, \\
                \mathcal{E}(\mu(t)) & \geq \cE(\mu) -t I(\mu), && \text{if } \kappa = 0, 
            \end{aligned}
            \end{equation}
            In particular, if $\kappa \neq 0$, the function $t\mapsto \exp(2\hat\kappa t)\mathcal{E}(\mu(t))$ is bounded from below.
\item\label{item: EVI distance bound} 
If  $I(\mu)<+\infty$, then there are constants $c_{1,\mu} > 0$, $c_{2,\mu} \in \bR$ such that for any $t>0$
\begin{equation}\label{eq: EVI implies boundedness 6}
\begin{aligned}
    & \sup_{0 \leq \varepsilon \leq 1} \exp(\hat\kappa t) d_{\varepsilon}(\pi,\mu(t)) \leq  d(\pi,\mu) +  c_{1,\mu} \sqrt{|\cE(\pi)|} + c_{2,\mu} && \text{if } \kappa \neq 0, \\
    & \sup_{0 \leq \varepsilon \leq 1} \frac{1}{t} d_{\varepsilon}(\pi,\mu(t)) \leq d(\pi,\mu) +  c_{1,\mu} \sqrt{|\cE(\pi)|} + c_{2,\mu} && \text{if } \kappa = 0,
\end{aligned}
\end{equation}
where, $d_\varepsilon$ is defined as in \eqref{eq: d_epsilon} and we write $d_0$ for the metric $d$. As a consequence, we have for any $\rho \in E$ and $c,d \in \bR$
\begin{equation}\label{eq: EVI implies boundedness 7}
\begin{aligned}
    & \sup_{0 \leq \varepsilon \leq 1} \sup_{\pi\in K_{c,d}^\rho,t\geq 0}\exp(\hat\kappa t)d_{\varepsilon}(\pi,\mu(t))<+\infty && \text{if } \kappa \neq 0, \\
    & \sup_{0 \leq \varepsilon \leq 1} \sup_{\pi\in K_{c,d}^\rho,t\geq 0} \frac{1}{t} d_{\varepsilon}(\pi,\mu(t))<+\infty && \text{if } \kappa = 0,
\end{aligned}
\end{equation}
where $K_{c,d}^\rho$ was defined in Assumption \ref{assumption:weak_topology}.
\end{enumerate}
\end{lemma}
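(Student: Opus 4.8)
The plan is to prove Lemma \ref{lem: EVI implies boundedness} \ref{item: EVI distance bound} by isolating the single nontrivial ingredient --- a bound on $e^{\hat\kappa t}d(\pi,\mu(t))$ in terms of $d(\pi,\mu)$, $\cE(\pi)$ and quantities depending only on $\mu$ --- and then upgrading it to the $d_\varepsilon$ version and to the uniform version \eqref{eq: EVI implies boundedness 7} by soft arguments. I focus on the case $\kappa \neq 0$; the case $\kappa = 0$ is handled by the same computation with the factor $e^{\hat\kappa t}$ replaced by the factor $1/t$ that arises naturally.

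\textbf{Step 1: the basic metric estimate along the flow.} Using the triangle inequality, $d(\pi,\mu(t)) \leq d(\pi,\mu) + d(\mu,\mu(t))$, so it suffices to control $e^{\hat\kappa t} d(\mu,\mu(t))$. The EVI inequality \eqref{item:ass_EVI}, written for the flow $\mu(\cdot)$ and tested against the fixed point $\mu = \mu(0)$, gives $\frac{1}{2}\frac{\dd^+}{\dd t}d^2(\mu(t),\mu) \leq \cE(\mu) - \cE(\mu(t)) - \frac{\kappa}{2}d^2(\mu(t),\mu)$. Combined with the energy identity \ref{item: EVI energy identity}, which gives $\cE(\mu) - \cE(\mu(t)) = \int_0^t I(\mu(s))\,\dd s$, and with the Bakry-type decay $I(\mu(s)) \leq I(\mu)e^{-2\kappa s}$ from \ref{item: EVI Bakry} (valid since $I(\mu) < \infty$), one obtains a differential inequality for $w(t) := d^2(\mu(t),\mu)$ of the form $\frac{\dd^+}{\dd t}w \leq 2C_\mu e^{-2\kappa^- t} - \kappa w$, where $C_\mu$ depends only on $I(\mu)$ and $\kappa$. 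Gronwall's lemma (in the $\frac{\dd^+}{\dd t}$ form, which is available since the relevant maps are continuous) then yields $e^{\hat\kappa t}d(\mu,\mu(t)) \leq c_{2,\mu}$ for a constant $c_{2,\mu}$ depending only on $\mu$ and $\kappa$. (Here the sign bookkeeping with $\hat\kappa = 0 \wedge \kappa$ versus $\kappa$ matters, and one should double-check against the statements of \cite{MuSa20}, where such estimates are derived; I would cite those.) Actually, to get the $\sqrt{|\cE(\pi)|}$ term one must be slightly more careful: testing EVI against $\pi$ itself rather than against $\mu$ produces $\cE(\pi)$ on the right-hand side, and after Gronwall one picks up a term of size $\sqrt{|\cE(\pi)| + |\cE(\mu(t))|}$ times $e^{\hat\kappa t}$; bounding $\cE(\mu(t))$ below by \eqref{eq: EVI implies boundedness 2} and using $\sqrt{x+y}\leq \sqrt x + \sqrt y$ gives exactly the claimed form $d(\pi,\mu) + c_{1,\mu}\sqrt{|\cE(\pi)|} + c_{2,\mu}$.

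\textbf{Step 2: from $d$ to $d_\varepsilon$, uniformly in $\varepsilon$.} By Lemma \ref{lemma:approximate_square_root} (the properties of $\psi_\varepsilon$), we have $d_\varepsilon(\pi,\mu(t)) = \psi_\varepsilon(\tfrac12 d^2(\pi,\mu(t)))$ with $\psi_\varepsilon(r) \leq \sqrt{2r} + c\sqrt\varepsilon$ (or a similar elementary bound) for all $r \geq 0$ and $\varepsilon \in (0,1]$, and $\psi_\varepsilon(r) = \sqrt{2r}$ for $r \geq \varepsilon$, so $d_\varepsilon(\pi,\mu(t)) \leq d(\pi,\mu(t)) + C$ with $C$ absolute. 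Multiplying by $e^{\hat\kappa t}\leq 1$ and invoking Step 1 gives the first line of \eqref{eq: EVI implies boundedness 6}; the case $\varepsilon = 0$ (i.e. $d_0 = d$) is Step 1 itself. Then \eqref{eq: EVI implies boundedness 7} follows by taking the supremum over $\pi \in K_{c,d}^\rho$: on that set $d(\rho,\pi)\leq c$, hence $d(\pi,\mu) \leq c + d(\rho,\mu)$ is bounded, and $\cE(\pi) \leq d$ while $\cE(\pi) \geq -c_1 d^2(\pi,\sigma) - c_2 \geq -c_1(c + d(\rho,\sigma))^2 - c_2$ by the quadratic lower bound \ref{EVI: quadratic lower bound}, so $|\cE(\pi)|$ is bounded on $K_{c,d}^\rho$; plugging these bounds into \eqref{eq: EVI implies boundedness 6} gives a finite supremum, uniformly in $t \geq 0$ and $\varepsilon \in [0,1]$.

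\textbf{Step 3: the case $\kappa = 0$.} When $\kappa = 0$ the exponential prefactor $e^{-2\kappa s}$ disappears and the energy bound \eqref{eq: EVI implies boundedness 2} becomes $\cE(\mu(t)) \geq \cE(\mu) - tI(\mu)$, which is linear in $t$; the Gronwall argument of Step 1 then produces $d(\mu,\mu(t)) \leq C_\mu t + (\text{lower order})$, so dividing by $t$ rather than multiplying by $e^{\hat\kappa t}$ gives the second lines of \eqref{eq: EVI implies boundedness 6} and \eqref{eq: EVI implies boundedness 7}; Steps 2 goes through verbatim.

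The main obstacle I anticipate is Step 1: getting the differential/Gronwall argument with the upper right derivative $\frac{\dd^+}{\dd t}$ to be rigorous (one needs the comparison principle for upper Dini derivatives applied to a continuous function, plus a careful tracking of the $\hat\kappa$ versus $\kappa$ distinction and of which constants are allowed to depend on $\mu$ through $I(\mu)$), and correctly extracting the $\sqrt{|\cE(\pi)|}$ dependence rather than a cruder $|\cE(\pi)|$ dependence --- the square root is essential for the later applications (e.g. Step 1 of the proof of Theorem \ref{thm: 2to3}). Since all of these estimates are, up to reorganization, contained in \cite{MuSa20}, the cleanest route is to derive the $d$-estimate by quoting the relevant results there and then perform only the elementary $\psi_\varepsilon$ and $K_{c,d}^\rho$ bookkeeping here.
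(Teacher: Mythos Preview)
Your proposal is essentially correct and, once you abandon the triangle-inequality detour, lands on the same argument as the paper: test EVI against $\pi$, bound $\cE(\mu(t))$ from below using item \ref{item: EVI Bakry}, integrate, and take a square root; then extend to $d_\varepsilon$ via Lemma \ref{lemma:approximate_square_root} and to $K^\rho_{c,d}$ via item \ref{EVI: quadratic lower bound}, exactly as you do in Step~2.

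Two small points where the paper is cleaner. First, the triangle inequality $d(\pi,\mu(t))\leq d(\pi,\mu)+d(\mu,\mu(t))$ is a genuine dead end for this statement (you note it cannot produce the $\sqrt{|\cE(\pi)|}$ term), so drop it entirely: the paper goes straight to EVI tested at $\pi$. Second, rather than invoking Gronwall and a Dini-derivative comparison principle, the paper rewrites EVI with the integrating factor already built in,
\[
\frac{\dd^+}{\dd t}\Big(\tfrac12 e^{\kappa t}d^2(\pi,\mu(t))\Big)\leq e^{\kappa t}\big[\cE(\pi)-\cE(\mu(t))\big],
\]
inserts the lower bound \eqref{eq: EVI implies boundedness 2} for $\cE(\mu(t))$ \emph{before} integrating (your text suggests doing it after, which is awkward since the unknown $\cE(\mu(t))$ would survive), integrates explicitly, multiplies by $e^{\pm\kappa t}$ according to the sign of $\kappa$, and only then takes the square root using $\sqrt{\sum x_i}\leq\sum\sqrt{|x_i|}$. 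This avoids any subtle comparison argument and keeps the $\hat\kappa$ vs.\ $\kappa$ bookkeeping transparent. For $\kappa=0$ the same integration gives $\tfrac12 d^2(\pi,\mu(t))\leq \tfrac12 d^2(\pi,\mu)+t[\cE(\pi)-\cE(\mu)]+\tfrac12 t^2 I(\mu)$, and one divides by $t^2$; your Step~3 phrasing reverts to controlling $d(\mu,\mu(t))$, which is the detour you already discarded.
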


\begin{remark}

Note that if \eqref{eq: lower bound on energy} holds for some $\sigma\in E$, then it also holds for any $\rho\in E$ with possibly larger constants.
\end{remark}

\begin{proof}
The proofs of \ref{EVI: quadratic lower bound} to \ref{item: EVI contraction} can be found in \cite{MuSa20} and can also be found in Section 4 of \cite{CoKrTo21}. We proceed with the final three items. First of all, item \ref{item: EVI stability} is \cite[Thm 3.5, Eq 3.20]{MuSa20}. 

\smallskip

We proceed with \ref{item: EVI Bakry}. Equation \eqref{eq: EVI implies boundedness 1} is \cite[Thm 3.5, Eq 3.12]{MuSa20} applied for $I = |\partial \cE|^2$. Moreover, we observe that \eqref{eq: EVI implies boundedness 2} is easily obtained applying \eqref{eq: EVI implies boundedness 1} in \ref{item: EVI energy identity}. 

\smallskip

The only statement left to prove is \ref{item: EVI  distance bound}. We only consider the case $\varepsilon = 0$, as the general case then follows by Lemma \ref{lemma:approximate_square_root} \ref{item:lemma_approx_sqrt_convergence}.

\smallskip

We first work out the case $\kappa\neq 0$, postponing the analysis of the case $\kappa=0$ to the end of the proof. We begin by observing that we can rewrite \eqref{item:ass_EVI} in the form (see also \cite[Eq. 3.9]{MuSa20})
\begin{equation*}
\frac{\dd^{+}}{\dd t} \Big(\frac{1}{2}\exp(\kappa t)d^2(\pi,\mu(t)) \Big) \leq \exp(\kappa t) \left[\mathcal{E}(\pi)-\mathcal{E}(\mu(t)) \right].   
\end{equation*}
If we now plug in the estimate \eqref{eq: EVI implies boundedness 2} we find
\begin{equation} \label{eqn:consequences_EVI_upperbound_forTataru}
    \frac{\dd^{+}}{\dd t} \Big(\frac{1}{2}\exp(\kappa t)d^2(\pi,\mu(t))\Big) \leq  \exp(\kappa t)[\mathcal{E}(\pi)-\mathcal{E}(\mu) ]   +\frac{I(\mu)}{2\kappa} (\exp(\kappa t)-\exp(-\kappa t )).
\end{equation} 
Note that as a consequence of the local Lipschitz property for $t \mapsto \mu(t)$ \cite[Thm 3.5, Eq 3.11]{MuSa20} the function 
\begin{equation*}
    t \mapsto \frac{1}{2}\exp(\kappa t)d^2(\pi,\mu(t))
\end{equation*}
is locally absolutely continuous. Thus, integrating both sides of \eqref{eqn:consequences_EVI_upperbound_forTataru} yields
\begin{equation}\label{eq: EVI implies boundedness 4}
    \begin{split}
\frac{1}{2}\exp(\kappa t)d^2(\pi,\mu(t)) \leq & \frac{1}{2}d^2(\pi,\mu)+ \frac{\exp(\kappa t)-1}{\kappa} [\cE(\pi)-\cE(\mu)]\\
&+\frac{I(\mu)}{2{\kappa}^2}[\exp(\kappa t)+\exp(-\kappa t)-2]. 
\end{split}
\end{equation}
We argue for $\kappa < 0$ first. Multiplication of \eqref{eq: EVI implies boundedness 4} by $\frac{1}{2}e^{\kappa t}$ yields
\begin{equation*}\label{eq: EVI implies boundedness 5}
\begin{split}
\exp(2\kappa t)d^2(\pi,\mu(t)) \leq & \exp(\kappa t) d^2(\pi,\mu)+ \frac{\exp(2\kappa t)-\exp(\kappa t)}{\kappa} [\cE(\pi)-\cE(\mu)]\\
& + \frac{I(\mu)}{\kappa^2}[\exp(2\kappa t)+1-\exp(\kappa t)],
\end{split}
\end{equation*}
after which taking a square root and using the upper bound $\sqrt{\sum x_i} \leq \sum \sqrt{|x_i|}$ leads to
\begin{equation*}
\begin{split}
\exp(\kappa t)d(\pi,\mu(t)) \leq & \exp(\frac{1}{2}\kappa t) d(\pi,\mu) +  \sqrt{\frac{\exp(2\kappa t)-\exp(\kappa t)}{\kappa}} \left(\sqrt{|\cE(\pi)|} + \sqrt{|0 \wedge \cE(\mu)|} \right) \\
& + \frac{\sqrt{1+ \exp(2\kappa t)-\exp(\kappa t)}}{\kappa} |\partial \cE|(\mu).
\end{split}
\end{equation*}
which gives \eqref{eq: EVI implies boundedness 6}.

The case $\kappa > 0$ follows similarly from \eqref{eq: EVI implies boundedness 4} after multiplication by $e^{-\kappa t}$. A similar procedure for $\kappa = 0$ yields
\begin{equation*}
    \frac{1}{2}d^2(\pi,\mu(t)) \leq \frac{1}{2}d^2(\pi,\mu) + t\left[\cE(\pi) - \cE(\mu)\right] + \frac{1}{2}t^2 I(\mu)
\end{equation*}
which gives \eqref{eq: EVI implies boundedness 6} for $\varepsilon = 0$ after multiplication by $t^{-2}$.

\end{proof}

\subsection{Properties of the Tataru distance}\label{appendixTataru}

In this Section, we focus on the properties of the Tataru distance of Definition \ref{definition:Tataru}. The first two properties are well known for the distance, and for our $\kappa$ dependent context stated as Lemma 4.3 and Lemma 4.4 in \cite{CoKrTo21}. We add a third property: namely that the Tataru distance, as a function of $\kappa$ is continuous.

\begin{lemma} \label{lemma:estimates_Tataru}
Let Assumptions \ref{assumption:distance_and_energy} and \ref{assumption:gradientflow} be satisfied.	We have for all $\mu,\hat{\mu}, \nu,\hat{\nu}\in E$ and $r > 0$ that
	\begin{enumerate}
		\item [(a)]
		\begin{equation*}
		d_T(\mu,\nu) - d_T(\hat{\mu},\hat{\nu}) \leq d(\mu,\hat{\mu}) + d(\nu,\hat{\nu})
		\end{equation*}
		\item [(b)]
		\begin{equation*}
		\frac{d_T(\nu(r),\hat{\nu}) - d_T(\nu,\hat{\nu})}{r} \leq 1.
		\end{equation*}
	\end{enumerate}
\end{lemma}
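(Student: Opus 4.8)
\textbf{Proof plan for Lemma \ref{lemma:estimates_Tataru}.}

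The plan is to establish both estimates directly from the definition $d_T(\pi,\rho) = \inf_{t \geq 0}\{t + e^{\hat\kappa t} d(\pi,\rho(t))\}$, exploiting only the triangle inequality for $d$ and the contraction estimate \eqref{lemma:distance_contracting_under_gradient_flow} of Lemma \ref{lem: EVI implies boundedness} \ref{item: EVI contraction}, namely $d(\nu(t),\hat\nu(t)) \leq e^{-\kappa t} d(\nu,\hat\nu)$. For part (a), I would first reduce by symmetry to bounding $d_T(\mu,\nu) \leq d_T(\hat\mu,\hat\nu) + d(\mu,\hat\mu) + d(\nu,\hat\nu)$. Pick an arbitrary $t \geq 0$; using the triangle inequality twice,
\begin{equation*}
d(\mu,\nu(t)) \leq d(\mu,\hat\mu) + d(\hat\mu,\hat\nu(t)) + d(\hat\nu(t),\nu(t)).
\end{equation*}
Then multiply by $e^{\hat\kappa t}$, add $t$, and use that $e^{\hat\kappa t} \leq 1$ (since $\hat\kappa \leq 0$) on the term $e^{\hat\kappa t} d(\mu,\hat\mu)$, together with $e^{\hat\kappa t} d(\hat\nu(t),\nu(t)) \leq e^{\hat\kappa t} e^{-\kappa t} d(\hat\nu,\nu) \leq d(\nu,\hat\nu)$ — here one checks the exponent $\hat\kappa - \kappa = (0\wedge\kappa) - \kappa \leq 0$, so $e^{(\hat\kappa-\kappa)t} \leq 1$. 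This yields
\begin{equation*}
t + e^{\hat\kappa t} d(\mu,\nu(t)) \leq d(\mu,\hat\mu) + d(\nu,\hat\nu) + \big(t + e^{\hat\kappa t} d(\hat\mu,\hat\nu(t))\big).
\end{equation*}
Taking the infimum over $t$ on both sides (the right-hand side's last bracket infimizes to $d_T(\hat\mu,\hat\nu)$) gives the claim.

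For part (b), the idea is that a shift along the gradient flow costs at most the shift in time. Fix $t \geq 0$ achieving $d_T(\nu,\hat\nu)$ approximately, i.e. take $\varepsilon > 0$ and $t$ with $t + e^{\hat\kappa t} d(\nu,\hat\nu(t)) \leq d_T(\nu,\hat\nu) + \varepsilon$; here I must be careful about orientation — in the statement it is $d_T(\nu(r),\hat\nu)$, so using the convention that the second argument's flow is taken, I would instead write $d_T(\nu(r),\hat\nu) = \inf_{s\geq 0}\{s + e^{\hat\kappa s} d(\nu(r),\hat\nu(s))\}$ and $d_T(\nu,\hat\nu) = \inf_{s\geq0}\{s + e^{\hat\kappa s} d(\nu,\hat\nu(s))\}$. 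Given a near-optimal $s$ for $d_T(\nu(r),\hat\nu)$, I would use the semigroup property $\nu(r) = \nu(r)$ and relate $d(\nu(r),\hat\nu(s))$ to $d(\nu,\hat\nu(s+r))$ via the contraction: $d(\nu(r),\hat\nu(s)) = d(\nu(r),\hat\nu(s))$, and comparing the flow of $\nu$ from time $r$ with the flow of $\hat\nu$ from time $s$... Actually the cleanest route: for any $s \geq 0$, by \eqref{lemma:distance_contracting_under_gradient_flow} applied to the curves starting at $\nu$ and $\hat\nu(s)$ run for time $r$ we get $d(\nu(r), \hat\nu(s+r)) \leq e^{-\kappa r} d(\nu,\hat\nu(s))$, hence
\begin{equation*}
(s+r) + e^{\hat\kappa(s+r)} d(\nu(r),\hat\nu(s+r)) \leq r + s + e^{\hat\kappa(s+r)} e^{-\kappa r} d(\nu,\hat\nu(s)) \leq r + \big(s + e^{\hat\kappa s} d(\nu,\hat\nu(s))\big),
\end{equation*}
using $e^{\hat\kappa r} \leq 1$ and $e^{(\hat\kappa - \kappa)r} \leq 1$. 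The left side is bounded below by $d_T(\nu(r),\hat\nu)$ (it is the value of the Tataru functional for $\nu(r)$ at time $s+r$), and taking the infimum over $s$ on the right gives $d_T(\nu(r),\hat\nu) \leq r + d_T(\nu,\hat\nu)$, which rearranges to (b).

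I do not expect a serious obstacle here; the only points requiring care are bookkeeping of the exponential factors — repeatedly using that $\hat\kappa \leq 0$ and $\hat\kappa \leq \kappa$ so that $e^{\hat\kappa t}$ and $e^{(\hat\kappa-\kappa)t}$ are both bounded by $1$ — and fixing the orientation convention in the definition of $d_T$ (which argument is flowed), which must be kept consistent with the gradient flow used in Definition \ref{definition:Tataru}. Since these are exactly Lemmas 4.3–4.4 of \cite{CoKrTo21}, I would in practice just cite that reference, but the self-contained argument above is short enough to include.
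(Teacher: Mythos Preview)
Your argument is correct in both parts; the exponent bookkeeping ($\hat\kappa \leq 0$ and $\hat\kappa - \kappa \leq 0$) is exactly what is needed, and the use of the contraction estimate together with the semigroup property is the right idea. The paper itself gives no proof for this lemma and simply cites \cite[Lemmas 4.3 and 4.4]{CoKrTo21}, so your self-contained derivation is a welcome addition rather than a deviation.
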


\begin{lemma} \label{lemma:triangle_inequality_Tataru}
	For $\rho, \mu,\nu\in E$, we have
	\begin{equation*}
	d_T(\rho,\nu) \leq d_T(\rho,\mu) + d_T(\mu,\nu).
	\end{equation*}
\end{lemma}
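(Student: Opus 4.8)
The statement to prove is the triangle inequality for the Tataru distance:
\begin{equation*}
d_T(\rho,\nu) \leq d_T(\rho,\mu) + d_T(\mu,\nu), \qquad \forall \rho,\mu,\nu \in E.
\end{equation*}

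\textbf{Approach.} The plan is to unfold the definition $d_T(\pi,\sigma) = \inf_{t\geq 0}\{t + e^{\hat\kappa t} d(\pi,\sigma(t))\}$ and combine an arbitrary near-optimal time for the pair $(\mu,\nu)$ with an arbitrary near-optimal time for the pair $(\rho,\mu)$, using the semigroup property of the gradient flow together with the contraction estimate of Lemma \ref{lem: EVI implies boundedness} \ref{item: EVI contraction}. Fix $\varepsilon > 0$ and pick $s,t \geq 0$ with
\begin{equation*}
t + e^{\hat\kappa t} d(\rho,\mu(t)) \leq d_T(\rho,\mu) + \varepsilon, \qquad s + e^{\hat\kappa s} d(\mu,\nu(s)) \leq d_T(\mu,\nu) + \varepsilon.
\end{equation*}
The candidate time for the pair $(\rho,\nu)$ will be $s + t$. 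Using the semigroup property $\nu(s+t) = (\nu(s))(t)$ and the triangle inequality for $d$,
\begin{equation*}
d(\rho,\nu(s+t)) \leq d(\rho,\mu(t)) + d(\mu(t),\nu(s+t)) = d(\rho,\mu(t)) + d(\mu(t),(\nu(s))(t)).
\end{equation*}
Now I would apply the contraction estimate $d(\mu(t),(\nu(s))(t)) \leq e^{-\kappa t} d(\mu,\nu(s))$ from \eqref{lemma:distance_contracting_under_gradient_flow}, and crucially use that $e^{\hat\kappa t} \leq e^{-\kappa t}$ since $\hat\kappa = 0 \wedge \kappa \leq -\kappa$ whenever $\kappa \geq 0$... but that is false when $\kappa < 0$. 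So the key point is instead to bound things directly in terms of $e^{\hat\kappa t}$.

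\textbf{Key computation.} Write the bound for $d_T(\rho,\nu)$ at time $s+t$:
\begin{equation*}
d_T(\rho,\nu) \leq (s+t) + e^{\hat\kappa(s+t)} d(\rho,\nu(s+t)) \leq (s+t) + e^{\hat\kappa(s+t)}\big(d(\rho,\mu(t)) + d(\mu(t),(\nu(s))(t))\big).
\end{equation*}
For the first summand, since $\hat\kappa \leq 0$ we have $e^{\hat\kappa(s+t)} \leq e^{\hat\kappa t}$, hence $e^{\hat\kappa(s+t)} d(\rho,\mu(t)) \leq e^{\hat\kappa t} d(\rho,\mu(t))$. For the second summand, apply contraction with constant $\kappa$: $d(\mu(t),(\nu(s))(t)) \leq e^{-\kappa t} d(\mu,\nu(s))$, so that $e^{\hat\kappa(s+t)} d(\mu(t),(\nu(s))(t)) \leq e^{\hat\kappa(s+t)}e^{-\kappa t} d(\mu,\nu(s)) = e^{\hat\kappa s} e^{(\hat\kappa-\kappa)t} d(\mu,\nu(s)) \leq e^{\hat\kappa s} d(\mu,\nu(s))$, where the last inequality uses $\hat\kappa - \kappa \leq 0$ (true in both cases $\kappa\geq 0$ and $\kappa<0$). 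Collecting,
\begin{equation*}
d_T(\rho,\nu) \leq \big(t + e^{\hat\kappa t} d(\rho,\mu(t))\big) + \big(s + e^{\hat\kappa s} d(\mu,\nu(s))\big) \leq d_T(\rho,\mu) + d_T(\mu,\nu) + 2\varepsilon.
\end{equation*}
Letting $\varepsilon \downarrow 0$ gives the claim.

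\textbf{Main obstacle.} There is no deep obstacle here; the only subtlety — and the step I would be most careful about — is the correct handling of the exponential weights, namely verifying that $e^{\hat\kappa(s+t)} e^{-\kappa t} \leq e^{\hat\kappa s}$ uniformly in $s,t\geq 0$ for both signs of $\kappa$, which reduces to $\hat\kappa \leq \kappa$ and $\hat\kappa \leq 0$, both immediate from $\hat\kappa = 0\wedge\kappa$. The other ingredient to state cleanly is that the contraction estimate of Lemma \ref{lem: EVI implies boundedness} \ref{item: EVI contraction} applies to the flow started at $\nu(s)$ (legitimate since $\nu(s) \in E$ and, by the semigroup property, its flow is $t\mapsto \nu(s+t)$). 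Everything else is a routine $\varepsilon$-argument on infima.
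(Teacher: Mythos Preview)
Your proof is correct and is the natural approach: pick near-optimal times, combine them via the semigroup property, and control the exponential weights using $\hat\kappa \leq 0$ together with the contraction estimate from Lemma~\ref{lem: EVI implies boundedness}~\ref{item: EVI contraction}. The paper itself does not give a proof of this lemma here but refers to Lemma~4.4 of \cite{CoKrTo21}; your argument is precisely the one that underlies that reference, and your careful verification that $e^{\hat\kappa(s+t)}e^{-\kappa t}\leq e^{\hat\kappa s}$ via $\hat\kappa-\kappa\leq 0$ (valid for both signs of $\kappa$) is exactly the point one needs to check.
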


For the next lemma, we denote the dependence of the Tataru distance on $\kappa$ by $d_{T,\kappa}$.

\begin{lemma}\label{lemma:convergence_Tataru_in_kappa}

Let $\mu$ be such that $I(\mu) < \infty$ and $\rho \in E$ such that $\cE(\rho) < \infty$. Fix $\kappa_0 \in \bR$.

\begin{enumerate}[(a)]
    \item \label{item:convergence_Tataru_in_kappa_uniform} For any $c,d \in \bR$ we have
\begin{equation*}
    \lim_{\kappa \uparrow \kappa_0} \sup_{\pi \in K_{c,d}^\rho} \left|d_{T,\kappa}(\pi,\mu) - d_{T,\kappa_0}(\pi,\mu) \right| = 0.
\end{equation*}
\item \label{item:convergence_Tataru_in_kappa_lsc} Let $\pi$ be such that $\cE(\pi) < \infty$ and consider for $\kappa \leq \kappa_0$ a sequence $\pi_\kappa$ that converges weakly to $\pi$ as $\kappa \uparrow \kappa_0$. We then have that
\begin{equation*}
    \liminf_{\kappa \uparrow \kappa_0} d_{T,\kappa}(\pi_\kappa,\mu) \geq d_{T,\kappa_0}(\pi,\mu).
\end{equation*}
\end{enumerate}
\end{lemma}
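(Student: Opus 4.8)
\textbf{Proof strategy for Lemma \ref{lemma:convergence_Tataru_in_kappa}.}

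The plan is to work directly from the definition $d_{T,\kappa}(\pi,\mu) = \inf_{t\ge 0}\{t + e^{\hat\kappa t}d(\pi,\mu_\kappa(t))\}$, where $\hat\kappa = 0\wedge\kappa$ and $\mu_\kappa(t)$ denotes the gradient flow started at $\mu$; note carefully that both the exponential weight $e^{\hat\kappa t}$ \emph{and} the curve $\mu_\kappa(t)$ depend on $\kappa$ through the EVI inequality. The crucial preliminary observation, needed to make any uniform estimate work, is that for $\pi$ in a sublevel set $K_{c,d}^\rho$ and $\mu$ with $I(\mu)<\infty$, the infimum defining $d_{T,\kappa}(\pi,\mu)$ is effectively attained over a bounded time interval $[0,T]$ with $T$ depending only on $c,d$ and not on $\kappa$ (for $\kappa$ near $\kappa_0$): indeed, choosing $t=0$ gives $d_{T,\kappa}(\pi,\mu)\le d(\pi,\mu)$, which is bounded on $K_{c,d}^\rho$, while for large $t$ the term $t$ alone exceeds this bound. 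So the first step is to fix such a $T$ uniformly.

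For part (a), on $[0,T]$ I would bound $|e^{\hat\kappa t}d(\pi,\mu_\kappa(t)) - e^{\hat\kappa_0 t}d(\pi,\mu_{\kappa_0}(t))|$ by splitting it as $|e^{\hat\kappa t}-e^{\hat\kappa_0 t}|\,d(\pi,\mu_\kappa(t)) + e^{\hat\kappa_0 t}|d(\pi,\mu_\kappa(t))-d(\pi,\mu_{\kappa_0}(t))|$. The first summand is handled by continuity of $\kappa\mapsto e^{\hat\kappa t}$, uniform on $[0,T]$, together with a uniform-in-$\kappa$ bound on $d(\pi,\mu_\kappa(t))$ for $\pi\in K_{c,d}^\rho$ and $t\in[0,T]$ — which is available from Lemma \ref{lem: EVI implies boundedness}\ref{item: EVI distance bound} (or more elementarily, since $t\mapsto\mu_\kappa(t)$ is locally Lipschitz with controlled constant and $d(\pi,\cdot)$ is $1$-Lipschitz). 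For the second summand I need $\sup_{t\in[0,T]}d(\mu_\kappa(t),\mu_{\kappa_0}(t))\to 0$ as $\kappa\uparrow\kappa_0$; this is the technical heart, and can be obtained from a Gronwall-type comparison built from the EVI inequalities for the two values of $\kappa$ (the flow depends continuously on $\kappa$ in the EVI formulation — one compares $\frac12\frac{\dd^+}{\dd t}d^2(\mu_\kappa(t),\mu_{\kappa_0}(t))$ using \eqref{item:ass_EVI} for each flow against the other point), using $I(\mu)<\infty$ to control the energy terms via Lemma \ref{lem: EVI implies boundedness}\ref{item: EVI Bakry}. Taking the supremum over $t\in[0,T]$ then over $\pi\in K_{c,d}^\rho$ of the resulting bound, and noting it tends to $0$ independently of $\pi$, gives $\sup_{\pi\in K_{c,d}^\rho}|d_{T,\kappa}(\pi,\mu)-d_{T,\kappa_0}(\pi,\mu)|\to 0$ since the infimum of a function is $1$-Lipschitz with respect to sup-norm perturbations of that function.

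For part (b), I would combine part (a) with weak lower semicontinuity. Write $d_{T,\kappa}(\pi_\kappa,\mu)\ge d_{T,\kappa_0}(\pi_\kappa,\mu) - \sup_{\sigma\in K_{c,d}^\rho}|d_{T,\kappa}(\sigma,\mu)-d_{T,\kappa_0}(\sigma,\mu)|$, where $c,d$ are chosen so that the weakly convergent sequence $\pi_\kappa$ (and $\pi$) lie in $K_{c,d}^\rho$ — this uses that weakly convergent sequences with $\cE(\pi_\kappa)$ bounded stay in such a set, which needs the additional input that $\cE(\pi_\kappa)$ is bounded; if that is not automatic one should instead note that the statement is only interesting when $\pi_\kappa\in K_{c,d}^\rho$ and handle the general case by the lower bound \eqref{eq: lower bound on energy} plus weak lower semicontinuity of $\cE$ to get boundedness of $\cE(\pi_\kappa)$ from weak convergence (alternatively restrict to a subsequence). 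The second term vanishes by part (a). For the first term, $\liminf_{\kappa\uparrow\kappa_0}d_{T,\kappa_0}(\pi_\kappa,\mu)\ge d_{T,\kappa_0}(\pi,\mu)$ follows because $d_{T,\kappa_0}(\cdot,\mu)$ is weakly lower semicontinuous — each map $\sigma\mapsto t+e^{\hat\kappa_0 t}d(\sigma,\mu_{\kappa_0}(t))$ is weakly lsc by Assumption \ref{assumption:weak_topology} (weak lsc of $d$), and an infimum of weakly lsc functions is weakly lsc. Combining the two estimates yields the claim. The main obstacle is the uniform-in-$\kappa$ control of $\sup_{t\in[0,T]}d(\mu_\kappa(t),\mu_{\kappa_0}(t))$ in step (a); everything else is soft.
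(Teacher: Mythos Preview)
Your main misconception is that the gradient flow $t\mapsto\mu(t)$ depends on $\kappa$. It does not: under Assumption~\ref{assumption:gradientflow} there is a single flow satisfying \eqref{item:ass_EVI} for the ambient $\kappa_0$, and this same curve automatically satisfies \eqref{item:ass_EVI} for every $\kappa\le\kappa_0$. The only $\kappa$-dependence in $d_{T,\kappa}$ is through the weight $e^{\hat\kappa t}$. Consequently the ``technical heart'' you identify --- a Gronwall comparison to bound $\sup_{t\in[0,T]}d(\mu_\kappa(t),\mu_{\kappa_0}(t))$ --- is identically zero, and your argument for (a), though not wrong, is far more elaborate than needed. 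The paper's proof simply takes the optimal time $t^*$ for $d_{T,\kappa_1}$ (the smaller $\kappa$), notes $t^*\le d(\pi,\mu)$, and bounds
\[
d_{T,\kappa_2}(\pi,\mu)-d_{T,\kappa_1}(\pi,\mu)\le\bigl(e^{\hat\kappa_2 t^*}-e^{\hat\kappa_1 t^*}\bigr)\,d(\pi,\mu(t^*)),
\]
after which uniformity over $K_{c,d}^\rho$ follows directly from Lemma~\ref{lem: EVI implies boundedness}\ref{item: EVI distance bound}.

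For (b) your route --- apply (a) uniformly on a set $K_{c,d}^\rho$ containing the $\pi_\kappa$, then invoke weak lower semicontinuity of $d_{T,\kappa_0}(\cdot,\mu)$ --- differs from the paper's, which instead applies (a) at the single limit point $\pi$ and uses the $1$-Lipschitz estimate of Lemma~\ref{lemma:estimates_Tataru}. Your argument has two gaps. First, the hypotheses give only $\cE(\pi)<\infty$ for the limit, not a uniform bound on $\cE(\pi_\kappa)$, so you cannot place the sequence in a fixed $K_{c,d}^\rho$; the fixes you suggest (weak lsc of $\cE$, or \eqref{eq: lower bound on energy}) yield lower bounds on $\cE(\pi_\kappa)$, not the upper bounds you need. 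Second, your justification that $d_{T,\kappa_0}(\cdot,\mu)$ is weakly lsc is incorrect: an infimum of lsc functions is in general not lsc (suprema are). The conclusion happens to be true --- it follows by the argument of Lemma~\ref{lemma:uniform_convergence_approxTataru_to_Tataru}\ref{item:lemma_approx_sqrt_lsc_Tataru}, extracting a convergent subsequence of near-optimal times --- but it requires a separate proof.
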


\begin{remark}
Note that if $\kappa_1 \leq \kappa_2$, then $d_{T,\kappa_1} \leq d_{T,\kappa_2}$.
\end{remark}

\begin{proof}

We start with the proof of \ref{item:convergence_Tataru_in_kappa_uniform}.

Let $\kappa_1 < \kappa_2 \in \bR$, $\pi \in K_{c,d}^\rho$ and $\mu$ such that $I(\mu) < \infty$. We now estimate $|d_{T,\kappa_1}(\pi,\mu) - d_{T,\kappa_2}(\pi,\mu)| = d_{T,\kappa_2}(\pi,\mu) - d_{T,\kappa_1}(\pi,\mu)$. Let $t^*$ be an optimal time in the definition for $d_{T,\kappa_1}(\pi,\mu)$. Then we have that $t^* \leq d(\pi,\mu)$ and
\begin{align*}
    \left|d_{T,\kappa_1}(\pi,\mu) - d_{T,\kappa_2}(\pi,\mu)\right| & = d_{T,\kappa_2}(\pi,\mu) - d_{T,\kappa_1}(\pi,\mu) \\
    & \leq \left(e^{\hat{\kappa}_2 t^*} - e^{\hat{\kappa}_1 t^*}\right) d(\pi,\mu(t^*)) \\
    & \leq t^* \left(1 - e^{(\hat{\kappa}_1 - \hat{\kappa}_2) t^*}\right) \frac{1}{t^*} e^{\hat{\kappa}_2 t^*}d(\pi,\mu(t^*)) \\ 
    & \leq \left(1 - e^{(\hat{\kappa}_1 - \hat{\kappa}_2) t^*}\right) d(\pi,\mu)  \frac{1}{t^*} e^{\hat{\kappa}_2 t^*}d(\pi,\mu(t^*)) 
\end{align*}
The result thus follows by Lemma \ref{lem: EVI implies boundedness} \ref{item: EVI distance bound}.

For the proof of \ref{item:convergence_Tataru_in_kappa_lsc}, let $\pi$ be such that $\cE(\pi) < \infty$ and consider for $\kappa \leq \kappa_0$ a sequence $\pi_\kappa \rightarrow \pi$ weakly. We then have by Lemma \ref{lemma:estimates_Tataru} that
\begin{align*}
    d_{T,\kappa_0}(\pi,\mu) - d_{T,\kappa}(\pi_\kappa,\mu) & \leq \left[d_{T,\kappa_0}(\pi,\mu) - d_{T,\kappa}(\pi,\mu)\right] + \left[d_{T,\kappa}(\pi,\mu) - d_{T,\kappa}(\pi_\kappa,\mu)\right] \\
    & \leq \left[d_{T,\kappa_0}(\pi,\mu) - d_{T,\kappa}(\pi,\mu)\right] + d(\pi,\pi_\kappa).
\end{align*}
Rearranging and taking $\liminf_{\kappa \uparrow 0}$ we find
\begin{equation*}
    \liminf_{\kappa \uparrow \kappa_0} d_{T,\kappa}(\pi_\kappa,\mu) \geq d_{T,\kappa_0}(\pi,\mu) + \liminf_{\kappa \uparrow \kappa_0} 
 \left\{ \left[d_{T,\kappa_0}(\pi,\mu) - d_{T,\kappa}(\pi,\mu)\right] + d(\pi,\pi_\kappa) \right\}.
\end{equation*}
As $d$ is weakly lower semi-continuous by Assumption \ref{assumption:weak_topology}, it follows by the first part of the lemma that
\begin{equation*}
    \liminf_{\kappa \uparrow \kappa_0} d_{T,\kappa}(\pi_\kappa,\mu) \geq d_{T,\kappa_0}(\pi,\mu)
\end{equation*}
establishing the second claim.
\end{proof}

\subsection{Properties of the modified Tataru distances} \label{appendixTataru_modified}

In this section we first  examine the properties of the approximation of the square root. Secondly, we study the stability of the Tataru distance.

\smallskip

The following lemma is a slightly modified version of Lemma A.10 of \cite{Fe06}, which we therefore state without proof. 

\begin{lemma} \label{lemma:approximate_square_root}
	Let $\psi_{\varepsilon}$ be as in \eqref{eqn:def_approx_of_squareroot}.
	Then:
	\begin{enumerate}[(a)]
		\item \label{item:lemma_approx_sqrt_derivatives} $\psi_\varepsilon',\psi_\varepsilon'' \in C_b( \bR^+)$, $\psi_\varepsilon$ is positive and strictly increasing, $\psi_\varepsilon'$ is positive and strictly decreasing and $\sup_r r |\psi_\varepsilon''(r)| < \infty$.
		\item \label{item:lemma_approx_sqrt_convergence} $\sqrt{2r} \leq \psi_\varepsilon(r) \leq \sqrt{2\varepsilon} \vee \sqrt{2r}$, and $\lim_{\varepsilon \downarrow 0} \sup_{r \geq 0} \left|\psi_\varepsilon(r) - \sqrt{2r} \right| = 0$.
		\item \label{item:lemma_approx_sqrt_bound_on_product} $0 \leq r \psi_\varepsilon'\left(\frac{1}{2}r^2\right) \leq 1$.
	\end{enumerate}
\end{lemma}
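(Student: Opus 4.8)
The plan is a direct computation, using the structure of $\psi_\varepsilon$: on $\{r\ge\varepsilon\}$ it equals $f(r):=\sqrt{2r}$, while on $\{0\le r\le\varepsilon\}$ it equals the second-order Taylor polynomial of $f$ at $r=\varepsilon$, namely $p(r)=f(\varepsilon)+f'(\varepsilon)(r-\varepsilon)+\tfrac12 f''(\varepsilon)(r-\varepsilon)^2$ with $f(\varepsilon)=\sqrt{2\varepsilon}$, $f'(\varepsilon)=(2\varepsilon)^{-1/2}$, $f''(\varepsilon)=-(2\varepsilon)^{-3/2}$. In particular $p,p',p''$ match $f,f',f''$ at $\varepsilon$, so $\psi_\varepsilon\in C^2(\bR^+)$. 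For item (a) I would then record the explicit branches $\psi_\varepsilon'(r)=(2\varepsilon)^{-1/2}-(r-\varepsilon)(2\varepsilon)^{-3/2}$ on $[0,\varepsilon]$ and $\psi_\varepsilon'(r)=(2r)^{-1/2}$ on $[\varepsilon,\infty)$, and $\psi_\varepsilon''(r)=-(2\varepsilon)^{-3/2}$ on $[0,\varepsilon]$, $\psi_\varepsilon''(r)=-(2r)^{-3/2}$ on $[\varepsilon,\infty)$. From these: $\psi_\varepsilon'$ is continuous, strictly positive (it decreases linearly from $\tfrac32(2\varepsilon)^{-1/2}$ at $0$ to $(2\varepsilon)^{-1/2}$ at $\varepsilon$, then down to $0$) and strictly decreasing, hence bounded; $\psi_\varepsilon''$ is continuous, negative and bounded in modulus by $(2\varepsilon)^{-3/2}$; $\psi_\varepsilon$ is strictly increasing because $\psi_\varepsilon'>0$, and positive since $\psi_\varepsilon(0)=\tfrac38\sqrt{2\varepsilon}>0$; and $r|\psi_\varepsilon''(r)|\le\varepsilon(2\varepsilon)^{-3/2}$ on $[0,\varepsilon]$ while $r|\psi_\varepsilon''(r)|=\tfrac12(2r)^{-1/2}\le\tfrac12(2\varepsilon)^{-1/2}$ on $[\varepsilon,\infty)$, giving $\sup_r r|\psi_\varepsilon''(r)|<\infty$.

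For item (b): on $\{r\ge\varepsilon\}$ the claim is an equality. On $\{0\le r\le\varepsilon\}$ the upper bound $\psi_\varepsilon(r)\le\psi_\varepsilon(\varepsilon)=\sqrt{2\varepsilon}$ is immediate from the monotonicity proved in Step~(a); the lower bound $\psi_\varepsilon(r)\ge\sqrt{2r}$ follows from Taylor's theorem with Lagrange remainder: for $0<r<\varepsilon$ there is $\xi\in(r,\varepsilon)$ with $\sqrt{2r}=p(r)+\tfrac16 f'''(\xi)(r-\varepsilon)^3$, and since $f'''(\xi)=3(2\xi)^{-5/2}>0$ while $(r-\varepsilon)^3<0$, we get $\sqrt{2r}\le p(r)=\psi_\varepsilon(r)$; the boundary case $r=0$ follows by continuity (or directly). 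Combining, $0\le\psi_\varepsilon(r)-\sqrt{2r}\le\sqrt{2\varepsilon}-\sqrt{2r}\le\sqrt{2\varepsilon}$ on $[0,\varepsilon]$ and is $0$ on $[\varepsilon,\infty)$, so $\sup_{r\ge0}|\psi_\varepsilon(r)-\sqrt{2r}|\le\sqrt{2\varepsilon}\to0$.

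For item (c), set $g(r):=r\,\psi_\varepsilon'\!\left(\tfrac12 r^2\right)$; nonnegativity is clear from $\psi_\varepsilon'>0$. For the upper bound I would split on the sign of $\tfrac12 r^2-\varepsilon$. If $r\ge\sqrt{2\varepsilon}$ then $\psi_\varepsilon'\!\left(\tfrac12 r^2\right)=(r^2)^{-1/2}=1/r$, so $g(r)=1$. If $0\le r\le\sqrt{2\varepsilon}$, inserting $\psi_\varepsilon'\!\left(\tfrac12 r^2\right)=(2\varepsilon)^{-1/2}-(\tfrac12 r^2-\varepsilon)(2\varepsilon)^{-3/2}$ and writing $t:=r/\sqrt{2\varepsilon}\in[0,1]$ yields $g(r)=\tfrac12(3t-t^3)$; since $3t-t^3$ is increasing on $[0,1]$ with maximal value $2$ at $t=1$, we conclude $0\le g(r)\le1$.

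I do not expect any genuine obstacle: the statement is a bundle of elementary one-variable estimates, and it is in fact quoted from Lemma~A.10 of \cite{Fe06}. The only points deserving care are recognizing $p$ as the order-two Taylor polynomial of $\sqrt{2r}$ (which makes the $C^2$-gluing at $r=\varepsilon$ automatic) and the sign of the cubic Taylor remainder used in item~(b); the rest is bookkeeping on the two explicit branches.
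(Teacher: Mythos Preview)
Your proof is correct and complete. The paper itself does not give a proof of this lemma at all: it simply states that the result is a slight modification of Lemma~A.10 of \cite{Fe06} and omits the argument, so your direct computation (recognizing the first branch as the second-order Taylor polynomial of $\sqrt{2r}$ at $r=\varepsilon$, using the sign of the cubic remainder for the lower bound in (b), and the substitution $t=r/\sqrt{2\varepsilon}$ for (c)) actually supplies what the paper leaves to the reference.
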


\begin{lemma} \label{lemma:uniform_convergence_approxTataru_to_Tataru}
Let Assumptions \ref{assumption:distance_and_energy} and \ref{assumption:gradientflow} be satisfied. Let $d_{\varepsilon}$  be the modified distance defined in \eqref{eq: d_epsilon},  $d_{T,\varepsilon}$ the modified Tataru distance defined in \eqref{eqn:def_smoothed_Tataru_distance} and $d_T$ the standard Tataru distance.
\begin{enumerate}[(a)]
    \item \label{item:lemma_approx_sqrt_uniform_Tataru}  We have
    \begin{equation*}
        \lim_{\varepsilon \downarrow 0} \sup_{\pi,\rho} \left|d_{T,\varepsilon}(\pi,\rho) - d_T(\pi,\rho)\right| = 0
    \end{equation*}
    \item \label{item:lemma_approx_sqrt_lsc_Tataru} Let $\pi_m\rightarrow \pi_{\infty}$ weakly. If $\mu$ is such that $\sup_m d_\varepsilon (\pi_m,\mu)<+\infty$  and $I(\mu)<+\infty$, then we have
		\begin{equation}\label{eq: approx tataru lsc}
		    \liminf_m d_{T,\varepsilon}(\pi_m,\mu) \geq d_{T,\varepsilon}(\pi_{\infty},\mu)
		\end{equation}
		\end{enumerate}
\end{lemma}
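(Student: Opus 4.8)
\textbf{Plan of proof for Lemma \ref{lemma:uniform_convergence_approxTataru_to_Tataru}.}

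For part \ref{item:lemma_approx_sqrt_uniform_Tataru}, the plan is to exploit the uniform approximation of the square root provided by Lemma \ref{lemma:approximate_square_root} \ref{item:lemma_approx_sqrt_convergence}. Writing $\delta(\varepsilon):=\sup_{r\geq 0}|\psi_\varepsilon(r)-\sqrt{2r}|$, which tends to $0$ as $\varepsilon\downarrow 0$, I would use the elementary fact that taking infima is $1$-Lipschitz with respect to the sup-norm of the integrand: for any two functions $t\mapsto A(t)$ and $t\mapsto B(t)$ one has $|\inf_t A(t)-\inf_t B(t)|\leq \sup_t|A(t)-B(t)|$. Applying this with $A(t)=t+e^{\hat\kappa t}\psi_\varepsilon(\tfrac12 d^2(\pi,\rho(t)))$ and $B(t)=t+e^{\hat\kappa t}d(\pi,\rho(t))$, and using $0\leq e^{\hat\kappa t}\leq 1$ (since $\hat\kappa\leq 0$), gives
\begin{equation*}
\left|d_{T,\varepsilon}(\pi,\rho)-d_T(\pi,\rho)\right|\leq \sup_{t\geq 0} e^{\hat\kappa t}\left|\psi_\varepsilon\!\left(\tfrac12 d^2(\pi,\rho(t))\right)-d(\pi,\rho(t))\right|\leq \delta(\varepsilon),
\end{equation*}
which is uniform in $(\pi,\rho)$ and proves the claim. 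Here I use $\psi_\varepsilon(\tfrac12 r^2)$ as an approximation of $r\mapsto r$, so that $|\psi_\varepsilon(\tfrac12 d^2)-d|\leq \delta(\varepsilon)$ pointwise.

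For part \ref{item:lemma_approx_sqrt_lsc_Tataru}, the strategy is the standard one for lower semicontinuity of an infimum of lower semicontinuous integrands over a compact index set, once one has localised the infimum to a compact set of times. First, I would observe that since $\hat\kappa\leq 0$ and $\psi_\varepsilon\geq 0$, any minimiser $t^*$ in $d_{T,\varepsilon}(\pi_m,\mu)=\inf_{t\geq 0}\{t+e^{\hat\kappa t}d_\varepsilon(\pi_m,\mu(t))\}$ satisfies $t^*\leq t^*+e^{\hat\kappa t^*}d_\varepsilon(\pi_m,\mu(t^*))\leq 0+e^{0}d_\varepsilon(\pi_m,\mu(0))=d_\varepsilon(\pi_m,\mu)$, which is uniformly bounded by hypothesis by some $T<\infty$; hence for all $m$ (including $m=\infty$) the infimum is attained over $t\in[0,T]$. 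Pick minimisers $t_m\in[0,T]$ realising (up to a vanishing error) $d_{T,\varepsilon}(\pi_m,\mu)$; by compactness extract a subsequence $t_{m}\to t_\infty\in[0,T]$ along which the liminf is achieved. By Lemma \ref{lem: EVI implies boundedness} \ref{item: EVI stability}, $\mu(t_m)\to\mu(t_\infty)$; combined with $\pi_m\to\pi_\infty$ weakly and weak lower semicontinuity of $d$ (Assumption \ref{assumption:weak_topology}), one gets $\liminf_m d(\pi_m,\mu(t_m))\geq d(\pi_\infty,\mu(t_\infty))$, and since $\psi_\varepsilon$ is continuous and increasing this transfers to $\liminf_m d_\varepsilon(\pi_m,\mu(t_m))\geq d_\varepsilon(\pi_\infty,\mu(t_\infty))$. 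Since also $e^{\hat\kappa t_m}\to e^{\hat\kappa t_\infty}$, we conclude
\begin{equation*}
\liminf_m d_{T,\varepsilon}(\pi_m,\mu)=\liminf_m\left\{t_m+e^{\hat\kappa t_m}d_\varepsilon(\pi_m,\mu(t_m))\right\}\geq t_\infty+e^{\hat\kappa t_\infty}d_\varepsilon(\pi_\infty,\mu(t_\infty))\geq d_{T,\varepsilon}(\pi_\infty,\mu),
\end{equation*}
which is \eqref{eq: approx tataru lsc}.

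\textbf{Main obstacle.} Part \ref{item:lemma_approx_sqrt_uniform_Tataru} is essentially immediate from the sup-norm contraction property of $\inf$ together with Lemma \ref{lemma:approximate_square_root}. The only genuinely delicate point is in part \ref{item:lemma_approx_sqrt_lsc_Tataru}: one must justify that the infimum defining $d_{T,\varepsilon}(\pi_m,\mu)$ can be restricted to a single compact interval of times \emph{uniformly in $m$}, which is exactly where the hypothesis $\sup_m d_\varepsilon(\pi_m,\mu)<\infty$ enters; without a uniform bound on the minimising times the compactness argument would collapse. One should also be a little careful that minimisers exist (or use near-minimisers with an $o(1)$ error), since $\psi_\varepsilon$ and $t\mapsto\mu(t)$ are continuous but the argument is on a half-line — restricting to $[0,T]$ and invoking continuity of $t\mapsto d_\varepsilon(\pi_m,\mu(t))$ (via Lemma \ref{lem: EVI implies boundedness} \ref{item: EVI stability}) resolves this.
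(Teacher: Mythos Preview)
Your proposal is correct and follows essentially the same approach as the paper's proof. For part \ref{item:lemma_approx_sqrt_uniform_Tataru} the paper also reduces to the uniform bound $\sup_t e^{\hat\kappa t}|\psi_\varepsilon(\tfrac12 d^2)-d|\leq c_\varepsilon$ via Lemma \ref{lemma:approximate_square_root}, though it argues the two directions separately using optimal times rather than invoking the general $|\inf A-\inf B|\leq\sup|A-B|$ fact; your formulation is slightly cleaner. For part \ref{item:lemma_approx_sqrt_lsc_Tataru} the paper runs the identical compactness argument (bound $t_m\leq d_\varepsilon(\pi_m,\mu)$, extract a convergent subsequence, use weak lower semicontinuity of $d_\varepsilon$), only framed as a proof by contradiction rather than a direct argument; the content is the same, and your identification of the uniform time-bound as the key point is exactly right.
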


\begin{proof}
    We start with the proof of \ref{item:lemma_approx_sqrt_uniform_Tataru}. Let $\psi_{\varepsilon}$ be as in \eqref{eqn:def_approx_of_squareroot}. By Lemma \ref{lemma:approximate_square_root}, using that $\hat{\kappa}\leq 0$, we have
    \begin{equation} \label{eqn:bound_on_Tataru_input}
    \begin{aligned}
        & \sup_{\pi,\rho} \sup_{t \geq 0} \left|t + e^{\hat{\kappa}t}\psi_{\varepsilon}\left(\frac{1}{2} d^2(\pi,\rho(t))\right) - (t + e^{\hat{\kappa}t}d(\pi,\rho(t))  \right| \\
        & \quad \leq \sup_{\pi,\rho} \sup_{t \geq 0} \left| \psi_{\varepsilon}\left(\frac{1}{2} d^2(\pi,\rho(t))\right) - d(\pi,\rho(t))  \right| \\
        & \leq c_\varepsilon,
        \end{aligned}
    \end{equation}
    where $c_\varepsilon \geq 0$ is a constant depending on $\varepsilon$ such that $\lim_{\varepsilon \rightarrow 0} c_\varepsilon = 0$. 
    
    \smallskip
    
    Let $t^*$ be an  optimal time for $d_T(\pi,\rho)$. We then have
    \begin{equation*}
        d_T(\pi,\rho) - d_{T,\varepsilon}(\pi,\rho) \leq e^{\hat{\kappa}t^*} \left(d(\pi,\rho(t^*)) - \psi_{\varepsilon}\left(\frac{1}{2}d^2(\pi,\rho(t^*))\right)\right)  \leq c_\varepsilon
    \end{equation*}
    For other inequality, let $t_\varepsilon^*$ be an optimal time for $d_{T,\varepsilon}(\pi,\rho)$. Then:
    \begin{equation*}
        d_{T,\varepsilon}(\pi,\rho) - d_{T}(\pi,\rho) \leq e^{\hat{\kappa}t_\varepsilon^*} \left(\psi_{\varepsilon}\left(\frac{1}{2}d^2(\pi,\rho(t^*_\varepsilon))\right) - d(\pi,\rho(t^*_\varepsilon))\right) \leq c_\varepsilon.
    \end{equation*}
    Both these inequalities and the fact that $c_\varepsilon \rightarrow 0$ establish the claim.

    \smallskip
    
     We proceed with the proof of \ref{item:lemma_approx_sqrt_lsc_Tataru}.
    
    Assume by contradiction that there exists a subsequence $(\pi_{k_m})_{m\geq 1}$ and $\delta >0$ such that  
    \begin{equation}\label{eq: contradiction}
    \forall m\geq 1, \quad d_{T,\varepsilon}(\pi_{k_m},\mu)<d_{T,\varepsilon}(\pi_{\infty},\mu)-\delta.    
    \end{equation} 
     Upon relabeling the subsequence we can assume w.l.o.g. that $k_m=m$. 
     
     Let $t_m\in \argmin \{t+ e^{\hat\kappa t}d_{\varepsilon}(\pi_m,\mu(t))\}$. Then, since $t_m\leq d_{\varepsilon}(\pi_m,\mu)$ and the latter is bounded by assumption, we have that $t_{l_m}\rightarrow t_{\infty}$ along a subsequence. As before we can assume w.l.o.g. that $l_m=m$ for all $m$. We have 
    \begin{align*}
    d_{T,\varepsilon}(\pi_\infty,\mu) & \leq t_{\infty}+e^{\hat\kappa t_\infty}d_{\varepsilon}(\pi_{\infty},\mu(t_{\infty})) \leq \liminf_m t_{m}+e^{\hat\kappa t_m}d_\varepsilon(\pi_{m},\mu(t_{m})) \\
    & = \liminf_m d_{T,\varepsilon}(\pi_m,\mu),\end{align*}
    where we used lower semicontinuity of $d_{\varepsilon}$ to obtain the second inequality. We have thus obtained a contradiction to \eqref{eq: contradiction}.
    
\end{proof}

	\section{Large Deviations and weak convergence} \label{Appendix:Varadhan_tilt}

To facilitate the application of Proposition \ref{proposition:Hamiltonian_convergence_pseudo_coercive} in the proofs of Theorem \ref{thm: 2to3} and \ref{thm: 3to4}, we list some key properties of weak convergence and large deviations in the context where we work with lower semi-continuous functions that are bounded from below.

We start out with two results on weak convergence of measures.

\begin{lemma}\label{lem:weak conv usc}
    Let $(\lambda_m)_{m\geq 1}$ be a sequence of probability measures on a Polish space $X$ converging weakly to $\lambda_{\infty}$ a probability measures on $X$. Moreover let $(f_m)_{m\geq 1}$ be a sequence of uniformly {upper }bounded measurable functions with the following property:  there exists a measurable function $f_{\infty}$ such that for any $t_\infty \in X$ and any sequence  $(t_m)_{m\geq 1}$ in $X$ converging to $t_{\infty}$, we have 
    \begin{equation*}
     \limsup_{m\rightarrow +\infty} f_{m}(t_m) \leq f_{\infty}(t_{\infty}).
    \end{equation*}
    Then 
    \begin{equation*}
        \limsup_{m\rightarrow+\infty} \int f_m \dd \lambda_m \leq \int f_{\infty} \dd \lambda_{\infty}.
    \end{equation*}
\end{lemma}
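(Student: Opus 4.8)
\textbf{Proof proposal for Lemma \ref{lem:weak conv usc}.}

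The plan is to reduce the statement to the standard Portmanteau-type criterion for weak convergence by constructing, out of the sequence $(f_m)_{m\geq1}$, a single upper semi-continuous majorant that captures the limiting behaviour. The natural object is
\begin{equation*}
    \bar f(t) := \sup_{(t_m)\to t} \limsup_{m\to+\infty} f_m(t_m),
\end{equation*}
where the supremum runs over all sequences converging to $t$; equivalently $\bar f$ is the \emph{upper relaxed limit} (or $\Gamma$-limsup) of the sequence. By the hypothesis we have $\bar f \leq f_\infty$ pointwise, and $\bar f$ inherits the uniform upper bound $\sup_m \sup_X f_m =: M < \infty$. The first key step is to verify that $\bar f$ is upper semi-continuous: this is a routine diagonal-sequence argument, taking $s_k \to t$ with $\bar f(s_k) \to \limsup_k \bar f(s_k)$, choosing for each $k$ a sequence realising $\bar f(s_k)$ up to $1/k$, and extracting an appropriate diagonal sequence converging to $t$.

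Second, I would prove the functional inequality $\limsup_{m\to+\infty} \int f_m \, \dd\lambda_m \leq \int \bar f \, \dd\lambda_\infty$. For this it is convenient to pass to a metric realisation: by Skorokhod's representation theorem there exist $X$-valued random variables $\zeta_m \sim \lambda_m$ and $\zeta_\infty \sim \lambda_\infty$ on a common probability space with $\zeta_m \to \zeta_\infty$ almost surely. Then, almost surely, $\limsup_m f_m(\zeta_m) \leq \bar f(\zeta_\infty)$ by the very definition of $\bar f$ (applied along the realising sequence $\zeta_m(\omega)\to\zeta_\infty(\omega)$). Since $M - f_m(\zeta_m) \geq 0$, Fatou's lemma gives
\begin{equation*}
    \liminf_{m\to+\infty} \int (M - f_m)\,\dd\lambda_m = \liminf_{m\to+\infty}\mathbb{E}[M - f_m(\zeta_m)] \geq \mathbb{E}\Big[\liminf_{m\to+\infty}(M - f_m(\zeta_m))\Big] \geq \mathbb{E}[M - \bar f(\zeta_\infty)],
\end{equation*}
which rearranges to $\limsup_m \int f_m\,\dd\lambda_m \leq \int \bar f\,\dd\lambda_\infty$.

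Finally, combining with $\bar f \leq f_\infty$ yields $\limsup_m \int f_m\,\dd\lambda_m \leq \int f_\infty\,\dd\lambda_\infty$, which is the claim. (If one prefers to avoid Skorokhod and stay purely measure-theoretic, the same conclusion follows from the Portmanteau theorem applied to the bounded upper semi-continuous function $\bar f$: $\limsup_m \int \bar f\,\dd\lambda_m \leq \int \bar f\,\dd\lambda_\infty$, together with the elementary bound $\int f_m\,\dd\lambda_m \leq \int \bar f_m\,\dd\lambda_m$ where $\bar f_m := \sup_{j\geq m} f_j$ is an upper envelope decreasing to some $g \geq \bar f$; monotone convergence then closes the argument. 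Either route works; I would present the Skorokhod one as it is shortest.) The only mild subtlety — the ``main obstacle'' — is making sure the relaxed-limit function $\bar f$ is genuinely upper semi-continuous and measurable despite being defined by an uncountable supremum over sequences; on a Polish space this is standard, since one may restrict to sequences drawn from a fixed countable dense set without changing $\bar f$, rendering it a countable sup of limsups of measurable functions and hence measurable, with upper semi-continuity following from the diagonal argument above. $\qed$
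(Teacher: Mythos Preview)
The paper states this lemma without proof, so there is no reference argument to compare against. Your Skorokhod-plus-Fatou route is correct and is exactly the technique the paper itself deploys elsewhere (cf.\ the proofs of Lemmas \ref{lem: 2to3 skorokhod} and \ref{lem: 3to4 skorokhod}), so this is presumably the intended argument.

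One remark: the detour through the relaxed limit $\bar f$ is unnecessary for the Skorokhod argument. Once you have $\zeta_m\to\zeta_\infty$ almost surely, the hypothesis applied to the realising sequence $\zeta_m(\omega)\to\zeta_\infty(\omega)$ gives $\limsup_m f_m(\zeta_m)\leq f_\infty(\zeta_\infty)$ directly, and Fatou (for $M-f_m\geq 0$) closes the argument in one line. This also sidesteps the measurability issue you flag for $\bar f$: your claim that restricting to sequences from a countable dense set leaves $\bar f$ unchanged relies on some continuity of the $f_m$, which is not assumed here, so that justification is shaky. Since $f_\infty$ is given as measurable, going to it directly avoids the problem entirely. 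Your alternative Portmanteau route via $\bar f_m:=\sup_{j\geq m}f_j$ is also problematic, since $\bar f_m$ need not be upper semi-continuous (the $f_j$ are only measurable); I would drop that parenthetical.
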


\begin{lemma} \label{lemma:a.s.limsup_and_meanconvergene_inplies_prob_liminf}
    Let $(\Omega,\cF,\PR)$ be a probability space with random variables $f_m$ and $f_\infty$ taking values in $[0,1]$ such that 
    \begin{enumerate}
        \item \label{item:a.s.limsup_limsup} $\limsup_{k\to +\infty}  f_k \leq f_\infty$ almost surely,
        \item \label{item:a.s.limsup_mean} $\lim_{k\to +\infty}  \bE[f_k] = \bE[f_\infty]$.
    \end{enumerate}
    Then $f_k$ converges to $f_\infty$ in probability.
\end{lemma}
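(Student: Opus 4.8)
The statement is a purely measure-theoretic fact: under $\limsup_k f_k \le f_\infty$ a.s.\ together with $\mathbb{E}[f_k]\to\mathbb{E}[f_\infty]$, one gets $f_k\to f_\infty$ in probability, where all random variables take values in $[0,1]$. The plan is to show that $\mathbb{E}[|f_k - f_\infty|]\to 0$, which is equivalent to $L^1$-convergence and hence implies convergence in probability. Write $g_k := f_\infty - f_k$, so that $g_k \le 1$, $\liminf_k g_k \ge 0$ a.s.\ (this is just the hypothesis \ref{item:a.s.limsup_limsup} rewritten), and $\mathbb{E}[g_k]\to 0$ by \ref{item:a.s.limsup_mean}. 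It suffices to prove $\mathbb{E}[|g_k|]\to 0$.

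\textbf{Key steps.} First, split $g_k = g_k^+ - g_k^-$, so $|g_k| = g_k^+ + g_k^-$ and $\mathbb{E}[|g_k|] = \mathbb{E}[g_k^+] + \mathbb{E}[g_k^-] = \mathbb{E}[g_k] + 2\mathbb{E}[g_k^-]$. Since $\mathbb{E}[g_k]\to 0$, the whole problem reduces to showing $\mathbb{E}[g_k^-]\to 0$, i.e.\ that the negative parts vanish in mean. Now observe that $g_k^- = (f_\infty - f_k)^- = (f_k - f_\infty)^+ \le f_k \le 1$, so the sequence $(g_k^-)_{k}$ is bounded by the constant $1$, which is integrable. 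Moreover, $\liminf_k g_k \ge 0$ a.s.\ gives $\limsup_k g_k^- = \limsup_k (-g_k)^+ = (-\liminf_k g_k)^+ \le 0$ a.s., hence $g_k^- \to 0$ a.s. By the dominated convergence theorem (with dominating function $1$), $\mathbb{E}[g_k^-]\to 0$. Combining, $\mathbb{E}[|g_k|] = \mathbb{E}[g_k] + 2\mathbb{E}[g_k^-]\to 0$, so $f_k \to f_\infty$ in $L^1$ and therefore in probability by Markov's inequality: for any $\delta>0$, $\PR(|f_k-f_\infty|>\delta) \le \delta^{-1}\mathbb{E}[|f_k - f_\infty|]\to 0$.

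\textbf{Main obstacle.} There is no serious obstacle here; the only point that requires a moment's care is the interchange of $\limsup$ with negative/positive parts used to deduce $g_k^- \to 0$ a.s.\ from $\liminf_k g_k \ge 0$ a.s. The identities $\limsup_k (-g_k)^+ = (\,-\liminf_k g_k\,)^+$ and $\limsup_k (x_k)^+ = (\limsup_k x_k)^+$ (valid because $t\mapsto t^+$ is continuous and nondecreasing) handle this cleanly. The boundedness by $1$ is exactly what makes dominated convergence applicable without any uniform integrability argument, and this is where the hypothesis that the random variables are $[0,1]$-valued (rather than merely integrable) is used.
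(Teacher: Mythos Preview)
Your proof is correct and follows essentially the same route as the paper: both arguments show $\mathbb{E}[(f_k-f_\infty)^+]\to 0$ from the almost-sure $\limsup$ hypothesis together with boundedness (the paper via reverse Fatou, you via DCT after first noting $(f_k-f_\infty)^+\to 0$ a.s.), then use $\mathbb{E}[f_k]\to\mathbb{E}[f_\infty]$ to get $\mathbb{E}[(f_\infty-f_k)^+]\to 0$, and conclude $L^1$ convergence and hence convergence in probability via Markov's inequality. The decomposition $|g_k|=g_k+2g_k^-$ is just a repackaging of the paper's split into the two indicator sets $\{f_k\ge f_\infty\}$ and $\{f_\infty>f_k\}$.
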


\begin{proof}
    By Fatou's Lemma for the $\limsup$, we find using assumption \ref{item:a.s.limsup_limsup}that
    \begin{equation*}
        \limsup_{k\to +\infty}  \bE\left[(f_k - f_\infty) \bONE_{\{f_k \geq f_\infty\}}\right]  \leq 0.
    \end{equation*}
    Using the positivity of the integrand, we strengthen the statement to
    \begin{equation*}
        \lim_{k\to +\infty}  \bE\left[(f_k - f_\infty) \bONE_{\{f_k \geq f_\infty\}}\right]  = 0.
    \end{equation*}
    Using \ref{item:a.s.limsup_mean}, it follows that
    \begin{equation*}
        0 = \lim_{k\to +\infty} \bE\left[f_\infty - f_k\right] = \lim_{k\to +\infty}  \bE\left[(f_\infty - f_k) \bONE_{\{f_\infty > f_m\}}\right].
    \end{equation*}
    Thus, the claim follows by an application of Markov's inequality.
\end{proof}

We proceed with two results in the context of large deviations. For these results only the large deviation upper bound is needed. 

\begin{definition}
Let $(\lambda_m)_{m \geq 1}$ be a sequence of probability measures on a Polish space $X$. We say that $(\lambda_m)_{m \geq 1}$ satisfies the large deviation upper bound at speed $m$ with good rate function $\mathcal I : X \to\bR$ if the sets $\{t \, | \,\mathcal I(t) \leq a\}$ are compact for all $a$ for any closed set $A \subseteq X$ we have
        \begin{equation*} 
            \lim_{m \rightarrow \infty} \frac{1}{m} \log \int_A  \lambda_m(\dd t) = - \inf_{t\in A} \left\{\mathcal I(t) \right\}.
        \end{equation*}
\end{definition}

The first of our two results is the upper bound side of Varadhan's lemma.

\begin{proposition} \label{proposition:Varadhan}
 Let $(\lambda_m)_{m \geq 1}$ be a sequence of probability measures on a Polish space $X$ satisfying a large deviation principle at speed $m$ with good rate function $\mathcal I : X \to \bR$. Let $h: X \to \R$ be a continuous, bounded from below, function. Then, we have
        \begin{equation*}
            \lim_{m \rightarrow \infty} \frac{1}{m} \log \int e^{-m h(t)} \lambda_m(\dd t) = - \inf_{t\in X} \left\{\mathcal I(t)+ h(t) \right\}.
        \end{equation*}
\end{proposition}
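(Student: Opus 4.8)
\textbf{Proof strategy for Proposition \ref{proposition:Varadhan} (upper bound side of Varadhan's lemma).}

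The plan is to prove the two inequalities separately, since only the large deviation upper bound is available. For the \emph{upper bound} on $\frac{1}{m}\log\int e^{-mh}\,\dd\lambda_m$, I would first use that $h$ is bounded below, say $h \geq -M$, and continuous. Fix $\delta > 0$. For each $t \in X$ pick a neighbourhood $U_t$ on which $h > h(t) - \delta$; cover the sublevel set $\{\mathcal I \leq L\}$ (compact, since $\mathcal I$ is a good rate function) by finitely many such neighbourhoods $U_{t_1},\dots,U_{t_N}$, and handle the complement of $\bigcup_j U_{t_j}$ — which is closed with $\inf \mathcal I \geq L$ on it — by the large deviation upper bound. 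On each $U_{t_j}$ one has $\int_{U_{t_j}} e^{-mh}\,\dd\lambda_m \leq e^{-m(h(t_j)-\delta)}\lambda_m(\overline{U_{t_j}})$, and applying the upper bound to the closed set $\overline{U_{t_j}}$ gives, after taking $\frac1m\log$ and using the standard "$\max$ of finitely many exponentials" estimate,
\begin{equation*}
  \limsup_{m\to\infty}\frac1m\log\int e^{-mh}\,\dd\lambda_m \leq \max\Big\{-L+M,\ \max_{j}\big(-\inf_{\overline{U_{t_j}}}\mathcal I - h(t_j) + \delta\big)\Big\}.
\end{equation*}
Since $\inf_{\overline{U_{t_j}}}\mathcal I + h(t_j) - \delta \geq \inf_{t}\{\mathcal I(t) + h(t)\} - 2\delta$ (using $h > h(t_j)-\delta$ on $U_{t_j}$), letting $L \to \infty$ and then $\delta \downarrow 0$ yields $\limsup_m \frac1m\log\int e^{-mh}\,\dd\lambda_m \leq -\inf_t\{\mathcal I(t)+h(t)\}$.

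For the matching \emph{lower bound}, I would use the large deviation \emph{lower bound} (which, although the excerpt's \texttt{Definition} only spells out the upper bound, is part of the hypothesis "satisfying a large deviation principle" in the statement of this proposition). Fix any $t_0 \in X$ with $\mathcal I(t_0) + h(t_0) < \infty$ and $\delta > 0$; by continuity of $h$ choose an open neighbourhood $G$ of $t_0$ with $h < h(t_0) + \delta$ on $G$. Then
\begin{equation*}
  \int e^{-mh}\,\dd\lambda_m \geq \int_G e^{-mh}\,\dd\lambda_m \geq e^{-m(h(t_0)+\delta)}\lambda_m(G),
\end{equation*}
so $\liminf_m \frac1m\log\int e^{-mh}\,\dd\lambda_m \geq -h(t_0) - \delta - \inf_{t\in G}\mathcal I(t) \geq -h(t_0) - \delta - \mathcal I(t_0)$. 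Taking $\delta\downarrow 0$ and then the supremum over $t_0$ gives the lower bound $-\inf_t\{\mathcal I(t)+h(t)\}$. Combining the two inequalities and noting that the $\limsup$ and $\liminf$ coincide with the asserted value proves that the limit exists and equals $-\inf_{t\in X}\{\mathcal I(t)+h(t)\}$.

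The main obstacle here is purely one of bookkeeping in the upper bound: one must be careful that $h$ bounded below is genuinely needed (to control $\int e^{-mh}$ over the far-away closed set where only an unbounded-below $h$ could cause trouble) and that the covering/compactness argument is set up so that the finitely-many-exponentials estimate applies cleanly. No deep input beyond the large deviation principle and elementary properties of $\limsup$ of sums of exponentials is required; in fact, since in our application $X = [0,\infty)$ and $h$ is bounded (not just bounded below), the argument simplifies further, but I would state it in the generality above to match the proposition. Alternatively, one could simply cite \cite[Lemma 4.3.6]{dembo2009large} or the corresponding statement in \cite{FK06}, since this is the classical Varadhan lemma; but the self-contained argument above is short enough to include.
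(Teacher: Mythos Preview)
The paper does not actually prove this proposition; it is stated as the classical Varadhan lemma, with the subsequent Proposition~\ref{proposition:Varadhan_usc} referred to \cite{RASe15} for the standard argument. Your closing remark that one could simply cite the result is precisely what the paper does, and your self-contained sketch follows the standard covering proof.

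There is one small bookkeeping slip in your upper-bound argument. You choose $U_{t_j}$ so that $h > h(t_j) - \delta$ on it; this is the correct inequality for bounding $\int_{U_{t_j}} e^{-mh}\,\dd\lambda_m$ from above, but it is the \emph{wrong} direction for the step where you claim $\inf_{\overline{U_{t_j}}}\mathcal I + h(t_j) - \delta \geq \inf_t\{\mathcal I(t)+h(t)\} - 2\delta$. For that you need the opposite one-sided bound $h(s) < h(t_j) + \delta$ on $U_{t_j}$, so that at any near-minimizer $s_j$ of $\mathcal I$ on $\overline{U_{t_j}}$ one has $h(t_j) \geq h(s_j) - \delta$ and hence $\mathcal I(s_j) + h(t_j) \geq \mathcal I(s_j) + h(s_j) - \delta \geq \inf\{\mathcal I + h\} - \delta$. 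The fix is immediate: take the two-sided neighbourhood $U_t = \{s : |h(s) - h(t)| < \delta\}$ from the outset. With this correction the argument is the textbook one.
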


The following result is a strengthening of the above statement which follows from an immediate adaptation of the typical proof of Varadhan's lemma. see e.g. Lemma 3.8 of \cite{RASe15}.

\begin{proposition}\label{proposition:Varadhan_usc}
    $(\lambda_m)_{m \geq 1}$ be a sequence of probability measures on a Polish space $X$ satisfying a large deviation upper bound at speed $m$ with good rate function $\cI : X \to \bR$. 
    Let $h_m$, $h_\infty : X \to \R$ be continuous functions satisfying 
    \begin{itemize}
        \item $\sup_{t \in X,m\geq1} h_m(t) > - \infty$, $\sup_{t\in X} h_\infty(t) > - \infty$.
        \item For any  $t_\infty \in X$ and any sequence  $(t_m)_{m\geq 1}$ in $X$ converging to $t_{\infty}$,  we have
        \begin{equation}\label{proposition:hypothesis_Varadhan_usc}
            \liminf_{m\to+\infty} h_m(t_m) \geq h_\infty(t_\infty).
        \end{equation}
    \end{itemize}
    Then
    \begin{enumerate}[(a)]
        \item \label{item:prop:usc varadhan_upper_bound} We have
    \begin{equation*}
        \limsup_{m\to+\infty} \frac{1}{m} \log \int e^{-m h_m(t)} \lambda_m(\dd t) \leq -\inf_{t\in X} \left\{\mathcal I(t)+ h_\infty(t) \right\}. 
    \end{equation*}
    \item \label{item:prop:usc varadhan_limit_points} Suppose that
    \begin{equation*}
        \lim_{m\to+\infty} \frac{1}{m} \log \int e^{-m h_m(t)} \lambda_m(\dd t) = -\inf_{t\in X} \left\{\mathcal I(t)+ h_\infty(t) \right\}. 
    \end{equation*}
    Then sequence of probability measures $(\nu_m)_{m\geq 1}$ defined by 
    \begin{equation*}
        \nu_m(\dd t) =  \frac{1}{\Lambda_m} e^{-m h_m(t)} \lambda_m(\dd t), \quad \Lambda_m = \int e^{-m h_m(t)} \lambda_m(\dd t)
    \end{equation*}
    is tight and any accumulation point is supported on $\argmin
\{\mathcal I+h_{\infty}\}$.
    \item \label{item:prop_usc varadhan_convergence_mean_weight} 
    Let the assumption in \ref{item:prop:usc varadhan_limit_points}  be satisfied and let  $\{\nu_{m_k}\}_{k \geq 1}$ be a subsequence of the sequence $\{\nu_m\}_{m \geq 1}$ converging to the limit $\nu_\infty$, then we have
    \begin{equation*}
        \lim_{k \rightarrow +\infty} \int h_{m_k} \dd \nu_{m_k} = \int h_\infty \dd \nu_\infty.
    \end{equation*}
    \end{enumerate}
    \end{proposition}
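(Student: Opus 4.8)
\textbf{Proof proposal for Proposition \ref{proposition:Varadhan_usc}.}

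The plan is to mimic the classical proof of Varadhan's lemma, keeping careful track of the one-sided nature of all hypotheses, and then to harvest the tightness and convergence-of-means statements as by-products. For part \ref{item:prop:usc varadhan_upper_bound}, the first step is to reduce to the case where $h_\infty$ (and hence all $h_m$) is bounded below by a constant, which is exactly what the first bullet guarantees: replacing $h_m$ by $h_m \wedge M$ only increases the left-hand side while the right-hand side is continuous in $M$, so it suffices to treat bounded continuous $h_m$. Then I would fix $\delta > 0$ and, for each point $t \in X$, use the lower-semicontinuity hypothesis \eqref{proposition:hypothesis_Varadhan_usc} to choose a neighbourhood $U_t$ of $t$ on which $\inf_{m} h_m \geq h_\infty(t) - \delta$ eventually (more precisely, for $s \in U_t$ and $m$ large, $h_m(s) \geq h_\infty(t) - \delta$); cover a large compact set $\{\cI \leq a\}$ by finitely many such neighbourhoods, split the integral over this finite cover plus the complement (handled by the large deviation upper bound applied to the closed set $\{\cI \geq a\}$, using boundedness of $h_m$ below), apply the large deviation upper bound on each closed piece, and use the finite-union principle $\frac{1}{m}\log\sum_{i}x_{i,m}^{1/m}$ converges to $\max_i$. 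Letting $a \to \infty$ and $\delta \to 0$ gives the bound $-\inf_t\{\cI(t) + h_\infty(t)\}$.

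For part \ref{item:prop:usc varadhan_limit_points}, I would argue tightness by contradiction using the exponential tightness supplied by the good rate function: since $\{\cI \leq a\}$ is compact, for any $\eta > 0$ one picks $a$ so large that $\frac{1}{m}\log \lambda_m(\{\cI > a\})$ is eventually below $-(\inf\{\cI + h_\infty\} + 1 + \sup_m\|h_m\|_\infty^-)$, say; combined with the assumed exact asymptotics of $\Lambda_m$ this forces $\nu_m(\{\cI > a\})$ to go to zero, giving tightness. For the support statement, let $\nu_{m_k} \to \nu_\infty$ weakly and let $F$ be any closed set disjoint from $\argmin\{\cI + h_\infty\}$ at positive distance; then $\inf_{F}\{\cI + h_\infty\} > \inf_X\{\cI + h_\infty\}$, and applying the upper bound of part \ref{item:prop:usc varadhan_upper_bound} to $h_m + m^{-1}(\text{something})$—or more simply re-running the covering argument restricted to $F$—shows $\limsup_k \frac{1}{m_k}\log \int_F e^{-m_k h_{m_k}}\dd\lambda_{m_k} \leq -\inf_F\{\cI + h_\infty\}$, which is strictly less than $\lim_k \frac{1}{m_k}\log\Lambda_{m_k}$; hence $\nu_{m_k}(F) \to 0$, and by the portmanteau theorem $\nu_\infty(F) = 0$. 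Since $X$ is Polish, taking a countable family of such $F$ exhausting the complement of $\argmin\{\cI + h_\infty\}$ yields that $\nu_\infty$ is supported there.

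For part \ref{item:prop_usc varadhan_convergence_mean_weight}, I would combine the two inequalities. The liminf direction: $\liminf_k \int h_{m_k}\dd\nu_{m_k} \geq \int h_\infty \dd\nu_\infty$ follows from \eqref{proposition:hypothesis_Varadhan_usc} together with weak convergence $\nu_{m_k}\to\nu_\infty$ and a standard lower-semicontinuity-under-weak-convergence argument (after truncating $h_m$ from above at a level $M$, which is legitimate since $h_m$ is bounded below and the integrals over the tail are controlled by tightness from part \ref{item:prop:usc varadhan_limit_points}; then let $M \to \infty$ by monotone convergence on $\nu_\infty$). The limsup direction is the delicate one and is where I expect the main obstacle: one must show $\limsup_k \int h_{m_k}\dd\nu_{m_k} \leq \int h_\infty\dd\nu_\infty$. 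Here the natural route is that $\nu_\infty$ is concentrated on $\argmin\{\cI + h_\infty\}$ by part \ref{item:prop:usc varadhan_limit_points}, so $\int h_\infty \dd\nu_\infty$ equals a constant $h_\infty^* := \inf_X\{\cI + h_\infty\} - \int \cI\,\dd\nu_\infty$ on the support—but $\cI$ need not be bounded or continuous, so this needs care; the cleaner argument is a "rate-distortion"/Gibbs-variational-principle bound: for the tilted measures one has $\frac{1}{m}\log\Lambda_m \geq -\int h_m\,\dd\nu_m - \frac{1}{m}\mathrm{Ent}(\nu_m \,\|\, \lambda_m)$, and combining with the large-deviation upper bound $\frac{1}{m}\mathrm{Ent}(\nu_m\,\|\,\lambda_m) \gtrsim \int \cI\,\dd\nu_m - o(1)$ (lower-semicontinuity of relative entropy against the LDP) and the assumed exact asymptotics of $\Lambda_m$ pins down $\limsup_k \int h_{m_k}\dd\nu_{m_k} \leq \inf_X\{\cI + h_\infty\} - \liminf_k\int\cI\,\dd\nu_{m_k} \leq \int h_\infty\,\dd\nu_\infty$, using lower semicontinuity of $\cI$ and the support statement to identify the right-hand side. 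Since the proposition explicitly says it "follows from an immediate adaptation of the typical proof of Varadhan's lemma, see e.g. Lemma 3.8 of \cite{RASe15}", I would in practice simply cite that reference and indicate which modifications (namely: replacing two-sided continuity of the integrand by the one-sided hypothesis \eqref{proposition:hypothesis_Varadhan_usc}, and replacing the fixed integrand $h$ by a sequence $h_m$ with the uniform lower bound) carry through verbatim.
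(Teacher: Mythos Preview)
Your proposal is correct and, for parts \ref{item:prop:usc varadhan_limit_points} and \ref{item:prop_usc varadhan_convergence_mean_weight}, follows essentially the same route as the paper: tightness and support of limit points via the tilted large deviation upper bound, and the convergence of means via the relative-entropy identity $-\frac{1}{m}\log\Lambda_m = \frac{1}{m}H(\nu_m\,|\,\lambda_m) + \int h_m\,\dd\nu_m$ combined with the lower bound $\liminf_k \frac{1}{m_k}H(\nu_{m_k}\,|\,\lambda_{m_k}) \geq \int \cI\,\dd\nu_\infty$. The paper makes this last step precise by invoking Mariani's equivalence between the large deviation upper bound and the entropy inequality (Theorem 3.5 of \cite{Ma18}), which is exactly the ``lower-semicontinuity of relative entropy against the LDP'' you allude to.

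The one genuine difference is in part \ref{item:prop:usc varadhan_upper_bound}. You carry out the classical covering argument by hand: cover a sublevel set $\{\cI\leq a\}$ by finitely many neighbourhoods on which $h_m$ is eventually bounded below by $h_\infty(t_i)-\delta$, estimate each piece via the LDP upper bound, and send $a\to\infty$, $\delta\to 0$. The paper instead absorbs the $m$-dependence of $h_m$ into the state space: it works on $X\times\bN_\infty$ with the single lower semi-continuous function $(x,m)\mapsto h_m(x)$ and the product measures $\lambda_m\times\delta_m$, which satisfy the LDP upper bound with rate $\hat\cI(t,m)=\cI(t)$ if $m=\infty$ and $+\infty$ otherwise; the result then follows from a single citation of the standard Varadhan upper bound for upper semi-continuous integrands (Lemma 3.8 of \cite{RASe15}). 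Your direct argument is more self-contained and makes the role of the hypothesis \eqref{proposition:hypothesis_Varadhan_usc} explicit; the paper's embedding trick is shorter and shows that the $m$-dependent setting is not genuinely more general once one enlarges the space.
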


\begin{proof}
We start with the proof of \ref{item:prop:usc varadhan_upper_bound}. Let $\bN_\infty := \bN \cup\{\infty\}$ that is equipped with the regular topology on $\bN$ but with $\infty$ as the limit point of any unbounded sequence.

We embed our problem into the topological space $X \times \bN_\infty$. First of all, note that the function
\begin{equation*}
    h(x,m) := h_m(x), \qquad x \in X, m \in \bN_\infty
\end{equation*}
is bounded and lower semi-continuous on $X \times \bN_\infty$.

By assumption, the measures $\lambda_m$ satisfy a large deviation upper bound on $X$. Secondly, the measures 
\begin{equation*}
    \delta_{m}(\dd k)
\end{equation*}
satisfy a large deviation principle on $\bN_\infty$ with good rate function 
\begin{equation*}
    J(m) = \begin{cases}
    0 & \text{if } m = \infty, \\
    \infty & \text{if } m \neq \infty.
    \end{cases}
\end{equation*}
It follows that the measures $\lambda_m \times \delta_m$ satisfy a large deviation upper bound on $X \times \bN_\infty$ with good rate function 
\begin{equation*}
    \hat{\mathcal I}(t,m) = \begin{cases}
    \mathcal I(t) & \text{if } m = \infty, \\
    \infty & \text{if } m \neq \infty.
    \end{cases}
\end{equation*}
By the part of Varadhan's lemma that involves upper semi-continuous functions that are bounded above (Lemma 3.8 of \cite{RASe15}) it follows (arguing for $-h$) that
\begin{align*}
    & \limsup_{m\to +\infty} \frac{1}{m} \log \int e^{-m h_m(t)} \lambda_m(\dd t) \\
    & \qquad = \limsup_{m\to +\infty}  \frac{1}{m} \log \int e^{-m h(t,k)} \lambda_m(\dd t) \delta_m(\dd k) \\
    & \qquad \leq \sup_{t,m} \left\{ -h(t,m) - \hat{\mathcal { I}}(t,m) \right\} \\
    & \qquad = - \inf_t \left\{ h_\infty(t) + \mathcal I(t) \right\}
\end{align*}
establishing the claim.

\smallskip

For the proof of \ref{item:prop:usc varadhan_limit_points}, note that the a combination of the upper bound and the existence of the limit yield the large deviation upper bound for the sequence of measures $\nu_m$ with rate function
\begin{equation*}
    J(t) = \mathcal I(t) + h_\infty(t) - \inf \{\mathcal I + h_\infty \}.
\end{equation*}
Note that $J$ has compact sublevel sets due to the fact that $\mathcal I$ is good and $h_\infty$ bounded from below and continuous. It follows by e.g. Exercise 4.1.10 (c) in \cite{DZ98} that the sequence of measures $\nu_m$ is tight. Finally, any limit point of this sequence must be supported on the minimizers of the rate function $J$, a variant of this result was proven in \cite[Lemma C.1]{HeKrKu19}.

\smallskip

We proceed with the proof of \ref{item:prop_usc varadhan_convergence_mean_weight}. In this proof, we use the relative entropy functional $H : \cP(X) \times \cP(X) \rightarrow [0,\infty]$ defined by
\begin{equation*}
    H(\alpha \, | \, \beta) := \begin{cases}
        \int \frac{\dd \alpha}{\dd \beta} \log \frac{\dd \alpha}{\dd \beta} \dd \beta.
    \end{cases}
\end{equation*}

For any probability measure $\mu \in \cP(X)$ and measurable function $h$ {such that $e^{-h}$ is integrable} set
\begin{equation*}
    \nu := \frac{e^{-h}}{\Lambda_h} \mu,
\end{equation*}
with $\Lambda_h$ the appropriate normalization constant. A straightforward computation yields
\begin{equation} \label{eqn:relative_entropy_specific}
    H(\pi \, | \, \nu) = H(\pi \, | \, \mu) + \int h \dd \pi + \log \Lambda_h.
\end{equation} 
Choosing in our context $\mu = \lambda_m$, $h = m h_m$ and $\pi = \nu_m$, {we work with sufficiently integrable functions, and \eqref{eqn:relative_entropy_specific} reads}
\begin{equation*}
    0 =  H(\nu_m \, | \, \lambda_m) +  m \int h_m \dd \nu_m - \log \Lambda_m.
\end{equation*}
Rearranging and dividing by $m$ yields
\begin{equation*}
  - \frac{1}{m} \log \Lambda_m  = \frac{1}{m} H(\nu_m \, | \, \lambda_m) +  \int h_m \, \dd \nu_m.
\end{equation*}
Using the assumption in \ref{item:prop:usc varadhan_limit_points}, we we can extract a converging subsequence $\{\nu_{m_k}\}_{k \geq 1}$ with limit $\nu_\infty$ that has support on $\argmin \{\cI + h_\infty\}$, we find that
\begin{equation} \label{eqn:Varadhan_weight_convergence1}
    \begin{aligned} 
    \inf \{\cI + h_\infty\} & = \lim_{k \rightarrow +\infty} \frac{1}{m_k} \log \Lambda_{m_k} \\
    & = \liminf_{k \rightarrow +\infty}\frac{1}{{m_k}} H(\nu_{m_k} \, | \, \lambda_{m_k}) +  \int h_{m_k} \, \dd \nu_{m_k} \\
    & \geq \liminf_{k \rightarrow +\infty}\frac{1}{{m_k}} H(\nu_{m_k} \, | \, \lambda_{m_k}) +  \liminf_{k \rightarrow \infty} \int h_{m_k} \, \dd \nu_{m_k}.
\end{aligned}
\end{equation}
Using the lower semicontinuity of \eqref{proposition:hypothesis_Varadhan_usc} in combination with Lemma \ref{lem:weak conv usc} for the first statement and Theorem 3.5 (P1) to (H2) of \cite{Ma18} for the second, we obtain
\begin{equation}\label{eqn:Varadhan_weight_convergence2}
    \begin{aligned}
    & \liminf_{k \rightarrow+ \infty} \int h_{m_k} \, \dd \nu_{m_k} \geq \int h_\infty \, \dd \nu_\infty, \\
    & \liminf_{k \rightarrow +\infty} \frac{1}{{m_k}} H(\nu_{m_k} \, | \, \lambda_{m_k}) \geq \int \cI \, \dd \nu_\infty.
\end{aligned}
\end{equation}
Applying these two statements in \eqref{eqn:Varadhan_weight_convergence1}, we obtain
\begin{equation*}
    \inf \{\cI + h_\infty \}\geq \liminf_{k \rightarrow +\infty}\frac{1}{{m_k}} H(\nu_{m_k} \, | \, \lambda_{m_k}) +  \liminf_{k \rightarrow +\infty} \int h_{m_k} \, \dd \nu_{m_k} \geq \int \cI \, \dd \nu_\infty + \int h_\infty \, \dd \nu_\infty.
\end{equation*}
Using now that $\nu_\infty$ is supported on $\argmin \{\cI + h_\infty\}$, we find that all the inequalities in the above equation are equalities and that
\begin{equation*}
    \lim_{k \rightarrow +\infty}\frac{1}{{m_k}} H(\nu_{m_k} \, | \, \lambda_{m_k}) +  \int h_{m_k} \, \dd \nu_{m_k} = \int \cI + h_\infty \, \dd \nu_\infty.
\end{equation*}
In combination with \eqref{eqn:Varadhan_weight_convergence2}, this implies that both separate $\liminf_k$ statements must be limits:
\begin{equation*}
    \begin{aligned}
    & \lim_{k \rightarrow +\infty} \frac{1}{{m_k}} H(\nu_{m_k} \, | \, \lambda_{m_k}) \geq \int \cI \, \dd \nu_\infty, \\
    & \lim_{k \rightarrow +\infty} \int h_{m_k} \, \dd \nu_{m_k} \geq \int h_\infty \, \dd \nu_\infty,
\end{aligned}
\end{equation*}
establishing the claim.

\end{proof}

\section{Bounded smooth cylindrical test functions}\label{appendix:boundedcylinders}

In Section \ref{section:intro_technical_smoothcylinders} of our introduction we started out our computations with a slightly different set. In this appendix, we connect these test functions with our main result. Recall the definition of $\cT$ of \eqref{eqn:defT}. In our definition below, we consider bounded elements of $\cT$:

\begin{equation*}
        \cT_b  := \left\{\varphi \in \cT \, \middle| \, \varphi \text{ is bounded} \right\}. 
\end{equation*}

Consider the following set of Hamiltonians acting on bounded cylinders.

\begin{definition}[Smooth Hamiltonians] \label{definition:cylindrical_Hamiltonian}

\begin{enumerate}[(1)]
	\item For $\varphi \in \cT_b$  and $\bm\rho=(\rho,\rho_1,\ldots,\rho_{k}) \in {\cD(I) }$  we consider the functions
	\begin{subequations}
	    \begin{align}
	        f^{\dagger}(\pi) & :=\varphi\left( \frac{1}{2}d^2(\pi,\bm\rho)\right),  \label{eq:reg_f_dag}\\
	       g^{\dagger}(\pi) & := \sum_{i=0}^k \partial_i \varphi\left( \frac{1}{2}d^2(\pi,\bm\rho)\right) \left[\cE(\rho_i) - \cE(\pi) - \frac{\kappa}{2} d^2(\pi,\rho_i) \right] \label{eq:reg_g_dag}\\
            & \nonumber \quad + \frac{1}{2}\Big( \sum_{i=0}^k \partial_i \varphi\left(\frac{1}{2}d^2(\pi,\bm\rho)\right) d(\pi,\rho_i) \Big)^2 .
        \end{align}
	\end{subequations}
	and define $H_{0,\dagger}$  by
	\begin{equation*}
	    H_{0,\dagger} := \left\{ (f^{\dagger},g^{\dagger}) \, \middle| \,  \varphi \in \cT_{b}, \bm\rho \in {\cD(I) } \right\}.
	\end{equation*}
    \item For $\varphi \in \cT_b$ and $\bm\gamma=(\gamma,\gamma_1\ldots,\gamma_{k}) \in \cD(I)$ we consider
    \begin{subequations}
         \begin{align}
	        f^{\ddagger}(\mu) & :=-\varphi\left( \frac{1}{2}d^2(\bm\gamma, \mu)\right), \label{eq:reg_f_ddag} \\
	        g^{\ddagger}(\mu) & :=  \sum_{i=0}^k \partial_i \varphi\left(\frac{1}{2}d^2(\bm\gamma,\mu)\right)\left[\cE(\mu) -\cE(\gamma_i) + \frac{\kappa}{2}d^2(\gamma_i,\mu) \right] \label{eq:reg_g_ddag}\\
    	    &\nonumber \qquad + \frac{1}{2} \sum_{i=0}^k \partial_i \varphi\left(\frac{1}{2}d^2(\bm\gamma,\mu)\right)^2 d^2(\gamma_i,\mu) \\
    	    \nonumber & \qquad - \frac{1}{2} \sum_{i \neq j}^k \partial_i \varphi\left(\frac{1}{2}d^2(\bm\gamma,\mu)\right)\partial_j \varphi\left(\frac{1}{2}d^2(\bm\gamma,\mu)\right) d(\gamma_i,\mu)d(\gamma_j,\mu)
	    \end{align}
    \end{subequations}
	and define  $H_{0,\ddagger}$ by
	\begin{equation*}
	    H_{0,\ddagger} := \left\{ (f^{\ddagger},g^{\ddagger}) \, \middle| \,  \varphi \in \cT_{b}, \bm\gamma \in {\cD(I)} \right\}.
	\end{equation*}
\end{enumerate}

\end{definition}

Then we have the following result relating $(H_{0,\dagger},H_{0,\ddagger})$ to our main results.

\begin{lemma} \label{lemma:push_0_1}
    Let $h \in C_b(E)$ and $\lambda > 0$.

	Every viscosity subsolution to $f - \lambda H_{0,\dagger} f = h$ is also a viscosity subsolution to $f - \lambda H_{\dagger} f = h$.
	
	Every viscosity supersolution to $f - \lambda H_{0,\ddagger} f = h$ is also a viscosity supersolution to $f - \lambda H_{\ddagger} f = h$ .
\end{lemma}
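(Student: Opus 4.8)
The statement to prove connects the bounded-cylinder Hamiltonian pair $(H_{0,\dagger},H_{0,\ddagger})$ of Definition~\ref{definition:cylindrical_Hamiltonian} with the pair $(H_\dagger,H_\ddagger)$ of Definition~\ref{definition:H1}. The plan is to apply Lemma~\ref{lemma:viscosity_push}: we fix an arbitrary pair $(f^\dagger,g^\dagger)\in H_\dagger$, produced from data $a>0$, $\varphi\in\cT$, $\rho\in\cD(I)$, $\bm\mu\in\cD(I)$, and construct an approximating sequence $(f_k,g_k)_{k\ge 1}$ inside $H_{0,\dagger}$ satisfying the two conditions of part (a) of that lemma. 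The natural choice is to set $\bm\rho=(\rho,\mu_1,\dots,\mu_k)$ (so $\rho$ plays the role of the $0$-th entry) and to choose $\varphi_k\in\cT_b$ of the form $\varphi_k(r_0,r_1,\dots,r_k)=\chi_k(r_0)+\varphi(r_1,\dots,r_k)\wedge$(truncation), where $\chi_k$ is a smooth increasing bounded function that agrees with $r_0\mapsto a r_0$ on $[0,k]$ and levels off beyond; one must also make $\varphi$ itself bounded by composing with a smooth increasing truncation that is the identity on a large ball. Since $\partial_0\varphi_k=\chi_k'>0$ and $\partial_i\varphi_k=\partial_i\varphi>0$ after truncation on the relevant region, we indeed have $\varphi_k\in\cT_b$, and the associated $(f_k,g_k)$ lies in $H_{0,\dagger}$.

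First I would verify the test-function convergence: $\vn{f_k\wedge c - f^\dagger\wedge c}\to 0$ for every $c$. On any sublevel set $\{f^\dagger\le c\}$ the relevant distances $d(\pi,\rho),d(\pi,\mu_i)$ are bounded, so for $k$ large the truncations are inactive and $f_k=f^\dagger$ there; outside that set $f^\dagger\ge c$, and one checks $f_k\ge c$ eventually as well (because $f_k\ge \chi_k(\tfrac12 d^2(\pi,\rho))$ and $\chi_k\to a(\cdot)$ monotonically, while the $\varphi$-part stays bounded below). This gives the first condition. Next, the delicate part is the upper bound on $g_k$: one must show
\[
\limsup_{k\to\infty}\ \sup_{\pi:\, f_k(\pi)\vee f^\dagger(\pi)\le c}\ \bigl(g_k(\pi)\vee d - g^\dagger(\pi)\vee d\bigr)\le 0.
\]
On the set where $f_k=f^\dagger$ and $f^\dagger\le c$ (which, by the first step, eventually contains $\{f_k\vee f^\dagger\le c\}$), the cylindrical slope formula \eqref{eq:reg_g_dag} for $H_{0,\dagger}$ and the formula \eqref{eq:reg_g_1dag} for $H_\dagger$ must be compared. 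Expanding the square $\bigl(\sum_{i=0}^k\partial_i\varphi_k\, d(\pi,\cdot)\bigr)^2=\bigl(a d(\pi,\rho)+\sum_{i\ge1}\partial_i\varphi\, d(\pi,\mu_i)\bigr)^2$ and collecting terms exactly reproduces the "$\tfrac{a^2}{2}d^2(\pi,\rho)+ad(\pi,\rho)\sum\cdots+\tfrac12(\sum\cdots)^2$" structure of $g^\dagger$, together with the linear energy terms; the first-order energy term $\partial_0\varphi_k[\cE(\rho)-\cE(\pi)-\tfrac\kappa2 d^2(\pi,\rho)]$ becomes $a[\cE(\rho)-\cE(\pi)-\tfrac\kappa2 d^2(\pi,\rho)]$ once $\chi_k'=a$ on the active region. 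Hence on the relevant region $g_k=g^\dagger$ for $k$ large, so the limsup is $\le 0$ (in fact $=0$ eventually). The truncation of $\varphi$ away from the ball of radius $\sqrt{2c}$ is harmless because there $f^\dagger> c$ so those points are excluded from the supremum.

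The main obstacle is not conceptual but bookkeeping: one must ensure that the smooth bounded approximants $\varphi_k\in\cT_b$ can be chosen so that (i) $\partial_i\varphi_k>0$ everywhere (strict positivity is required by the definition of $\cT$, so the truncations must be strictly increasing, e.g.\ built from a fixed strictly-increasing smooth sigmoid rescaled), and (ii) on each fixed sublevel set $\{f^\dagger\le c\}$ the approximant and all its first derivatives agree with the target for $k$ sufficiently large, so that both Lemma~\ref{lemma:viscosity_push} conditions degenerate to eventual equality. Once this is set up, applying Lemma~\ref{lemma:viscosity_push}(a) yields that a subsolution to $f-\lambda H_{0,\dagger}f=h$ is a subsolution to $f-\lambda H_\dagger f=h$. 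The supersolution statement is proved identically, using Lemma~\ref{lemma:viscosity_push}(b): here one singles out the $0$-th configuration $\gamma$ in $\bm\gamma$, uses the elementary estimate \eqref{eqn:elementary_lowerbound} to pass from the off-diagonal lower bound \eqref{eq:reg_g_ddag} of $H_{0,\ddagger}$ to the form of $g^\ddagger$ in Definition~\ref{definition:H1}, and otherwise repeats the truncation argument verbatim with inequalities reversed.
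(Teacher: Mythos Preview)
Your proposal is correct and follows essentially the same route as the paper: apply Lemma~\ref{lemma:viscosity_push} by approximating $(f^\dagger,g^\dagger)\in H_\dagger$ with bounded cylinders obtained via smooth increasing truncations, so that on each sublevel set $\{f^\dagger\le c\}$ the approximants eventually agree with the target in value and in all first partial derivatives. The only difference is cosmetic: the paper truncates in one shot by setting $\varphi_n(r_0,r_1,\dots,r_k)=\iota_n\bigl(ar_0+\varphi_0(r_1,\dots,r_k)\bigr)$ for a single smooth increasing $\iota_n$ equal to the identity on $(-\infty,n]$, which makes the identity $f_n^\dagger\wedge c=f^{1,\dagger}\wedge c$ for $n\ge c$ immediate and avoids the case analysis your separate truncations of $r_0\mapsto ar_0$ and of $\varphi$ require; your added remark that the truncation must be kept \emph{strictly} increasing so that $\varphi_n\in\cT_b$ is well taken.
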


\begin{proof}
    We only prove the first statement as the proof of the second statement is completely analogous once we perform the elementary lower bound \eqref{eqn:elementary_lowerbound} on the quadratic terms for $i \neq 0$. We argue on the basis of Lemma \ref{lemma:viscosity_push}.
    
    Let $\varphi_0 \in \cT$, $a > 0$, $\rho \in \cD(I)$ and $\bm\rho=(\mu_1,\ldots,\mu_{k})$ such that $\bm\rho\in {\cD(I) }$ and set
    \begin{equation*}
        f^{1,\dagger}(\pi) := \frac{a}{2}d^2(\pi,\rho) + \varphi_0\left( \frac{1}{2}d^2(\pi,\bm\mu)\right)
    \end{equation*}
    and set $g^{1,\dagger}$ as the corresponding function such that $(f^{1,\dagger},g^{1,\dagger}) \in H_{1,\dagger}$. We next construct approximating pairs in $H_{0,\dagger}$. Let $\iota_n$ be a a smooth increasing function such that $\iota_n(r) \leq r$ and
    \begin{equation*}
        \iota_n(r) = \begin{cases}
        r & \text{if } r \leq n, \\
        n+1 & \text{if } r \geq n+2.
        \end{cases}
    \end{equation*}
    Set $\varphi_n(r_0,r_1,\dots,r_k) := \iota_n(r_0 + \varphi_0(r_1,\dots,r_k))$ and set 
    \begin{equation*}
        f^{\dagger}_n(\pi) =\varphi_n\left(\frac{1}{2} d^2(\pi,\rho), \frac{1}{2}d^2(\pi,\bm\mu) \right).
    \end{equation*}
    As $\varphi_n$ is bounded, we have that $f^\dagger_n$ is of the form as in \eqref{eq:reg_f_dag}. Let corresponding $g^\dagger_n$ be the corresponding action as in \eqref{eq:reg_g_dag}.

    We next check the conditions of Lemma \ref{lemma:viscosity_push} (a). Observing that for any $c\in \bR$ and $n \geq c$ we have $f^{\dagger}_n \wedge c = f^{1,\dagger} \wedge c$, ensuring that the first condition of item (a) is satisfied. For the second condition, note that since $f^{1,\dagger} \geq f^{\dagger}_n$, we have $f^{1,\dagger}\vee f^{\dagger}_n= f^{1,\dagger}$ , hence we need to check that for any $d\in\R$
    \begin{equation*}
        \limsup_{n \rightarrow \infty} \sup_{\pi :  f^{1,\dagger}(\pi) \leq c} g^{\dagger}_n(\pi) \vee d - g^{1,\dagger}(\pi)\vee d \leq 0.
    \end{equation*}
    To do this, we observe that for $n \geq c$, the gradients of $(r_0,r_1,\dots,r_k) \mapsto \varphi_n(r_0,r_1,\dots,r_k)$ and  $(r_0,r_1,\dots,r_k) \mapsto r_0 + \varphi_0(r_1,\dots,r_k)$ coincide, at least, on the set $\{\bm r \in \mathbb{R}^{k+1}: r_0 + \varphi_0(r_1,\dots,r_k) \leq c \}$. In particular this yields the inclusion
    \begin{equation*}
    \forall n\geq c+1, \quad   \{ \pi:f^{1,\dagger}(\pi)\leq c\} \subseteq \{ \pi: g^{\dagger}_n(\pi)=g^{1,\dagger}(\pi) \}
   \end{equation*}
    from which it easily follows that the second condition of item (a) is verified. We can then apply Lemma \ref{lemma:viscosity_push}, to obtain the conclusion.
\end{proof}

\printbibliography

\end{document}